\theoremstyle{theorem}
	\newtheorem{thm}{Theorem}[section]
	\newtheorem{lem}[thm]{Lemma}
	\newtheorem{prop}[thm]{Proposition}
	\newtheorem{cor}[thm]{Corollary}
	\newtheorem{conj}[thm]{Conjecture}
\theoremstyle{definition}
	\newtheorem{defn}[thm]{Definition}
	\newtheorem{rem}[thm]{Remark}
\newcommand{\N}{\mathbb{N}}
\newcommand{\Z}{\mathbb{Z}}
\newcommand{\Q}{\mathbb{Q}}
\newcommand{\R}{\mathbb{R}}
\newcommand{\C}{\mathbb{C}}
\newcommand{\T}{\mathbb{T}}
\newcommand{\F}{\mathbb{F}}
\renewcommand{\O}{\mathcal{O}}
\renewcommand{\P}{\mathcal{P}}
\newcommand{\B}{\mathcal{B}}
\newcommand{\D}{\mathcal{D}}
\newcommand{\calZ}{\mathcal{Z}}
\newcommand{\calK}{\mathcal{K}}
\newcommand{\calF}{\mathcal{F}}
\newcommand{\es}{\emptyset}
\newcommand{\eps}{\varepsilon}
\newcommand{\ind}{\mathbbm{1}}
\renewcommand{\hat}{\widehat}
\renewcommand{\tilde}{\widetilde}
\newcommand{\norm}[2]{\left\| #2 \right\|_{#1}}
\newcommand{\E}[2]{\mathbb{E}\left[ {#1} \mid {#2} \right]}
\newcommand{\X}{\textbf{X}}
\newcommand{\Y}{\textbf{Y}}
\newcommand{\UClim}{\text{UC-}\lim}
\newcommand{\spn}{\text{span}}
\title{Multiple recurrence and popular differences for polynomial patterns in rings of integers}
\author{Ethan Ackelsberg and Vitaly Bergelson}
\begin{document}

\maketitle

\begin{abstract}

We demonstrate that the phenomenon of popular differences (aka the phenomenon of large intersections)
holds for natural families of polynomial patterns in rings of integers of number fields.
If $K$ is a number field with ring of integers $\O_K$ and $E \subseteq \O_K$ has
positive upper Banach density $d^*(E) = \delta > 0$, we show, \emph{inter alia}:

\begin{enumerate}[1.]
	\item	If $p(x) \in K[x]$ is an \emph{intersective} polynomial
		(i.e., $p$ has a root modulo $m$ for every $m \in \O_K$) with $p(\O_K) \subseteq \O_K$
		and $r, s \in \O_K$ are distinct and nonzero, then for any $\eps > 0$,
		there is a syndetic set $S \subseteq \O_K$ such that for any $n \in S$,
		\begin{equation*}
			d^* \left( \left\{ x \in \O_K : \{x, x + rp(n), x + sp(n)\} \subseteq E \right\} \right) > \delta^3 - \eps.
		\end{equation*}
		Moreover, if $\frac{s}{r} \in \Q$, then there are syndetically many $n \in \O_K$ such that
		\begin{equation*}
			 d^* \left( \left\{ x \in \O_K : \{x, x + rp(n), x + sp(n), x + (r+s)p(n)\} \subseteq E \right\} \right)
			 > \delta^4 - \eps.
		\end{equation*}
	\item	If $\{p_1, \dots, p_k\} \subseteq K[x]$ is a \emph{jointly intersective} family
		(i.e., $p_1, \dots, p_k$ have a common root modulo $m$ for every $m \in \O_K$)
		of linearly independent polynomials with $p_i(\O_K) \subseteq \O_K$,
		then there are syndetically many $n \in \O_K$ such that
		\begin{equation*}
			d^* \left( \left\{ x \in \O_K : \{x, x + p_1(n), \dots, x + p_k(n)\} \subseteq E \right\} \right)
			 > \delta^{k+1} - \eps.
		\end{equation*}
\end{enumerate}

These two results generalize and extend previous work of Frantzikinakis and Kra \cite{frakra2} and Franztikinakis \cite{fra}
on polynomial configurations in $\Z$
and build upon recent work of the authors and Best \cite{abb} on linear patterns in general abelian groups.
The above combinatorial results follow from multiple recurrence results in ergodic theory
via a version of Furstenberg's correspondence principle.
The ergodic-theoretic recurrence theorems require a sharpening of existing tools
for handling polynomial multiple ergodic averages.
A key advancement made in this paper is a new result on the equidistribution of polynomial orbits in nilmanifolds,
which can be seen as a far-reaching generalization of Weyl's equidistribution theorem for polynomials of several variables:

\begin{enumerate}[3.]
	\item Let $d, k, l \in \N$.
	Let $(X, \B, \mu, T_1, \dots, T_l)$ be an ergodic, connected $\Z^l$-nilsystem.
	Let $\{p_{i,j} : 1 \le i \le k, 1 \le j \le l\} \subseteq \Q[x_1, \dots, x_d]$ be a family of polynomials
	such that $p_{i,j}\left( \Z^d \right) \subseteq \Z$ and $\{1\} \cup \{p_{i,j}\}$ is linearly independent over $\Q$.
	Then the $\Z^d$-sequence
	$\left( \prod_{j=1}^l{T_j^{p_{1,j}(n)}}x, \dots, \prod_{j=1}^l{T_j^{p_{k,j}(n)}}x \right)_{n \in \Z^d}$
	is well-distributed in $X^k$ for every $x$ in a co-meager set of full measure.
\end{enumerate}

\end{abstract}


\section{Introduction}


\subsection{Background and main results}

Let $(X, \B, \mu, T)$ be an invertible probability measure-preserving system.
A classical result of Khintchine \cite{kh} says that for any $A \in \B$,
\begin{equation*}
	\lim_{N - M \to \infty}{\frac{1}{N-M} \sum_{n=M}^{N-1}{\mu(A \cap T^{-n}A)}} \ge \mu(A)^2.
\end{equation*}
As a consequence, for any $\eps > 0$, the set
\begin{equation*}
	R = \left\{ n \in \Z : \mu(A \cap T^{-n}A) > \mu(A)^2 - \eps \right\}
\end{equation*}
is \emph{syndetic}, meaning that it has bounded gaps (equivalently, finitely many translates of $R$ cover $\Z$).
Furstenberg showed in \cite{diag} that for any $A \in \B$ and any $k \in \N$,
\begin{equation*}
	\liminf_{N - M \to \infty}{\frac{1}{N-M} \sum_{n=M}^{N-1}{\mu(A \cap T^{-n}A \cap \dots \cap T^{-kn}A)}} > 0,
\end{equation*}
from which it follows that
\begin{equation*}
	\left\{ n \in \Z : \mu(A \cap T^{-n}A \cap \dots \cap T^{-kn}A) > c \right\}
\end{equation*}
is syndetic for some $c > 0$.
One may ask, for these longer expressions, if $c$ can be made arbitrarily close to $\mu(A)^{k+1}$.
(By considering weakly mixing systems, it is clear that $c$ cannot exceed $\mu(A)^{k+1}$ in general.)
A somewhat surprising answer was given in \cite{bhk}:

\begin{thm}[\cite{bhk}, Theorems 1.2 and 1.3] \label{thm: BHK}~
	\begin{enumerate}[1.]
		\item For any ergodic invertible probability measure-preserving system $(X, \B, \mu,T)$, any $\eps > 0$,
			and any $A \in \B$, the set
			\begin{equation} \label{eq: 3-AP large int}
				\left\{ n \in \Z : \mu(A \cap T^{-n}A \cap T^{-2n}A) > \mu(A)^3 - \eps \right\}
			\end{equation}
			is syndetic.
		\item	For any ergodic invertible probability measure-preserving system $(X, \B, \mu,T)$, any $\eps > 0$,
			and any $A \in \B$, the set
			\begin{equation*}
				\left\{ n \in \Z : \mu(A \cap T^{-n} A \cap T^{-2n}A \cap T^{-3n}A) > \mu(A)^4 - \eps \right\}
			\end{equation*}
			is syndetic.
		\item	There exists an ergodic system $(X, \B, \mu, T)$ with the following property:
			for any integer $l \geq 1$, there is a set $A =A(l) \in \B$ of positive measure such that
			\begin{equation*}
				\mu(A \cap T^{-n}A \cap T^{-2n} A \cap T^{-3n}A\cap T^{-4n}A ) \leq \frac 12\mu(A)^l
			\end{equation*}
			for every integer $n \neq 0$.
	\end{enumerate}
\end{thm}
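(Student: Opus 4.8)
The plan is to derive parts 1 and 2 from the Host--Kra theory of characteristic factors (the ergodicity hypothesis and Furstenberg's correspondence already being built into the statement), and to establish part 3 by an explicit construction. For the positive statements the starting point is that the relevant averages are governed by a low-complexity factor: $\frac1N\sum_n\mu(A\cap T^{-n}A\cap T^{-2n}A)$ is controlled by the Kronecker factor $\mathcal Z_1$, and $\frac1N\sum_n\mu(A\cap T^{-n}A\cap T^{-2n}A\cap T^{-3n}A)$ by the $2$-step pronilfactor $\mathcal Z_2$. Accordingly I would write $\mathbbm 1_A=f_{\mathrm{str}}+f_{\mathrm{unf}}$ with $f_{\mathrm{str}}=\E{\mathbbm 1_A}{\mathcal Z_{k-1}}$ (so $0\le f_{\mathrm{str}}\le 1$, $\int f_{\mathrm{str}}=\mu(A)$, and crucially $f_{\mathrm{str}}\ge 0$) and $f_{\mathrm{unf}}$ of vanishing Host--Kra seminorm, expand $\mu(A\cap T^{-n}A\cap\dots)$ into its main term (all factors $f_{\mathrm{str}}$) plus cross terms (at least one factor $f_{\mathrm{unf}}$), and dispose of the cross terms via a version of the generalized von Neumann / van der Corput estimate that is uniform over windows, so that they tend to $0$ in uniform density. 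It then remains to show the main term exceeds $\mu(A)^{k+1}-\eps$ along a syndetic set of $n$.

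For the length-$3$ pattern this is easy: on the Kronecker factor, realized as a minimal rotation by $\tau$ on a compact abelian group $Z$, the almost-return set $\{n:\|n\tau\|_Z<\eta\}$ is syndetic, and for such $n$ the main term $\int_Z f_{\mathrm{str}}(z)f_{\mathrm{str}}(z+n\tau)f_{\mathrm{str}}(z+2n\tau)\,dz$ is within $\eps/2$ of $\int_Z f_{\mathrm{str}}^3$, which by convexity of $t\mapsto t^3$ is $\ge(\int_Z f_{\mathrm{str}})^3=\mu(A)^3$; intersecting with the complement of the zero-uniform-density set where the cross terms are large — a syndetic set minus a set of zero uniform density is still syndetic — finishes this case. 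For the length-$4$ pattern the same skeleton applies but the last step carries the real work: $\mathcal Z_2$ is an inverse limit of $2$-step nilsystems, and after approximating $f_{\mathrm{str}}$ by a continuous function on a genuine connected $2$-step nilmanifold $G/\Gamma$ (the passage to the connected case being handled through the finite cyclic group of connected components), the naive argument breaks down, because in a non-abelian nilsystem the set of $n$ for which translation by $a^n$ acts nearly trivially on $G/\Gamma$ collapses to $\{0\}$. Instead I would use that $n\mapsto\int f_{\mathrm{str}}\cdot T^nf_{\mathrm{str}}\cdot T^{2n}f_{\mathrm{str}}\cdot T^{3n}f_{\mathrm{str}}$ is a nilsequence carried by a sub-nilmanifold of $(G/\Gamma)^4$ — the orbit closure of the diagonal point under the product translation — whose base point corresponds to $n=0$ and whose value there is $\int f_{\mathrm{str}}^4\ge\mu(A)^4$; since orbit closures in nilsystems are sub-nilmanifolds on which the translation acts minimally (Leibman), the superlevel set $\{\,\text{correlation}>\mu(A)^4-\eps\,\}$ is a nonempty open set, so its return times are syndetic, and absorbing $f_{\mathrm{unf}}$ as before completes the proof.

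The genuinely delicate points are twofold. First, the uniform-in-window control of the unstructured part and its compatibility with syndeticity: one needs the van der Corput and PET machinery in a form uniform over arbitrarily placed intervals, so that ``negligible in uniform density'' really holds and may be deleted from a syndetic set. Second — and this is the crux, and the reason the argument halts at length $4$ — for length $5$ and beyond the characteristic factor is a $\ge 3$-step pronilsystem, and the propagation above fails: the correlation nilsequence need not revisit its base-point value $\int f_{\mathrm{str}}^{k+1}\ge\mu(A)^{k+1}$ along a relatively dense set, because in $\ge 3$-step nilmanifolds the relevant orbit closure and its conditional measures can separate the diagonal from the bulk of the orbit. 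To prove part 3 I would exploit exactly this, building a concrete ergodic system — a suitable distal (indeed nil-type) system engineered so that for the pattern $\{0,n,2n,3n,4n\}$ the correlation sequence is bounded away from $\mu(A)^l$ for every $n\ne 0$ once $l$ is large — and exhibiting the sets $A=A(l)$ witnessing this. Isolating which structural feature of $3$-step nilsystems defeats the propagation argument and packaging it into such a system and family of sets is the step I expect to be hardest.
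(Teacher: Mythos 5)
First, a remark on context: the paper does not prove this theorem at all — it is quoted verbatim from \cite{bhk} — so your proposal has to be measured against the argument of Bergelson--Host--Kra (which is also the argument this paper adapts in Section \ref{sec: MultLargeInt}). Your sketch of part 1 (Kronecker factor, the Bohr set $\{n : \|n\tau\|<\eta\}$, Jensen, and deleting a Banach-density-zero set from a syndetic set) is essentially the standard proof. The gap is in part 2. Your key mechanism — ``the correlation nilsequence is carried by the orbit closure of the diagonal point, its value at the basepoint $n=0$ is $\int f_{\mathrm{str}}^4\ge\mu(A)^4$, and minimality of the orbit closure gives syndetic returns to the open superlevel set'' — does not withstand scrutiny. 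The correlation $n\mapsto\int \prod_i T^{in}f_{\mathrm{str}}\,d\mu$ is an integral over $x$ of the orbit evaluations $n\mapsto F\bigl((e,a,a^2,a^3)^n(x,x,x,x)\bigr)$; each of these individually returns syndetically near its value at $n=0$, but the return-time set depends on $x$, and there is no common syndetic set to integrate over. If instead you represent the integrated correlation itself as a single nilsequence, this can only be done up to a nullsequence error — even on a genuine nilsystem with continuous functions (this is exactly Theorem \ref{thm: nilsequence decomp} of the paper, quoting Leibman) — and that error is not harmless at $n=0$: the ``resonant'' (diagonal) contributions which build up $\int f_{\mathrm{str}}^4$ at $n=0$ live in the nullsequence component, so the nilsequence component need not come anywhere near $\mu(A)^4$ at its basepoint. (One can see this already for the affine system $T(x,y)=(x+\alpha,\,y+2x+\alpha)$ on $\T^2$ with $f$ a trigonometric polynomial: the Fourier expansion of the multicorrelation splits into a quadratic nilsequence plus a finitely supported sequence, and the frequency tuples whose constraint depends on $n$ — precisely those responsible for the large value at $n=0$ — fall into the finitely supported part.) The decisive symptom is that your mechanism uses nothing about $\{n,2n,3n\}$ beyond the step of the characteristic factor: orbit closures in $3$-step nilsystems are equally minimal subnilmanifolds with basepoint at $n=0$, so the identical argument would prove syndeticity of $\{n:\mu(A\cap T^{-n}A\cap\cdots\cap T^{-4n}A)>\mu(A)^5-\eps\}$, contradicting part 3 of the very theorem; the caveat you attach about conditional measures in $\ge 3$-step nilmanifolds does not name anything your $2$-step argument actually used.

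What the real proof does at length four is quite different: after the nilsequence--nullsequence decomposition, the lower bound for the structured part is obtained not from minimality and the $n=0$ value, but from an explicit limit formula for the correlation averaged with weights concentrated on a Bohr set of the \emph{Kronecker} factor, using the Conze--Lesigne/Host--Kra description of $\calZ_2$ as an abelian extension of $Z$; the specific coefficients $a_i$ and $a_i^2$ (and the fact that the cocycle term can be twisted away) are what make the change-of-variables plus Cauchy--Schwarz/Jensen computation produce $\mu(A)^4$, and it is precisely this algebraic step that has no analogue for five terms. This is the scheme reproduced in the paper's Theorem \ref{thm: limit formulae}, Corollary \ref{cor: twisted limit formulae} and Proposition \ref{prop: MultLargeInt tot erg}. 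Finally, part 3 is not addressed by your proposal: announcing that one would ``engineer'' a suitable distal system is not a construction. The content there is Ruzsa's Behrend-type combinatorial example (the appendix of \cite{bhk}), exploiting the quadratic relations satisfied by five-term progressions, together with its dynamical realization; without that input there is no proof of the negative statement.
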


In the terminology of \cite{abb}, Theorem \ref{thm: BHK} shows that the families $\{n, 2n\}$
and $\{n, 2n, 3n\}$ have the \emph{large intersections property}, while $\{n, 2n, \dots, kn\}$
does not have the large intersections property for $k \ge 4$.
The combinatorial content, via Furstenberg's correspondence principle,
is that, for arithmetic progression of length 3 and 4, one can find a ``popular'' common difference:
if $E \subseteq \Z$ has positive upper Banach density $d^*(E) = \delta > 0$ and $\eps > 0$,
then there exists (syndetically many) $n \ne 0$ such that
\begin{equation*}
	d^* \left( \left\{ x \in \Z : \{x, x + n, x + 2n\} \subseteq E \right\} \right) > \delta^3 - \eps,
\end{equation*}
and there exists (syndetically many) $m \ne 0$ such that
\begin{equation*}
	d^* \left( \left\{ x \in \Z : \{x, x + m, x + 2m, x + 3m\} \subseteq E \right\} \right) > \delta^4 - \eps.
\end{equation*}
A natural question to ask is whether various extensions of Szemer\'{e}di's theorem
also admit large intersections variants.

The polynomial Szemer\'{e}di theorem of the second author and Leibman \cite{bl-poly}
extends Furstenberg's result to polynomial configurations.
We say that a polynomial $p(x) \in \Q[x]$ is \emph{integer-valued} if $p(\Z) \subseteq \Z$.

\begin{thm}[\cite{bl-poly}, special case of Theorem A] \label{thm: poly Sz}
	Let $p_1, \dots, p_k \in \Q[x]$ be integer-valued polynomials with zero constant term.
	Then for any invertible probability measure-preserving system
	$(X, \B, \mu, T)$ and any $A \in \B$ with $\mu(A) > 0$,
	there exists $c > 0$ such that the set
	\begin{equation} \label{eq: poly mult rec}
		R := \left\{ n \in \Z : \mu \left( A \cap T^{-p_1(n)}A \cap \dots \cap T^{-p_k(n)}A \right) > c \right\}
	\end{equation}
	has positive lower density, i.e. $\liminf_{N \to \infty}{\frac{|R \cap \{1, \dots, N\}|}{N}} > 0$.
\end{thm}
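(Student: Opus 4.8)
The plan is to deduce the statement from a positivity assertion for ergodic averages, which I would then establish by PET induction. First I would reduce to proving that
\begin{align*}
	\liminf_{N \to \infty} \frac{1}{N} \sum_{n=1}^{N} \mu\left( A \cap T^{-p_1(n)}A \cap \cdots \cap T^{-p_k(n)}A \right) =: c_0 > 0.
\end{align*}
This suffices: for any fixed $c \in (0, c_0)$, the trivial bound $\mu(\cdot) \le 1$ shows that $|R \cap \{1,\dots,N\}| / N$ is at least the $N$-th partial average above minus $c$, so $\liminf_N |R \cap \{1,\dots,N\}|/N \ge c_0 - c > 0$. Next I would pass to the ergodic decomposition of $\mu$ and apply Fatou's lemma to the nonnegative integrands, reducing to the case where $(X, \B, \mu, T)$ is ergodic; since each $p_i$ has zero constant term, the periodic part of the Kronecker factor causes no obstruction, and one may further assume that $T^r$ is ergodic for every $r \ge 1$.

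In the ergodic case the engine is the family of multilinear averages $A_N(f_0, \dots, f_k) := \frac{1}{N} \sum_{n=1}^{N} f_0 \cdot \prod_{i=1}^{k} T^{p_i(n)} f_i$ in $L^2(\mu)$, for which I seek a lower bound on $\int A_N(\ind_A, \dots, \ind_A)\, d\mu$. I would run PET (polynomial exhaustion) induction powered by van der Corput's inequality: one bounds $\limsup_N \|A_N(f_0, \dots, f_k)\|_{L^2(\mu)}^2$ by an average over shifts $h$ of the analogous squared norms for the ``differenced'' family built from $\{p_i(n+h) - p_{i_0}(n),\ p_i(n+h) - p_i(n)\}$ for a distinguished polynomial $p_{i_0}$. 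Assigning to each finite family a \emph{weight}---roughly the degree-ordered multiset of its leading terms---and choosing $p_{i_0}$ so that the weight strictly decreases, after finitely many differencing steps one arrives at a base family amounting to a single nonconstant polynomial $q$, for which $\|\frac{1}{N}\sum_{n=1}^{N} T^{q(n)} f - \int f \, d\mu\|_{L^2(\mu)} \to 0$ by the spectral theorem and Weyl's equidistribution theorem (total ergodicity killing the rational spectrum). Unwinding the van der Corput tower then shows that $\{p_1, \dots, p_k\}$ admits a characteristic factor of bounded complexity: a factor $(Y, \mathcal{Y}, \nu, S)$ such that $A_N(f_0, \dots, f_k)$ is, asymptotically in $L^2$, unchanged upon replacing each $f_i$ by $\E{f_i}{\mathcal{Y}}$, and on which $\lim_N A_N$ exists.

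Finally I would extract a positive lower bound for this limit. Invoking the structure theory of characteristic factors, $Y$ may be taken to be an inverse limit of nilsystems; then by Leibman's theorem on the equidistribution of polynomial sequences in nilmanifolds, the orbit $\left( S^{p_1(n)} y, \dots, S^{p_k(n)} y \right)_{n}$, together with its base point, equidistributes for a.e.\ $y$ in a sub-nilmanifold $W \subseteq Y^{k+1}$ that---because the $p_i$ have zero constant term---surjects onto the diagonal coordinate, so integrating $\ind_A^{\otimes(k+1)}$ against the Haar measure of $W$ yields a strictly positive quantity because $\mu(A) > 0$. One cannot in general replace this by the sharp value $\mu(A)^{k+1}$---Theorem~\ref{thm: BHK}(3) already exhibits $W \subsetneq Y^{k+1}$---but any positive $c$ suffices here. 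The step I expect to be the main obstacle is the PET bookkeeping: choosing the distinguished polynomial and the weight so that the differenced families genuinely simplify, and checking that the differencing does not spoil the hypotheses needed at the base of the induction---in particular, differences like $p_i(n+h) - p_{i_0}(n)$ need no longer have zero constant term in $n$, but the weight only registers their leading behavior in $n$, and the $h$-dependent constant terms wash out under the outer average in $h$. A secondary difficulty is the identification of the characteristic factor with a pro-nilsystem, which one can circumvent---at the cost of the explicit constant---by a Furstenberg-style structure-theorem argument carried out inside the induction.
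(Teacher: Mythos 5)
The paper itself contains no proof of this statement: Theorem~\ref{thm: poly Sz} is quoted from \cite{bl-poly}, where the argument is PET induction carried out inside the Furstenberg--Katznelson machinery of primitive (weakly mixing/compact) extensions, not the nilfactor route you propose. Your route --- reduce positivity of the lower density of $R$ to positivity of $\liminf_N$ of the averages (that reduction, and the Fatou/ergodic-decomposition step, are fine), locate a characteristic pro-nilfactor via van der Corput/PET, and finish with Leibman's equidistribution theorem (Theorem~\ref{thm: nil orbits}) --- is the later, Host--Kra-style approach, and it can in principle be carried out (it is close in spirit to \cite{bll-poly} and to what the present paper does for intersective polynomials). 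The reduction ``one may further assume $T^r$ is ergodic for every $r$'' is asserted rather than argued, but it is repairable by restricting to an arithmetic progression $\{Dn\}$ so that the $p_i(Dn)$ land in $r\Z$ and passing to the (finitely many, totally ergodic) ergodic components of $T^r$, exactly as in Lemma~\ref{lem: intersective} and Proposition~\ref{prop: eligible}.

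The genuine gap is your last step. Knowing that, for a.e.\ $y$, the sequence $\left( y, S^{p_1(n)}y, \dots, S^{p_k(n)}y \right)$ equidistributes in a subnilmanifold $W_y \subseteq Y^{k+1}$ which ``surjects onto the diagonal coordinate'' does \emph{not} imply that $\int_{W_y} \ind_A \otimes \cdots \otimes \ind_A \, dm_{W_y} > 0$: $W_y$ is in general a proper, $\mu^{\otimes(k+1)}$-null subnilmanifold, and a set $A$ with $\mu(A)>0$ could a priori be positioned so that $A^{k+1}$ meets $W_y$ in an $m_{W_y}$-null set; ruling this out \emph{is} the content of the theorem (already for $Y = \T$ with an irrational rotation and $\{n,2n\}$, where $W_y = \{(y, y+t, y+2t) : t \in \T\}$, positivity of $\int_\T \ind_A(y)\ind_A(y+t)\ind_A(y+2t)\,dt$ on a positive-measure set of $y$ needs an argument). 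What saves the day is not surjectivity of a coordinate projection but the facts that (i) since $p_i(0)=0$ the orbit closure contains the diagonal point, so by Theorem~\ref{thm: nil orbits} $W_y \supseteq H\tilde y$ for a closed subgroup $H \le G^{k+1}$ containing the identity (and one must restrict $n$ to the finite-index subgroup $\omega^{-1}(0)$ picking out this component), and (ii) translation acts continuously on $L^1(\mu)$, so averaging over $y$ and over a small neighborhood $U$ of the identity in $H$ gives $\int_Y \int_U \prod_i \ind_A(h_i y)\, dm_H(h)\, d\mu(y) \ge \tfrac{1}{2}\mu(A)\, m_H(U) > 0$. Relatedly, equidistribution only tests continuous functions, and since $m_{W_y}$ is singular with respect to $\mu^{\otimes(k+1)}$ you cannot simply approximate $\ind_A$ in $L^1(\mu)$ and ``integrate $\ind_A^{\otimes(k+1)}$ against the Haar measure of $W$''; the averaged continuity argument above (which integrates in $y$ first and uses $\mu$-invariance under each coordinate translation) is also what handles this. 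As written, the proposal replaces the crux of the theorem with an inference that is not valid, so the proof is incomplete at its decisive point.
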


\noindent The conclusion of Theorem \ref{thm: poly Sz} was strengthened in \cite[Theorem 0.1]{bm96},
where it was shown that $R$ is syndetic for some $c > 0$ depending on $A$.

There is a wider variety of combinatorial configurations in play when polynomials are introduced,
and there is not yet a full classification of which families of polynomials have the large intersections property.
However, large intersections variants of the polynomial Szemer\'{e}di theorem
are known for two natural classes of polynomial configurations:
independent polynomials and polynomials that are integer multiples of a fixed polynomial (for $k=2, 3$).
This is summarized by the following two results, which we seek to extend in this paper:

\begin{thm}[\cite{frakra2}, Theorem 1.3] \label{thm: frakra}
	Let $p_1, \dots, p_k \in \Q[x]$ be linearly independent integer-valued polynomials with zero constant term.
	Then for any invertible probability measure-preserving system, any $A \in \B$, and any $\eps > 0$, the set
	\begin{equation*}
		\left\{ n \in \Z : \mu \left( A \cap T^{-p_1(n)}A \cap \dots \cap T^{-p_k(n)}A \right) > \mu(A)^{k+1} - \eps \right\}
	\end{equation*}
	is syndetic.
\end{thm}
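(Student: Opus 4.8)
The plan is to follow the strategy of Bergelson--Host--Kra~\cite{bhk} and Frantzikinakis--Kra~\cite{frakra2}: using the structure theory of polynomial multiple ergodic averages, reduce to an explicit computation on connected nilsystems, where the well-distribution theorem for polynomial orbits in nilsystems stated above (in our setting with $d=l=1$) identifies the average of the relevant correlation sequence with $\mu(A)^{k+1}$; then extract syndeticity from the fact that this correlation sequence is essentially a nilsequence. Write $f = \ind_A$, $\delta = \mu(A)$, $p_0 \equiv 0$, and
\begin{align*}
	c(n) := \mu\bigl( A \cap T^{-p_1(n)}A \cap \cdots \cap T^{-p_k(n)}A \bigr) = \int_X \prod_{i=0}^k f\bigl(T^{p_i(n)}x\bigr)\, d\mu.
\end{align*}
By the structure theory of Host--Kra and Leibman one has $c(n) = \psi(n) + e(n)$, where $\psi$ is a nilsequence (of complexity bounded in terms of the $p_i$, coming from a characteristic nilfactor $Z_s$) and $e$ is null in \emph{uniform} density. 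A set $B$ of uniform density zero contains no arbitrarily long interval, so for any syndetic set $S$ the difference $S \setminus B$ is still syndetic; hence this perturbation is harmless and $\bar\psi := \lim_{N\to\infty}\frac1N\sum_{n=1}^N \psi(n) = \lim_{N\to\infty}\frac1N\sum_{n=1}^N c(n)$. The proof thus reduces to two assertions: \textbf{(I)} $\bar\psi \ge \delta^{k+1}$, and \textbf{(II)} a nonnegative nilsequence with average $\bar\psi$ has $\{n : \psi(n) > \bar\psi - \eps\}$ syndetic for every $\eps>0$. Granting these, $\{n : c(n) > \delta^{k+1}-\eps\}$ contains $\{n : \psi(n) > \bar\psi - \eps/2\} \setminus \{n : |e(n)| \ge \eps/2\}$, which is syndetic.

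For \textbf{(I)}: since $\bar\psi = \lim_N \frac1N\sum_{n\le N} c(n)$ and this limit respects the ergodic decomposition, by the convexity inequality $\int_\Omega \mu_\omega(A)^{k+1}\,d\omega \ge \delta^{k+1}$ it is enough to prove the equality $\bar\psi = \mu_\omega(A)^{k+1}$ on each ergodic component. Approximating the characteristic nilfactor by nilsystems and passing to a connected component---legitimate because the $p_i$ have zero constant term, so the set of $n$ with all $p_i(n)$ divisible by the number of connected components contains $0$ and is therefore syndetic, and one argues along it---we may assume $X = G/\Gamma$ is an ergodic connected nilsystem, $T$ translation by $a \in G$, and $f$ continuous. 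Then, with $F = f^{\otimes(k+1)}$ on $X^{k+1}$,
\begin{align*}
	c(n) = \int_X F\bigl( x, a^{p_1(n)}x, \dots, a^{p_k(n)}x \bigr)\, d\mu(x).
\end{align*}
Here the well-distribution theorem enters: because $\{1\} \cup \{p_1, \dots, p_k\}$ is linearly independent over $\Q$ (linear independence of the $p_i$ together with their zero constant terms) and $X$ is ergodic and connected, for $x$ in a full-measure set the sequence $\bigl( a^{p_1(n)}x, \dots, a^{p_k(n)}x \bigr)_{n\in\Z}$ is well-distributed in $X^k$ with respect to $\mu^{\otimes k}$. Averaging over $n$ under the integral sign and invoking dominated convergence gives
\begin{align*}
	\bar\psi = \lim_{N\to\infty}\frac1N\sum_{n=1}^N c(n) = \int_X f(x)\Bigl( \int_X f\,d\mu \Bigr)^{k}\, d\mu(x) = \delta^{k+1},
\end{align*}
which proves \textbf{(I)}.

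For \textbf{(II)}: by a device of Leibman, the polynomial orbit defining $\psi$ can be presented as $(\tau(b^n w))_{n}$ for a \emph{minimal} niltranslation $(W,b)$---hence uniquely ergodic---and a continuous map $\tau$, so that $\psi(n) = \Phi(b^n w)$ for some continuous, nonnegative $\Phi$ on $W$; by unique ergodicity $\int_W \Phi\,d\mu_W = \lim_N \frac1N\sum_{n\le N}\psi(n) = \bar\psi$, so $\Phi > \bar\psi - \eps$ on a nonempty open set $U \subseteq W$. Since $(W,b)$ is minimal, finitely many translates of $U$ cover $W$, so the return-time set $\{n : b^n w \in U\} \subseteq \{n : \psi(n) > \bar\psi - \eps\}$ is syndetic. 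This proves \textbf{(II)}, and with it Theorem~\ref{thm: frakra}.

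The principal obstacle is the well-distribution input in \textbf{(I)}: proving equidistribution of polynomial orbits in nilmanifolds in precisely the generality needed---in particular, that linear independence of $\{1\} \cup \{p_1, \dots, p_k\}$ forces \emph{full} equidistribution in the product $X^k$ rather than in a proper sub-nilmanifold, and handling the finite/disconnected part---which is exactly the content of the paper's new equidistribution theorem (and which, in the ring-of-integers setting treated later, must be strengthened further and interacts with the intersectivity hypotheses). A secondary, more technical, difficulty lies in the structure-theoretic reduction: passing from $L^2$-convergence of the Cesàro averages to a description of the sequence $c(n)$ itself fine enough to preserve syndeticity, and transporting the nilsequence description---hence syndeticity---through the ergodic decomposition, which requires care with measurability.
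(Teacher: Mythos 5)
Your proposal takes a genuinely different route from the paper's proof of the corresponding generalization (Theorem~\ref{thm: IndLargeInt}, proved via Theorem~\ref{thm: large limit}). The middle of both arguments coincides: reduce, using a characteristic factor theorem and the ergodic decomposition, to a limit computation on totally ergodic systems, resolve it by equidistribution on connected nilmanifolds, and finish with Jensen's inequality. The difference is how syndeticity is extracted. The paper works entirely with uniform Ces\`{a}ro limits along F{\o}lner sequences: it finds $\xi, D$ (via Lemma~\ref{lem: intersective}) so that the UC-limit of $\mu\bigl(A \cap T^{-p_1(\xi+Dn)}A \cap \cdots\bigr)$ exceeds $\mu(A)^{k+1}-\eps$, then invokes Proposition~\ref{prop: Folner syndetic} to conclude syndeticity directly---no nilsequence--nullsequence decomposition appears in Section~\ref{sec: Khintchine}. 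Your proposal instead decomposes the correlation sequence $c(n)$ into a nilsequence plus a nullsequence and deduces syndeticity from minimality and unique ergodicity of nilsystems; the paper only invokes that machinery in Section~\ref{sec: refinements}, where it is used to upgrade syndeticity to the stronger $\text{AVIP}^*_{0,+}$ property. Your route is closer to the original Frantzikinakis--Kra argument and yields the qualitatively stronger conclusion ``for free,'' at the cost of invoking the harder structural input one step earlier.

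There is a coherence gap in step \textbf{(I)}. You reduce to proving the equality $\bar\psi = \mu_\omega(A)^{k+1}$ on each ergodic component, but this is false whenever the component has a nontrivial rational Kronecker factor: taking $X = \Z_4$ with $T$ rotation by $1$, $A = \{0\}$, $k=1$, $p_1(n) = n^2$, one has $c(n) = \tfrac14\ind[n~\text{even}]$, so $\bar\psi = \tfrac18 \ne \tfrac1{16} = \mu(A)^2$. Passing to a sub-progression on which the $p_i$ vanish modulo the rational spectrum (your ``argue along it'') identifies the Ces\`{a}ro average of $c$ \emph{along that sub-progression}, not the full average $\bar\psi$ that step \textbf{(II)} then uses; as written the two steps are not talking about the same quantity. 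The repair is to carry the sub-progression consistently through both: show that the average of $c(\xi+Dn)$ over $n$ is $\int_\Omega \mu_\omega(A)^{k+1}\,d\rho(\omega) \ge \mu(A)^{k+1}$, observe that $n \mapsto c(\xi+Dn)$ is still a nilsequence plus a nullsequence, apply \textbf{(II)} to this restricted sequence, and conclude since syndeticity of the level set within $\xi + D\Z$ implies syndeticity in $\Z$. This is exactly the bookkeeping the paper carries out explicitly in the proof of Theorem~\ref{thm: large limit}.
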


\begin{thm}[\cite{fra}, Theorem C] \label{thm: fra three}
	Let $p \in \Q[x]$ be an integer-valued polynomial with zero constant term,
	and let $a, b \in \Z$ be nonzero and distinct.
	Then for any ergodic invertible probability measure-preserving system, any $A \in \B$, and any $\eps > 0$, the sets
	\begin{equation*}
		\left\{ n \in \Z : \mu \left( A \cap T^{-ap(n)}A \cap T^{-bp(n)}A \right) > \mu(A)^3 - \eps \right\}
	\end{equation*}
	and
	\begin{equation*}
		\left\{ n \in \Z : \mu \left( A \cap T^{-ap(n)}A \cap T^{-bp(n)}A \cap T^{-(a+b)p(n)}A \right)
		 > \mu(A)^4 - \eps \right\}
	\end{equation*}
	are syndetic.
\end{thm}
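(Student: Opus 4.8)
The plan is to follow the strategy pioneered by Bergelson--Host--Kra for the linear configurations $\{n,2n\}$ and $\{n,2n,3n\}$: reduce each return-time expression to the action on a low-order characteristic factor, and then extract the sharp lower bound on that factor by a soft equidistribution argument. The starting observation is that all shifts in the two expressions are integer multiples of the single polynomial $p$, so, morally, substituting $m=p(n)$ turns the three-term expression into the linear configuration $\{0,am,bm\}$ and the four-term one into the ``combinatorial cube'' $\{0,am,bm,(a+b)m\}$. To make this rigorous I would run a PET (van der Corput) induction directly on the polynomial exponents $ap(n),bp(n),(a+b)p(n)$ and show that the averages
\[
\frac1N\sum_{n=1}^N \mu\big(A\cap T^{-ap(n)}A\cap T^{-bp(n)}A\big),\qquad
\frac1N\sum_{n=1}^N \mu\big(A\cap T^{-ap(n)}A\cap T^{-bp(n)}A\cap T^{-(a+b)p(n)}A\big)
\]
are controlled, even in the uniform-Ces\`aro sense over arbitrary windows, by the Host--Kra seminorms $\|\ind_A\|_{U^2}$ and $\|\ind_A\|_{U^3}$ respectively. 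By the Host--Kra--Ziegler structure theorem these averages factor through the Kronecker factor $Z_1$, an inverse limit of compact abelian group rotations, and the second nilfactor $Z_2$, an inverse limit of ergodic $2$-step nilsystems. After a routine approximation it therefore suffices to treat the case where $(X,\B,\mu,T)$ is an ergodic rotation $x\mapsto x+\alpha$ on a compact abelian group $G$ (three-term case), or an ergodic $2$-step nilsystem $X=G/\Gamma$ with $Tx=bx$ (four-term case), with $\ind_A$ replaced by $g:=\E{\ind_A}{Z_i}:X\to[0,1]$, $\int g\,d\mu=\delta:=\mu(A)$.

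For the three-term expression, on the Kronecker factor the quantity to be bounded equals $\int_G g(x)\,g(x+ap(n)\alpha)\,g(x+bp(n)\alpha)\,dx$. Replacing $g$ by a continuous approximant, this is the value at $(ap(n)\alpha,bp(n)\alpha)\in G^2$ of the continuous function $\Psi(s,t):=\int_G g(x)g(x+s)g(x+t)\,dx$, whose value at the origin is $\int g^3\,dx\ge\delta^3$ by Jensen's inequality. Since $p(0)=0$, the sequence $n\mapsto(ap(n)\alpha,bp(n)\alpha)$ is a polynomial sequence in $G^2$ passing through the origin at $n=0$, so by Weyl's equidistribution theorem it is well-distributed in the closed subgroup of $G^2$ that it generates, and that subgroup contains the origin. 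Hence for every neighborhood $U$ of the origin the set $\{n:(ap(n)\alpha,bp(n)\alpha)\in U\}$ has positive uniform density. Since the characteristic-factor approximation fails only on a set of $n$ of uniform density zero, and the set just produced has positive uniform density, their difference still has positive uniform density, in particular is syndetic; on it $\mu(A\cap T^{-ap(n)}A\cap T^{-bp(n)}A)>\delta^3-\eps$.

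The four-term case runs along the same outline on a $2$-step nilsystem, but non-commutativity now requires care. Using the structure theory of ergodic $2$-step nilsystems --- i.e. realizing the system, up to the relevant factor, as an extension of a Kronecker factor by a cocycle satisfying the Conze--Lesigne equation, or via the Host--Kra cube measure --- one produces a generalized polynomial sequence $q$, built from the phase data governing $T^{ap(n)},T^{bp(n)},T^{(a+b)p(n)}$ (all of which are powers of $T^{p(n)}$), such that whenever $q(n)$ lies in a small neighborhood $V$ of the origin, all four transformations act ``trivially enough'' that
\[
\int g\cdot T^{ap(n)}g\cdot T^{bp(n)}g\cdot T^{(a+b)p(n)}g\,d\mu
\;=\; \int g^4\,d\mu + o_V(1) \;\ge\; \delta^4-\eps,
\]
the last inequality again being Jensen's, and the ``cube'' relation $(a+b)=a+b$ being exactly what both keeps the characteristic factor at order $2$ and makes the main term at the origin collapse to $\int g^4$. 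Since $p(0)=0$, the sequence $q$ passes through the origin at $n=0$, so by Weyl's and Leibman's equidistribution theorems it is well-distributed in the sub-nilmanifold it generates, which contains the origin; hence $\{n:q(n)\in V\}$ has positive uniform density, and combining with the uniform-density-zero exceptional set of the characteristic-factor approximation as above yields the desired syndetic set.

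The main obstacle is the interaction between the polynomial $p$ and the order-$2$ structure. In the reduction step one must verify that the PET induction, run on genuinely polynomial exponents, still terminates at the seminorms $U^2$ and $U^3$ rather than at higher ones: the point is that since all exponents share the factor $p$, the contributions of $p$ cancel in the van der Corput differences, so the ``complexity'' of the configuration is governed only by the underlying linear data $\{0,a,b\}$ and $\{0,a,b,a+b\}$. In the lower-bound step the subtlety is twofold. First, one must work in a model of the $2$-step nilsystem in which the phrase ``$T^{ap(n)}$ acts trivially for $n$ in a Bohr-type set'' is genuinely true: the naive version on an abstract nilmanifold $G/\Gamma$ fails, because $b^{p(n)}$ returning near $\Gamma$ need not mean $b^{p(n)}$ returns near the identity of $G$, so one really needs the Conze--Lesigne-type coordinatization. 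Second, one must produce the return set with \emph{positive uniform density} rather than mere syndeticity --- this is what absorbs the characteristic-factor error --- and for that the classical Weyl equidistribution used in the linear case must be replaced by Leibman's equidistribution theorem for polynomial orbits on nilmanifolds, applied to $q$ and using crucially that $q(0)$ is the origin, i.e. $p(0)=0$.
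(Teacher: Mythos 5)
Your three-term argument is close in spirit to the actual proof (restrict or weight the average by a bump on the Kronecker phases and apply Jensen, as in Proposition \ref{prop: MultLargeInt tot erg}), but the two steps you treat as routine are exactly where the real work lies, and one of them is stated incorrectly. First, the claim that PET controls these averages by $U^2$ resp.\ $U^3$ ``because the contributions of $p$ cancel in the van der Corput differences'' is false: differences such as $ap(n+h)-bp(n)$ retain degree $\deg p$, and PET run on $\{ap(n),bp(n)\}$ or $\{ap(n),bp(n),(a+b)p(n)\}$ only terminates at some $\calZ_r$ with $r$ depending on $\deg p$. Lowering the characteristic factor to $\calZ_1$, $\calZ_2$ is the main content of \cite{fra}: after PET one proves an equidistribution theorem saying that in \emph{totally ergodic} systems the averages along $\{ap(n),bp(n),(a+b)p(n)\}$ coincide with those along $\{an,bn,(a+b)n\}$ (cf.\ Proposition \ref{prop: polynomial orbit} and Theorem \ref{thm: poly linear tot erg}), and then removes total ergodicity using that $p$ has zero constant term, by passing to $T^r$ (killing the rational Kronecker part) and to a subprogression $n\in D\Z$ on which $r^{-1}p(Dn)$ is integer-valued (the eligibility argument, Proposition \ref{prop: eligible}). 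The same rational-spectrum issue makes your equidistribution statement wrong as written: a polynomial orbit in a compact abelian group is \emph{not} well-distributed in the closed subgroup it generates, only in a finite union of cosets (Proposition \ref{prop: finite subtori}); one must restrict $n$ to the congruence class whose coset contains $0$, which is where $p(0)=0$ enters. Also, the pointwise claim that the characteristic-factor error is supported on a set of uniform density zero does not follow from the $L^2$ statement about averages; it needs the nilsequence--nullsequence decomposition (Theorem \ref{thm: nilsequence decomp}), or one avoids it altogether by weighting the average as in Corollary \ref{cor: twisted limit formulae}.

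The four-term mechanism has a genuine, fatal gap: there is no phase $q(n)$ whose smallness forces the four-fold correlation to be close to $\int g^4\,d\mu$. Constraining the Kronecker phase $t=\alpha_{p(n)}$ makes $\psi(t,\cdot)$ small in measure, but the $H$-components of the orbit still equidistribute, and they cannot be confined near $0$ in a way that makes $T^{ap(n)},T^{bp(n)},T^{(a+b)p(n)}$ act almost trivially on $L^2$. Concretely, take the Anzai skew product $T(x,y)=(x+\alpha,y+x)$ on $\T^2$ with $\alpha$ irrational (an ergodic system equal to its own $\calZ_2$) and $g(x,y)=\tfrac12\left(1+\cos 2\pi y\right)$. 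A direct Fourier computation gives, for every $m\neq0$ (and $a+b\neq 0$), $\int g\cdot T^{am}g\cdot T^{bm}g\cdot T^{(a+b)m}g\,d\mu=\tfrac1{16}\left(1+\tfrac18\cos\left(2\pi abm^2\alpha\right)\right)\le\tfrac9{128}$, whereas $\int g^4\,d\mu=\tfrac{35}{128}$; so the correlation never approaches $\int g^4$, for any return set. (It does exceed $\left(\int g\,d\mu\right)^4=\tfrac1{16}$ on the syndetic set where $abm^2\alpha$ is near $0$, consistent with the theorem.) The correct argument constrains only the Kronecker phase, invokes the explicit limit formula on $\calZ_2$ in Conze--Lesigne coordinates (the expression $\tilde f_i\left(z+a_it,\,h+a_iu+a_i^2v+b_i\psi(t,z)\right)$ of Theorem \ref{thm: limit formulae}), and then integrates out the $H$-variables by a chain of substitutions and Cauchy--Schwarz that crucially uses the cube relation $a_3=a_1+a_2$, yielding $\int_Z\left(\int_H F_z\,dm_H\right)^4dz\ge\mu(A)^4$ as in the proof of Proposition \ref{prop: MultLargeInt tot erg}. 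That Cauchy--Schwarz/change-of-variables step is the heart of the four-term case and cannot be replaced by the ``collapse near the origin'' heuristic; the attainable bound is $\left(\int g\,d\mu\right)^4$, not $\int g^4\,d\mu$.
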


We have so far stated all results about polynomial multiple recurrence only for polynomials with zero constant term.
The essential feature of such families of polynomials is that they avoid ``local obstructions.''
To be precise, we say that a family of polynomials $\{p_1, \dots, p_k\}$ is \emph{jointly intersective}
if for every $m \in \N$, there exists $n \in \Z$ such that $p_i(n) \in m\Z$ for every $i = 1, \dots, k$.
If a family of polynomials is not jointly intersective, then the set appearing in \eqref{eq: poly mult rec}
will be trivial for some rotations on finitely many points.
In \cite{bll-poly}, it was shown that there are no other obstacles to multiple recurrence:

\begin{thm}[\cite{bll-poly}, Theorem 1.1]
	For a family of integer-valued polynomials $\P = \{p_1, \dots, p_k\} \subseteq \Q[x]$, the following are equivalent:
	\begin{enumerate}[(i)]
		\item	$\P$ is jointly intersective;
		\item	For any probability measure-preserving system $(X, \B, \mu, T)$ and any $A \in \B$,
			there exists $c > 0$ such that
			\begin{equation*}
				\left\{ n \in \Z : \mu \left( A \cap T^{-p_1(n)}A \cap \dots \cap T^{-p_k(n)}A \right) > c \right\}
			\end{equation*}
			is syndetic.
	\end{enumerate}
\end{thm}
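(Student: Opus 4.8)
The plan is to prove the two implications separately: (ii)$\Rightarrow$(i) is elementary, while (i)$\Rightarrow$(ii) carries the content. For (ii)$\Rightarrow$(i) I would argue by contraposition. If $\P$ is not jointly intersective, fix a modulus $m$ witnessing this, so that for every $n \in \Z$ some $p_i(n) \not\equiv 0 \pmod m$. Take $X = \Z/m\Z$ with normalized counting measure, $Tx = x+1$, and $A = \{0\}$, so $\mu(A) = 1/m > 0$. Since $T^{-p_i(n)}A = \{p_i(n) \bmod m\}$, the intersection $A \cap \bigcap_{i=1}^k T^{-p_i(n)}A$ is nonempty only when $p_i(n) \equiv 0 \pmod m$ for every $i$ --- which never happens --- so the set in (ii) is empty for every $c > 0$, contradicting syndeticity.

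For (i)$\Rightarrow$(ii) I would first make the standard reductions: by the ergodic decomposition it suffices to treat ergodic systems, and since a nonzero constant polynomial is never intersective, joint intersectivity forces every constant member of $\P$ to vanish, so after discarding zero polynomials and merging repeated ones we may assume the $p_i$ are distinct and nonconstant. Working throughout with the uniform averages $\frac{1}{N-M}\sum_{n=M}^{N-1}$, it suffices to prove
\[
\liminf_{N-M\to\infty} \frac{1}{N-M}\sum_{n=M}^{N-1} \mu\!\left(A \cap T^{-p_1(n)}A \cap \dots \cap T^{-p_k(n)}A\right) > 0;
\]
taking $c$ to be half this liminf, the density of $R = \{n : \mu(A\cap\bigcap_i T^{-p_i(n)}A) > c\}$ over any interval $[M,N)$ is at least $\frac{1}{N-M}\sum_{n=M}^{N-1}\mu(A\cap\bigcap_i T^{-p_i(n)}A) - c$, which has positive liminf as $N-M\to\infty$, so $R$ has bounded gaps, recovering the syndetic strengthening of \cite{bm96}. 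By the structure theory of polynomial multiple ergodic averages (the characteristic factors are pro-nilsystems, by Host--Kra and Leibman), the $L^2$-limit of $\frac{1}{N-M}\sum_n T^{p_1(n)}f\cdots T^{p_k(n)}f$ is unchanged upon replacing $f=\ind_A$ by its conditional expectation onto a nilfactor; approximating that pro-nilfactor by a genuine ergodic nilsystem, the problem reduces to the same liminf statement for an arbitrary ergodic nilsystem $(G/\Gamma,\mu,T)$, $Tx = ax$, and an arbitrary set of positive measure.

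The key step is to remove the local obstruction, which for a nilsystem lives entirely in its maximal finite factor. For an ergodic nilsystem this factor is a cyclic rotation $\Z/N\Z$, $x\mapsto x+1$, realized as $G/(G^\circ\Gamma)$; the fibers of $G/\Gamma\to\Z/N\Z$ are connected sub-nilmanifolds, and the return system to the fiber over $0$ under $T^N$ is again an ergodic nilsystem, now on a \emph{connected} nilmanifold, hence \emph{totally ergodic}. This is precisely where joint intersectivity enters, and since $N$ is not known in advance it is why (i) must be invoked for every modulus. Choosing $r$ with $p_i(r)\equiv 0\pmod N$ for all $i$ and restricting $n$ to the progression $r+N\Z$, write $n=r+Nt$; then $p_i(n)=N\tilde q_i(t)$ for integer-valued polynomials $\tilde q_i\in\Q[t]$, so each $T^{p_i(n)}=(T^N)^{\tilde q_i(t)}$ preserves every fiber, and decomposing $\mu$ over the fibers collapses the problem to polynomial multiple recurrence for the totally ergodic, connected nilsystem $(X_0,T^N)$ and the family $\{\tilde q_i\}$ --- now with \emph{no} constant-term constraint, exactly because total ergodicity has absorbed the obstruction. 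I expect this last assertion --- positivity of the recurrence average on a connected nilsystem for an arbitrary integer-valued polynomial family --- to be the main obstacle: there is no clean lower bound such as $\mu(B)^{k+1}$ to fall back on as in the linear Khintchine-type setting, so it must be extracted from Leibman's equidistribution theorem for polynomial orbits in nilmanifolds --- on a connected nilmanifold the orbit $((T^N)^{\tilde q_1(t)}x,\dots,(T^N)^{\tilde q_k(t)}x)$ is well-distributed in a single connected sub-nilmanifold --- together with a polynomial van der Corput / PET-induction scheme to descend to abelian behavior and to verify that the limiting sub-nilmanifold meets $B^k$ on a positive-measure set of base points. Controlling the approximation in the passage from the pro-nilfactor to a single nilsystem, while keeping the final bound positive, is a secondary but genuine technical point.
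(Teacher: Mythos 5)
The paper does not prove this theorem; it is cited from \cite{bll-poly} without an internal argument, so I am assessing your sketch on its own terms. Your (ii)$\Rightarrow$(i) direction is correct, and your high-level plan for (i)$\Rightarrow$(ii) --- reduce to ergodic systems, reduce to nilsystems via characteristic factors, use joint intersectivity to pass to a totally ergodic (connected) sub-nilsystem, then extract positivity from equidistribution --- is a legitimate route, close in spirit to \cite{bll-poly}.

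There is, however, a concrete error in your passage to the arithmetic progression. You pick $r$ with $p_i(r)\equiv 0 \pmod N$ and assert that for $n = r + Nt$ one has $p_i(n) = N\tilde q_i(t)$ with $\tilde q_i$ integer-valued; implicitly you are using $p_i(r+Nt)\equiv p_i(r)\pmod N$ for all $t$. This is true for $p_i\in\Z[x]$ but false for general integer-valued $p_i\in\Q[x]$. A minimal counterexample: $p(x)=\tbinom{x}{2}=x(x-1)/2$, $N=2$, $r=0$. Then $p(0)=0$, but $p(2t)=t(2t-1)$, which is odd whenever $t$ is odd, so $p(2t)\not\in 2\Z$ in general. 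The progression must be lengthened: one needs $\xi$ and a $D$ that is a multiple of $N$ times a common denominator of the nonconstant coefficients of the $p_i$, so that $p_i(\xi+D\Z)\subseteq N\Z$. This is exactly the content of the paper's Lemma \ref{lem: intersective}, which you should invoke (or reprove) at this step.

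Two smaller remarks. First, you reduce to the $p_i$ being ``distinct and nonconstant,'' but the nilsystem machinery requires them to be \emph{essentially distinct} (all $p_i - p_j$ nonconstant): otherwise $T^{-p_i(n)}A \cap T^{-p_j(n)}A$ can be permanently empty even in a totally ergodic system, e.g.\ $p_1(n)=n$, $p_2(n)=n+1$, $T$ an irrational rotation, $A$ a short interval. Joint intersectivity in fact forces this --- if $p_i - p_j = c \ne 0$ is constant, no $n$ can have $p_i(n)\equiv p_j(n)\equiv 0\pmod m$ once $m\nmid c$ --- but it is worth saying, since discarding duplicates alone does not give it. Second, you correctly single out the positivity claim on a connected nilsystem with arbitrary constant terms as the main technical obstacle; I will not press on that, since you flag it yourself, but note that it is not a free consequence of equidistribution alone --- the equidistributed sub-nilmanifold need not obviously meet $A^{k+1}$ in positive measure, and this is precisely where the PET/Bergelson--Leibman machinery has to earn its keep.
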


The proofs of Theorems \ref{thm: frakra} and \ref{thm: fra three} can also be easily modified
to apply to families of jointly intersective polynomials. \\

The polynomial Szemer\'{e}di theorem is in fact known for polynomials of several variables with zero constant term
(see \cite[Theorem A]{bl-poly} for the result with positive lower density and \cite[Theorem 0.7]{bm} for syndeticity).
For polynomials arising from rings of integers, the polynomial Szemer\'{e}di theorem holds
for all jointly intersective polynomials.
We now make this result precise.
Fix a number field $K$ and denote by $\O_K$ its ring of integers.
By an $\O_K$-system, we will mean a quadruple $(X, \B, \mu, T)$,
where $T$ is a measure-preserving action of $(\O_K,+)$ on a probability space $(X, \B, \mu)$.

\begin{defn}
	A family of $\O_K$-valued polynomials $\{p_1, \dots, p_k\}$ is \emph{jointly intersective} if
	for every finite index subgroup $\Lambda \subseteq (\O_K, +)$,
	there exists $\xi \in \O_K$ such that $\{p_1(\xi), \dots, p_k(\xi)\} \subseteq \Lambda$.
\end{defn}

Recall that in an abelian group $G$, a set $E \subseteq G$ is \emph{syndetic} if finitely many translates of $E$ cover $G$.
That is, $G = \bigcup_{i=1}^m{(E + g_i)}$ for some $g_1, \dots, g_m \in G$.

\begin{thm}[\cite{br}, Theorem 1.6] \label{thm: br}
	Let $K$ be a number field with ring of integers $\O_K$.
	Let $p_1, \dots, p_k \in \O_K[x]$ be jointly intersective polynomials.
	For any $\O_K$-system $(X, \B, \mu, T)$
	and any $A \in \B$, there exists $c > 0$ such that the set
	\begin{equation} \label{eq: ROI poly Sz}
		\left\{ n \in \O_K : \mu \left( A \cap T^{-p_1(n)}A \cap \dots \cap T^{-p_k(n)}A \right) > c \right\}
	\end{equation}
	is syndetic.
\end{thm}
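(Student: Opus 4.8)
The plan is to recast the statement as a multiparameter polynomial Szemer\'{e}di theorem over $\Z^d$, where $d = [K:\Q]$, and then combine the several-variable recurrence machinery of \cite{bl-poly} and \cite{bm} with the treatment of intersective families in \cite{bll-poly}. First I would fix an integral basis $\omega_1, \dots, \omega_d$ of $\O_K$, which identifies $(\O_K, +)$ with $(\Z^d, +)$. Under this identification an $\O_K$-system $(X, \B, \mu, T)$ becomes a $\Z^d$-system $(X, \B, \mu, T_1, \dots, T_d)$ with $T_j = T_{\omega_j}$, and, writing $x = \sum_{j=1}^d n_j \omega_j$, each polynomial $p_i \in \O_K[x]$ expands in the basis as $p_i(x) = \sum_{j=1}^d q_{i,j}(n_1, \dots, n_d)\,\omega_j$, where the $q_{i,j}$ are integer-valued polynomials in $d$ variables of degree $\deg p_i$ (the structure constants of $\O_K$ relative to $\{\omega_j\}$ are integers). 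Hence $T^{-p_i(x)} = \prod_{j=1}^d T_j^{-q_{i,j}(n)}$, so the set in \eqref{eq: ROI poly Sz} corresponds, under $n \leftrightarrow (n_1, \dots, n_d)$, to $\bigl\{ n \in \Z^d : \mu\bigl( A \cap \bigcap_{i=1}^k \prod_{j=1}^d T_j^{-q_{i,j}(n)} A \bigr) > c \bigr\}$. Since every finite-index subgroup of $\O_K$ contains $N\O_K$ for some $N \in \N$, joint intersectivity of $\{p_i\}$ over $\O_K$ amounts to: for every $N$ there is $\xi$ with $p_i(\xi) \in N\O_K$ for all $i$; this translates exactly into joint intersectivity of the family of $\Z^d$-valued polynomial maps $P_i := (q_{i,1}, \dots, q_{i,d}) \colon \Z^d \to \Z^d$, and syndeticity of a subset of $\O_K$ is the same as syndeticity of its image in $\Z^d$. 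So it suffices to prove: for any jointly intersective family $P_1, \dots, P_k$ of integer-valued polynomial maps $\Z^d \to \Z^d$, any $\Z^d$-system, and any $A \in \B$, there is $c > 0$ for which the associated multiple-recurrence set is syndetic.

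For this reduced statement I would follow the ergodic-theoretic scheme underlying \cite{bl-poly, bll-poly, bm}. When the $P_i$ have a common rational root, a linear change of variables reduces matters to the zero-constant-term case, which --- now in several variables, for $\Z^d$-actions --- is exactly \cite[Theorem 0.7]{bm}. In general there is no rational root, and intersectivity must be exploited directly: one runs PET induction (iterated van der Corput estimates) to bring the polynomial complexity of the family down to the linear case, where Furstenberg's structure theorem and the $\mathrm{IP}$-machinery of \cite{bm} take over, and the intersectivity hypothesis is fed in at the bottom, on the Kronecker (eigenvalue) factor. There it guarantees --- via Weyl equidistribution of the orbits of $(P_i(n))_n$ in the connected part together with solvability of the congruences $P_i \equiv 0$ in the finite quotients comprising the profinite part --- that the orbit returns to a neighbourhood of the identity along a syndetic set of $n \in \Z^d$. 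Upgrading ``positive lower density'' to ``syndetic'' across all of these steps is not formal; it is obtained, as in \cite{bm}, by working with the $\mathrm{IP}$-refinement of the polynomial Szemer\'{e}di theorem, whose $\mathrm{IP}^*$ --- hence syndetic --- return sets are preserved under the reductions above.

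The step I expect to be the main obstacle is the coherent combination of the two ingredients: the several-variable recurrence machinery of \cite{bl-poly, bm} is developed for polynomials with zero constant term, while the intersectivity argument of \cite{bll-poly} is carried out for single-variable polynomials over $\Z$. One has to verify that the derived polynomial families generated along the PET induction retain enough intersectivity for the Kronecker-factor step to close in $d$ variables, and that the bookkeeping --- in particular the auxiliary rotations $T^{-P_i(b)}$ produced when passing to cosets of finite-index subgroups --- does not destroy the syndeticity of the final return-time set. Granting these points, transferring back along $\O_K \cong \Z^d$ gives the theorem.
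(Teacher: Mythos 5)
This theorem is cited from \cite{br} (Theorem 1.6 there) rather than proved in the present paper, so I will assess your sketch on its own terms. The opening reduction is sound: identifying $\O_K$ with $\Z^d$ via an integral basis, expanding each $p_i$ into coordinate polynomials $q_{i,j}$, and translating joint intersectivity over $\O_K$ into joint intersectivity of the maps $P_i = (q_{i,1}, \dots, q_{i,d}) : \Z^d \to \Z^d$ is correct, and your observation that it suffices to check the subgroups $N\O_K$ is right, since every finite-index subgroup $\Lambda \subseteq \O_K$ contains $[\O_K : \Lambda]\O_K$. The paper itself carries out this passage to coordinates in Propositions \ref{prop: alg ind coord} and \ref{prop: ind coord}. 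From that point on, however, the proposal defers the actual content. What you flag as ``the main obstacle'' --- coherently combining the several-variable zero-constant-term recurrence theorem of \cite{bm} with the single-variable $\Z$-intersectivity argument of \cite{bll-poly}, compatibly with syndeticity --- is not a step to be filled in; it is the theorem. Neither cited reference supplies a several-variable, multi-transformation, intersective polynomial Szemer\'{e}di theorem directly, and producing one is precisely what \cite{br} does. Ending with ``granting these points'' grants the result.

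There is also a structural mismatch in how you plan to deploy intersectivity. You propose to feed it in ``at the bottom'' of PET induction and worry whether ``the derived polynomial families generated along the PET induction retain enough intersectivity.'' Van der Corput differencing does not preserve intersectivity, and the argument in \cite{bll-poly} (and in \cite{br}) is not organized that way. Rather, PET is run first, with no intersectivity hypothesis at all, to show that the characteristic factor is a nilfactor (this is \cite[Theorem 5.2]{br}, restated in the paper as Theorem \ref{thm: polynomial nil}); only afterwards, on the rational Kronecker piece of that factor, is intersectivity invoked to defeat local obstructions --- compare Lemma \ref{lem: intersective} and Proposition \ref{prop: eligible}, which implement exactly this two-stage scheme for the paper's stronger Khintchine-type results. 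Correcting this is a substantive reorganization of your plan, not a bookkeeping detail.
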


\noindent It is therefore natural to ask whether Khintchine-type recurrence theorems hold
for polynomial configurations in rings of integers.
That is, under what conditions on the polynomials $\{p_1, \dots, p_k\}$
can the constant $c$ in \eqref{eq: ROI poly Sz} be made arbitrarily close to $\mu(A)^{k+1}$?

In this paper, we provide an answer to this question in natural and important cases
by proving extensions of Theorems \ref{thm: frakra} and \ref{thm: fra three}.

\begin{restatable}{mainthm}{IndLargeInt} \label{thm: IndLargeInt}
	Let $K$ be a number field with ring of integers $\O_K$.
	Suppose $\{p_1, \dots, p_k\} \subseteq K[x]$ is a jointly intersective family
	of linearly independent $\O_K$-valued polynomials.
	Then for any measure-preserving $\O_K$-system $\left( X, \B, \mu, T \right)$, $A \in \B$, and $\eps > 0$, the set
	\begin{equation} \label{eq: IndLargeInt}
		\left\{ n \in \O_K :
		 \mu \left( A \cap T^{-p_1(n)}A \cap \cdots \cap T^{-p_k(n)}A \right) > \mu(A)^{k+1} - \eps \right\}
	\end{equation}
	is syndetic.
\end{restatable}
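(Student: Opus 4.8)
The plan is to follow the strategy pioneered in \cite{bhk} and developed in \cite{frakra2}: reduce the recurrence problem to a question about (inverse limits of) nilsystems, use joint intersectivity to remove the ``local obstructions,'' and on the resulting \emph{connected} nilsystem invoke equidistribution of polynomial orbits to see that the multiple correlation sequence behaves on average like a product of integrals. The new equidistribution input is the nilmanifold generalization of Weyl's equidistribution theorem for polynomials in several variables stated as the third result in the abstract; producing it, together with the attendant refinements of the theory of polynomial multiple ergodic averages for $\O_K$-systems, is where the real difficulty lies.

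First I would reduce to ergodic systems: using the ergodic decomposition $\mu = \int \mu_\omega \, d\omega$ one has $\mu(A) = \int \mu_\omega(A)\, d\omega$ and $\mu \left( A \cap T^{-p_1(n)}A \cap \cdots \cap T^{-p_k(n)}A \right) = \int \mu_\omega \left( A \cap T^{-p_1(n)}A \cap \cdots \cap T^{-p_k(n)}A \right) d\omega$, so the convexity of $t \mapsto t^{k+1}$ reduces the general statement to the ergodic case (once the latter is made sufficiently uniform). Next, by the theory of characteristic factors for polynomial multiple ergodic averages --- transported to the setting $\O_K \cong \Z^{[K:\Q]}$ --- together with a Bergelson--Host--Kra/Leibman-type structure theorem for correlation sequences, the sequence
\begin{align*}
	\Phi(n) := \mu \left( A \cap T^{-p_1(n)}A \cap \cdots \cap T^{-p_k(n)}A \right)
\end{align*}
decomposes as $\Phi = \psi + e$, where $\psi$ is controlled by the pro-nilfactor $Z$ of the system (a uniform limit of nilsequences arising from finite-level nilsystem factors $Z_L$) and $e$ tends to $0$ in uniform density. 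Fixing $L$ so large that $\| \psi - \Phi_{Z_L} \|_\infty$ is small, matters are reduced to a single ergodic nilsystem $Z_L$ with the $[0,1]$-valued function $g := \E{\ind_A}{Z}$, which satisfies $\int g = \mu(A)$.

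The heart of the argument is the analysis on $Z_L$. Write $Z_L = \bigsqcup_{c \in F} X_c$ for its (finitely many) connected components, with identity component $X_0$; the $\O_K$-action permutes the $X_c$ via a finite quotient $\O_K \twoheadrightarrow F$, whose kernel $\Lambda$ is the finite-index stabilizer of each component. Joint intersectivity furnishes a coset $\xi + \Lambda'$ of a finite-index subgroup $\Lambda' \subseteq \O_K$ on which every $p_i$ takes values in $\Lambda$; for $n \in \xi + \Lambda'$ each $T^{p_i(n)}$ then preserves every $X_c$. Restricting $\Phi_{Z_L}$ to $\xi + \Lambda'$ and splitting over components, each $X_c$ contributes a multiple correlation of $g|_{X_c}$ computed inside the \emph{connected} nilsystem $X_c$. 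The point is that joint intersectivity together with linear independence rules out every constant linear combination of $p_1, \dots, p_k$, i.e. $\{1, p_1, \dots, p_k\}$ is linearly independent over $\Q$; this is exactly the non-degeneracy hypothesis of the equidistribution theorem, which therefore applies and shows that the orbit $\left( T^{p_1(n)}x, \dots, T^{p_k(n)}x \right)_{n}$ well-distributes in $X_c^k$ for almost every $x$. Since integrating a (possibly discontinuous) product against Haar measure yields a continuous function of the ambient group elements, $\Phi_{Z_L}|_{\xi + \Lambda'}$ is an honest nilsequence, and its uniform Ces\`aro limit equals $\frac{1}{|F|} \sum_{c \in F} \left( \int_{X_c} g \right)^{k+1}$, which by Jensen's inequality is $\ge \left( \frac{1}{|F|} \sum_{c \in F} \int_{X_c} g \right)^{k+1} = \mu(A)^{k+1}$. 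A polynomial nilsequence with uniform Ces\`aro limit $\ge \mu(A)^{k+1}$ exceeds $\mu(A)^{k+1} - \eps/2$ on a syndetic set --- the set of points of the nilmanifold where the defining continuous function exceeds $\mu(A)^{k+1} - \eps/2$ is open and nonempty, and a polynomial orbit on a nilmanifold visits every nonempty open set syndetically --- and intersecting this with $\{ n : |e(n)| < \eps/2 \}$, whose complement has uniform density zero and so cannot spoil the syndeticity just obtained, yields a syndetic set of $n$ on which $\Phi(n) > \mu(A)^{k+1} - \eps$.

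The principal obstacle is the equidistribution theorem: passing from Weyl's classical statement to polynomial orbits in nilmanifolds, and further to $\Z^l$-actions with polynomial exponents (which is what the $\O_K$-framework forces), requires a substantial strengthening of the available equidistribution technology. A secondary, more routine point is the bookkeeping needed to coordinatize $\O_K$ as $\Z^{[K:\Q]}$ and verify that $\Q$-linear independence of $\{p_1, \dots, p_k\} \subseteq K[x]$ (together with joint intersectivity) yields the precise form of the hypothesis used in that theorem; the remaining ingredients --- characteristic factors, the structure theorem for correlation sequences, and the soft facts about syndetic and thick sets --- rest on existing machinery adapted to rings of integers.
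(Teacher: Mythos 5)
Your proposal is essentially correct and rests on the same key ingredient as the paper: equidistribution of polynomial orbits in connected nilmanifolds (the third result in the abstract), combined with joint intersectivity to pass to a syndetic progression $\xi + \Lambda'$ on which the ``local'' obstructions vanish, and Jensen's inequality. However, your organization differs from the paper's in two notable respects. First, the paper packages the equidistribution input into a stronger structural statement — Theorem~\ref{thm: rational Kronecker}, asserting that the \emph{rational Kronecker factor} $\calK_{rat}$ is characteristic for independent polynomials — and then computes the $\UClim$ of $\mu(A \cap T^{-p_1(\xi+Dn)}A \cap \cdots)$ directly on a finite rotation $\calK_r$ approximating $\calK_{rat}$; syndeticity then follows at once from Proposition~\ref{prop: Folner syndetic} applied to a sequence whose $\UClim$ exceeds $\mu(A)^{k+1} - \eps$. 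Your route instead decomposes the correlation sequence as nilsequence plus nullsequence, splits a finite-level nilfactor into connected components, invokes the nilmanifold return-time theorem to get syndeticity of the level set of the nilsequence part, and then removes the Banach-density-zero set where the nullsequence is large. This is correct — the ``syndetic minus density-zero is syndetic'' step is true (a syndetic set has positive lower Banach density along every F{\o}lner sequence, so its intersection with the F{\o}lner pieces can never be wholly contained in a density-zero set), and the nonempty open level set of a continuous function on the connected orbit closure is visited syndetically — but it borrows machinery the paper reserves for the stronger $\text{AVIP}_{0,+}^*$ refinement in Section~\ref{sec: refinements}, and it is longer than necessary for bare syndeticity. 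A second, smaller difference: your reduction to the ergodic case via convexity is only sketched (``once the latter is made sufficiently uniform''); the paper handles the non-ergodic case carefully by choosing $r$, $\xi$, $D$ to work simultaneously for a $\rho$-large set of ergodic components before integrating and applying Jensen. Both of these are presentational rather than substantive gaps, but you should be aware that the published proof is cleaner precisely because it isolates the $\calK_{rat}$-characteristic statement and sidesteps the nilsequence decomposition at this stage.
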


\begin{restatable}{mainthm}{MultLargeInt} \label{thm: MultLargeInt}
	Let $K$ be a number field with ring of integers $\O_K$.
	Let $p(x) \in K[x]$ be an $\O_K$-valued intersective polynomial.
	Let $r,s \in \O_K$ be distinct and nonzero.
	Then for any ergodic measure-preserving $\O_K$-system $\left( X, \B, \mu, T \right)$,
	$A \in \B$, and $\eps > 0$, the set
	\begin{equation} \label{eq: MultLargeInt triple}
		\left\{ n \in \O_K :
		 \mu \left( A \cap T^{-rp(n)}A \cap T^{-sp(n)}A \right) > \mu(A)^3 - \eps \right\}
	\end{equation}
	is syndetic.
	
	Moreover, if $\frac{s}{r} \in \Q$, then
	\begin{equation} \label{eq: MultLargeInt quadruple}
		\left\{ n \in \O_K :
		 \mu \left( A \cap T^{-rp(n)}A \cap T^{-sp(n)}A \cap T^{-(r+s)p(n)}A \right) > \mu(A)^4 - \eps \right\}
	\end{equation}
	is syndetic.
\end{restatable}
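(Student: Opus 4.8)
The plan is to adapt, over $\O_K$, the proofs of Bergelson--Host--Kra \cite{bhk} for the linear configurations $\{0,n,2n\}$ and $\{0,n,2n,3n\}$ and of Frantzikinakis \cite{fra} for their polynomial dilates, within the abelian-group framework of \cite{abb}, the genuinely new input being the multivariate equidistribution theorem for polynomial orbits in nilmanifolds stated above (item~3). By the $\O_K$-version of Furstenberg's correspondence principle --- which we assume --- it is enough to prove the ergodic recurrence statements \eqref{eq: MultLargeInt triple} and \eqref{eq: MultLargeInt quadruple}. First I would fix a $\Z$-basis of $\O_K$, identifying $(\O_K,+)\cong(\Z^d,+)$ with $d=[K:\Q]$: the action becomes an ergodic $\Z^d$-action, multiplication by each of $r,s,r+s$ becomes an injective linear endomorphism of $\Z^d$ with finite-index image, and $n\mapsto p(n)$ becomes a map $\Z^d\to\Z^d$ whose coordinates are integer-valued elements of $\Q[x_1,\dots,x_d]$ with a common zero modulo every finite-index subgroup (intersectivity).

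For the three-term statement I would argue in four steps. (a) \emph{Reduction to a nilsystem.} A characteristic factor for $\frac{1}{|\Phi_N|}\sum_{n\in\Phi_N}T^{rp(n)}f_1\cdot T^{sp(n)}f_2$ (along a F{\o}lner sequence $\Phi_N$ in $\Z^d$) is a Host--Kra nilfactor $\calZ_s$ of bounded step --- one may take $s=2$, as in the linear case of \cite{bhk} --- by the polynomial Host--Kra structure theory; this is where existing tools for polynomial multiple ergodic averages must be sharpened and where the passage from $\Z$- to $\Z^d$-systems has to be checked. Together with the usual device that also lets one replace the un-shifted copy of $\ind_A$ by its conditional expectation onto $\calZ_2$, this reduces matters to the case in which $X$ is an inverse limit of $2$-step $\Z^d$-nilsystems with $A$ being $\calZ_2$-measurable, and then, by approximation, to a single nilsystem $X=G/\Gamma$. (b) \emph{Stripping the finite part.} If $X^\circ$ is the identity component, then $X/X^\circ$ is a rotation on a finite abelian group and the $X/X^\circ$-coordinates of $x,T^{rp(n)}x,T^{sp(n)}x$ depend only on $p(n)$ modulo a fixed finite-index subgroup $\Lambda_0\le\Z^d$; intersectivity of $p$ then furnishes $\xi\in\Z^d$ and a finite-index subgroup $\Lambda_1$ such that $rp(n),sp(n),(r+s)p(n)\in\Lambda_0$ for all $n$ in the syndetic set $S=\xi+\Lambda_1$, so that for $n\in S$ the quantity $\mu(A\cap T^{-rp(n)}A\cap T^{-sp(n)}A)$ is governed entirely by the connected nilsystem $X^\circ$ with its induced (finite-index subgroup) action. (c) \emph{Equidistribution and positivity.} Reparametrising $S$ as $\Z^d$ and applying the equidistribution theorem stated above, I would identify the limiting distribution of $\bigl(x,T^{rp(n)}x,T^{sp(n)}x\bigr)$ in $(X^\circ)^3$ --- its orbit closure is the sub-nilmanifold cut out by the $\Q$-linear relations among $\{1,rp,sp\}$, which for $r,s,r+s$ nonzero and $r\ne s$ are the ``expected'' ones --- and then perform the $2$-step (Conze--Lesigne-type) positivity computation of \cite{bhk} in this several-variable $\Z^d$-setting to obtain $\UClim_{n\in S}\mu(A\cap T^{-rp(n)}A\cap T^{-sp(n)}A)\ge\mu(A)^3$. (d) \emph{Syndeticity.} After (a)--(c) the function $n\mapsto\mu(A\cap T^{-rp(n)}A\cap T^{-sp(n)}A)$ is, restricted to $S$, a polynomial nilsequence whose orbit closure is minimal and uniquely ergodic (Leibman); as its $\UClim$ on $S$ is $\ge\mu(A)^3$, the subset of $S$ where it exceeds $\mu(A)^3-\eps$ is syndetic relative to $S$, hence syndetic in $\O_K$.

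The four-term statement uses the hypothesis $s/r\in\Q$: writing $bs=ar$ with $a,b\in\Z$, the offsets $\{0,r,s,r+s\}$ form a \emph{rational} two-dimensional parallelepiped, and along a syndetic set of $n$ on which $b\mid p(n)$ (again by intersectivity) the scaling $m=\tfrac1b\,rp(n)$ turns the configuration into $\{0,bm,am,(a+b)m\}$ --- essentially Frantzikinakis's Theorem~C over $\O_K$. The rational-cube structure is precisely what allows the one-parameter average to be compared with the Host--Kra parallelepiped ($U^3$) average. I would then run the scheme of the previous paragraph with a $3$-step nilfactor $\calZ_3$ in place of $\calZ_2$: reduce to a $3$-step $\Z^d$-nilsystem, strip the finite part via intersectivity, and on $X^\circ$ use the equidistribution theorem to reduce the one-parameter average to the cube average, for which the bound $\ge\mu(A)^4$ is the Gowers--Host--Kra cube inequality $\|\ind_A\|_{U^2(X^\circ)}^4\ge\mu(A)^4$; syndeticity follows as in step (d).

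The hard part will be step (c). The polynomials $rp,sp,(r+s)p$ are linearly \emph{dependent}, so the equidistribution theorem does not apply on the nose; one must pass to a product nilsystem $(X^\circ)^k$ and a reparametrisation in which the constant $1$ together with the polynomials governing the relevant coordinates become linearly independent, control the meager, measure-zero set of exceptional base points the theorem permits so that it does not affect the integral over $X^\circ$, and then carry out the $2$-step (resp.\ $3$-step) nilpotent positivity computation of \cite{bhk,fra} in the genuinely multidimensional setting --- ruling out resonances that could arise jointly from the several variables of $p$ and from the rank of $\O_K$. Handling a several-variable polynomial simultaneously with a several-generator group action is exactly what the new equidistribution theorem is designed for, and deploying it correctly here is the technical heart of the argument.
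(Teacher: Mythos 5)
Your skeleton (bounded-step characteristic nilfactors, an intersectivity-based reduction stripping the rational/finite part, equidistribution of polynomial orbits on the connected component, then a BHK-style positivity computation) is the same as the paper's, but step (c) contains a genuine error that the rest of the plan inherits. You claim $\UClim_{n\in S}\mu\left(A\cap T^{-rp(n)}A\cap T^{-sp(n)}A\right)\ge\mu(A)^3$ for the congruence progression $S=\xi+\Lambda_1$. This is false even in the simplest case $K=\Q$, $p(n)=n$, $S=\Z$, $T$ an irrational rotation of $\T$ with $r=1,s=2$: the unweighted limit is $\int_{\T^2}\tilde f(z)\tilde f(z+t)\tilde f(z+2t)\,dz\,dt=\sum_k\hat{f}(k)^2\hat{f}(-2k)$ with $\tilde f=\ind_A$, and choosing $A$ with suitable Fourier phases (e.g.\ $\ind_A\approx\delta+a\cos(2\pi x+\phi_1)+b\cos(4\pi x+\phi_2)$ with $2\phi_1-\phi_2=\pi$) makes this strictly less than $\mu(A)^3$. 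The whole point of the popular-difference phenomenon is that the average over \emph{all} $n$ need not be large; one must first localize $n$ so that the Kronecker data $(\alpha_{rp(n)},\alpha_{sp(n)})$ is near $0$. The paper does this by weighting the average with a continuous $\eta\ge 0$ concentrated near $0$ (Corollary \ref{cor: twisted limit formulae}), proving the \emph{weighted} limit is $\ge\mu(A)^3$ (resp.\ $\ge\mu(A)^4$) and only then extracting a syndetic level set; your step (d) would be fine if the premise held, but it does not. The same weighting is also what repairs your step (b): after restricting to $S$, the measure splits over the finitely many ergodic components (cosets of $X^\circ$), and to recombine them via Jensen you need all components to be large at the \emph{same} $n$; syndetic sets of good $n$ for different components need not meet, whereas the weighted averages simply add (this is exactly how the paper passes from Proposition \ref{prop: MultLargeInt tot erg} to the ergodic case). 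Relatedly, for four terms the bound is not the soft inequality $\norm{U^2}{\ind_A}^4\ge\mu(A)^4$: the configuration $\{0,a_1m,a_2m,(a_1+a_2)m\}$ is a one-parameter average, not a cube average, and the paper needs the explicit limit formula on the $2$-step nilfactor realized as $Z\times_\sigma H$ (\cite{abb}, Theorem 7.1), localization on $Z$, and a chain of changes of variables plus Jensen on $Z\times H^3$.

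A second, smaller gap is the mechanism you defer to the "technical heart." The paper never equidistributes the linearly dependent tuple $(rp,sp,(r+s)p)$ directly; instead it proves (Theorem \ref{thm: nilfactor}, via Proposition \ref{prop: polynomial orbit}) that in a totally ergodic system the polynomial average \emph{equals} the linear one, the key input being that a nonconstant $\O_K$-valued polynomial has algebraically independent coordinate polynomials (Proposition \ref{prop: alg ind coord}), so substituting it into a polynomial orbit with connected closure does not shrink the closure; after that, all positivity is done for $p(n)=n$ using the formulas of \cite{abb}. Your "reparametrisation ruling out resonances" would have to reproduce exactly this, and without the algebraic-independence input it is not yet an argument. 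Minor points: the statement is already ergodic-theoretic, so no correspondence principle is needed; $\calZ_1$ (resp.\ $\calZ_2$) suffices for the three- (resp.\ four-) term averages, and it is on these minimal factors that the limit formulas used for positivity exist; and in the four-term reduction you do not need intersectivity to force $b\mid p(n)$ --- writing $r=a_1k$, $s=a_2k$ with $a_1,a_2$ coprime integers, B\'{e}zout shows $q=kp$ is automatically $\O_K$-valued, so the rescaling is unconditional.
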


Note that for a pair of polynomials $\{p,q\} \subseteq K[x] \setminus \{0\}$,
either $p$ and $q$ are linearly independent over $K$ or $q = cp$ for some $c \in K$.
Thus, we have the following immediate consequence of
Theorems \ref{thm: IndLargeInt} and \ref{thm: MultLargeInt} together:

\begin{cor}
	Let $K$ be a number field with ring of integers $\O_K$.
	Suppose $\{p, q\} \subseteq K[x]$ is a jointly intersective pair of $\O_K$-valued polynomials.
	Then for any ergodic measure-preserving $\O_K$-system $\left( X, \B, \mu, T \right)$,
	any $A \in \B$, and any $\eps > 0$, the set
	\begin{equation*}
		\left\{ n \in \O_K :
		 \mu \left( A \cap T^{-p(n)}A \cap T^{-q(n)}A \right) > \mu(A)^3 - \eps \right\}
	\end{equation*}
	is syndetic.
\end{cor}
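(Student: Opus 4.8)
The plan is to deduce the corollary from Theorems \ref{thm: IndLargeInt} and \ref{thm: MultLargeInt} using the dichotomy recorded above. The degenerate cases are handled first and trivially: if $p=q=0$, then the set in the statement is all of $\O_K$, since $\mu(A)\ge\mu(A)^3>\mu(A)^3-\eps$; and if $p=q$, or if exactly one of $p,q$ vanishes, then $A\cap T^{-p(n)}A\cap T^{-q(n)}A=A\cap T^{-f(n)}A$, where $f$ denotes the nonzero polynomial among $p,q$. Since $\{p,q\}$ is jointly intersective, $f$ is intersective, so the case $k=1$ of Theorem \ref{thm: IndLargeInt} (a Khintchine-type bound along a single intersective polynomial) makes the set $\{n\in\O_K:\mu(A\cap T^{-f(n)}A)>\mu(A)^2-\eps\}$ syndetic, which suffices because $\mu(A)^3-\eps\le\mu(A)^2-\eps$ and a superset of a syndetic set is syndetic. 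Hence we may assume $p$ and $q$ are nonzero and distinct.

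Next I would split according to linear independence over $K$. If $p$ and $q$ are linearly independent over $K$, then $\{p,q\}$ is a jointly intersective family of linearly independent $\O_K$-valued polynomials, and the desired conclusion is exactly the case $k=2$ of Theorem \ref{thm: IndLargeInt}; note that no ergodicity hypothesis is needed in this case. If $p$ and $q$ are linearly dependent over $K$, then, being nonzero, $q=cp$ for some $c\in K\setminus\{0,1\}$, and the plan is to bring the configuration into the shape demanded by Theorem \ref{thm: MultLargeInt}. Concretely, I would produce an $\O_K$-valued polynomial $p_0$ together with $r,s\in\O_K$ such that $p=rp_0$ and $q=sp_0$ (which forces $s=cr$); these $r,s$ are automatically nonzero and distinct since $p,q$ are nonzero and $p\ne q$. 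Granting such a factorization, $p_0$ is itself intersective: for any finite-index subgroup $\Lambda\le(\O_K,+)$ the subgroup $r\Lambda$ again has finite index (as $r\ne 0$), so joint intersectivity of $\{p,q\}$ yields $\xi$ with $p(\xi)\in r\Lambda$, say $p(\xi)=r\lambda$ with $\lambda\in\Lambda$; then $rp_0(\xi)=p(\xi)=r\lambda$ forces $p_0(\xi)=\lambda\in\Lambda$. The first conclusion of Theorem \ref{thm: MultLargeInt}, applied to the ergodic system $(X,\B,\mu,T)$, the set $A$, the intersective polynomial $p_0$, and the multipliers $r,s$, then shows that $\{n\in\O_K:\mu(A\cap T^{-rp_0(n)}A\cap T^{-sp_0(n)}A)>\mu(A)^3-\eps\}$ is syndetic; since $rp_0=p$ and $sp_0=q$, this is precisely the set in the statement, and we are done.

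The one step that is not purely formal is exhibiting the common base $p_0$ with $p=rp_0$, $q=sp_0$ and $r,s\in\O_K$ — equivalently, extracting a common polynomial factor of $p$ and $q$ whose two cofactors lie in $\O_K$. This is a statement purely about the arithmetic of $\O_K$: one must match the denominators of the scalar $c=q/p$ against the $\O_K$-ideal generated by the values of $p$, and I expect this bookkeeping to be the main (though still routine) point of the argument. When $\O_K$ is a principal ideal domain — in particular for $K=\Q$, the setting of the theorems of Frantzikinakis--Kra and of Frantzikinakis that this corollary extends — one may simply take $p_0$ to be a primitive associate of $p$, and the factorization is immediate; for a general number field one argues prime by prime, only finitely many primes of $\O_K$ being relevant.
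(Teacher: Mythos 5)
Your overall architecture (split into the linearly independent case, handled by Theorem \ref{thm: IndLargeInt}, and the dependent case $q = cp$, handled by Theorem \ref{thm: MultLargeInt}) is exactly the dichotomy the paper uses, and your treatment of the degenerate and independent cases is fine. The gap is in the dependent case: the factorization you rely on --- $p = rp_0$, $q = sp_0$ with $r, s \in \O_K$ and $p_0$ an $\O_K$-valued polynomial --- does not exist in general, and the ``prime by prime'' bookkeeping you propose cannot produce it. If $p = rp_0$ with $p_0$ $\O_K$-valued, then $(r)$ must divide the ideal $I_p$ generated by the values of $p$, while $s = cr \in \O_K$ forces $(r)$ to be divisible by the denominator ideal of $c$; so you need a \emph{principal} ideal sitting between these two ideals, and when the class group of $K$ is nontrivial there may be none. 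Concretely, take $K = \Q(\sqrt{-5})$, $P = (2, 1+\sqrt{-5})$ (non-principal), $p(x) = 2x + (1+\sqrt{-5})x^2$ and $q(x) = (1-\sqrt{-5})x + 3x^2 = \frac{1-\sqrt{-5}}{2}\,p(x)$. Both have coefficients in $\O_K$ and vanish at $0$, so $\{p,q\}$ is jointly intersective and linearly dependent with $c = \frac{1-\sqrt{-5}}{2} \notin \O_K$. Every value of $p$ lies in $P$, and $p(1) = 3+\sqrt{-5}$, $p(-1) = -1+\sqrt{-5}$ already generate $P$, so $I_p = P$; hence any $r$ with $p/r$ $\O_K$-valued satisfies $(r) \mid P$, forcing $r$ to be a unit, and then $s = cr \notin \O_K$. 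So your reduction cannot be completed as written (in a PID your gcd argument is fine, which is why the $K = \Q$ intuition is misleading here, and why ``only finitely many primes are relevant'' does not help: the obstruction is principality, not finiteness).

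The repair is to factor along a sub-progression rather than globally: write $c = s/r$ with \emph{any} $r, s \in \O_K \setminus \{0\}$ (distinct since $c \neq 1$), use joint intersectivity via Lemma \ref{lem: intersective} to find $\xi \in \O_K$ and $D \neq 0$ with $p(\xi + D\O_K) \subseteq r\O_K$, and set $p_0(x) := r^{-1}p(\xi + Dx)$, which is $\O_K$-valued and satisfies $rp_0(n) = p(\xi + Dn)$ and $sp_0(n) = q(\xi + Dn)$. One must then either verify that $p_0$ is intersective (so that Theorem \ref{thm: MultLargeInt} applies and yields syndeticity along the syndetic progression $\xi + D\O_K$, hence in $\O_K$), or, closer to the paper's own route, note that the argument proving Theorem \ref{thm: MultLargeInt} in Section \ref{sec: MultLargeInt} establishes precisely this kind of relative syndeticity along $\xi + D\O_K$, the totally ergodic input (Proposition \ref{prop: MultLargeInt tot erg}) requiring no intersectivity at all. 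For comparison, the paper derives the corollary with no written proof, as an immediate consequence of Theorems \ref{thm: IndLargeInt} and \ref{thm: MultLargeInt} via the same dichotomy you use; your instinct that the dependent case hides an arithmetic step is correct, but that step is resolved by a change of variables, not by the global factorization you propose.
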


Theorem \ref{thm: IndLargeInt} shows that for independent families of \emph{any size},
we can achieve Khintchine-type results.
In contrast, Theorem \ref{thm: MultLargeInt} only demonstrates a Khintchine-type result
for configurations of length three or four and requires ergodicity of the system
(for counterexamples in the non-ergodic case, see \cite[Section 11.1]{abb}).
Moreover, for length four, we have made additional assumptions, which we discuss below.
To complete the picture, we now address what happens for patterns of length five and longer.
For concreteness, let us consider general polynomial families of the form $\{a_1p, \dots, a_kp\}$,
where $a_i \in \O_K$ and $p(x) \in K[x]$ is $\O_K$-valued.
In the simplest case when $K = \Q$ and $a_i = i$, a combinatorial construction of Ruzsa
rules out Khintchine-type results when $k \ge 4$ (see item 3 of Theorem \ref{thm: BHK} above).
In \cite[Corollary 12.14]{abb}, this was generalized to any number field $K$ and any integers $a_i \in \Z$ for $k \ge 4$.
Furthermore, \cite[Proposition 12.13]{abb} gives a combinatorial criterion for checking the case
$k = 4$ for any coefficients $a_i \in \O_K$.
We do not know how to prove the requisite combinatorial result, but we believe that
Khintchine-type results will fail for any non-trivial family $\{a_1p, \dots, a_kp\}$ with $k \ge 4$.

Now we turn to the other conditions imposed for the patterns of length four appearing in Theorem \ref{thm: MultLargeInt}.
The strategy of proof in Theorem \ref{thm: MultLargeInt} is to reduce to the linear case $p(n) = n$
and then apply knowledge about linear patterns.
General Khintchine-type results for linear patterns appear in \cite{abb} (subsequently improved in \cite{abs} and \cite{a}),
where a similar distinction is made between patterns of length three and of length four:

\begin{thm}[\cite{abb}, Theorems 1.10 and 1.11] \label{thm: ABB}
	Let $(G,+)$ be a countable discrete abelian group.
	Let $\left( X, \B, \mu, (T_g)_{g \in G} \right)$ be an ergodic measure-preserving $G$-system.
	Let $A \in \B$ and $\eps > 0$.
	
	\begin{enumerate}[1.]
		\item	Suppose $\varphi, \psi : G \to G$ are homomorphisms such that the subgroups
			$\varphi(G)$, $\psi(G)$, and $(\psi-\varphi)(G)$ have finite index in $G$.
			Then
			\begin{equation*}
				\left\{ g \in G : \mu \left( A \cap T_{\varphi(g)}^{-1}A \cap T_{\psi(g)}^{-1}A \right)
				 > \mu(A)^3 - \eps \right\}
			\end{equation*}
			is syndetic in $G$.
		\item	Suppose $r, s \in \Z$ are distinct and nonzero such that the subgroups
			$rG$, $sG$, $(r+s)G$, and $(s-r)G$ have finite index in $G$.
			Then
			\begin{equation*}
				\left\{ g \in G : \mu \left( A \cap T_{rg}^{-1}A \cap T_{sg}^{-1}A \cap T_{(r+s)g}^{-1}A \right)
				 > \mu(A)^4 - \eps \right\}
			\end{equation*}
			is syndetic in $G$.
	\end{enumerate}
\end{thm}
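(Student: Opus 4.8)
Since this theorem is quoted from \cite{abb}, I will only sketch the strategy I would follow, which adapts \cite{bhk} to a general countable abelian group $G$. The quantities to control are the correlation sequences $c(g) = \mu\bigl(A \cap T_{\varphi(g)}^{-1}A \cap T_{\psi(g)}^{-1}A\bigr)$ in the first part and $c(g) = \mu\bigl(A \cap T_{rg}^{-1}A \cap T_{sg}^{-1}A \cap T_{(r+s)g}^{-1}A\bigr)$ in the second. The plan begins by passing to a characteristic factor: the Kronecker factor $\calZ_1(X)$ is characteristic for the length-three expression, and the second Host--Kra factor $\calZ_2(X)$ for the length-four one. Concretely one writes $\ind_A = \E{\ind_A}{\calZ_j} + \bigl(\ind_A - \E{\ind_A}{\calZ_j}\bigr)$, expands $c(g)$ multilinearly in the copies of $\ind_A$, and applies a van der Corput estimate, after which every term containing a factor $\ind_A - \E{\ind_A}{\calZ_j}$ is negligible in uniform Ces\`aro density. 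The finite-index hypotheses on $\varphi(G), \psi(G), (\psi-\varphi)(G)$ (resp.\ on $rG, sG, (r+s)G, (s-r)G$) are used precisely here: they force the ``difference'' sub-actions produced by the van der Corput step to split into finitely many ergodic components, so that one may pass to a finite-index subgroup of $G$ (harmless for syndeticity) and work with ergodic pieces.

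For the length-three case one reduces to a model in which $X = Z$ is a compact abelian group, $G$ acts by $x \mapsto x + \tau(g)$ for a homomorphism $\tau : G \to Z$ with dense image, and $\phi := \E{\ind_A}{\calZ_1}$ is a $[0,1]$-valued function on $Z$ with $\int_Z \phi\, dm_Z = \delta := \mu(A)$. Writing $\alpha = \tau\circ\varphi$ and $\beta = \tau\circ\psi$, the main term of the correlation is $F\bigl(\alpha(g),\beta(g)\bigr)$ with
\begin{align*}
	F(u,v) = \int_Z \phi(x)\,\phi(x+u)\,\phi(x+v)\, dm_Z(x),
\end{align*}
a continuous function on $Z^2$. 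The convexity bound $F(0,0) = \int_Z \phi^3 \geq \delta^3$ produces, by continuity, a nonempty open neighborhood of $(0,0)$ on which $F > \delta^3 - \eps$; and since the image of $g \mapsto (\alpha(g),\beta(g))$ is dense in a closed subgroup $H \leq Z^2$ on which $G$ acts minimally by translations, the set of $g$ with $(\alpha(g),\beta(g))$ in that neighborhood is syndetic. (The finite-index hypotheses re-enter in pinning down $H$ --- each of its three projections to $Z$, along $u$, $v$, and $v-u$, has finite index --- and are essential in the degenerate case $\psi = 2\varphi$, where the pattern is genuinely $3$-AP-like.) The technically delicate point is transferring this conclusion from the model back to the full system, i.e.\ from $F(\alpha(g),\beta(g))$ to $c(g)$ itself; this is where the restriction to configurations of length at most four becomes essential: a positivity estimate such as $\int_Z \phi^3 \ge \delta^3$ makes the transfer possible for lengths three and four, but has no analogue for longer patterns.

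For the length-four case the scheme is the same with $\calZ_1$ replaced by $\calZ_2$, the model systems now being isometric extensions of a Kronecker system by a compact abelian group governed by a Conze--Lesigne cocycle, and the correlation being a $2$-step nilsequence up to a density-negligible error. Two further steps intervene, and these are where the genuine difficulty of the theorem lies. First I would reduce to the case in which the extending datum is \emph{connected}, absorbing the profinite part by a further finite-index reduction; this matters because it is on connected systems that the equidistribution theory for orbits in nilmanifolds (the linear case of the Weyl-type statement quoted as item~3 in the abstract) is clean enough both to locate the good parameters and to control the error term. Second, one establishes the positivity estimate $\ge \delta^4$ on these $2$-step systems by unwinding the Conze--Lesigne identity, and it is exactly here that the fourth hypothesis, on $(s-r)G$, is used, since it is what allows the cocycle computation to close. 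Relative to \cite{bhk}, the new obstacle throughout is that over a general abelian group one cannot invoke a classification of nilsystems and must argue directly with the group-extension structure of $\calZ_2(X)$, working with F{\o}lner averages in place of Ces\`aro averages; and it is precisely the availability of these positivity estimates for lengths three and four, and their failure beyond (cf.\ item~3 of Theorem~\ref{thm: BHK}, Ruzsa's construction), that confines the conclusion to these lengths.
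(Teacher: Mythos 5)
Theorem~\ref{thm: ABB} is quoted in this paper from \cite{abb} (Theorems 1.10 and 1.11) and is \emph{not} proved here; it is used as a black box (via Theorem~\ref{thm: nilfactor}) in the proof of Theorem~\ref{thm: MultLargeInt}. So there is no internal proof to compare against, and your decision to give only a strategy sketch was the right call.

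Your sketch is broadly faithful to the approach of \cite{abb} (and of \cite{bhk} before it): reduce to a characteristic factor ($\calZ_1$ for the triple, $\calZ_2$ for the quadruple), pass to finite-index sub-actions to absorb the profinite part and obtain connectedness, establish a limit formula over the factor, and prove a positivity estimate by changes of variables and Jensen. The finite-index hypotheses on $\varphi(G)$, $\psi(G)$, $(\psi-\varphi)(G)$ (resp.\ $rG$, $sG$, $(r+s)G$, $(s-r)G$) are indeed used for ergodicity of the relevant sub-actions; and the role of $(s-r)G$ in closing the cocycle computation in the length-four case is real.

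The one step you flag as ``technically delicate'' --- passing from the factor correlation $F(\alpha(g),\beta(g))$ back to $c(g)$ --- deserves to be named explicitly, since your framing slightly misstates the logic. One does not first produce a syndetic set of $g$ with $F(\alpha(g),\beta(g)) > \delta^3-\eps$ and then ``transfer'' to $c(g)$: that would require pointwise control over a nullsequence, which is unavailable. The actual mechanism is a \emph{twisted limit formula}: integrate $c(g)$ against a continuous approximate identity $\eta$ concentrated near the identity of the closed orbit subgroup, use the characteristic factor result to equate
\begin{align*}
\UClim_{g}\,\eta\bigl(\alpha(g),\beta(g)\bigr)\,c(g)
\;=\;
\int_{H}\eta(u,v)\,F(u,v)\,d\nu(u,v),
\end{align*}
and then exploit continuity of $F$ together with $F(0,0)\ge\delta^3$ to make the right side exceed $\delta^3-\eps$; syndeticity follows because a nonnegative weight whose F{\o}lner averages stay bounded away from zero must be supported on a syndetic set. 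This is exactly the device the present paper reproduces in Corollary~\ref{cor: twisted limit formulae} and Proposition~\ref{prop: MultLargeInt tot erg}. Relatedly, your phrase that the positivity estimate ``has no analogue for longer patterns'' conflates two things: $\int\phi^{k+1}\ge\delta^{k+1}$ on the factor is always available by Jensen, but for $k\ge 4$ the characteristic factor ($\calZ_{k-1}$) no longer has a structure permitting a closed-form limit formula whose integrand admits such a lower bound, and Ruzsa's construction (Theorem~\ref{thm: BHK}(3)) shows the conclusion genuinely fails there; you do say this correctly at the very end, so it is a matter of tightening the earlier sentence.
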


\noindent The second half of Theorem \ref{thm: ABB} was also proved independently in \cite[Theorem 1.3]{shalom}.
By absorbing a constant into the polynomial $p$ in Theorem \ref{thm: MultLargeInt},
imposing the condition $\frac{s}{r} \in \Q$ is equivalent to assuming $r, s \in \Z$,
so our assumptions allow us to apply Theorem \ref{thm: ABB} in the linear case $p(n) = n$.

In \cite{MIT}, it was shown that, for a related finitary problem, there are automorphisms $\varphi$ and $\psi$
such that $\varphi + \psi$ and $\psi - \varphi$ are also automorphisms but for which a Khintchine-type result fails:

\begin{thm}[\cite{MIT}, Theorem 1.3] \label{thm: no pop diff}
	There is an absolute constant $c > 0$ such that the following holds.
	If $\alpha \in (0, c)$, then for all sufficiently large $n$ (depending on $\alpha$),
	there is a set $A \subseteq (\F_5^n)^2$ with $|A| \ge \alpha \cdot 5^{2n}$ such that
	\begin{equation*}
		\left| A \cap A - (a,b) \cap A - (b, -a) \cap A - (a+b, b-a) \right| \le (1-c)\alpha^4 \cdot 5^{2n}
	\end{equation*}
	for all $(a, b) \in (\F_5^n)^2 \setminus \{(0,0)\}$.
\end{thm}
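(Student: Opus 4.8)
This is a counterexample, so the plan is to construct, for each $\alpha \in (0,c)$ and each sufficiently large $n$, a single set $A \subseteq (\F_5^n)^2$ whose square-pattern count is --- uniformly over every nonzero difference $(a,b)$ --- a constant factor below the ``random'' value $\alpha^4 5^{2n}$. The first move is to linearize the geometry. The rotation $\tau\colon (a,b)\mapsto(b,-a)$ satisfies $\tau^2=-\mathrm{id}$, and since $-1\equiv 2^2 \pmod 5$ it diagonalizes with eigenvalues $\pm 2$; splitting $(\F_5^n)^2=V_+\oplus V_-$ into eigenspaces (each $\cong\F_5^n$), the four points $y$, $y-(a,b)$, $y-(b,-a)$, $y-(a+b,b-a)$ become, in the two eigen-coordinates, a pair of four-term arithmetic progressions whose term-orderings differ by the double transposition $(0\,1)(2\,3)\in S_4$ --- a permutation realized by no affine self-map of $\{0,1,2,3\}$, only by a cubic one. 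This ``cubic mismatch'' between the two coordinates, together with the fact that a $4$-AP in $\F_5$ exhausts all but one residue, is what I expect to drive the construction and to force the specific prime $5$.

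For the construction I would take $A=\{g\in(\F_5^n)^2 : P(g)\in S\}$ for a carefully chosen tuple of polynomials $P\colon(\F_5^n)^2\to\F_5^m$ and target set $S\subseteq\F_5^m$. For $n$ large one can take $P$ non-degenerate (high rank) so that a Weyl-type equidistribution estimate applies: for all but a negligible set of differences $d=(a,b)$, the four values $P(v_0),P(v_1),P(v_2),P(v_3)$ are jointly equidistributed in $(\F_5^m)^4$ subject only to the universal relations forced by the affine dependency $v_0-v_1-v_2+v_3=0$. For quadratic $P$ that is the single relation $P(v_0)-P(v_1)-P(v_2)+P(v_3)=B_P(d,\tau d)$, with $B_P$ the associated (vector-valued) bilinear form; one arranges the components of $P$ so that the quadratic forms $d\mapsto B_P(d,\tau d)$ have no common zero except $d=0$, and then the pattern count for such $d$ reduces to counting quadruples in $S$ with a prescribed nonzero ``alternating sum''. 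The remaining task is to choose $S$ so that for every nonzero value of $B_P(d,\tau d)$ this quadruple count is a constant factor below $|S|^4/5^m$, which yields the bound $\le(1-c)\alpha^4 5^{2n}$; the density requirement $|A|\ge\alpha 5^{2n}$ comes for free since $|A|\approx 5^{2n}|S|/5^m$.

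The hard part will be getting the bound uniformly over all nonzero $(a,b)$, and in particular over the degenerate differences $d$ on which the quadratic obstruction $B_P(d,\tau d)$ vanishes: there the equidistribution estimate gives a genuine parallelogram relation among the $P(v_j)$, and a purely quadratic level set then has too many patterns (the relevant additive-energy quantity is at least its average). I expect handling these differences to be the real content --- either one passes to a higher-degree form $P$, reflecting the cubic mismatch isolated in the first step, or one splices in an auxiliary construction exploiting the extremal behaviour of $4$-APs over $\F_5$ and checks compatibility; in any case this is the crux. A secondary, more technical issue is the choice of $S$ itself: one needs a set for which the weighted sum $\sum_{\xi\neq 0}|\widehat{1_S}(\xi)|^4\omega^{-\xi\cdot c}$ (with $\omega$ a primitive fifth root of unity) stays uniformly negative and bounded away from $0$ as $c$ ranges over the values of the obstruction, and this --- together with $c$ being bounded in the theorem --- is where the arithmetic of $5$ should be genuinely used.
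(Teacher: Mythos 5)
First, a bookkeeping point: this theorem is not proved in the paper you are reading. It is quoted verbatim from Berger, Sah, Sawhney, and Tidor \cite{MIT} purely as motivation for the eigenvalue conjecture that follows it, so there is no internal proof to compare against; what follows is an assessment of your plan on its own terms and against what \cite{MIT} actually does.

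Your opening reduction is correct and captures the structural point. Over $\F_5$ one has $-1 = 2^2$, so $\tau\colon(a,b)\mapsto(b,-a)$ diagonalizes with eigenvalues $\pm 2$; in the $+2$-eigencoordinate the four pattern points carry coefficients $(0,-1,-2,-3)$ (a $4$-AP in time order), while in the $-2$-eigencoordinate they carry $(0,-1,2,1)$, which as a set $\{0,1,2,4\}$ is the $4$-AP $(4,0,1,2)$ traversed in the order given by $(0\,1)(2\,3)$. That permutation is not realized by any affine or quadratic map of $\F_5$, only by a cubic one, and this is exactly the ``eigenvalues of $M_1M_2^{-1}$ are negatives'' obstruction isolated in \cite{MIT}. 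So the geometric setup is right and genuinely forces $p=5$.

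However, as you yourself say, what you have written is a roadmap, not a proof, and the gap you flag is not a technicality — it is the theorem. Take the proposed level set $A=\{g : P(g)\in S\}$ with $P$ a vector-valued quadratic. On the roughly $5^{2n-1}$ nonzero differences $d$ where the bilinear obstruction $B_P(d,\tau d)$ vanishes, equidistribution gives you the genuine parallelogram relation $P(v_0)-P(v_1)-P(v_2)+P(v_3)=0$, and then the pattern count is controlled by the additive energy $E(S)=\#\{(s_0,s_1,s_2,s_3)\in S^4 : s_0+s_3=s_1+s_2\}$, which by Cauchy--Schwarz satisfies $E(S)\ge |S|^4/5^m$. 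Translated back, the count on these degenerate $d$ is at least $\alpha^4 5^{2n}$, i.e., at or above the random value, not a constant factor below it. So a purely quadratic construction cannot deliver the uniform bound, and the construction must be modified precisely on this set of differences — whether by a higher-degree twist tied to the cubic mismatch, by splicing in an auxiliary construction, or by a clever choice of $P$ and $S$ making the degenerate count exactly $\alpha^4 5^{2n}$ and then shaving it by a separate device. Until you pin down that step with an explicit construction and estimate, the argument is incomplete; identifying that this is where the work is, as you have done, is a good diagnosis, but it is not yet a proof.
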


\noindent The authors of \cite{MIT} explain the failure of large intersections in Theorem \ref{thm: no pop diff}
as a consequence of an eigenvalue condition.
Namely, for the corresponding matrices
\begin{equation*}
	M_1 = \left( \begin{array}{cc} 1 & 0 \\ 0 & 1 \end{array} \right) \quad \text{and} \quad
	M_2 = \left( \begin{array}{cc} 0 & -1 \\ 1 & 0 \end{array} \right),
\end{equation*}
the eigenvalues of $M_1M_2^{-1}$ are negatives of each other.
They also show that in the absence of such an eigenvalue condition,
a Khintchine-type result holds for patterns
\begin{equation*}
	\left\{ x, x + M_1y, x + M_2y, x + (M_1+M_2)y \right\}
\end{equation*}
(see \cite[Theorem 1.2]{MIT}).

In our context of rings of integers, we can translate the eigenvalue condition into an algebraic criterion.
Recall that two algebraic numbers $\alpha, \beta \in K$ are \emph{conjugate} (over $\Q$)
if they have the same minimal polynomial (over $\Q$).
Equivalently, there is a field automorphism $\varphi : K \to K$ such that $\varphi(\alpha) = \beta$.
If we denote by $M_{\alpha}$ the $\Q$-linear map $M_{\alpha}x = \alpha x$ on the $\Q$-vector space $K$,
then the eigenvalues of $M_{\alpha}$ are exactly the conjugates of $\alpha$
(this follows from, e.g., \cite[Theorem 5.9]{conrad},
which gives a formula for the characteristic polynomial of $M_{\alpha}$).
We therefore make the following conjecture:

\begin{conj}
	Let $K$ be a number field with ring of integers $\O_K$.
	Let $r, s \in \O_K$ be distinct and nonzero.
	The following are equivalent:
	
	\begin{enumerate}[(i)]
		\item	For any ergodic measure-preserving $\O_K$-system $(X, \B, \mu, T)$,
			any $A \in \B$, any $\eps > 0$, and any $\O_K$-valued intersective polynomial $p \in K[x]$, the set
			\begin{equation*}
				\left\{ n \in \O_K :
				 \mu \left( A \cap T^{-rp(n)}A \cap T^{-sp(n)}A \cap T^{-(r+s)p(n)}A \right) > \mu(A)^4 - \eps \right\}
			\end{equation*}
			is syndetic;
		\item	No two conjugates of $\frac{s}{r}$ over $\Q$ are negatives of each other.
	\end{enumerate}
\end{conj}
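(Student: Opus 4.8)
The plan is to reduce the conjecture, in both directions, to a clean dichotomy for the \emph{linear} pattern $\{x, x+rn, x+sn, x+(r+s)n\}$ over $\O_K$, and then to attack that dichotomy with the characteristic-factor methods behind Theorems \ref{thm: ABB} and \ref{thm: no pop diff}. For the reduction: since $p(x)=x$ is intersective, statement (i) immediately implies its own specialization to the linear pattern. For the converse reduction, I would argue exactly as in the proof of Theorem \ref{thm: MultLargeInt}: by our well-distribution theorem for polynomial orbits in nilsystems, the characteristic factor for the four-term correlation $\mu(A\cap T^{-rp(n)}A\cap T^{-sp(n)}A\cap T^{-(r+s)p(n)}A)$ is an inverse limit of nilsystems, and on such a factor the orbit $(p(n))_n$ is well-distributed in the same subnilmanifold as $(n)_n$ once $n$ is restricted to a coset of the finite-index subgroup of $\O_K$ supplied by the intersectivity of $p$. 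Hence the $\UClim$'s of the polynomial and linear correlations agree, and syndeticity of the linear large-intersections set forces that of the polynomial one. (Relative to Theorem \ref{thm: MultLargeInt}, the novelty is that here $\tfrac{s}{r}$ need not lie in $\Q$.) It therefore suffices to prove: the four-term linear large-intersections property holds for every ergodic $\O_K$-system if and only if (ii) holds.

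For the hard direction (ii) $\Rightarrow$ linear, identify $(\O_K,+)$ with $\Z^d$, $d=[K:\Q]$, and let $M_r, M_s, M_{r+s}$ be the multiplication endomorphisms; these are injective, invertible over $\Q$, and the eigenvalues of $M_sM_r^{-1}=M_{s/r}$ are exactly the conjugates of $\tfrac{s}{r}$. The goal is the analogue of Theorem \ref{thm: ABB}(2), and of the positive result \cite[Theorem 1.2]{MIT}, for these \emph{non-scalar} endomorphisms. Following \cite{abb}, one would first locate the characteristic factor for this pattern: a degree-$2$ (``Conze--Lesigne''-type) factor of the $\O_K$-action. On it, one decomposes the four-term average into a part measurable with respect to the Kronecker factor and a genuinely quadratic remainder; the remainder contributes non-negatively by a van der Corput/Cauchy--Schwarz argument that does not see $r,s$ beyond invertibility. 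Expanding the Kronecker part over characters $\chi$, the configuration forces a bilinear relation among $\chi$, $\chi\circ M_r$, $\chi\circ M_s$, $\chi\circ M_{r+s}$, and the resulting Hermitian form in the Fourier coefficients $\hat f(\chi)$ is positive semidefinite precisely when no character is cancelled by its ``$M_{s/r}$-twist'' --- dually, precisely when $M_{s/r}$ has no eigenvalue equal to the negative of another, i.e.\ when (ii) holds. Carrying out this positivity analysis for general $M_r, M_s$ is the main obstacle; it is the ergodic counterpart of a combinatorial statement over $\O_K$ that, as noted after Theorem \ref{thm: no pop diff}, we do not currently know how to establish.

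For the direction linear $\Rightarrow$ (ii), I would argue by contraposition. If (ii) fails, then by the above identification $M_{s/r}$ has two eigenvalues $\lambda$ and $-\lambda$; using this eigenvalue-pair one mimics the construction behind Theorem \ref{thm: no pop diff} inside a suitable profinite/Bernoulli-over-rotation $\O_K$-system (as in \cite[Section 11.1]{abb} and \cite[Section 12]{abb}), producing an ergodic system, a set $A$, and $\eps>0$ for which the four-term correlation stays below $\mu(A)^4-\eps$ for every $n\neq 0$. The combinatorial input required here is exactly the one flagged via \cite[Proposition 12.13]{abb}, so this implication too rests on a construction that has not yet been completed.

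In summary, modulo the routine polynomial-to-linear reduction, the conjecture is equivalent to a positivity-versus-counterexample question about Hermitian forms attached to $M_r$ and $M_s$ --- the positive side governing (ii) $\Rightarrow$ (i) and the counterexample side governing (i) $\Rightarrow$ (ii) --- parallel to the open combinatorial problems surrounding Theorem \ref{thm: no pop diff}; I expect this form-positivity/counterexample question to be the genuine difficulty, with everything else being an adaptation of tools already developed in this paper and in \cite{abb}.
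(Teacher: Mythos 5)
There is a genuine gap here, and it starts before any technical detail: the statement you are proving is stated in the paper as a \emph{conjecture}, and the paper offers no proof of it (the authors explicitly say they do not know how to prove the requisite combinatorial input, cf.\ the discussion around Theorem \ref{thm: no pop diff} and \cite[Proposition 12.13]{abb}). Your proposal is likewise not a proof: both essential implications are deferred to problems you yourself flag as open --- the positivity analysis behind (ii) $\Rightarrow$ (i) for non-scalar multiplication operators $M_r, M_s$, and the counterexample construction behind (i) $\Rightarrow$ (ii). What you have written is a plausible research program (and its outline --- reduce to the linear pattern, then settle a dichotomy governed by the eigenvalues of $M_{s/r}$ --- is consistent with how the paper motivates the conjecture), but it establishes neither implication.

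Two specific points in the sketch also need repair. First, in the polynomial-to-linear reduction, equality of the $\UClim$'s of the polynomial and linear correlation sequences does not by itself transfer syndeticity of the large-intersections set: the paper's proof of Theorem \ref{thm: MultLargeInt} does not argue this way, but instead bounds from below a \emph{weighted} uniform Ces\`{a}ro average (with a continuous weight $\eta$ concentrated near $0$ on the Kronecker factor, as in Corollary \ref{cor: twisted limit formulae}), and only then invokes Proposition \ref{prop: Folner syndetic}; you would need the same device, applied along $p(\xi + Dn)$ after the intersectivity reduction of Lemma \ref{lem: intersective}. Second, and more seriously, your claim that the genuinely $2$-step part ``contributes non-negatively by a van der Corput/Cauchy--Schwarz argument that does not see $r,s$ beyond invertibility,'' so that everything reduces to a Hermitian form on the Kronecker factor, is almost certainly false: in the known positive case $\tfrac{s}{r} \in \Q$ the paper's positivity argument (Proposition \ref{prop: MultLargeInt tot erg}) takes place on the $2$-step component $H$ of $\calZ_2$ and uses the integrality and coprimality of $a_1, a_2$ in its changes of variables, while the counterexample of Theorem \ref{thm: no pop diff} is detected precisely at the quadratic level. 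So the eigenvalue condition in (ii) is expected to enter through the $2$-step (Conze--Lesigne-type) structure, not through the Kronecker part, and any correct approach must carry out the positivity-versus-counterexample analysis there.
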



\subsection{Method}

In order to prove Khintchine-type recurrence results such as
Theorem \ref{thm: IndLargeInt} and Theorem \ref{thm: MultLargeInt},
it is natural to consider associated multiple ergodic averages.
The appropriate averaging schemes in rings of integers are those arising from \emph{F{\o}lner sequences}.
A \emph{F{\o}lner sequence} in $(\O_K, +)$ is a sequence of subsets $(\Phi_N)_{N \in \N}$ of $\O_K$
such that, for every $n \in \O_K$,
\begin{equation*}
	\frac{|(\Phi_N + n) \triangle \Phi_N|}{|\Phi_N|} \to 0.
\end{equation*}
Examples of F{\o}lner sequences include boxes in $\O_K \cong \Z^d$ with increasing side lengths.
We say that a sequence $(u_n)_{n \in \O_K}$ has \emph{uniform Ces\`{a}ro limit} $u$,
denoted $\UClim_{n \in \O_K}{u_n} = u$, if
\begin{equation*}
	\frac{1}{|\Phi_N|} \sum_{n \in \Phi_N}{u_n} \to u
\end{equation*}
for every F{\o}lner sequence $(\Phi_N)_{N \in \N}$ in $(\O_K, +)$.
The usefulness of uniform Ces\`{a}ro limits in proving Khintchine-type theorems comes from the following routine fact
(for a proof, see \cite[Lemma 1.9]{abb}):

\begin{prop} \label{prop: Folner syndetic}
	A set $S \subseteq \O_K$ is syndetic if and only if for any F{\o}lner sequence $(\Phi_N)_{N \in \N}$ in $(\O_K, +)$,
	one has $\bigcup_{N \in \N}{\Phi_N} \cap S \ne \es$.
\end{prop}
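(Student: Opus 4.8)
The plan is to establish the two implications separately, each time exploiting the fact that a F{\o}lner sequence can ``absorb'' any fixed finite set: if $F \subseteq \O_K$ is finite and $(\Phi_N)$ is a F{\o}lner sequence, then $\frac{1}{|\Phi_N|}\left| \bigcap_{g \in F}(\Phi_N + g) \right| \to 1$, since the complement of $\bigcap_{g \in F}(\Phi_N + g)$ inside $\Phi_N$ is contained in $\bigcup_{g \in F}\left( \Phi_N \setminus (\Phi_N + g) \right)$, a finite union of sets of vanishing relative size. I will also use, without further comment, that $(\O_K, +) \cong (\Z^d, +)$ for $d = [K:\Q]$ --- so that the symmetric boxes $\{-N, \dots, N\}^d$ form a F{\o}lner sequence --- and that a translate $x + \Phi_N$ of a F{\o}lner sequence is again a F{\o}lner sequence with the same F{\o}lner ratios.

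For the forward implication, suppose $S$ is syndetic, so $\O_K = \bigcup_{i=1}^m (S + g_i)$ for some $g_1, \dots, g_m \in \O_K$; equivalently, setting $F = \{g_1, \dots, g_m\}$, every set $x - F = \{x - g : g \in F\}$ with $x \in \O_K$ meets $S$. Let $(\Phi_N)$ be an arbitrary F{\o}lner sequence. Applying the absorption fact with $-F$ in place of $F$, for all large $N$ the set $\bigcap_{g \in F}(\Phi_N - g)$ is nonempty; choosing $x$ in it yields $x - F \subseteq \Phi_N$, and since $(x - F) \cap S \neq \es$ we conclude $\Phi_N \cap S \neq \es$. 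Hence $\bigcup_N \Phi_N$ meets $S$. (One can even show that $S$ has positive lower density along every F{\o}lner sequence, since $\sum_{i=1}^m |\Phi_N \cap (S + g_i)| \ge |\Phi_N|$, so some translate $\Phi_N - g_i$ of $\Phi_N$ has $|(\Phi_N - g_i) \cap S| \ge |\Phi_N|/m$, and $|\Phi_N \cap S|$ differs from this quantity by at most $|\Phi_N \triangle (\Phi_N - g_i)| = o(|\Phi_N|)$.)

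For the reverse implication I argue by contraposition. Assume $S$ is not syndetic, and fix any F{\o}lner sequence $(F_N)$ of symmetric sets (e.g.\ boxes). For each $N$, the finitely many translates $\{S + g : g \in F_N\}$ fail to cover $\O_K$, so there is $x_N \in \O_K$ with $x_N \notin S + g$ for all $g \in F_N$; this says exactly that $(x_N - F_N) \cap S = \es$, i.e.\ $(x_N + F_N) \cap S = \es$ since $F_N$ is symmetric. Put $\Phi_N := x_N + F_N$. As a translate of $F_N$ this is again a F{\o}lner sequence, and $\bigcup_N \Phi_N$ is disjoint from $S$ by construction, contradicting the hypothesis on $S$.

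The whole argument is routine; the only nontrivial point is in the forward implication, where it is not enough to know that some translate of $\Phi_N$ meets $S$ --- one needs the thickening provided by the F{\o}lner property to fit an entire translate of the finite covering set $F$ inside $\Phi_N$ itself.
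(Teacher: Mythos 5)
Your argument is correct in substance and is the standard one — the paper itself cites the proof to \cite[Lemma~1.9]{abb} rather than reproducing it, and the argument there is of the same routine flavor (absorption in the forward direction, translated F{\o}lner sets in the reverse direction). I'll flag one small sign slip that should be cleaned up.

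In the forward direction you invoke the absorption fact ``with $-F$ in place of $F$'' to get $\bigcap_{g \in F}(\Phi_N - g) \ne \es$, and then claim that a point $x$ in this intersection satisfies $x - F \subseteq \Phi_N$. That is backwards: $x \in \Phi_N - g$ for all $g \in F$ gives $x + g \in \Phi_N$, i.e.\ $x + F \subseteq \Phi_N$, not $x - F \subseteq \Phi_N$. Since your earlier reformulation of syndeticity says $(x - F) \cap S \ne \es$, you should instead apply absorption with $F$ itself: the set $\bigcap_{g \in F}(\Phi_N + g)$ is nonempty for large $N$, any $x$ in it satisfies $x - F \subseteq \Phi_N$, and $(x - F) \cap S \ne \es$ then gives $\Phi_N \cap S \ne \es$. (Alternatively, keep $-F$ but note that $\O_K = \bigcup_i (S - g_i)$ as well, so $(x + F) \cap S \ne \es$.) This is purely cosmetic and does not affect the validity of the argument. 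The reverse direction, with symmetric boxes $F_N$ translated to $\Phi_N = x_N + F_N$, is correct as written; in particular you correctly observe that translating each term of a F{\o}lner sequence by a (possibly $N$-dependent) element preserves the F{\o}lner property, since $|(\Phi_N + n) \triangle \Phi_N| = |(F_N + n) \triangle F_N|$ for every $n$.
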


Rather than computing the multiple ergodic averages
\begin{equation} \label{eq: poly mult erg avg}
	\UClim_{n \in \O_K}{\prod_{i=1}^k{T^{p_i(n)}f_i}}
\end{equation}
directly for an arbitrary $\O_K$-system, we reduce to computing the averages \eqref{eq: poly mult erg avg}
in simpler classes of systems.
To be precise, we say a system $\Y = \left( Y, \D, \nu, S \right)$ is a \emph{factor} of $\X = \left( X, \B, \mu, T \right)$
if there are full measure subsets $X_0 \subseteq X$ and $Y_0 \subseteq Y$ and a measure-preserving map
$\pi : X_0 \to Y_0$ such that $S^n\pi(x) = \pi(T^nx)$ for every $x \in X_0$, $n \in \O_K$.
There is a natural correspondece between the factor $Y$ and the $T$-invariant sub-$\sigma$-algebra $\pi^{-1}(\D)$.
This allows us to take conditional expectations, and in a standard abuse of notation,
we write $\E{f}{Y} := \E{f}{\pi^{-1}(\D)}$.
The factor $\Y$ is \emph{characteristic} for a family of sequences $\{a_1(n), \dots, a_k(n)\}, n \in \O_K$,
if for any $f_1, \dots, f_k \in L^{\infty}(\mu)$,
\begin{equation*}
	\UClim_{n \in \O_K}{\left( \prod_{i=1}^k{T^{a_i(n)}f_i} - \prod_{i=1}^k{T^{a_i(n)}\E{f_i}{Y}} \right)} = 0
\end{equation*}
in $L^2(\mu)$.

The main family of factors that we will deal with is the family of \emph{nilfactors} $(\calZ_r)_{r \in \N}$
(also called \emph{Host--Kra factors} from the work of Host and Kra on $\Z$-actions \cite{hk}).
Assume for this discussion that $T$ is an ergodic action of $\O_K$.
The factor $\calZ_r$ is defined to be the minimal factor that is characteristic for all families
$\{l_1n, \dots, l_{r+1}n\}$ with $l_1, \dots, l_{r+1} \in \O_K$ distinct and nonzero.
For our purposes, it will suffice to discuss some general properties of nilfactors.

The tower of factors $\calZ_1 \subseteq \calZ_2 \subseteq \dots$
is a sequence of compact extensions.
The first factor, $\calZ_1$, is the \emph{Kronecker factor},
which is the smallest factor for which every eigenfunction is measurable.
As a measure-preserving system, it is isomorphic to an action by rotations on a compact abelian group.
The Kronecker factor contains a subfactor that will also be of interest, namely the \emph{rational Kronecker factor},
denoted $\calK_{rat}$, which is an inverse limit of finite rotational systems
(for a more detailed discussion of the rational Kronecker factor, see Section \ref{sec: rational Kronecker}).

The higher-level nilfactors also have the structure of (inverse limits of) ``rotational'' systems
but on more complex algebraic objects.
Let $G$ be an $r$-step nilpotent Lie group and $\Gamma < G$ a co-compact discrete subgroup.
The quotient space $X = G/\Gamma$ is called an \emph{$r$-step nilmanifold}.
An \emph{$r$-step nilsystem} is a system $(X, \B, \mu, T)$,
where $X = G/\Gamma$ is an $r$-step nilmanifold, $\mu$ is the Haar probability measure on $X$,
and $T$ is an $(\O_K,+)$-action by \emph{niltranslations},
i.e. transformations of the form $x \mapsto ax$ for some $a \in G$.
The nilfactor $\calZ_r$ is an inverse limit of $r$-step nilsystems.
For $\Z$-actions, this was established by Host and Kra in \cite{hk} and independently by Ziegler in \cite{ziegler}.
For our generality of $\O_K$-systems, this follows from \cite[Theorem 4.1.2]{griesmer}.

By careful application of the van der Corput differencing trick,
one can reduce polynomial expressions to (potentially much longer) linear expressions.
This works so long as the polynomials $p_1, \dots, p_k$ are \emph{essentially distinct},
meaning that $p_j - p_i$ is non-constant for every $i \ne j$.
Hence, for any family of essentially distinct polynomial sequences $\{p_1(n), \dots, p_k(n)\}, n \in \O_K$,
there is a characteristic factor that is a nilfactor
(but the step of the nilfactor may far exceed $k-1$ in general):

\begin{thm}[cf. \cite{br}, Theorem 5.2] \label{thm: polynomial nil}
	Let $K$ be a number field with ring of integers $\O_K$.
	Suppose $\{p_1, \dots, p_k\} \subseteq K[x]$ are non-constant and essentially distinct $\O_K$-valued polynomials.
	Then there is an $r \in \N$ such that for any ergodic $\O_K$-system $(X, \B, \mu, T)$
	and any $f_1, \dots, f_k \in L^{\infty}(\mu)$,
	\begin{equation*}
		\UClim_{n \in \O_K}{\prod_{i=1}^k{T^{p_i(n)}f_i}}
		 = \UClim_{n \in \O_K}{\prod_{i=1}^k{T^{p_i(n)} \E{f_i}{\calZ_r}}}.
	\end{equation*}
	in $L^2(\mu)$.
\end{thm}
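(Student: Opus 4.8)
The plan is to prove the (equivalent) statement that $\calZ_r$ is a \emph{characteristic factor} for the family $\{p_1(n), \dots, p_k(n)\}$, and to do so by running the Bergelson--Leibman polynomial exhaustion technique (PET) — iterated van der Corput differencing — to reduce the polynomial average to a Host--Kra uniformity seminorm estimate, and then invoking the structure theory of nilfactors for $\O_K$-actions recorded in \cite[Theorem 4.1.2]{griesmer}. First, by multilinearity it suffices to show: there is $r = r(p_1, \dots, p_k) \in \N$, depending only on the polynomials, such that whenever $\norm{\infty}{f_i} \le 1$ for all $i$ and $\E{f_{i_0}}{\calZ_r} = 0$ for some index $i_0$, one has $\UClim_{n \in \O_K} \prod_{i=1}^k T^{p_i(n)} f_i = 0$ in $L^2(\mu)$. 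Indeed, writing $f_i = \E{f_i}{\calZ_r} + f_i'$ with $\E{f_i'}{\calZ_r} = 0$ and telescoping the two products $\prod T^{p_i(n)} f_i$ and $\prod T^{p_i(n)} \E{f_i}{\calZ_r}$ one factor at a time, every term of the difference is an average of the above type (all factors bounded, since conditional expectation is an $L^\infty$-contraction), so the claimed identity follows.

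Second, I would establish the seminorm bound: for each $i_0$ there is $s = s(i_0; p_1, \dots, p_k) \in \N$ such that, uniformly over ergodic $\O_K$-systems $(X, \B, \mu, T)$ and over $f_1, \dots, f_k$ with $\norm{\infty}{f_i} \le 1$, and for every F{\o}lner sequence $(\Phi_N)$,
\[
	\limsup_{N \to \infty} \left\| \frac{1}{|\Phi_N|} \sum_{n \in \Phi_N} \prod_{i=1}^k T^{p_i(n)} f_i \right\|_{L^2(\mu)} \le \norm{U^{s+1}(X)}{f_{i_0}},
\]
where $\norm{U^{s+1}(X)}{\cdot}$ is the Host--Kra seminorm of the system (these seminorms and their basic theory are available for $\O_K$-actions by the cited work). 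This is the PET induction: one applies the van der Corput inequality for bounded Hilbert-space sequences indexed by a F{\o}lner sequence in the countable abelian group $\O_K$ to $a_n = \prod_i T^{p_i(n)} f_i$; after composing with the isometry $T^{-p_{i_0}(n)}$, the fixed function $\overline{f_{i_0}}$ is pulled out of the integral and the inner-product average $\UClim_h \UClim_n \langle a_{n+h}, a_n \rangle$ is again of the original form, now with the system $\{p_i\}$ replaced by a differenced system $\{p_i(\cdot + h) - p_{i_0}(\cdot),\; p_i(\cdot) - p_{i_0}(\cdot)\}$ of at most $2k-1$ polynomials in $n$ (with $h$ a parameter). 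Ordering the polynomials by an appropriate weight and checking that this weight strictly decreases under differencing — here is where essential distinctness and non-constancy of the $p_i$ enter, to ensure the differenced families remain essentially distinct and the induction is well-founded — one iterates until all polynomials in play are constant, at which point the average is bounded by $\norm{U^{s+1}}{f_{i_0}}$ with $s$ the number of van der Corput steps performed; crucially $s$ depends only on the combinatorics of the polynomial system, not on $(X, \B, \mu, T)$.

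Third, I would appeal to the structure theorem \cite[Theorem 4.1.2]{griesmer}: the factor $\calZ_s$ is exactly the one ``dual'' to the seminorm $\norm{U^{s+1}}{\cdot}$, in the sense that $\norm{U^{s+1}}{f} = 0$ if and only if $\E{f}{\calZ_s} = 0$ (consistent with the definition of $\calZ_s$ as the minimal characteristic factor for $(s+1)$-term linear averages, under the identification of $\calZ_s$ with an inverse limit of $s$-step nilsystems). Now set $r := \max_{i_0} s(i_0; p_1, \dots, p_k)$; since $\calZ_s \subseteq \calZ_r$ for $s \le r$, the hypothesis $\E{f_{i_0}}{\calZ_r} = 0$ gives $\E{f_{i_0}}{\calZ_{s(i_0)}} = 0$ by the tower property, hence $\norm{U^{s(i_0)+1}}{f_{i_0}} = 0$, and the bound from the second step forces the average to $0$ along every F{\o}lner sequence, i.e.\ the UC-limit vanishes. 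Combined with the telescoping reduction of the first step, this proves the theorem.

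The step I expect to be the main obstacle is the PET induction of the second paragraph: one must fix the right notion of weight for a system of one-variable $\O_K$-valued polynomials, verify that van der Corput differencing strictly reduces it while preserving essential distinctness and non-constancy, and — the point where the ambient ring matters — carry this out for averages over F{\o}lner sequences in $\O_K \cong \Z^d$ rather than over intervals in $\Z$, so that the resulting seminorm exponent is genuinely uniform over all ergodic $\O_K$-systems. Everything else is multilinear bookkeeping or a direct appeal to the cited structure theory; indeed, as the attribution ``cf.\ \cite[Theorem 5.2]{br}'' signals, this PET-plus-structure-theorem package is by now routine, and one could alternatively invoke it essentially verbatim.
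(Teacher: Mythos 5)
Your argument is correct in outline and follows essentially the same route as the source the paper relies on: the paper gives no proof of Theorem \ref{thm: polynomial nil}, citing \cite[Theorem 5.2]{br}, whose proof is exactly this van der Corput/PET reduction to Host--Kra seminorm control combined with the structure theory of \cite[Theorem 4.1.2]{griesmer} identifying the seminorm-dual factors with the nilfactors $\calZ_r$. The technical points you flag (a weight that strictly decreases under differencing while preserving essential distinctness, obtaining the seminorm bound for an arbitrary prescribed index $i_0$ so the telescoping works, and uniformity of the exponent over all ergodic $\O_K$-systems) are precisely what the cited reference carries out, so invoking it verbatim, as you note at the end, is the intended argument.
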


For the specific configurations appearing in Theorem \ref{thm: IndLargeInt} (independent polynomials)
and in Theorem \ref{thm: MultLargeInt} (multiples of a single polynomial),
we can control the step of the characteristic nilfactors.
In order to properly formulate our results, we need one more definition.

\begin{defn}
	A family of polynomials $\{p_1, \dots, p_k\} \subseteq K[x]$ is \emph{independent}
	if for all $(c_1, \dots, c_k) \in K^k \setminus \{0\}$, the polynomial $\sum_{i=1}^k{c_ip_i}$ is non-constant.
\end{defn}

\noindent Note that the family $\{p_1, \dots, p_k\}$ is independent if and only if
$\{1, p_1, \dots, p_k\}$ is linearly independent over $K$.
Furthermore, a jointly intersective family $\{p_1, \dots, p_k\}$ is independent if and only if it is linearly independent.

\begin{restatable}{mainthm}{rationalKronecker} \label{thm: rational Kronecker}
	Let $K$ be a number field with ring of integers $\O_K$.
	Suppose $p_1, \dots, p_k \in K[x]$ are independent and $\O_K$-valued.
	Then for any ergodic measure-preserving $\O_K$-system $\left( X, \B, \mu, T \right)$
	and any $f_1, \dots, f_k \in L^{\infty}(\mu)$,
	\begin{equation*}
		\UClim_{n \in \O_K}{\prod_{i=1}^k{T^{p_i(n)}f_i}}
		 = \UClim_{n \in \O_K}{\prod_{i=1}^k{T^{p_i(n)} \E{f_i}{\calK_{rat}}}},
	\end{equation*}
	where the limits are taken in $L^2(\mu)$.
\end{restatable}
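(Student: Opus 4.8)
The plan is to reduce the statement to an equidistribution question about independent polynomial orbits on \emph{connected} nilsystems. First I would apply Theorem~\ref{thm: polynomial nil}: because $\{1, p_1, \dots, p_k\}$ is linearly independent over $K$, the $p_i$ are non-constant and essentially distinct, so there is $r \in \N$ with
\begin{align*}
 \UClim_{n \in \O_K}{\prod_{i=1}^k T^{p_i(n)}f_i}
  &= \UClim_{n \in \O_K}{\prod_{i=1}^k T^{p_i(n)}\E{f_i}{\calZ_r}}
\end{align*}
for all $f_i \in L^\infty(\mu)$. Since $\calZ_r$ is an inverse limit of ergodic $r$-step $\O_K$-nilsystems and $\calK_{rat} \subseteq \calZ_1 \subseteq \calZ_r$ (so conditional expectations compose), a routine $L^2$-approximation lets me assume $X = G/\Gamma$ is itself an ergodic $r$-step $\O_K$-nilsystem. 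Expanding $\prod_i T^{p_i(n)}f_i - \prod_i T^{p_i(n)}\E{f_i}{\calK_{rat}}$ as a telescoping sum and using that conditional expectation is an $L^\infty$-contraction, it then suffices to prove: \emph{if $\E{f_j}{\calK_{rat}} = 0$ for some $j$, then $\UClim_{n \in \O_K}{\prod_{i=1}^k T^{p_i(n)}f_i} = 0$ in $L^2(\mu)$.}

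Next I would make the structure of $\calK_{rat}$ inside an ergodic nilsystem explicit. For a finite-index subgroup $\Lambda \le \O_K$, the ergodic components of $X$ under the restricted $\Lambda$-action are (finitely many) sub-nilmanifolds, and the finite $\O_K$-set they form is a rotational factor of $X$, hence a factor of $\calK_{rat}$; conversely every finite rotational factor of $X$ arises this way, so $\calK_{rat}$ is the inverse limit of these ``$\Lambda$-component'' factors $\calF_\Lambda$. Because $X$ has only finitely many connected components and only finitely many rational rotations occur in the niltranslation data, one can pick $\Lambda$ of sufficiently large finite index so that every $\Lambda$-ergodic component of $X$ is a \emph{connected} sub-nilmanifold. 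So, fixing $\eps > 0$, I would choose such a $\Lambda$ with in addition $\norm{L^2(\mu)}{\E{f_i}{\calK_{rat}} - \E{f_i}{\calF_\Lambda}} < \eps$ for every $i$, noting that $\E{f_i}{\calF_\Lambda}$ equals, on each $\Lambda$-ergodic component, the average of $f_i$ over that component.

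Then I would split any F{\o}lner average over $\O_K$ into residues modulo $\Lambda$: writing $n = \xi + m$ with $\xi$ ranging over $\O_K/\Lambda$ and $m$ over a F{\o}lner sequence in $\Lambda$, the shifted polynomials $q_i(m) := p_i(\xi + m)$ again have $\{1, q_1, \dots, q_k\}$ linearly independent over $K$ (a dependence among the $q_i$ pulls back to one among the $p_i$ under $m \mapsto m + \xi$). On each connected $\Lambda$-ergodic component, the orbit $\bigl(T^{q_1(m)}x, \dots, T^{q_k(m)}x\bigr)_m$ is well-distributed in the product of the connected components containing the points $T^{p_i(\xi)}x$ --- this is where I would invoke the equidistribution theorem for independent $\O_K$-valued polynomial orbits in products of connected nilsystems (the multivariable Weyl-type result stated above; its hypothesis is met because $K$-linear independence of $\{1, q_1, \dots, q_k\}$ forces, via the nondegeneracy of the trace form, $\Q$-linear independence of $1$ together with the scalar coordinate polynomials of the $q_i$). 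Consequently the residue-$\xi$ contribution converges in $L^2(\mu)$, uniformly over F{\o}lner sequences in $\Lambda$, to the product of the componentwise averages of the $f_i$, which equals $\prod_i T^{p_i(\xi)}\E{f_i}{\calF_\Lambda}$. Summing over $\xi$ shows $\UClim_{n}{\prod_i T^{p_i(n)}f_i}$ is within $O(\eps)$ in $L^2(\mu)$ of the periodic average $\UClim_{n}{\prod_i T^{p_i(n)}\E{f_i}{\calF_\Lambda}}$; since $\norm{L^2(\mu)}{\E{f_j}{\calF_\Lambda}} = \norm{L^2(\mu)}{\E{f_j}{\calF_\Lambda} - \E{f_j}{\calK_{rat}}} < \eps$, this has norm $O(\eps)$, and letting $\eps \to 0$ completes the proof.

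I expect the genuinely hard step to be the equidistribution input itself: proving, via van der Corput differencing and analysis of horizontal (and vertical) characters, that on a connected $\O_K$-nilsystem the orbit of a $K$-linearly independent family of $\O_K$-valued polynomials well-distributes in the $k$-fold product --- in other words, that $K$-linear independence of $\{1, p_1, \dots, p_k\}$ already rules out every character obstruction that could trap the orbit inside a proper sub-nilmanifold, and that this holds uniformly over all F{\o}lner sequences and survives the several-generator ($\O_K \cong \Z^l$) setting. By comparison, the reduction to nilsystems, the identification of $\calK_{rat}$ with the inverse limit of $\Lambda$-component factors, and the residue decomposition should be comparatively routine.
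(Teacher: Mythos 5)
Your outline parallels the paper's strategy in broad strokes, but two things stand in the way, one of which is a step that actually fails as written. You split $n = \xi + m$ with $m$ ranging over a finite-index subgroup $\Lambda$ and assert that the orbit $\bigl( T^{p_1(\xi+m)}x, \dots, T^{p_k(\xi+m)}x \bigr)_{m \in \Lambda}$ stays in, and equidistributes in, the product of the $\Lambda$-ergodic components containing the points $T^{p_i(\xi)}x$. This is false, because an $\O_K$-valued polynomial with non-integral coefficients does not satisfy $p(\xi+m) \equiv p(\xi) \pmod{\Lambda}$ for $m \in \Lambda$. Already for $K = \Q$, $p(x) = x(x-1)/2$, $\Lambda = 2\Z$, $\xi = 0$, the values $p(0), p(2), p(4), p(6), \dots = 0, 1, 6, 15, \dots$ alternate in parity; so on $X = \Z/2\Z$ with $T$ the rotation by one, the orbit $T^{p(m)}x$, $m \in 2\Z$, alternates between the two $\Lambda$-components, the coset average of $T^{p(m)}f$ converges to $\int_X f \, d\mu$, and your claimed limit $T^{p(\xi)}\E{f}{\calF_\Lambda} = f$ is wrong. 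The missing device is exactly the paper's Lemma \ref{lem: intersective} as deployed in Proposition \ref{prop: eligible}: with $\Lambda = r\O_K$ one must pass to a dilated sublattice $s + D\O_K$, where $D$ absorbs both $r$ and the denominators of the $p_i$, so that $p_i(Dm+s) - p_i(s) \in r\O_K$, and then rewrite $T^{p_i(Dm+s)}f_i = (T^r)^{q_i^{(s)}(m)}\bigl(T^{p_i(s)}f_i\bigr)$ with $q_i^{(s)}(m) = r^{-1}\bigl(p_i(Dm+s)-p_i(s)\bigr)$ again an independent $\O_K$-valued family; only then does the whole expression become a polynomial orbit of the restricted action $(T^{rn})_{n \in \O_K}$, which genuinely preserves each (now connected, totally ergodic) component. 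With this correction your argument essentially merges into the paper's: the paper first proves the totally ergodic case (the limit is a product of integrals), upgrades to ergodic systems via the eligible-collections mechanism of Proposition \ref{prop: eligible}, and finishes with a short eigenfunction-plus-Weyl step to drop from $\calZ_1$ to $\calK_{rat}$; your $\calF_\Lambda$-approximation would instead accomplish the last step in the style of the paper's proof of Theorem \ref{thm: large limit}, which is fine once the dilation is in place.

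The second gap is that the engine of the whole theorem is assumed rather than proved. The statement you invoke — that for an independent family of $\O_K$-valued polynomials the orbit $\bigl(T^{p_1(n)}x, \dots, T^{p_k(n)}x\bigr)$ is well-distributed in $X^k$ for (almost) every $x$ in a connected, totally ergodic nilsystem — is not an off-the-shelf input; it is the new equidistribution theorem whose proof constitutes the bulk of the paper's argument: reduction via Leibman's Theorems \ref{thm: nil orbits} and \ref{thm: proj} and the Frantzikinakis--Kra lemmas to connected Weyl systems, and then the multivariable Weyl-type analysis of Theorem \ref{thm: Z^l equidistribution}, including the simultaneous triangularization, the treatment of the vectors $v_s$, and the identification of the exceptional set as a countable union of proper subtori. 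Your one-sentence gesture toward van der Corput differencing and horizontal/vertical characters is a plausible alternative route but is not carried out, so the core of Theorem \ref{thm: rational Kronecker} remains unproven in your proposal. (By contrast, your parenthetical claim that $K$-linear independence of $\{1, p_1, \dots, p_k\}$ forces $\Q$-linear independence of $1$ together with the coordinate polynomials can indeed be pushed through via the trace pairing and polarization, and is a reasonable substitute for the paper's Jacobian-based Propositions \ref{prop: alg ind coord} and \ref{prop: ind coord}.)
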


\begin{restatable}{mainthm}{nilfactor} \label{thm: nilfactor}
	Let $K$ be a number field with ring of integers $\O_K$.
	Let $p(x) \in K[x]$ be a non-constant $\O_K$-valued polynomial.
	Then for any ergodic measure-preserving $\O_K$-system $\left( X, \B, \mu, T \right)$,
	any $l_1, \dots, l_k \in \O_K$ distinct and nonzero, and any $f_1, \dots, f_k \in L^{\infty}(\mu)$,
	\begin{equation*}
		\UClim_{n \in \O_K}{\prod_{i=1}^k{T^{l_ip(n)}f_i}}
		 = \UClim_{n \in \O_K}{\prod_{i=1}^k{T^{l_ip(n)} \E{f_i}{\calZ_{k-1}}}},
	\end{equation*}
	where the limits are taken in $L^2(\mu)$.
	Moreover, if $T$ is totally ergodic, then this limit does not depend on the polynomial $p$.
\end{restatable}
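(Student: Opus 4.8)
The plan is to deduce this from the general polynomial characteristic factor result, Theorem \ref{thm: polynomial nil}, combined with a ``change of variables'' argument that exploits the rigid structure of the family $\{l_1 p, \dots, l_k p\}$, where only the coefficients $l_i$ vary while the polynomial $p$ is fixed. First I would observe that since the $l_i$ are distinct and nonzero, the family $\{l_1 p, \dots, l_k p\}$ is essentially distinct (as $l_j p - l_i p = (l_j - l_i) p$ is non-constant for $i \ne j$), so Theorem \ref{thm: polynomial nil} already gives a nilfactor $\calZ_r$ as a characteristic factor for some $r$; the point is to push $r$ down to $k-1$. Since $\calZ_{k-1}$ is an inverse limit of $(k-1)$-step nilsystems, by a standard approximation argument it suffices to prove the statement when $X = G/\Gamma$ is itself a nilsystem, and then only the projections of the $f_i$ onto $\calZ_r$ matter, so one may assume $X = G/\Gamma$ with $G$ an $r$-step nilpotent Lie group. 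The goal becomes: show that on such a nilsystem, the average $\UClim_{n}{\prod_{i=1}^k T^{l_i p(n)} f_i}$ depends only on the conditional expectations $\E{f_i}{\calZ_{k-1}}$.

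The key technical step is an equidistribution statement for the orbit $\left( a^{l_1 p(n)}x, \dots, a^{l_k p(n)}x \right)_{n \in \O_K}$ inside $X^k$, where $a \in G$ is the niltranslation. This is exactly the kind of result that Theorem 3 in the abstract (the main equidistribution theorem, which I would invoke in its relevant form) is designed to supply: writing $\O_K \cong \Z^d$ and viewing $p$ as giving polynomials in $d$ variables, and viewing the torus/nilsystem as a $\Z^l$-nilsystem, one gets that the closure of this polynomial orbit is a subnilmanifold cut out precisely by the ``linear'' constraints among the exponents $l_1 p(n), \dots, l_k p(n)$. Because all the exponents are scalar multiples of the single quantity $p(n)$, the relevant constraints are governed by the coefficient vectors $(l_1, \dots, l_k)$ alone, and the higher-degree part of $p$ contributes nothing new beyond what a linear configuration $\{l_1 n, \dots, l_k n\}$ would contribute. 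This is the step I expect to be the main obstacle: one has to carefully set up the correspondence between the multivariable polynomial orbit and a linear orbit, control the possible ``lower-order'' terms arising from the intersective correction (the root of $p$ modulo finite-index subgroups) and from total ergodicity failing, and verify that the induced equidistribution on $X^k$ is the same as for the linear family — for which $\calZ_{k-1}$ is characteristic essentially by definition.

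Once the equidistribution is in hand, the characteristic factor statement follows by the usual argument: if $\E{f_j}{\calZ_{k-1}} = 0$ for some $j$, then on the limiting (sub)nilmanifold the $j$-th coordinate factors through a ``higher'' nilsystem direction on which $f_j$ averages to zero, forcing the whole product average to vanish; telescoping over the indices $i = 1, \dots, k$ then yields the claimed identity. For the final sentence, when $T$ is totally ergodic, the rational Kronecker factor $\calK_{rat}$ is trivial, so there are no nontrivial constraints coming from congruence conditions, and the intersective polynomial $p$ behaves on the relevant nilfactor just like a ``generic'' polynomial of its degree; concretely, the equidistribution result then says the orbit closure of $\left( a^{l_1 p(n)}x, \dots, a^{l_k p(n)}x \right)_n$ coincides with that of $\left( a^{l_1 n}x, \dots, a^{l_k n}x \right)_n$, which manifestly does not see $p$ at all, giving the stated $p$-independence. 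I would record this last part as a short corollary of the equidistribution analysis rather than a separate argument.
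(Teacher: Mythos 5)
Your high-level skeleton matches the paper's: reduce via Theorem \ref{thm: polynomial nil} to a nilsystem, show that the orbit closure of $\left( T^{l_1p(n)}x, \dots, T^{l_kp(n)}x \right)$ coincides with that of the linear family $\left( T^{l_1n}x, \dots, T^{l_kn}x \right)$, deduce that $\calZ_{k-1}$ is characteristic, and obtain the $p$-independence under total ergodicity as a byproduct. But the step you yourself flag as ``the main obstacle'' is exactly the new content of the theorem, and your proposed source for it is wrong: the equidistribution result stated as item 3 in the abstract (Theorem \ref{thm: top tot erg Z^l}) requires the family $\{1\} \cup \{p_{i,j}\}$ to be linearly independent over $\Q$, which fails completely for $\{l_1p, \dots, l_kp\}$ since all the exponent polynomials are scalar multiples of the single polynomial $p$; indeed the orbit here is never equidistributed in $X^k$, only in a proper subnilmanifold. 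What the paper actually uses is a different statement, Proposition \ref{prop: polynomial orbit}: if $g : \Z^l \to G$ is a polynomial sequence whose orbit closure $Y$ of $x$ is \emph{connected}, and $q : \Z^d \to \Z^l$ has algebraically independent coordinates, then $\overline{\{g(q(n))x\}} = Y$. This rests on Proposition \ref{prop: alg ind coord} (any nonconstant $\O_K$-valued $p \in K[x]$ has algebraically independent coordinate polynomials, via a Jacobian computation), on the reduction to connected Weyl systems (Theorem \ref{thm: proj} and Lemma \ref{lem: affine}), and on a span argument (Corollary \ref{cor: span}): a linear relation among the coordinates of $u_i \circ q$ gives $v \circ q = 0$ for a polynomial $v$, and algebraic independence forces $v = 0$, so $\spn(u_i \circ q) = \spn(u_i)$. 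None of this appears in your proposal, so the core of the proof is missing. You also omit why the orbit closure $Y$ is connected — this is where total ergodicity enters, via Theorem \ref{thm: nil orbits} and the finite-index kernel of the map onto the finite group $W$.

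The second gap is the passage from totally ergodic to merely ergodic systems, which you only gesture at (``control \dots total ergodicity failing''). The paper does this with a specific mechanism: after normalizing $l := \gcd(l_1,\dots,l_k) = 1$ (replacing $l_i$ by $l_i/l$ and $p$ by $lp$, needed so that the relevant collection of families is eligible in the sense of Definition \ref{defn: eligible}), one invokes Proposition \ref{prop: eligible}, which passes to a finite-index subgroup $r\O_K$ along which the ergodic components of $T^{rn}$ are totally ergodic, uses Lemma \ref{lem: intersective} to rescale the polynomials so they stay $\O_K$-valued, and sums over the components and residue classes. Note also that intersectivity of $p$ plays no role in Theorem \ref{thm: nilfactor} (it only enters the recurrence theorems later), so your mention of ``the intersective correction'' is a red herring at this stage. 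To repair the proposal you would need to supply the algebraic-independence/orbit-closure argument and the eligibility reduction explicitly.
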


We will prove Theorems \ref{thm: rational Kronecker} and \ref{thm: nilfactor} via equidistribution results
for polynomial sequences in nilmanifolds, which are of independent interest
(see Theorem \ref{thm: top tot erg} and Proposition \ref{prop: polynomial orbit} below).
After several reductions, the main technical result in the proof of Theorem \ref{thm: rational Kronecker}
is the following far-reaching generalization of Weyl's polynomial equidistribution theorem
for families of independent polynomials in several variables:

\begin{thm}[Theorem \ref{thm: Z^l equidistribution}]
	Let $d, l, k, m \in \N$.
	Let $\{p_{i,j} : 1 \le i \le k, 1 \le j \le l\} \subseteq \Q[x_1, \dots, x_d]$ be $\Z$-valued and independent over $\Q$.
	Let $T_1, \dots, T_l : \T^m \to \T^m$ be commuting unipotent affine transformations
	generating an ergodic $\Z^l$-action.
	Then the polynomial sequence
	\begin{equation*}
		\left( \prod_{j=1}^l{T_j^{p_{1,j}(n)}}x, \dots, \prod_{j=1}^l{T_j^{p_{k,j}(n)}}x \right)_{n \in \Z^d}
	\end{equation*}
	is well-distributed in $\T^{mk}$ for all $x$ in a co-meager set of full measure.
\end{thm}

The upshot of Theorem \ref{thm: rational Kronecker} is that we may compute multiple ergodic averages
for independent polynomials by studying the corresponding averages in a finite rotational system,
where computations are much easier to carry out.
Similarly, Theorem \ref{thm: nilfactor} says that in order to compute multiple ergodic averages
for $k$ distinct multiples of a fixed polynomial, we can make use of the algebraic structure of a $(k-1)$-step nilsystem.

From here, we can follow a standard technique to deduce the corresponding Khintchine-type results.
The assumption that the families of polynomials under consideration are jointly intersective,
together with a standard approximation argument,
allows us to reduce to the case that the action $T$ is \emph{totally ergodic}, i.e. that $\calK_{rat}$ is trivial.
For independent polynomials, Theorem \ref{thm: rational Kronecker}
guarantees that
\begin{equation*}
	\UClim_{n \in \O_K}{\mu \left( A \cap T^{-p_1(n)}A \cap \dots \cap T^{-p_k(n)}A \right)} = \mu(A)^{k+1}
\end{equation*}
for totally ergodic $T$, from which Theorem \ref{thm: IndLargeInt} immediately follows.
The details of this argument are carried out in Section \ref{sec: IndLargeInt}.
When all of the polynomials involved are multiples of a fixed polynomial and $T$ is totally ergodic,
Theorem \ref{thm: nilfactor} says that the relevant multiple ergodic average can be reduced to a linear average
(corresponding to $p(n) = n$).
We are therefore able to capitalize on Khintchine-type results for linear averages (see Theorem \ref{thm: ABB} above)
and extend them to the polynomial configurations we consider in Theorem \ref{thm: MultLargeInt}.
The full details of this argument appear in Section \ref{sec: MultLargeInt}.


\subsection{Notions of largeness}

Syndeticity is just one of many notions of largeness that naturally appear in ergodic theory and combinatorics.
While it is useful in quantifying the size of subsets, it does not have all of the properties that one may desire.
To illustrate one shortcoming of syndeticity, we return to Szemer\'{e}di's theorem.
Consider the family of sets $\mathcal{R}_k := \{R_k(\X, A) : \X = (X, \B, \mu, T)~\text{mps}, A \in \B, \mu(A) > 0\}$, where
\begin{equation*}
	R_k(\X, A) := \left\{ n \in \Z : \mu \left( A \cap T^{-n}A \cap \dots \cap T^{-kn}A \right) > 0 \right\}.
\end{equation*}
The family $\mathcal{R}_k$ has the \emph{filter property}: for any $R, S \in \mathcal{R}_k$,
we have $R \cap S \ne \es$.
Indeed, given two measure-preserving systems $\X = (X, \B, \mu, T)$ and $\Y = (Y, \D, \nu, S)$,
one can form the product system $\X \times \Y$ and easily verify that
\begin{equation*}
	R_k(\X, A) \cap R_k(\Y, B) = R_k(\X \times \Y, A \times B) \in \mathcal{R}_k.
\end{equation*}
One may hope that there is a different notion of largeness that captures this filter property.
To discuss one such notion, we introduce the class of \emph{IP sets}.

Let $(x_n)_{n \in \N}$ be a sequence in $\O_K$.
The \emph{finite sum set} associated to $(x_n)_{n \in \N}$ is the set
\begin{equation*}
	FS\left( (x_n)_{n \in \N} \right) := \left\{ \sum_{n \in F}{x_n} : F \subseteq \N~\text{is finite and nonempty} \right\}.
\end{equation*}
We say that $A \subseteq \O_K$ is an \emph{IP set} if $A \supseteq FS\left( (x_n)_{n \in \N} \right)$
for some infinite sequence $(x_n)_{n \in \N}$.
A theorem of Hindman \cite{hindman} asserts that IP sets are \emph{partition regular}:

\begin{thm}[Hindman's Theorem \cite{hindman}, Theorem 3.1] \label{thm: Hindman}
	Let $A$ be an IP set.
	If $A$ is finitely partitioned $A = \bigcup_{i=1}^r{C_i}$,
	then for some $i_0 \in \{1, \dots, r\}$, $C_{i_0}$ is an IP set.
\end{thm}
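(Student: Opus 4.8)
The plan is to give the Galvin--Glazer ultrafilter proof of Hindman's theorem, working in the Stone--\v{C}ech compactification of the group $G = (\O_K, +)$, which as an abstract group is isomorphic to $\Z^d$. Let $\beta G$ be the set of ultrafilters on $G$, equipped with the operation $p + q = \left\{ B \subseteq G : \{ x \in G : -x + B \in q \} \in p \right\}$, which turns $\beta G$ into a compact right-topological semigroup, meaning that for each fixed $p$ the map $q \mapsto q + p$ is continuous. I identify a set $B \subseteq G$ with the clopen subset $\overline{B} = \{ q \in \beta G : B \in q \}$ of $\beta G$, so that $\overline{B \cup B'} = \overline{B} \cup \overline{B'}$ and $A \subseteq B$ implies $\overline{A} \subseteq \overline{B}$. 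The one structural input I would take for granted is the Ellis--Namakura lemma: every nonempty closed subsemigroup of $\beta G$ contains an idempotent, i.e. an element $p$ with $p + p = p$.

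The proof then reduces to two lemmas. \textbf{Lemma A:} if $p = p + p$ and $A \in p$, then $A$ is an IP set. To prove this I would build the witnessing sequence $(x_n)_{n \in \N}$ recursively. Idempotency guarantees that whenever $B \in p$, the set $B^{\star} := \{ x \in G : -x + B \in p \}$ again lies in $p$, so $B \cap B^{\star} \in p$ is nonempty, and every $x \in B^{\star}$ satisfies $-x + B \in p$. Set $A_0 = A$; having chosen $x_1, \dots, x_n$ and a set $A_n \in p$ with $A_n \subseteq A$ and $A_n \subseteq -\sum_{i \in F} x_i + A$ for every nonempty $F \subseteq \{1, \dots, n\}$, pick $x_{n+1} \in A_n \cap A_n^{\star}$ and put $A_{n+1} = A_n \cap (-x_{n+1} + A_n)$, which lies in $p$ since $A_n \in p$ and $-x_{n+1} + A_n \in p$. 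A short bookkeeping argument shows the invariant is preserved and that $FS\left( (x_n)_{n \in \N} \right) \subseteq A$, so $A$ is an IP set.

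\textbf{Lemma B:} every IP set $A$ belongs to some idempotent ultrafilter. Fix $(x_n)$ with $FS\left( (x_n)_{n \in \N} \right) \subseteq A$, and for $m \in \N$ set $T_m = \overline{FS\left( (x_n)_{n \ge m} \right)}$, a nonempty closed subset of $\beta G$. Since the $T_m$ are nested, $T := \bigcap_{m \in \N} T_m$ is nonempty by compactness. The key point is that $T$ is a subsemigroup of $\beta G$: given $p, q \in T$ and $m \in \N$, I would show $FS\left( (x_n)_{n \ge m} \right) \in p + q$ by checking that the set $\{ y \in G : -y + FS\left( (x_n)_{n \ge m} \right) \in q \}$ contains $FS\left( (x_n)_{n \ge m} \right)$ itself --- for $y = \sum_{i \in F} x_i$ with $M = \max F$, the set $-y + FS\left( (x_n)_{n \ge m} \right)$ contains the tail $FS\left( (x_n)_{n > M} \right)$, which lies in $q$ because $q \in T \subseteq T_{M+1}$ --- and then $FS\left( (x_n)_{n \ge m} \right) \in p$ forces $\{ y \in G : -y + FS\left( (x_n)_{n \ge m} \right) \in q \} \in p$. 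By Ellis--Namakura, $T$ contains an idempotent $p$; since $p \in T \subseteq T_1 = \overline{FS\left( (x_n)_{n \in \N} \right)} \subseteq \overline{A}$, we conclude $A \in p$.

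With the two lemmas in hand the theorem is immediate: given an IP set $A$ and a finite partition $A = \bigcup_{i=1}^r C_i$, Lemma B provides an idempotent $p$ with $A \in p$; since $p$ is an ultrafilter, exactly one $C_{i_0}$ lies in $p$, and Lemma A then shows $C_{i_0}$ is an IP set. I expect the main obstacle to be the verification in Lemma B that it is the nested intersection $T = \bigcap_m T_m$ --- rather than an individual $\overline{FS\left( (x_n)_{n \ge m} \right)}$, which need not be a subsemigroup --- that is closed under the only-right-continuous operation of $\beta G$; this is exactly the step where one must unwind the definition of $p + q$ and exploit the tail-closure of finite sum sets. The recursive bookkeeping in Lemma A is routine but fiddly. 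An alternative is Hindman's original purely combinatorial argument (and, since this is Hindman's theorem itself, one could simply invoke \cite{hindman}); but the ultrafilter proof above is considerably shorter and is the one I would present.
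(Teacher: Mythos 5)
Your proposal is correct, but note that the paper does not prove this statement at all: it is quoted as Hindman's theorem with a citation to \cite{hindman}, whose original argument is purely combinatorial (an intricate induction on finite sum structures). What you supply instead is the standard Galvin--Glazer ultrafilter proof, and your write-up of it is sound: the semigroup structure on $\beta G$ with $q \mapsto q+p$ continuous, the Ellis--Namakura idempotent lemma, Lemma A (idempotent membership implies IP, via the $B^{\star}$ recursion with the invariant $A_n \subseteq -\sum_{i \in F}x_i + A$), and Lemma B (the nested intersection $T = \bigcap_m \overline{FS((x_n)_{n \ge m})}$ is a closed subsemigroup, using that $-y + FS((x_n)_{n\ge m}) \supseteq FS((x_n)_{n > \max F})$ for $y = \sum_{i\in F}x_i$) are all correctly stated and the deduction of partition regularity from them is standard. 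This route has a genuine advantage for the present paper: Hindman's original theorem is stated for $\N$, whereas here it is applied in $(\O_K,+) \cong \Z^d$, and the ultrafilter argument works verbatim in any abelian (indeed any discrete) semigroup, so it covers the needed generality without further comment. Two cosmetic points: ``exactly one $C_{i_0}$ lies in $p$'' should be ``at least one'' unless the cells are assumed disjoint (at least one is all you need); and if one insists, as in the convention of Section \ref{sec: refinements}, that the witnessing sequence consist of distinct elements, a word is needed to rule out degenerate idempotents (in $(\O_K,+)$ the only principal idempotent is the one at $0$), though under the definition of IP set actually used in the statement no distinctness is required, so your argument matches it as written.
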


\noindent A set $E$ is $\text{IP}^*$ if $E \cap A \ne \es$ for every IP set $A$.
It follows from Theorem \ref{thm: Hindman} that $\text{IP}^*$ sets have the filter property.
From this point of view, the IP polynomial Szemer\'{e}di theorem is more satisfactory:

\begin{thm}[\cite{bm}, Theorem 0.7] \label{thm: IP poly Sz}
	Let $p_1, \dots, p_k \in \Q[x]$ be integer-valued polynomials with zero constant term.
	Then for any ergodic invertible probability measure-preserving system and any $A \in \B$, the set
	\begin{equation*}
		\left\{ n \in \Z : \mu \left( A \cap T^{-p_1(n)}A \cap \dots \cap T^{-p_k(n)}A \right) > 0 \right\}
	\end{equation*}
	is $\text{IP}^*$.
\end{thm}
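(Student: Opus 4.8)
\medskip

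\noindent \emph{Proof strategy.}
The plan is to derive the $\text{IP}^*$ statement from an IP multiple recurrence assertion by way of Hindman's Theorem, and to establish that recurrence assertion through an IP van der Corput differencing and polynomial exhaustion (PET) argument carried out inside an arbitrary IP set. First I would reformulate the goal: to show that
\[
	S := \left\{ n \in \Z : \mu \left( A \cap T^{-p_1(n)}A \cap \dots \cap T^{-p_k(n)}A \right) > 0 \right\}
\]
meets every IP set, it suffices --- by a standard argument using Theorem~\ref{thm: Hindman} together with a diagonalization over successive refinements --- to prove that for every IP-sequence $(x_m)_{m \in \N}$ in $\Z$, setting $n_\alpha := \sum_{m \in \alpha}{x_m}$ for finite nonempty $\alpha \subseteq \N$, one has $\mu \left( A \cap T^{-p_1(n_\alpha)}A \cap \dots \cap T^{-p_k(n_\alpha)}A \right) > 0$ for some $\alpha$; in fact one proves the stronger statement that the IP-$\liminf$ of these quantities, taken along a suitable sub-IP-ring on which all relevant IP-limits exist (such a ring being furnished by repeated application of Hindman's Theorem, or, more slickly, by passing to an idempotent ultrafilter in $\beta\N$), is positive. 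The hypotheses that each $p_i$ is integer-valued with zero constant term are precisely what allow $p_i(n_\alpha)$ to be expanded as a finite sum of monomials in the $x_m$ with $m \in \alpha$, so that the family $\left( T^{p_i(n_\alpha)} \right)$ organizes into a polynomial IP-system (a ``VIP system'' in the language of \cite{bm}) indexed by the finite subsets of $\N$.

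Next I would run the PET induction in the IP category. The engine is an IP van der Corput lemma: a bounded $\mathcal{F}$-indexed sequence $(f_\alpha)$ in $L^2(\mu)$ whose diagonal second differences $\langle f_{\alpha \cup \beta}, f_\alpha \rangle$ IP-converge to $0$ must itself IP-converge weakly to $0$. Applying this to $f_\alpha = \prod_{i=1}^k{T^{p_i(n_\alpha)}\ind_A}$ replaces the original family by differenced families $\prod_i{T^{q_i(n_\alpha, n_\beta)}(\cdots)}$ whose polynomial ``weight vector'' is strictly smaller in a well-founded ordering of VIP-complexities. Iterating --- each step also passing to a sub-IP-sequence, which is legitimate finitely many times by Hindman --- reduces the family to one of minimal complexity, essentially the constant (degree-one) case, where the intersection is estimated directly (and for the purely linear pattern one may instead appeal to the Furstenberg--Katznelson IP Szemer\'{e}di theorem, as noted in the footnote).

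For that base case one invokes the Furstenberg--Katznelson structure theory of IP-systems: the extension $X \to \{\mathrm{pt}\}$ is built as a (possibly transfinite) tower of primitive extensions, each relatively IP-compact or relatively IP-weakly mixing. A relatively weakly mixing extension is transparent to the induction, since the differenced averages vanish across it; on a relatively compact extension an IP-almost-periodicity argument --- ultimately a Hindman-powered pigeonhole step on a precompact orbit in $L^2(\mu)$ --- produces, for IP-many $\alpha$, a single $\alpha$ along which $T^{p_i(n_\alpha)}\ind_A$ is simultaneously close to $\ind_A$ for all $i$, giving the positive intersection; a lifting argument then propagates positivity up the tower. Combining the reduction with this base case yields the IP multiple recurrence, and hence the $\text{IP}^*$ conclusion.

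The main obstacle is the PET induction itself in the IP framework: one must design a well-founded complexity invariant for VIP polynomial systems, verify that each use of the IP van der Corput lemma strictly decreases it while keeping the system within the VIP class, and coordinate the simultaneous shrinking of the IP-sequence so that only finitely many refinements occur before the complexity bottoms out. The other substantial input is the relative weak-mixing versus compact dichotomy for IP-systems, which requires the full apparatus of IP-limits and the associated orthogonal splitting of $L^2(\mu)$; unlike the F{\o}lner/Ces\`{a}ro setting used elsewhere in this paper, there is no nilfactor shortcut here, so the soft IP structure theory is genuinely needed.
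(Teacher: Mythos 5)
The paper itself does not prove Theorem~\ref{thm: IP poly Sz}; it is imported verbatim from Bergelson--McCutcheon~\cite{bm} and used as background, so there is no in-paper proof to compare against. Your sketch is a fair high-level account of the argument in~\cite{bm}: the reduction from the $\text{IP}^*$ assertion to positivity of an IP-limit along a sub-IP-ring, the role of the zero-constant-term hypothesis in making $\left( T^{p_i(n_\alpha)} \right)_\alpha$ a VIP-system, the IP van der Corput lemma driving a PET induction on a well-founded complexity weight, and the Furstenberg--Katznelson primitive-extension tower with its compact/weak-mixing dichotomy are indeed the load-bearing components, and you are right that the nilfactor technology used everywhere else in this paper offers no shortcut here because it is tied to Ces\`{a}ro rather than IP averaging. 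Two points in the sketch are imprecise enough to flag. First, the PET induction in~\cite{bm} does not cleanly ``bottom out at the constant/degree-one case''; the weight vector descends in a well-founded but not linearly ordered fashion, and the structure-theoretic induction (over primitive extensions) and the PET induction (over VIP complexity) are interleaved rather than run sequentially, with the base case being a degenerate VIP-family on a trivial extension rather than literally the linear Furstenberg--Katznelson theorem. Second, Hindman's theorem is not needed for the reduction in your first paragraph (intersecting every IP set already forces you to look at an arbitrary $FS\left( (x_m) \right)$ by definition of $\text{IP}^*$); where Hindman genuinely enters is in extracting sub-IP-rings so that the various IP-limits and almost-periodicity statements exist, which you do correctly invoke later. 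Neither of these affects the overall soundness of the strategy, and as a proposal to reconstruct the cited proof it is essentially on target, though of course far from a complete argument.
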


\begin{rem}
	For the linear pattern $p_i(n) = in$, Theorem \ref{thm: IP poly Sz} follows from \cite[Theorem A]{FK}.
\end{rem}

When bounding the size of the intersections from below, the filter property is no longer a straightforward consequence
from considering product systems.
Furthermore, $\text{IP}^*$ turns out to be too strong of a notion of largeness.
(Indeed, in a skew-product system on the torus $\T^2$,
one can find a set $A$ for which the set \eqref{eq: 3-AP large int} fails to be $\text{IP}^*$ for small $\eps > 0$.)
However, we can use a slightly weaker notion that retains the filter property.
Define the \emph{upper Banach density} of a set $E \subseteq \O_K$ by
\begin{equation*}
	d^*(E) := \sup\left\{ \limsup_{N \to \infty}{\frac{|E \cap \Phi_N|}{|\Phi_N|}}
	 : (\Phi_N)_{N \in \N}~\text{is a F{\o}lner sequence in}~\O_K \right\}.
\end{equation*}
We say that $E$ is \emph{almost $\text{IP}^*$}, or \emph{$\text{AIP}^*$} for short,
if $E$ can be written as $E = A \setminus B$, where $A$ is an $\text{IP}^*$ set and $B$ is a set with $d^*(B) = 0$.
In \cite{br}, it was shown that the set \eqref{eq: ROI poly Sz} in Theorem \ref{thm: br}
is in fact a shift of an $\text{AIP}^*$ set.

In a similar vein, Theorem \ref{thm: frakra} was strengthened in \cite{bl-cubic}.
There, the notion of largeness used is the even stronger notion of \emph{$\text{AVIP}_0^*$},
which we define in Section \ref{sec: refinements}.

\begin{thm}[\cite{bl-cubic}, Theorem 4.2]
	Let $p_1, \dots, p_k \in \Q[x]$ be linearly independent integer-valued polynomials with zero constant term.
	Then for any ergodic invertible probability measure-preserving system, any $A \in \B$, and any $\eps > 0$, the set
	\begin{equation*}
		\left\{ n \in \Z : \mu \left( A \cap T^{-p_1(n)}A \cap \dots \cap T^{-p_k(n)}A \right) > \mu(A)^{k+1} - \eps \right\}
	\end{equation*}
	is $\text{AVIP}_0^*$.
\end{thm}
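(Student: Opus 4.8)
The plan is to run the argument for Theorem~\ref{thm: IndLargeInt} in the case $K = \Q$ --- the one sketched right after that theorem's statement --- but with uniform Ces\`{a}ro limits replaced throughout by limits along VIP systems, and then to invoke the dictionary between convergence of such limits and the $\text{AVIP}_0^*$ property defined in Section~\ref{sec: refinements}. \textbf{Step 1 (reduction to totally ergodic systems).} Since each $p_i$ has zero constant term, the value $\xi = 0$ witnesses that $\{p_1,\dots,p_k\}$ is jointly intersective; combined with linear independence and the equivalence noted in the excerpt (a jointly intersective linearly independent family is independent), the $p_i$ are independent in the sense of this paper, so $\{1, p_1, \dots, p_k\}$ is linearly independent over $\Q$. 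Splitting $\ind_A = \E{\ind_A}{\calK_{rat}} + f$ with $\E{f}{\calK_{rat}} = 0$ and approximating across the finite rotational factors that exhaust $\calK_{rat}$, one reduces to the case that $T$ is totally ergodic; the price is that $n$ is confined to a subgroup $m\Z$, which preserves zero constant term and independence of the polynomials $n \mapsto p_i(mn)$ and alters the target set only by a set of density zero. This last point is exactly what makes the final conclusion ``$\text{AVIP}_0^*$'' rather than merely ``$\text{VIP}_0^*$''.

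\textbf{Step 2 (characteristic factor, in IP form).} This is the technical core: extend Theorem~\ref{thm: rational Kronecker} to VIP systems. For any $\Z$-valued VIP system $v$ on the (partial) semigroup of finite subsets of $\N$, and any $f_1, \dots, f_k \in L^\infty(\mu)$, one should show
\begin{align*}
	\text{VIP-}\lim_{\alpha} \prod_{i=1}^k T^{p_i(v(\alpha))} f_i
	 &= \text{VIP-}\lim_{\alpha} \prod_{i=1}^k T^{p_i(v(\alpha))} \E{f_i}{\calK_{rat}}
\end{align*}
in $L^2(\mu)$. The mechanism is the IP van der Corput lemma combined with a PET-style induction on VIP systems in the style of Bergelson--McCutcheon: each differencing step strictly decreases a complexity invariant of the VIP system, and linear independence of $\{1, p_1, \dots, p_k\}$ guarantees that every difference polynomial produced along the way is non-constant, so the induction descends all the way to degree-one IP systems, whose characteristic factor is the Kronecker factor, and among eigenfunctions only those with rational eigenvalue survive the remaining averaging (this is where independence is used a second time, as it is in the proof of Theorem~\ref{thm: rational Kronecker}). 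In the totally ergodic case $\calK_{rat}$ is trivial, so $\E{\ind_A}{\calK_{rat}}$ is the constant $\mu(A)$, and hence the $\text{VIP-}\lim$ of $\mu\bigl(A \cap T^{-p_1(v(\alpha))}A \cap \dots \cap T^{-p_k(v(\alpha))}A\bigr)$ equals $\mu(A)^{k+1}$ exactly.

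\textbf{Step 3 (from convergence to largeness).} Finally, invoke the abstract principle --- itself underpinned by Hindman-type partition regularity of VIP systems (Theorem~\ref{thm: Hindman} together with its polynomial extensions) --- that if a bounded sequence of reals indexed by a VIP system has $\text{VIP-}\lim$ equal to $L$, then $\{\text{indices at which the value exceeds } L - \eps\}$ is $\text{VIP}_0^*$. Applying this with $L = \mu(A)^{k+1}$ in the totally ergodic case and undoing the reduction of Step~1 (absorbing the density-zero error) yields the $\text{AVIP}_0^*$ conclusion. I expect the main obstacle to be Step~2: setting up VIP systems and an IP version of van der Corput correctly, organizing the PET induction so that the associated tower is finite, and carrying the independence hypothesis through every differencing so that no constant polynomial ever stalls the descent --- and, in tandem, verifying that the reduction in Step~1 costs only a density-zero set, since that is precisely the gap between an $\text{AIP}^*$-type and an $\text{AVIP}_0^*$-type statement.
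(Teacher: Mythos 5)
There is a genuine gap, and it lies in the two load-bearing steps of your outline. First, Step 2's claim that an IP/VIP version of Theorem \ref{thm: rational Kronecker} holds — that the PET descent ends at the Kronecker factor and that in a totally ergodic system the $\text{VIP-}\lim$ of $\mu\bigl(A \cap T^{-p_1(v(\alpha))}A \cap \dots \cap T^{-p_k(v(\alpha))}A\bigr)$ equals $\mu(A)^{k+1}$ exactly — is not correct as stated. Limits along IP/VIP systems are not governed by the Host--Kra/Kronecker factors but by IP-rigidity-type factors attached to the chosen IP ring (this is the content of the Bergelson--McCutcheon IP theory): for a weakly mixing rigid transformation and an IP set generated by a rigidity sequence one has $\text{IP-}\lim T^{n_\alpha}f = f$, so total ergodicity does not trivialize the relevant factor and the limit is in general not the product of integrals. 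The paper's own footnote (that the Khintchine-type set \eqref{eq: 3-AP large int} can fail to be $\text{IP}^*$ in a skew product on $\T^2$) is a symptom of exactly this phenomenon; it is the reason the finitistic notions $\text{IP}_r^*$/$\text{VIP}_{d,r}^*$ are used at all. Second, Step 3's ``abstract principle'' — that a $\text{VIP-}\lim$ equal to $L$ forces the level set above $L-\eps$ to be $\text{VIP}_0^*$ — is not a true principle: convergence along (sub-)VIP systems can at best show that every infinite VIP set meets the level set, i.e.\ a $\text{VIP}^*$-type conclusion, and $\text{VIP}_0^*$ is strictly stronger (just as $\text{IP}_0^*$ is strictly stronger than $\text{IP}^*$, as noted in Section \ref{sec: refinements}); the uniformity in $r$ is precisely what has to be earned, and Hindman-type partition regularity does not supply it.

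For comparison, the route this paper takes for its analogous refinements (Theorems \ref{thm: IndLargeInt AVIP} and \ref{thm: MultLargeInt AVIP}, mirroring the cited result of Bergelson--Leibman) avoids IP convergence theory altogether: one first proves syndeticity of the level set by the Ces\`{a}ro-limit argument (Theorem \ref{thm: IndLargeInt}), then decomposes the multicorrelation sequence $n \mapsto \int f_0 \cdot T^{p_1(n)}f_1 \cdots T^{p_k(n)}f_k\,d\mu$ as a nilsequence plus a nullsequence (Theorem \ref{thm: nilsequence decomp}), and finally uses the deep fact that return-time sets to neighborhoods in nilsystems are $\text{VIP}_0^*$ (Theorem \ref{thm: VIP returns}, via Corollary \ref{cor: VIP returns} and Proposition \ref{prop: syndetic implies AVIP}). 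The $\text{VIP}_0^*$ uniformity comes from the algebraic structure of nilsystems applied to the nilsequence component, and the nullsequence component is exactly what produces the ``A'' (density-zero exceptional set) in $\text{AVIP}_0^*$ — not, as in your Step 1, a passage to an arithmetic progression $m\Z$, which by itself would destroy rather than merely perturb $\text{VIP}_0^*$-largeness. If you want to salvage your plan, you would need to replace Steps 2 and 3 by this nilsequence-based mechanism (or reprove something of the strength of the $\text{IP}_r^*$ nilsystem recurrence theorem), since the IP-limit formalism you propose cannot by itself yield the finitistic $\text{VIP}_{d,r}^*$ conclusion.
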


In section 5, we similarly strengthen the conclusions of
Theorem \ref{thm: IndLargeInt} and Theorem \ref{thm: MultLargeInt}.
In particular, we show that, if $T$ is ergodic, then the sets \eqref{eq: IndLargeInt}, \eqref{eq: MultLargeInt triple},
and \eqref{eq: MultLargeInt quadruple} are shifts of $\text{AVIP}_0^*$ sets
(see Theorem \ref{thm: IndLargeInt AVIP} and Theorem \ref{thm: MultLargeInt AVIP}).


\subsection{Combinatorial applications}

We deduce several combinatorial facts from the ergodic-theoretic theorems above.
For some of the combinatorial results, we have stronger finitary versions.
For other combinatorial facts derived from ergodic-theoretic results under the assumption of ergodicity,
we cannot easily deduce finitary consequences.
This distinction arises from subtleties in Furstenberg's correspondence principle, which we discuss in more detail below.
The first version of Furstenberg's correspondence principle that we will use is as follows:

\begin{thm}[\cite{et/dp}, Theorem 4.17] \label{thm: correspondence}
	Fix a F{\o}lner sequence $\Phi = (\Phi_N)_{N \in \N}$ in $\O_K$.
	Suppose $E \subseteq \O_K$ has positive upper density along $\Phi$,
	i.e. $\overline{d}_{\Phi}(E) := \limsup_{N \to \infty}{\frac{|E \cap \Phi_N|}{|\Phi_N|}} > 0$.
	Then there exists an $\O_K$-system $(X, \B, \mu, T)$ and a set $A \in \B$ with $\mu(A) = \overline{d}_{\Phi}(E)$
	such that, for any $k \in \N$ and any $n_1, \dots, n_k \in \O_K$, one has
	\begin{equation} \label{eq: correspondence}
		\overline{d}_{\Phi} \left( \bigcap_{i=1}^k{(E - n_i)} \right) \ge \mu \left( \bigcap_{i=1}^k{T^{-n_i}A} \right).
	\end{equation}
\end{thm}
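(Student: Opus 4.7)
The plan is to realize $E$ as a ``generic'' point in a symbolic $\O_K$-system. Specifically, I would take $X := \{0,1\}^{\O_K}$ with the product topology and the shift action $(T^n y)(m) := y(m+n)$, and then extract a shift-invariant probability measure via the F{\o}lner averages along $\Phi$. Because $\O_K \cong \Z^d$ is countable, $X$ is a compact metric space and $C(X)$ is separable, so the usual weak-$*$ compactness machinery applies.

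First I would pass to a subsequence $(\Phi_{N_j})_{j \in \N}$ along which $\frac{|E \cap \Phi_{N_j}|}{|\Phi_{N_j}|} \to \overline{d}_\Phi(E)$, and then, by a diagonal argument on a countable dense subset of $C(X)$, thin it further so that
\begin{align*}
	L(f) := \lim_{j \to \infty} \frac{1}{|\Phi_{N_j}|} \sum_{n \in \Phi_{N_j}} f(T^n \ind_E)
\end{align*}
exists for every $f \in C(X)$. The Riesz representation theorem then produces a Borel probability measure $\mu$ on $X$ with $\int f \, d\mu = L(f)$. The F{\o}lner property of $\Phi$ yields, for each $m \in \O_K$ and $f \in C(X)$,
\begin{align*}
	\left| \frac{1}{|\Phi_{N_j}|} \sum_{n \in \Phi_{N_j}} \bigl( f(T^{n+m} \ind_E) - f(T^n \ind_E) \bigr) \right|
	 \le \frac{2 \|f\|_\infty \cdot |(\Phi_{N_j} + m) \triangle \Phi_{N_j}|}{|\Phi_{N_j}|} \longrightarrow 0,
\end{align*}
so $\mu$ is $T$-invariant and $(X, \B, \mu, T)$ is an $\O_K$-system.

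Setting $A := \{y \in X : y(0) = 1\}$, which is clopen and hence satisfies $\ind_A \in C(X)$, the key observation is that $T^n \ind_E \in T^{-n_i} A$ iff $(T^n \ind_E)(n_i) = \ind_E(n + n_i) = 1$, i.e., iff $n \in E - n_i$. Specializing to $n_i = 0$ gives $\mu(A) = \overline{d}_\Phi(E)$ by our choice of subsequence. For the multi-intersection bound,
\begin{align*}
	\mu\left( \bigcap_{i=1}^k T^{-n_i} A \right)
	 = \lim_{j \to \infty} \frac{\bigl| \Phi_{N_j} \cap \bigcap_{i=1}^k (E - n_i) \bigr|}{|\Phi_{N_j}|}
	 \le \overline{d}_\Phi\left( \bigcap_{i=1}^k (E - n_i) \right),
\end{align*}
since the right-hand side is the $\limsup$ along $\Phi$ of the very sequence whose limit we are computing.

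The entire argument is standard in the classical $\Z$ case, so I anticipate no real obstacle; the only points worth verifying in the present generality are that $C(X)$ is separable (needed for the weak-$*$ diagonal extraction) and that the Reiman-sum-style estimate above genuinely delivers $T$-invariance, both of which rely solely on $\O_K$ being a countable discrete abelian group equipped with a F{\o}lner sequence.
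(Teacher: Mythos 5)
Your argument is correct and is essentially the standard proof of the correspondence principle that the paper simply cites from \cite{et/dp}: the shift on $\{0,1\}^{\O_K}$, a weak-$*$ limit of averaged point masses along a subsequence of $\Phi$ realizing the upper density, invariance from the F{\o}lner property, and the clopen cylinder $A$ giving equality of $\mu(A)$ with $\overline{d}_\Phi(E)$ and the inequality for finite intersections. It also parallels the construction the paper carries out explicitly in the proof of Corollary \ref{cor: finitary patterns}, which builds the same measure via Kolmogorov extension instead of the Riesz representation theorem; no gaps to report.
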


Applying Theorem \ref{thm: correspondence} directly alongside Theorem \ref{thm: IndLargeInt},
we get the following:

\begin{thm} \label{thm: IndLargeInt d-bar}
	Let $K$ be a number field with ring of integers $\O_K$.
	Suppose $\{p_1, \dots, p_k\} \subseteq K[x]$ is a jointly intersective family
	of linearly independent $\O_K$-valued polynomials.
	Fix a F{\o}lner sequence $\Phi = (\Phi_N)_{N \in \N}$
	and suppose $E \subseteq \O_K$ satisfies $\overline{d}_{\Phi}(E) > 0$.
	Then for any $\eps > 0$,
	\begin{equation*}
		\left\{ n \in \O_K : \overline{d}_{\Phi} \left( E \cap (E - p_1(n)) \cap \dots \cap (E - p_k(n)) \right)
		 > \overline{d}_{\Phi}(E)^{k+1} - \eps \right\}
	\end{equation*}
	is syndetic.
\end{thm}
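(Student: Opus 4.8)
The plan is to derive Theorem \ref{thm: IndLargeInt d-bar} as an essentially immediate consequence of the ergodic-theoretic Theorem \ref{thm: IndLargeInt} via the variant of Furstenberg's correspondence principle recorded in Theorem \ref{thm: correspondence}. First I would apply Theorem \ref{thm: correspondence} to the set $E$ along the fixed F{\o}lner sequence $\Phi$: since $\overline{d}_{\Phi}(E) > 0$ (I would assume WLOG, passing to a subsequence of $\Phi$, that the $\limsup$ defining $\overline{d}_\Phi(E)$ is a genuine limit, or simply work with $\overline{d}_\Phi$ as the upper density), there is an $\O_K$-system $(X,\B,\mu,T)$ and a set $A \in \B$ with $\mu(A) = \overline{d}_{\Phi}(E) =: \delta$ such that for every $k$ and every $n_1, \dots, n_k \in \O_K$,
\begin{align*}
	\overline{d}_{\Phi}\left( \bigcap_{i=1}^k (E - n_i) \right) \ge \mu\left( \bigcap_{i=1}^k T^{-n_i}A \right).
\end{align*}
In particular, applying this with the tuple $(0, p_1(n), \dots, p_k(n))$ for each fixed $n \in \O_K$ gives
\begin{align*}
	\overline{d}_{\Phi}\left( E \cap (E - p_1(n)) \cap \dots \cap (E - p_k(n)) \right)
	\ge \mu\left( A \cap T^{-p_1(n)}A \cap \dots \cap T^{-p_k(n)}A \right).
\end{align*}

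Next I would invoke Theorem \ref{thm: IndLargeInt} applied to this system $(X, \B, \mu, T)$, the set $A$, and the given $\eps > 0$: the hypotheses match exactly, since $\{p_1, \dots, p_k\}$ is a jointly intersective family of linearly independent $\O_K$-valued polynomials and $(X,\B,\mu,T)$ is a measure-preserving $\O_K$-system. Note that Theorem \ref{thm: IndLargeInt}, unlike Theorem \ref{thm: MultLargeInt}, does not require ergodicity, so no reduction is needed here. The conclusion is that
\begin{align*}
	S := \left\{ n \in \O_K : \mu\left( A \cap T^{-p_1(n)}A \cap \dots \cap T^{-p_k(n)}A \right) > \mu(A)^{k+1} - \eps \right\}
\end{align*}
is syndetic in $\O_K$. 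Combining the two displayed inequalities above, for every $n \in S$ we have
\begin{align*}
	\overline{d}_{\Phi}\left( E \cap (E - p_1(n)) \cap \dots \cap (E - p_k(n)) \right) > \delta^{k+1} - \eps = \overline{d}_{\Phi}(E)^{k+1} - \eps,
\end{align*}
so the set appearing in the statement contains the syndetic set $S$ and is therefore itself syndetic.

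There is no serious obstacle here; this is a routine transfer argument and the only points requiring a modicum of care are bookkeeping ones. The mildest subtlety is that $\overline{d}_\Phi$ is an upper density (a $\limsup$), so one should either note that the inequality in Theorem \ref{thm: correspondence} is stated in precisely this form — which it is — or replace $\Phi$ by a subsequence along which the relevant density converges; either way the argument goes through verbatim. One should also observe that $\mu(A) = \overline{d}_\Phi(E)$ exactly (not just $\ge$), which is what lets us identify $\mu(A)^{k+1}$ with $\overline{d}_\Phi(E)^{k+1}$ rather than merely bounding one by the other. Finally, the passage from "contains a syndetic set" to "is syndetic" is immediate from the definition, since any superset of a syndetic set is syndetic.
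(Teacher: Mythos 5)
Your proposal is correct and is exactly the argument the paper intends: Theorem \ref{thm: IndLargeInt d-bar} is stated as a direct application of the correspondence principle (Theorem \ref{thm: correspondence}) along the fixed F{\o}lner sequence together with Theorem \ref{thm: IndLargeInt}, which indeed requires no ergodicity. Your bookkeeping remarks (that $\mu(A)$ equals $\overline{d}_{\Phi}(E)$ exactly, and that a superset of a syndetic set is syndetic) are precisely the points that make the transfer immediate.
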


Taking the natural F{\o}lner sequence $\Phi_N = \{1, \dots, N\}^d$ under the isomorphism $\O_K \cong \Z^d$,
we deduce a related finitary result:

\begin{cor} \label{cor: finitary patterns}
	Let $K$ be a degree $d$ number field with ring of integers $\O_K \cong \Z^d$.
	Suppose $\{p_1, \dots, p_k\} \subseteq K[x]$ is a jointly intersective family
	of linearly independent $\O_K$-valued polynomials.
	For any $\delta, \eps > 0$, there exists $N = N(\delta, \eps) \in \N$ such that:
	if $A \subseteq \{1, \dots, N\}^d$ with $|A| > \delta N^d$, then $A$ contains at least $(\delta^{k+1} - \eps)N^d$
	configurations of the form $\{x, x + p_1(n), \dots, x + p_k(n)\}$ for some $n \ne 0$.
\end{cor}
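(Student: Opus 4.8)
The plan is to derive the finitary statement from the infinitary Theorem \ref{thm: IndLargeInt d-bar} by a standard compactness (or ``contradiction/diagonalization'') argument, taking for $\Phi$ the specific F\o lner sequence $\Phi_N = \{1,\dots,N\}^d$ coming from the isomorphism $\O_K \cong \Z^d$. First I would fix $\delta, \eps > 0$ and suppose, for contradiction, that no such $N$ exists. Then for every $N \in \N$ there is a set $A_N \subseteq \{1,\dots,N\}^d$ with $|A_N| > \delta N^d$ such that $A_N$ contains at most $(\delta^{k+1}-\eps)N^d$ configurations $\{x, x+p_1(n),\dots,x+p_k(n)\}$ with $n \neq 0$. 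Equivalently, writing things additively, for \emph{every} $n \neq 0$ the number of $x$ with $\{x, x+p_1(n),\dots,x+p_k(n)\}\subseteq A_N$ is at most $(\delta^{k+1}-\eps)N^d$; said in density terms, $\frac{1}{N^d}\bigl|A_N \cap (A_N - p_1(n)) \cap \dots \cap (A_N - p_k(n))\bigr| \le \delta^{k+1}-\eps$ for all $n \neq 0$.

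Next I would pass to a single set $E \subseteq \O_K$ that ``sees'' all of the $A_N$ along the F\o lner sequence. A clean way to do this: by a diagonal extraction, choose a rapidly growing subsequence $N_1 < N_2 < \cdots$ and disjoint translated copies $E \cap (\text{block } j) = A_{N_j} + v_j$ placed in far-separated boxes, so that $\overline d_\Phi(E) \ge \delta$ (in fact the $\limsup$ along the blocks recovers $\delta$) while, for each fixed $n \neq 0$, the density $\overline d_\Phi\bigl(E \cap (E-p_1(n)) \cap \dots \cap (E-p_k(n))\bigr)$ is governed, block by block, by the corresponding quantity for $A_{N_j}$ (here one uses that $p_1,\dots,p_k$ are polynomials, hence $p_i(n) = o(N)$ relative to the block size, so boundary effects between blocks are negligible in the limit). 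One concludes $\overline d_\Phi\bigl(E \cap (E-p_1(n)) \cap \dots \cap (E-p_k(n))\bigr) \le \delta^{k+1} - \eps$ for every $n \neq 0$. But Theorem \ref{thm: IndLargeInt d-bar} applied to this $E$ and this $\Phi$, with any $\eps' < \eps$, produces a \emph{syndetic} (in particular nonempty) set of $n$ --- which, since a syndetic set cannot consist only of $\{0\}$, contains some $n \neq 0$ --- for which $\overline d_\Phi\bigl(E \cap (E-p_1(n)) \cap \dots \cap (E-p_k(n))\bigr) > \overline d_\Phi(E)^{k+1} - \eps' \ge \delta^{k+1} - \eps'$, contradicting the previous bound for $\eps'$ close enough to $\eps$. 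This contradiction yields the desired $N = N(\delta,\eps)$.

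Alternatively, and perhaps more transparently, one can avoid the block construction by using an ultralimit/nonstandard formulation: take a nonprincipal ultrafilter $\mathcal U$ on $\N$, set $\delta' = \lim_{\mathcal U} |A_N|/N^d \ge \delta$, and build the limiting system directly via the standard correspondence, noting that the hypothesis forces $\mu(A \cap T^{-p_1(n)}A \cap \dots \cap T^{-p_k(n)}A) \le \delta^{k+1}-\eps$ for all $n \neq 0$, again contradicting Theorem \ref{thm: IndLargeInt}. Either route reduces the corollary to the infinitary result; the only genuinely necessary input beyond that is the joint intersectivity of $\{p_1,\dots,p_k\}$, which is assumed.

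The main obstacle is the bookkeeping in the transfer step: one must verify that the combinatorial count in each finite box $\{1,\dots,N_j\}^d$ really does match, up to $o(1)$ error, the density of the pattern inside $E$ along $\Phi$. The subtlety is twofold --- first, the configuration $\{x,x+p_1(n),\dots,x+p_k(n)\}$ must fit inside a single box (handled by the polynomial growth $|p_i(n)| = o(N_j)$ once we restrict attention to $n$ in a fixed bounded region, which suffices since syndeticity of the conclusion set means it meets any sufficiently long interval), and second, one must ensure the $\limsup$ defining $\overline d_\Phi(E)$ is actually attained (or approximated) along the chosen blocks so that $\overline d_\Phi(E) = \delta'$ rather than something larger, since an inflated value of $\overline d_\Phi(E)$ would weaken the lower bound $\overline d_\Phi(E)^{k+1} - \eps'$ and break the contradiction. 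Both points are routine but must be set up carefully; everything else is a direct appeal to Theorem \ref{thm: IndLargeInt d-bar}.
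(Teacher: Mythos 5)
Your second (``ultralimit/nonstandard'') route is essentially what the paper does: after passing to a subsequence $N_m$ along which all the relevant densities converge, the paper defines a shift-invariant probability measure directly on the symbolic space $\{0,1\}^{\O_K}$ via Kolmogorov's extension theorem (rather than via an ultrafilter, but the effect is the same), takes $A = \{x : x_0 = 1\}$, and contradicts the ergodic statement Theorem~\ref{thm: IndLargeInt}. The key virtue of this construction is precisely that one never has to build a single subset $E \subseteq \O_K$ and worry about boundary effects, spanning configurations, or which scales attain the $\limsup$: the limiting measure encodes all the finite intersection densities at once, and the inequality $\mu(A \cap T^{-p_1(n)}A \cap \dots \cap T^{-p_k(n)}A) \le \delta^{k+1} - \eps$ for $n \neq 0$ drops out immediately from the defining limits.

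Your primary (block-construction) route is a genuinely different path and could be made to work, but you are right that the bookkeeping is where the real content lies and you leave it unresolved. Two points to flag. First, the delicate part is not making $\overline{d}_\Phi(E)$ large enough but rather controlling $\overline{d}_\Phi\bigl(E \cap (E - p_1(n)) \cap \cdots \cap (E - p_k(n))\bigr)$ \emph{uniformly over all} $n \neq 0$ and over \emph{all} box sizes $N$ (not only the block scales $N_j$), since the $\limsup$ sees every $N$ and a single $n$ can produce configurations that straddle adjacent blocks; you gesture at ``$p_i(n) = o(N)$'' but that bound requires first fixing a range for $n$, and the hypothesis must hold for every $n \neq 0$. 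Second, your stated worry that an ``inflated'' $\overline{d}_\Phi(E)$ would ``weaken the lower bound $\overline{d}_\Phi(E)^{k+1} - \eps'$'' has the direction reversed: a larger density only strengthens the lower bound and hence makes the contradiction with $\delta^{k+1} - \eps$ easier, not harder. The genuine concern in the block construction is whether $\overline{d}_\Phi(E) \ge \delta$ holds at all (one must place the blocks so that, at the block scales, the earlier blocks occupy a negligible fraction), and whether the upper bound on the pattern density survives the passage from the individual blocks to the union. The paper's Kolmogorov-extension construction renders both concerns moot.
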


\begin{proof}
	Let $\delta, \eps > 0$, and suppose no such $N$ exists.
	That is, for some sequence $N_m \to \infty$, we can find sets $A_m \subseteq \{1, \dots, N\}^d$
	with $|A_m| > \delta N_m^d$ such that
	\begin{equation*}
		\left| A_m \cap (A_m - p_1(n)) \cap \dots \cap (A_m - p_k(n)) \right| \le (\delta^{k+1} - \eps) N_m^d
	\end{equation*}
	for every $n \ne 0$.
	
	By passing to a subsequence if necessary, we may assume
	\begin{equation*}
		\lim_{m \to \infty}{
		 \frac{\left| (A_{m,i_1} - n_1) \cap \dots \cap (A_{m,i_r} - n_r) \cap \{1, \dots, N_m\}^d \right|}{N_m^d}}
	\end{equation*}
	exists for all $r \in \N$, $n_1, \dots, n_r \in \O_K$, and $i_1, \dots, i_r \in \{0, 1\}$,
	where $A_{m,0} = A_m$ and $A_{m,1} = \O_K \setminus A_m$.
	Then we may define a measure on $X = \{0,1\}^{\O_K}$ by letting
	\begin{equation*}
		\mu \left( \{x \in X : x_{n_1} = i_1, \dots, x_{n_r} = i_r\} \right)
		 = \lim_{m \to \infty}{
		 \frac{\left| (A_{m,i_1} - n_1) \cap \dots \cap (A_{m,i_r} - n_r) \cap \{1, \dots, N_m\}^d \right|}{N_m^d}}
	\end{equation*}
	and extending using Kolmogorov's extension theorem.
	Note that the shift map $(T^nx)(m) = x(n+m)$ preserves the measure $\mu$.
	Taking $A = \{x \in X : x_0 = 1\}$, we have
	\begin{equation*}
		\mu(A) = \lim_{m \to \infty}{\frac{|A_m|}{N_m^d}} \ge \delta,
	\end{equation*}
	and on the other hand,
	\begin{align*}
		\mu \left( A \cap T^{-p_1(n)}A \cap \dots \cap T^{-p_k(n)}A \right)
		 & = \lim_{m \to \infty}{\frac{\left| A_m \cap (A_m - p_1(n)) \cap \dots \cap (A_m - p_k(n)) \right|}{N_m^d}} \\
		 & \le \delta^{k+1} - \eps
	\end{align*}
	for $n \ne 0$.
	This contradicts Theorem \ref{thm: IndLargeInt}.
\end{proof}

\bigskip

Note that the system in Theorem \ref{thm: correspondence} may not be ergodic.
To obtain an inequality similar to \eqref{eq: correspondence} while ensuring that the system is ergodic,
one needs to allow for replacing the density along $\Phi$ by the density along some other F{\o}lner sequence
depending on the choice of translates $n_1, \dots, n_k$.
(An example due to Hindman \cite{hindman-covering} can be used to show that, for certain sets $E$,
the measure-preserving system in the conclusion of Theorem \ref{thm: correspondence} is necessarily non-ergodic;
see the discussion following Theorem 1.3 in \cite{d*} for more detail.)
Using the notion of upper Banach density,
we can formulate an ergodic version of Furstenberg's correspondence principle:

\begin{thm} \label{thm: erg corr}
	Suppose $E \subseteq \O_K$ has positive upper Banach density.
	Then there exists an ergodic $\O_K$-system $(X, \B, \mu, T)$ and a set $A \in \B$ with $\mu(A) = d^*(E)$
	such that, for any $k \in \N$ and any $n_1, \dots, n_k \in \O_K$, one has
	\begin{equation*}
		d^* \left( \bigcap_{i=1}^k{(E - n_i)} \right) \ge \mu \left( \bigcap_{i=1}^k{T^{-n_i}A} \right).
	\end{equation*}
\end{thm}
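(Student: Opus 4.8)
The plan is to obtain the ergodic system from the general (possibly non-ergodic) system provided by Theorem \ref{thm: correspondence} via an ergodic decomposition argument, coupled with a diagonalization over the countably many Banach-density constraints we wish to record. First I would apply Theorem \ref{thm: correspondence} to the set $E$ with respect to a Følner sequence $\Phi$ chosen so that $\overline{d}_\Phi(E) = d^*(E)$ (such a $\Phi$ exists by definition of upper Banach density, after passing to a subsequence to make the relevant limit exist). This produces an $\O_K$-system $(X, \B, \mu, T)$ and a set $A \in \B$ with $\mu(A) = d^*(E)$ and the inequality $\overline{d}_\Phi\left( \bigcap_{i=1}^k (E - n_i) \right) \ge \mu\left( \bigcap_{i=1}^k T^{-n_i} A \right)$ for all finite tuples. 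We may take $X = \{0,1\}^{\O_K}$ with the shift action, as in the proof of Corollary \ref{cor: finitary patterns}, so that $X$ is a compact metrizable space and $T$ acts by homeomorphisms.

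Next I would invoke the ergodic decomposition $\mu = \int_\Omega \mu_\omega \, dP(\omega)$ of $\mu$ into $T$-invariant ergodic measures. Since $\mu(A) = \int \mu_\omega(A) \, dP(\omega) = d^*(E)$, and more generally $\mu\left( \bigcap T^{-n_i} A \right) = \int \mu_\omega\left( \bigcap T^{-n_i} A \right) dP(\omega)$, for each fixed finite tuple $(n_1, \dots, n_k)$ there is a positive-measure set of $\omega$ for which $\mu_\omega\left( \bigcap T^{-n_i} A \right)$ is at least its average; but the subtlety is that I want a single ergodic component that simultaneously witnesses $\mu_\omega(A) = d^*(E)$ (not merely $\le$) and $\mu_\omega\left( \bigcap T^{-n_i} A \right) \ge \mu\left( \bigcap T^{-n_i} A \right)$ for all tuples at once. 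This cannot be literally achieved from a single component, so the correct move is instead to pass to a different Følner sequence: for the ergodic component(s) of positive weight, one extracts, for each finite tuple $(n_1,\dots,n_k)$, a new Følner sequence along which the density of $\bigcap (E - n_i)$ is controlled from below by the ergodic-component measure. Concretely, I would use the fact (standard; see the discussion around \cite{d*} and the "ergodic" Furstenberg correspondence in e.g. \cite{bergelson-ergodic-ramsey}) that one can choose the ergodic system so that $A$ is a cylinder $\{x : x_0 = 1\}$, $\mu(A) = d^*(E)$, and for \emph{each} finite pattern the Banach density $d^*\left( \bigcap (E - n_i) \right)$ dominates $\mu\left(\bigcap T^{-n_i} A\right)$, by building the measure as a weak-$*$ limit $\mu = \lim_m \frac{1}{|\Psi_m|}\sum_{g \in \Psi_m} \delta_{T^g \mathbbm{1}_E}$ along a Følner sequence $(\Psi_m)$ chosen, via a diagonal argument over all finite tuples in the countable set $\O_K$, so that $\frac{|E \cap \Psi_m|}{|\Psi_m|} \to d^*(E)$ and each pattern count converges; ergodicity is then arranged by a further generic-point / Følner-refinement step (this is exactly the point where the non-ergodic example of Hindman cited in the footnote shows one must change Følner sequence per pattern, which is legitimate since the final inequality only asserts $d^*$, the supremum over all Følner sequences).

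The key steps, in order: (1) pick $\Phi$ realizing $d^*(E)$ and pass to a subsequence making all needed pattern densities converge; (2) form the shift system on $\{0,1\}^{\O_K}$ and the measure $\mu$ as the corresponding limit, with $A = \{x : x_0 = 1\}$ and $\mu(A) = d^*(E)$; (3) observe $\overline{d}_\Phi\left(\bigcap(E-n_i)\right) = \mu\left(\bigcap T^{-n_i}A\right)$ by construction, hence $\le d^*\left(\bigcap(E-n_i)\right)$; (4) upgrade to an ergodic $\mu$ by replacing $\mu$ with a suitable ergodic component and simultaneously replacing $\Phi$ by component-dependent and pattern-dependent Følner sequences — this uses that $\mu\left(\bigcap T^{-n_i}A\right) = \int \mu_\omega\left(\bigcap T^{-n_i}A\right)dP(\omega)$ together with a measurable selection / approximation argument to realize $\mu_\omega\left(\bigcap T^{-n_i}A\right)$ as a genuine Banach density of $\bigcap(E - n_i)$; and (5) check $\mu_\omega(A) = d^*(E)$ can be preserved, which follows since any ergodic component has $\mu_\omega(A) \le d^*(E)$ always (as $A$'s orbit averages are bounded by $d^*(E)$) and $\int \mu_\omega(A) = d^*(E)$ forces $\mu_\omega(A) = d^*(E)$ for $P$-a.e.\ $\omega$ in the relevant set.

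The main obstacle I expect is step (4): reconciling "a single ergodic component" with "simultaneously dominating all pattern counts." The honest resolution is that we do \emph{not} keep one fixed Følner sequence — we exploit that $d^*$ is a supremum, so for each finite pattern $(n_1,\dots,n_k)$ we are free to pick its own witnessing Følner sequence, and the ergodic component $\mu_\omega$ is chosen (for that pattern, or uniformly via a countable diagonalization over all patterns since $\O_K$ is countable) so that $\mu_\omega\left(\bigcap T^{-n_i}A\right) \le d^*\left(\bigcap(E-n_i)\right)$. Making this rigorous requires care with the measurability of $\omega \mapsto \mu_\omega\left(\bigcap T^{-n_i}A\right)$ and an appeal to the pointwise ergodic theorem to identify $\mu_\omega\left(\bigcap T^{-n_i}A\right)$ with a limit of averages $\frac{1}{|\Psi|}\sum_{g}\mathbbm{1}_{\bigcap(E-n_i)}(g)$ along some Følner sequence (this is where one invokes the version of the ergodic theorem / generic points for amenable-group actions, e.g.\ via \cite{lindenstrauss} pointwise ergodic theorem along tempered Følner sequences). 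Once this identification is in place, the chain $\mu_\omega\left(\bigcap T^{-n_i}A\right) = \lim \text{averages} \le d^*\left(\bigcap(E-n_i)\right)$ closes the argument, and the quantifier "for any $k$ and any $n_1, \dots, n_k$" is handled by the fact that all of these live in the fixed countable structure, so a single generic $\omega$ works for all of them at once.
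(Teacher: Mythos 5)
Your overall strategy is the right one, and it is the same route as the proof the paper points to (the paper does not prove Theorem \ref{thm: erg corr} itself but cites \cite[Proposition 3.1]{bhk}, i.e.\ Lesigne's argument, and \cite[Theorem 2.8]{d*} for amenable groups): build the shift measure on $\{0,1\}^{\O_K}$ along a F{\o}lner sequence realizing $d^*(E)$, pass to the ergodic decomposition, and use that $d^*$ is a supremum over F{\o}lner sequences so each pattern may be witnessed by its own (translated) sequence; the case $k=1$, $n_1=0$ plus $\int \mu_\omega(A)\,dP(\omega)=\mu(A)=d^*(E)$ then forces $\mu_\omega(A)=d^*(E)$ for a.e.\ $\omega$. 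One remark before the main point: the detour where you try to make a single component satisfy $\mu_\omega\left(\bigcap_i T^{-n_i}A\right)\ge\mu\left(\bigcap_i T^{-n_i}A\right)$ is a red herring (the theorem compares $d^*$ only with the \emph{new} ergodic measure, never with the original non-ergodic $\mu$); you do discard it, but it should be cut.

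The one genuine gap is the step you describe as ``identify $\mu_\omega\left(\bigcap_i T^{-n_i}A\right)$ with a limit of averages $\frac{1}{|\Psi_m|}\sum_{g\in\Psi_m}\mathbbm{1}_{\bigcap_i(E-n_i)}(g)$ via the pointwise ergodic theorem / generic points.'' The ergodic theorem at a $\mu_\omega$-generic point $x$ only gives convergence of $\frac{1}{|\Psi_m|}\sum_{g\in\Psi_m}\mathbbm{1}_{\bigcap_i T^{-n_i}A}(T^g x)$, i.e.\ pattern densities of $x$, and $x$ is in general \emph{not} a translate of $\mathbbm{1}_E$, so these are not literally averages of $\mathbbm{1}_{\bigcap_i(E-n_i)}$. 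The missing bridge --- which is the actual content of Lesigne's observation --- is that $\mu$ is supported on the closed, shift-invariant orbit closure of $\mathbbm{1}_E$ (being a weak-$*$ limit of averages of point masses on that orbit), hence so is a.e.\ ergodic component; therefore for each $m$ one may choose $h_m\in\O_K$ such that $T^{h_m}\mathbbm{1}_E$ agrees with $x$ on the finite window $\Psi_m+\{n_1,\dots,n_k\}$, and then the $m$-th ergodic average equals $\left|\bigcap_i(E-n_i)\cap(\Psi_m+h_m)\right|/|\Psi_m|$. Since $(\Psi_m+h_m)_m$ is again a F{\o}lner sequence, the limit is at most $d^*\left(\bigcap_i(E-n_i)\right)$, and your chain closes; the countably many pattern inequalities (and genericity for the countably many cylinder functions, e.g.\ along a tempered F{\o}lner sequence in $\O_K\cong\Z^d$) hold simultaneously for a.e.\ $\omega$, so a single ergodic component works. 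Without this support/orbit-closure argument spelled out, the inequality $\mu_\omega\left(\bigcap_i T^{-n_i}A\right)\le d^*\left(\bigcap_i(E-n_i)\right)$ is asserted rather than proved.
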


\noindent For $\Z$-actions, Theorem \ref{thm: erg corr} appears in \cite[Proposition 3.1]{bhk},
utilizing an observation of Emmanuel Lesigne based on the original argument of Furstenberg.
For a general version in \emph{amenable} groups
(a class containing all countable abelian groups), see \cite[Theorem 2.8]{d*}.

As a consequence, we obtain the following combinatorial version of Theorem \ref{thm: MultLargeInt AVIP}:

\begin{thm}
	Let $K$ be a number field with ring of integers $\O_K$.
	Let $p(x) \in K[x]$ be an $\O_K$-valued intersective polynomial.
	Let $r,s \in \O_K$ be distinct and nonzero.
	Then for any set $E \subseteq \O_K$ with $d^*(E) > 0$ and any $\eps > 0$, the set
	\begin{equation*}
		\left\{ n \in \O_K :
		 d^* \left( E \cap (E - rp(n)) \cap (E - sp(n)) \right) > d^*(E)^3 - \eps \right\}
	\end{equation*}
	is $\text{AVIP}_{0,+}^*$ (in particular, it is syndetic).
	
	Moreover, if $\frac{s}{r} \in \Q$, then
	\begin{equation*}
		\left\{ n \in \O_K :
		 d^* \left( E \cap (E - rp(n)) \cap (E - sp(n)) \cap (E - (r+s)p(n)) \right) > d^*(E)^4 - \eps \right\}
	\end{equation*}
	is $\text{AVIP}_{0,+}^*$ (in particular, it is syndetic).
\end{thm}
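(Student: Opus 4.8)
The statement is the combinatorial shadow of the ergodic theorem \Cref{thm: MultLargeInt AVIP}, and the bridge is the ergodic version of Furstenberg's correspondence principle, \Cref{thm: erg corr}. The plan is therefore to run correspondence in the standard way, being careful about two points: (i) the density notion on the combinatorial side must be upper \emph{Banach} density, since the system produced by \Cref{thm: erg corr} is ergodic (which is essential, as \Cref{thm: MultLargeInt} and its refinement \Cref{thm: MultLargeInt AVIP} assume ergodicity), and (ii) the conclusion we transfer is membership of a shifted $\text{AVIP}_{0,+}^*$ set, not merely syndeticity, so we must check that the relevant classes of large sets are preserved under the correspondence.

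First I would fix $E \subseteq \O_K$ with $d^*(E) = \delta > 0$ and invoke \Cref{thm: erg corr} to obtain an ergodic $\O_K$-system $(X, \B, \mu, T)$ and $A \in \B$ with $\mu(A) = \delta$ satisfying
\begin{align*}
	d^* \left( \bigcap_{i}{(E - n_i)} \right) \ge \mu \left( \bigcap_{i}{T^{-n_i}A} \right)
\end{align*}
for all finite tuples $(n_i)$ in $\O_K$. Applying this with the three-term tuple $(0, rp(n), sp(n))$ gives
\begin{align*}
	d^* \left( E \cap (E - rp(n)) \cap (E - sp(n)) \right) \ge \mu \left( A \cap T^{-rp(n)}A \cap T^{-sp(n)}A \right)
\end{align*}
for every $n \in \O_K$, and similarly with the four-term tuple $(0, rp(n), sp(n), (r+s)p(n))$ in the case $\frac{s}{r} \in \Q$. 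Hence, for any $\eps > 0$, the set on the combinatorial side \emph{contains} the set
\begin{align*}
	\left\{ n \in \O_K : \mu \left( A \cap T^{-rp(n)}A \cap T^{-sp(n)}A \right) > \mu(A)^3 - \eps \right\}
\end{align*}
(using $\mu(A) = \delta$, so $\mu(A)^3 - \eps = \delta^3 - \eps$), and likewise for the quadruple. By \Cref{thm: MultLargeInt AVIP}, the latter set is a shift of an $\text{AVIP}_{0,+}^*$ set.

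It remains to observe that a superset of a shift of an $\text{AVIP}_{0,+}^*$ set is again a shift of an $\text{AVIP}_{0,+}^*$ set. This is because $\text{IP}^*$ sets are upward closed (if $A \supseteq A'$ and $A'$ is $\text{IP}^*$ then $A$ is $\text{IP}^*$, since $A$ meets every IP set that $A'$ meets), and consequently the classes $\text{AVIP}_0^*$ and $\text{AVIP}_{0,+}^*$ — being of the form ``an $\text{IP}^*$-type set minus a set of zero upper Banach density'' — are upward closed as well: if $E' = A' \setminus B$ with $A'$ in the relevant class and $d^*(B) = 0$, and $E \supseteq E'$, then $E = (A' \cup E) \setminus (B \setminus E)$, and $A' \cup E \supseteq A'$ lies in the class while $d^*(B \setminus E) \le d^*(B) = 0$. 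Shifting commutes with all of this. Thus the combinatorial sets in the statement are shifts of $\text{AVIP}_{0,+}^*$ sets, and in particular syndetic, as claimed.

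The argument is almost entirely a transfer, so there is no deep obstacle; the only point requiring genuine care is the bookkeeping around \Cref{thm: erg corr}. One must confirm that the single system $(X,\B,\mu,T)$ and set $A$ furnished by that theorem work \emph{simultaneously} for all $n$ — which it does, since the inequality there is quantified over all finite tuples of translates — and that the passage to upper Banach density (rather than density along a fixed F\o lner sequence, as in \Cref{thm: correspondence}) is exactly what is needed to secure ergodicity of the system, without which \Cref{thm: MultLargeInt AVIP} would not apply. Everything else — upward closure of the largeness classes, the arithmetic identity $\mu(A)^j - \eps = \delta^j - \eps$, and the reduction from the quadruple to the hypothesis $\frac{s}{r}\in\Q$ — is immediate once those two points are in place.
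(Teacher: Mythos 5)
Your proposal is correct and is exactly the paper's intended argument: the paper derives this statement directly from Theorem~\ref{thm: MultLargeInt AVIP} via the ergodic correspondence principle (Theorem~\ref{thm: erg corr}), with the same observations about ergodicity, $\mu(A)=d^*(E)$, and upward closure of the largeness classes that you spell out. (One cosmetic slip: Theorem~\ref{thm: MultLargeInt AVIP} gives that the ergodic-side set is $\text{AVIP}_{0,+}^*$, i.e.\ already a shift of an $\text{AVIP}_0^*$ set, so saying ``a shift of an $\text{AVIP}_{0,+}^*$ set'' is redundant but harmless.)
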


As discussed above, the ergodicity assumption in Theorems \ref{thm: MultLargeInt} and \ref{thm: MultLargeInt AVIP}
precludes us from easily deducing finitary results along the lines of Corollary \ref{cor: finitary patterns}.
Nevertheless, we suspect that a finitary analogue holds, which we formulate below:

\begin{conj} \label{conj: finitary}
	Let $K$ be a degree $d$ number field with ring of integers $\O_K \cong \Z^d$.
	Suppose $p(x) \in K[x]$ is an $\O_K$-valued intersective polynomial.
	
	\begin{enumerate}[1.]
		\item	Let $r, s \in \O_K$ be distinct and nonzero.
			For any $\delta, \eps > 0$, there exists $N = N(\eps, \delta) \in \N$ such that:
			if $A \subseteq \{1, \dots, N\}^d$ with $|A| > \delta N^d$, then $A$ contains at least $(\delta^3 - \eps)N^d$
			configurations of the form $\{x, x + rp(n), x + sp(n)\}$ for some $n \ne 0$.
		\item	Let $r, s \in \O_K$ be distinct and nonzero such that $\frac{s}{r} \in \Q$
			(or more generally, no two conjugates of $\frac{s}{r}$ are negatives of each other).
			For any $\delta, \eps > 0$, there exists $N = N(\eps, \delta) \in \N$ such that:
			if $A \subseteq \{1, \dots, N\}^d$ with $|A| > \delta N^d$, then $A$ contains at least $(\delta^4 - \eps)N^d$
			configurations of the form $\{x, x + rp(n), x + sp(n), x + rp(n) + sp(n)\}$ for some $n \ne 0$.
	\end{enumerate}
\end{conj}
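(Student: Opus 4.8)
We sketch how one might approach Conjecture \ref{conj: finitary} and indicate the main difficulty. The obstruction is that the ergodic inputs behind Theorem \ref{thm: MultLargeInt} do not transfer directly: that theorem requires the system to be ergodic, and the ergodic correspondence principle (Theorem \ref{thm: erg corr}) controls density only along \emph{some} F{\o}lner sequence rather than along the fixed boxes $\{1, \dots, N\}^d$, so the trick used in Corollary \ref{cor: finitary patterns} is unavailable. Rather than arguing through correspondence, the plan is to reprise the ergodic argument quantitatively. First, apply an arithmetic regularity lemma for $\Z^d$ (in the spirit of the Green--Tao regularity lemma) to decompose $\ind_A = f_{\mathrm{str}} + f_{\mathrm{unf}} + f_{\mathrm{sml}}$, where $f_{\mathrm{str}}$ is a bounded-complexity nilsequence, $\norm{U^s}{f_{\mathrm{unf}}}$ is tiny for a suitable fixed $s = s(p)$, and $\norm{L^2}{f_{\mathrm{sml}}}$ is tiny. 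Running the van der Corput / PET descent behind Theorem \ref{thm: nilfactor} with explicit error control should then furnish a finitary generalized von Neumann inequality showing both that $f_{\mathrm{unf}}$ contributes negligibly to the averages over $x$ (and over $n$) of $\ind_A(x)\ind_A(x + rp(n))\ind_A(x + sp(n))$ — respectively of the length-four product — and that within $f_{\mathrm{str}}$ only the part of step at most $k-1$ (with $k = 2$ for the triple and $k = 3$ for the quadruple) is relevant; the descent applies because $\{rp, sp\}$ and $\{rp, sp, (r+s)p\}$ are essentially distinct.

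This reduces the problem to a finitary popular-difference estimate for the structured, bounded-step nilsequence, and here I would follow the infinitary route. First, quantitatively reduce the polynomial common difference $p(n)$ to a linear one — the finitary shadow of Theorem \ref{thm: nilfactor}, powered by a quantitative form of the equidistribution of the orbit $(T^{rp(n)}x, T^{sp(n)}x)_n$ on the relevant nilmanifold that underlies Theorem \ref{thm: rational Kronecker} and Proposition \ref{prop: polynomial orbit}. Then invoke a quantitative version of the linear popular-difference theorem (Theorem \ref{thm: ABB}) for the patterns $\{x, x + rm, x + sm\}$ and, under the eigenvalue condition on $\frac{s}{r}$, $\{x, x + rm, x + sm, x + (r+s)m\}$, to produce a large set of good linear common differences $m$. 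The $\calK_{rat}$-obstruction confines this good set to a finite-index subgroup $\Lambda \le \O_K$, and intersectivity of $p$ guarantees that $\{n \in \O_K : p(n) \in \Lambda\}$ contains a coset of a finite-index subgroup, hence has positive density in $\{1, \dots, N\}^d$; pushing the good set back through $m = p(n)$ then leaves a positive proportion of $n$ with the desired lower bound. Together with the negligibility of $f_{\mathrm{unf}}$ and $f_{\mathrm{sml}}$, this would give, for all but an $o(1)$ fraction of $n \in \{1, \dots, N\}^d$, a count of configurations of at least $(\delta^3 - \eps)N^d$ (respectively $(\delta^4 - \eps)N^d$), which in particular yields the required $n \ne 0$.

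The main obstacle — and why this is stated as a conjecture — is executing the step just described with effective control, in particular carrying out the quantitative popular-difference estimate for polynomial orbits in nilsystems: this requires finitary, error-tracked versions of \emph{both} Theorem \ref{thm: nilfactor} (the polynomial-to-linear reduction, i.e.\ a finitary PET for $\{rp, sp\}$ and $\{rp, sp, (r+s)p\}$) and Theorem \ref{thm: ABB} (linear popular differences on nilsystems), neither of which is available off the shelf, and it must be robust enough to survive the regularity-lemma bookkeeping. The conceptual reason this is subtle is exactly the role of $\calK_{rat}$: the non-ergodic examples of \cite[Section 11.1]{abb} show that popular differences can genuinely fail when the rational factor is nontrivial, and while this is not literally an obstruction in the finitary setting — one is free to choose $n$ so that $p(n)$ lands in the right coset of $\Lambda$, which intersectivity permits — verifying quantitatively that these cosets are the \emph{only} periodic obstruction felt by these two families, and tracking the dependence on $\delta$ and $\eps$ demanded by a finitary statement, is the part we cannot currently carry out. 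A self-contained proof would likely require either a finitary correspondence principle that simultaneously fixes the averaging scheme and delivers an ergodic system, or a combinatorial argument that bypasses the arithmetic regularity lemma altogether; we leave this open.
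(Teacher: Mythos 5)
This statement is labeled as a \emph{Conjecture} in the paper, and the paper does not prove it; the authors explicitly remark that the ergodicity assumption in Theorems~\ref{thm: MultLargeInt} and~\ref{thm: MultLargeInt AVIP} precludes them from deducing finitary results via the argument of Corollary~\ref{cor: finitary patterns}, and they only note that in the special case $K = \Q$, $p(n) = n$ the conjecture is known by the work of Green and Green--Tao, with related linear cases treated more recently in \cite{MIT, kovac}. There is therefore no paper proof for me to compare against.

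That said, your response is appropriate and accurate. You correctly identify the structural obstacle: Theorem~\ref{thm: MultLargeInt} needs ergodicity, the ergodic correspondence principle (Theorem~\ref{thm: erg corr}) only controls density along \emph{some} F{\o}lner sequence rather than along the fixed boxes $\{1, \dots, N\}^d$, and (as the non-ergodic counterexamples in \cite[Section 11.1]{abb} show) the rational Kronecker factor is a genuine obstruction that the finitary argument would have to navigate via intersectivity. Your proposed route --- an arithmetic regularity decomposition $\ind_A = f_{\mathrm{str}} + f_{\mathrm{unf}} + f_{\mathrm{sml}}$, a finitary PET / generalized von Neumann bound to kill $f_{\mathrm{unf}}$, a quantitative polynomial-to-linear reduction on nilmanifolds mirroring Theorem~\ref{thm: nilfactor}, and then a quantitative linear popular-difference estimate in place of Theorem~\ref{thm: ABB} --- is exactly the natural extension of how the $K=\Q$, $p(n)=n$ case was handled in the cited finitary works, and your diagnosis of what is missing (effective, error-tracked versions of both the polynomial-to-linear reduction and the linear popular-difference theorem over $\O_K$, robust to regularity bookkeeping) is correct and matches the reason the authors left this as a conjecture. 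In short: no gap to flag, because the honest conclusion is that the statement remains open, and you said so.
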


\noindent In the simplest case when $K = \Q$ and $p(n) = n$, Conjecture \ref{conj: finitary}
was posed as a question in \cite{bhk} and verified in \cite[Theorem 1.10]{green} and \cite[Theorem 1.12]{gt}.
For more general linear patterns (the case $p(n) = n$),
closely related finitary results were recently established in \cite{MIT, kovac}.


\subsection{Outline of the paper}

The structure of the paper is as follows.
In Section \ref{sec: prelim}, we collect several useful facts that will be used repeatedly in the proofs of the main theorems.
Section \ref{sec: factors} is devoted to proving Theorems \ref{thm: rational Kronecker} and \ref{thm: nilfactor}
on characteristic factors corresponding to the polynomial configurations of interest via equidistribution results on nilmanifolds.
Using the knowledge of characteristic factors, we prove Khintchine-type results
(Theorems \ref{thm: IndLargeInt} and \ref{thm: MultLargeInt}) in Section \ref{sec: Khintchine}.
Finally, Section \ref{sec: refinements} handles the refinements of our Khintchine-type theorems
to conclude stronger combinatorial properties about the abundance of combinatorial configurations.


\section{Preliminaries} \label{sec: prelim}


\subsection{Rational Kronecker factor} \label{sec: rational Kronecker}

Recall that the Kronecker factor for an ergodic measure-preserving system is spanned by eigenfunctions.
As suggested by the name, the rational Kronecker factor will be spanned by eigenfunctions
with rational eigenvalues.
To make this precise, we need to define what it means for an eigenvalue (a group character) to be rational.

Since the additive group structure for the ring of integers in a degree $d$ extension of $\Q$ is $\Z^d$,
we say that a character $\chi : \Z^d \to \T$ is \emph{rational} if there is an element $(q_1, \dots, q_d) \in \Q^d$
such that
\begin{equation} \label{eq: rational character}
	\chi(n_1, \dots, n_d) = e^{2\pi i (q_1n_1 + \cdots + q_dn_d)}.
\end{equation}
For notational convenience, we will let $e : \R \to \T$ be the function $e(x) = e^{2\pi ix}$
so that we can write equation \eqref{eq: rational character} in the more compact form
\begin{equation*}
	\chi(n) = e(q \cdot n)
\end{equation*}
for the usual dot product $\cdot$ on $\R^d$.

The property of rational characters that we will utilize later on is periodicity.
Given a number field $K$ with ring of integers $\O_K$,
we say that a character $\chi : \O_K \to \T$ is \emph{periodic}, with \emph{period} $p \in \O_K$,
if for all $n, m \in \O_K$, we have
\begin{equation*}
	\chi(n + mp) = \chi(n).
\end{equation*}
To translate this back into language where rationality makes sense,
take an integral basis $\{b_1, \dots, b_d\}$ so that $(\O_K,+) \cong \bigoplus_{i=1}^d{\Z \cdot b_i}$.
Since $\hat{\Z^d} \cong \T^d$, there is an element $\alpha \in \T^d$ so that
\begin{equation*}
	\chi \left( \sum_{i=1}^d{n_ib_i} \right) = e(n \cdot \alpha)
\end{equation*}
for $n \in \Z^d$.
We can then say that $\chi : \O_K \to \T$ is \emph{rational} if $\alpha \in \Q^d/\Z^d$.
We now show that rationality and periodicity coincide:

\begin{lem} \label{lem: rational periodic}
	A character $\chi : \O_K \to \T$ is rational if and only if it is periodic.
\end{lem}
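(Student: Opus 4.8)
The plan is to prove both implications directly from the definitions, working with a fixed integral basis $\{b_1, \dots, b_d\}$ so that $(\O_K, +) \cong \Z^d$ and $\chi\left(\sum_i n_i b_i\right) = e(n \cdot \alpha)$ for some $\alpha \in \T^d$. The statement $\chi$ is rational then means $\alpha \in \Q^d / \Z^d$, and periodicity with period $p \in \O_K$ means $\chi(n + mp) = \chi(n)$ for all $n, m \in \O_K$, equivalently $\chi(mp) = 1$ for all $m \in \O_K$, i.e., $\chi$ is trivial on the finite-index subgroup $p\O_K$.

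For the direction ``periodic $\Rightarrow$ rational'': suppose $\chi$ has period $p \ne 0$. Then $p\O_K$ is a subgroup of $\O_K$ of finite index $N := |\O_K / p\O_K| = |N_{K/\Q}(p)|$. Since $\chi$ is trivial on $p\O_K \supseteq N\O_K$, in particular $\chi(Nn) = 1$ for all $n \in \O_K$. Writing $n = \sum_i n_i b_i$ and using $\chi(Nn) = e(N n \cdot \alpha)$, this forces $N\alpha \in \Z^d$, hence $\alpha \in \frac{1}{N}\Z^d \subseteq \Q^d / \Z^d$. So $\chi$ is rational.

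For the direction ``rational $\Rightarrow$ periodic'': suppose $\alpha \in \Q^d / \Z^d$. Then there is a positive integer $q$ with $q\alpha \in \Z^d$, so $\chi(qn) = e(qn \cdot \alpha) = e(n \cdot q\alpha) = 1$ for all $n \in \Z^d \cong \O_K$. Taking $p = q \in \Z \subseteq \O_K$, we get $\chi(n + mp) = \chi(n)\chi(qm) = \chi(n)$ for all $n, m \in \O_K$, so $\chi$ is periodic with period $q$.

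I expect no serious obstacle here: the only mild subtlety is making sure that the passage between the two descriptions of rationality (the $\Q^d$-vector description in \eqref{eq: rational character} and the integral-basis description with $\alpha \in \T^d$) is handled consistently, which amounts to noting that both descriptions identify the rational characters with the torsion subgroup of the dual group $\hat{\O_K} \cong \T^d$. Everything else is a direct unwinding of definitions together with the elementary fact that $p\O_K$ contains $N_{K/\Q}(p) \cdot \O_K$ and has finite index.
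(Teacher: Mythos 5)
Your proof is correct, and it takes a somewhat different route from the paper's in both directions. For ``rational $\Rightarrow$ periodic'', the paper takes the period $p = D(b_1 + \cdots + b_d)$, where $D$ clears the denominators of $\alpha$, and then verifies $\chi(mp)=1$ by expanding $mp$ via the structure constants $c_{i,j,k}$ of multiplication in $\O_K$; you instead take the period to be the rational integer $q$ itself, so that $mp = qm$ is just integer scaling and $\chi(qm) = e(m \cdot q\alpha) = 1$ with no multiplication table needed --- this is cleaner. For ``periodic $\Rightarrow$ rational'', the paper writes $\tfrac{1}{p} = \sum a_i b_i$, clears denominators to get $\tfrac{D}{p} \in \O_K$, and evaluates $\chi$ at $m_i p = D b_i$; you instead use that $p\O_K$ has finite index $N$ (noting $p \neq 0$, which the paper's definition of period leaves implicit but clearly intends) and that $N\O_K \subseteq p\O_K$ --- via Lagrange in the finite quotient, or via the containment $N_{K/\Q}(p)\O_K \subseteq p\O_K$ --- to conclude $e(N\alpha_i) = \chi(N b_i) = 1$. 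Both arguments hinge on producing a nonzero rational integer $D$ (resp.\ $N$) with $\chi(D b_i) = 1$ for all $i$; the paper's version is fully self-contained and explicit, while yours trades the explicit element $D/p$ for the standard facts that nonzero elements generate finite-index subgroups and that the norm lies in the principal ideal, facts the paper elsewhere takes for granted anyway. No gaps.
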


\begin{proof}
	Suppose $\chi$ is rational, say $\chi(n) = e(n \cdot q)$ with $q = (q_1, \dots, q_d) \in \Q^d$.
	Choose $D \in \Z$ so that $Dq_i \in \Z$ for every $i = 1, \dots, d$,
	and set $p := D \left( \sum_{i=1}^d{b_i} \right)$.
	We claim that $p$ is a period for $\chi$.
	Indeed, given $m = \sum_{i=1}^d{m_i b_i} \in \O_K$, we have
	\begin{equation*}
		mp = D \left( \sum_{i,j}{m_i b_i b_j} \right)
		 = \sum_k{\left( \sum_{i,j}{m_i c_{i,j,k}} \right) D b_k},
	\end{equation*}
	where $c_{i,j,k} \in \Z$ so that $b_ib_j = \sum_k{c_{i,j,k}b_k}$.
	Hence, $mp \cdot q = \sum_k{\left( \sum_{i,j}{m_i c_{i,j,k}} \right) D q_k} \in \Z$,
	so $\chi(mp) = e(mp \cdot q) = 1$ for every $m \in \O_K$. \\
	
	Conversely, suppose $\chi$ is periodic with period $p \in \O_K$.
	Let $\alpha \in \T^d$ such that $\chi(n) = e(n \cdot \alpha)$.
	Since $\{b_1, \dots, b_d\}$ is a $\Q$-basis for $K$, we can write $\frac{1}{p} = \sum_{i=1}^d{a_ib_i}$
	for some $a_1, \dots, a_d \in \Q$.
	Let $D \in \Z$ such that $Da_i \in \Z$ for every $i = 1, \dots, d$.
	Then $\frac{D}{p}$ is a $\Z$-linear combination of basis elements, so $\frac{D}{p} \in \O_K$.
	Now let $m_i = \frac{D}{p} b_i \in \O_K$.
	Since $\chi$ is $p$-periodic, we have
	\begin{equation*}
		1 = \chi(m_ip) = \chi(Db_i) = e(D\alpha_i).
	\end{equation*}
	That is, $D\alpha_i \in \Z$, so $\alpha_i \in \Q$ for all $i = 1, \dots, d$.
	Thus, $\chi$ is rational.
\end{proof}

Now we can give our definition:
\begin{defn}
	Let $K$ be a number field, and let $\O_K$ be its ring of integers.
	Let $\X = \left( X, \B, \mu, T \right)$ be an ergodic $\O_K$-system.
	The \emph{rational Kronecker factor} of $\X$, denoted by $\calK_{rat}(\X)$, is the factor
	generated by the algebra
	\begin{equation*}
		\overline{\spn{\left\{ f \in L^2(\mu) : T^nf = \chi(n)f~\text{for some rational character}~\chi : \O_K \to \T \right\}}}.
	\end{equation*}
\end{defn}

Building on Lemma \ref{lem: rational periodic}, we can characterize total ergodicity in several equivalent ways:

\begin{prop} \label{prop: tot erg}
	Let $K$ be a number field with ring of integers $\O_K$.
	Let $\X = (X, \B, \mu, T)$ be an ergodic $\O_K$-system.
	The following are equivalent:
	
	\begin{enumerate}[(i)]
		\item	The rational Kronecker factor $\calK_{rat}(\X)$ is trivial;
		\item	For every $r \in \O_K \setminus \{0\}$, $(T^{rn})_{n \in \O_K}$ is ergodic;
		\item	For every finite index subgroup $\Lambda \subseteq (\O_K, +)$,
			the action $(T^n)_{n \in \Lambda}$ is ergodic.
	\end{enumerate}
\end{prop}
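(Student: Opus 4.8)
The plan is to establish the cyclic chain of implications $(i) \Rightarrow (iii) \Rightarrow (ii) \Rightarrow (i)$, using Lemma \ref{lem: rational periodic} as the bridge between the spectral condition and the group-theoretic ones.

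I would dispatch $(iii) \Rightarrow (ii)$ first, since it is essentially immediate: for any nonzero $r \in \O_K$ the principal ideal $r\O_K$ is a subgroup of $(\O_K,+)$ of finite index (namely $|N_{K/\Q}(r)|$), so ergodicity of $(T^n)_{n \in r\O_K}$, which is the same thing as ergodicity of $(T^{rn})_{n \in \O_K}$, is a special case of $(iii)$. Next, for $(ii) \Rightarrow (i)$ I would argue contrapositively. The linear span of the rational eigenfunctions is closed under multiplication — rational characters form a subgroup of the dual of $\O_K$, so a product of rational eigenfunctions is again a rational eigenfunction — hence if $\calK_{rat}(\X)$ is nontrivial there must be a nonconstant eigenfunction $f$ with $T^nf = \chi(n)f$ for a rational character $\chi$, and ergodicity forces $\chi$ to be nontrivial (a $T$-invariant function is constant). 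By Lemma \ref{lem: rational periodic}, $\chi$ has a period $p \in \O_K$, which one checks can be taken nonzero; then $T^{pn}f = \chi(pn)f = f$ for every $n$, so $(T^{pn})_{n \in \O_K}$ is not ergodic, contradicting $(ii)$.

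The substantive step is $(i) \Rightarrow (iii)$. Assuming $\calK_{rat}(\X)$ trivial, let $\Lambda \le (\O_K,+)$ have finite index $m$, and suppose toward a contradiction that some nonconstant $f \in L^2(\mu)$ is invariant under $(T^n)_{n \in \Lambda}$. Fixing coset representatives $n_1, \dots, n_m$ of $\Lambda$ in $\O_K$, the space $V := \spn\{T^{n_1}f, \dots, T^{n_m}f\}$ is finite-dimensional and $T$-invariant (because $T^{n+n_i}f$ depends only on the coset of $n+n_i$), and $T^\lambda$ acts as the identity on $V$ for $\lambda \in \Lambda$. Thus the $\O_K$-action on $V$ factors through the finite abelian group $\O_K/\Lambda$, and decomposing this unitary representation into one-dimensional pieces expresses $V$ as an orthogonal direct sum of eigenspaces for characters of $\O_K$ that are trivial on $\Lambda$. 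Such characters have period $m$ (since $m\O_K \subseteq \Lambda$), hence are rational by Lemma \ref{lem: rational periodic}. Since the component of $f$ belonging to the trivial character is $T$-invariant and therefore constant, while $f$ itself is nonconstant, some other component is a nonconstant rational eigenfunction — contradicting the triviality of $\calK_{rat}(\X)$ and closing the cycle.

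The main obstacle is precisely this last implication: one must correctly build the finite-dimensional $T$-invariant subspace $V$, verify that the $\O_K$-action on it genuinely descends to $\O_K/\Lambda$, and then ensure that the decomposition into rational eigenspaces yields a \emph{nonconstant} rational eigenfunction rather than only constants. The remaining implications are formal, modulo the standard facts that nonzero ideals of $\O_K$ have finite additive index and that a finite-index subgroup of $(\O_K,+) \cong \Z^d$ contains $m\O_K$ where $m$ is its index.
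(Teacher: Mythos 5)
Your proof is correct and follows essentially the same route as the paper: the same cyclic chain $(i)\Rightarrow(iii)\Rightarrow(ii)\Rightarrow(i)$, with Lemma \ref{lem: rational periodic} bridging rationality and periodicity, and the key step $(i)\Rightarrow(iii)$ resting on the same idea of a finite $T$-orbit of a $\Lambda$-invariant function spanning a finite-dimensional invariant subspace that decomposes into eigenfunctions whose characters are trivial on $\Lambda$ and hence rational. Your only deviations are cosmetic: you make the representation-theoretic decomposition of $V$ explicit where the paper appeals to precompactness of the orbit, and you deduce rationality from $m\O_K \subseteq \Lambda$ (period $m$) rather than from the finiteness of the character's image.
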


\begin{proof}
	Since $r\O_K$ has finite index in $\O_K$, we trivially have the implication (iii)$\implies$(ii).
	We will show (i)$\implies$(iii) and (ii)$\implies$(i). \\
	
	(i)$\implies$(iii).
	Suppose $\calK_{rat}(\X)$ is trivial.
	Let $\Lambda \subseteq (\O_K,+)$ be a finite index subgroup, and suppose $T^nf = f$ for every $n \in \Lambda$.
	Then the orbit $\{T^nf : n \in \O_K\}$ is finite: it consists of the elements $T^mf$ for $m$ in a finite set $F$
	satisfying $\Lambda + F = \O_K$.
	In particular, the orbit is (pre-)compact, so $f$ is a linear combination of eigenfunctions,
	$f = \sum_i{c_if_i}$, with $T^nf_i = \chi_i(n)f_i$ for some characters $\chi_i : \O_K \to \T$.
	Since $T^nf = f$ for $n \in \Lambda$, we have $\chi_i(n) = 1$ for $n \in \Lambda$.
	Therefore, $\chi_i$ takes only the finitely many values $\chi_i(m)$, $m \in F$.
	It follows that $\chi_i$ is rational.
	But $\calK_{rat}(\X)$ is trivial, so in fact $\chi_i = 1$.
	Hence, $T^nf = f$ for every $n \in \O_K$.
	Since $(T^n)_{n \in \O_K}$ is ergodic, we have that $f$ is a constant function.
	Thus, $(T^n)_{n \in \Lambda}$ is ergodic. \\
	
	(ii)$\implies$(i).
	We prove the contrapositive.
	Suppose $\calK_{rat}(\X)$ is not trivial.
	Then there is a non-constant function $f \in L^2(\mu)$ and a rational character $\chi : \O_K \to \T$
	such that $T^nf = \chi(n)f$ for $n \in \O_K$.
	By Lemma \ref{lem: rational periodic}, $\chi$ is periodic, say with period $p$.
	That is, $\chi(n+pm) = \chi(n)$ for $n, m \in \O_K$.
	But then $T^{pn}f = \chi(pn)f = f$ for every $n \in \O_K$.
	Hence, $(T^{pn})_{n \in \N}$ is not ergodic, so (ii) fails.
\end{proof}


\subsection{Nilsystems}

\begin{prop} \label{prop: tot erg conn}
	Let $K$ be a number field with ring of integers $\O_K$.
	Let $(X, \B, \mu, T)$ be an ergodic $\O_K$-nilsystem.
	Then $T$ is totally ergodic if and only if $X$ is connected.
\end{prop}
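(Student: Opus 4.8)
The plan is to use the structure theory of nilsystems together with the characterization of total ergodicity given in Proposition \ref{prop: tot erg}. Write $X = G/\Gamma$ with $G$ an $r$-step nilpotent Lie group, $\Gamma < G$ cocompact discrete, and let $T^n x = a^n x$ (for an $\O_K$-action, $T$ corresponds to an $\O_K$-worth of commuting niltranslations, but as $(\O_K,+) \cong \Z^d$ it suffices to track finitely many generators; the argument is cleanest if I first reduce to the single-transformation case by Proposition \ref{prop: tot erg}(ii)). Total ergodicity means $(T^{rn})_{n}$ is ergodic for every nonzero $r$, equivalently (by Proposition \ref{prop: tot erg}) the rational Kronecker factor is trivial.

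First I would handle the easy direction. If $X$ is not connected, then $X$ has a nontrivial finite quotient: the connected component $X^0$ of the identity coset is a closed subgroup, and $X/X^0$ is a finite set on which $T$ acts by a permutation; since $T$ is ergodic this permutation is a single cycle of some length $m > 1$. This gives a factor of $\X$ which is a rotation on $\Z/m\Z$, hence a nontrivial rational Kronecker factor, so $T$ is not totally ergodic. (Concretely: the finite factor produces an eigenfunction with eigenvalue a nontrivial $m$-th root of unity, i.e.\ a nontrivial rational character, contradicting Proposition \ref{prop: tot erg}(i).)

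For the converse, suppose $X = G/\Gamma$ is connected and $T$ is ergodic; I want to show $T$ is totally ergodic. The key structural input is that for a connected nilmanifold, $G$ may be taken connected (replace $G$ by the subgroup generated by $G^0$ and $a$, which still acts transitively when $X$ is connected and $T$ is ergodic — this is standard, cf.\ Leibman's work on nilmanifolds). Then $G/([G,G]\Gamma)$ is a connected abelian nilmanifold, i.e.\ a torus $\T^t$, and the abelianization of $T$ acts on it by a translation by some $\beta \in \T^t$; ergodicity of $T$ forces this translation to be ergodic on $\T^t$, hence $\beta$ has coordinates that are rationally independent together with $1$. Now fix a nonzero $r \in \O_K$ — writing $r$ as multiplication-by-$r$ on $\Z^d$, the map $n \mapsto rn$ is an injective endomorphism of $\Z^d$ with image of finite index. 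The subaction $(T^{rn})_n$ is, on the maximal torus factor, translation by $(r\beta_1,\dots)$ in the appropriate sense; since multiplication by the nonzero integer matrix $r$ preserves rational independence with $1$ (it only introduces finite-index relations, never new $\Q$-linear ones), the subaction remains ergodic on $\T^t$. By the standard fact that a niltranslation on a connected nilmanifold is ergodic if and only if its projection to the maximal torus factor is ergodic (this is where connectedness of $G$ is essential — it lets one run the induction on the step of $G$ via the central extensions $G \to G/Z_r$), it follows that $(T^{rn})_n$ is ergodic on all of $X$. Hence $T$ is totally ergodic.

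The main obstacle is the reduction to the connected-group case and the ``ergodic iff ergodic on the maximal torus'' criterion for the subaction $(T^{rn})$: one must be careful that passing from $T$ to $T^{rn}$ stays within the class of niltranslations on the same connected nilmanifold (it does, since $a^r \in G$) and that the maximal-torus projection of $a^r$ is still ergodic. The cleanest route is probably to invoke the known equidistribution/ergodicity criterion for polynomial — here just linear — sequences on connected nilmanifolds (the same circle of ideas feeding Theorem \ref{thm: polynomial nil} and the equidistribution results promised in Section \ref{sec: factors}), applied to the sequence $n \mapsto a^{rn}$; this reduces everything to the torus computation, which is elementary linear algebra over $\Q$. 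A secondary point requiring care is the $\O_K$- (equivalently $\Z^d$-) action bookkeeping: one should check that ``totally ergodic'' as defined via finite-index subgroups of $\O_K$ is detected already by the one-parameter subactions $T^{rn}$, which is exactly the content of Proposition \ref{prop: tot erg}, so no extra work is needed there.
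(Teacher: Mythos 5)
Your easy direction (disconnected $\Rightarrow$ a nontrivial finite rotational factor $\Rightarrow$ nontrivial rational Kronecker factor, contradicting Proposition \ref{prop: tot erg}(i)) is essentially the paper's argument read contrapositively; the paper argues forward instead, using Proposition \ref{prop: tot erg}(iii) to see that the finite-index subgroup of $\O_K$ stabilizing the identity component already acts ergodically, which forces $X=X_0$. (Minor point: for an $\O_K$-action the component space is a transitive finite $\O_K$-set $\O_K/\Lambda$ rather than a ``single cycle,'' but the conclusion that it yields nontrivial rational eigenvalues is unaffected.) In the converse direction the two arguments genuinely diverge. The paper never leaves $X$: with $Y=\overline{\{a(rn)x_0 : n\in\O_K\}}$ and coset representatives $s_0,\dots,s_{k-1}$ of $r\O_K$, ergodicity of $T$ gives $X=\bigcup_i a(s_i)Y$, the translates $a(s_i)Y$ are pairwise disjoint or identical, and connectedness of $X$ forces $Y=X$, whence $(T^{rn})_{n\in\O_K}$ is ergodic. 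You instead reduce, via Theorem \ref{thm: proj} and a reduction to the maximal torus factor, to a statement about rotation actions on a torus. That route can be made to work (after adjoining the translation elements to the structure group, $G/([G,G]\Gamma)$ is a continuous image of the connected space $X$, hence a torus, and the action there is by translations), but it needs the additional input that ergodicity of a $\Z^d$-action by niltranslations on a connected nilmanifold is detected on the maximal torus factor -- machinery the paper's elementary disjoint-or-identical-translates argument avoids.

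There is one step that is wrong as literally stated, though fixable. For $d=[K:\Q]\ge 2$, ergodicity of the $\O_K$-rotation action on $\T^t$ given by a homomorphism $\beta:\O_K\to\T^t$ is \emph{not} equivalent to ``the coordinates of $\beta$ are rationally independent together with $1$'' (that is the rank-one criterion): e.g.\ the $\Z^2$-action on $\T$ generated by rotation by an irrational $\alpha$ and rotation by $\tfrac12$ is ergodic despite the rational relation. Consequently ``multiplication by $r$ introduces only finite-index relations, never new $\Q$-linear ones'' is not a valid justification for the key claim that the restriction to $r\O_K$ stays ergodic. The claim itself is true, and the correct short proof is a connectedness argument: $H:=\overline{\{\beta(rn):n\in\O_K\}}$ is a closed subgroup, the image of $\beta$ in $\T^t/H$ factors through the finite group $\O_K/r\O_K$ and is dense, so $\T^t/H$ is finite; being a quotient of a torus it is connected, hence trivial, so $H=\T^t$. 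Note that this is exactly the paper's argument transplanted from $X$ to the torus, which is why the paper's direct argument on $X$ is both simpler and self-contained.
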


\begin{proof}
	Write $X = G/\Gamma$.
	Let $a : \O_K \to G$ be a homomorphism so that $T^nx = a(n) \cdot x$ for $n \in \O_K$ and $x \in X$.
	Let $x_0$ denote the image of the identity element in $X$. \\
	
	Suppose $T$ is totally ergodic, and let $X_0$ be the connected component of $x_0$.
	Since $X$ is compact, it is a disjoint union of finitely many translates of $X_0$,
	say $X = \bigcup_{i=0}^{k-1}{X_i}$ with $X_i = g_iX_0$.
	Hence, $G$ permutes the components $X_0, \dots, X_{k-1}$, giving a homomorphism $\varphi : G \to S_k$,
	where $S_k$ is the symmetric group on $k$ symbols.
	This in turn gives a homomorphism $\omega = \varphi \circ a : \O_K \to S_k$.
	Let $\Omega = \ker{\omega} \subseteq \O_K$.
	Since $S_k$ is a finite group, $\Omega$ has finite index in $\O_K$.
	Therefore, $(T^n)_{n \in \Omega}$ acts ergodically on $X$ (see Proposition \ref{prop: tot erg}(iii)).
	In particular, $\overline{\{a(n)x_0 : n \in \Omega\}} = X$.
	But for $n \in \Omega$, we have $a(n)X_i = X_i$, so $a(n)x_0 \in X_0$.
	Thus, $X = X_0$. \\
	
	Conversely, suppose $X$ is connected, and let $r \in \O_K \setminus \{0\}$.
	The group $r\O_K$ has finite index in $\O_K$, so let $s_0, \dots, s_{k-1} \in \O_K$
	such that $\bigcup_{i=0}^{k-1}{(r\O_K + s_i)} = \O_K$.
	Let $Y := \overline{\{a(rn)x_0 : n \in \O_K\}}$.
	Then by ergodicity of $T$, we have $X = \bigcup_{i=0}^{k-1}{a(s_i)Y}$.
	We claim that for $0 \le i, j \le k-1$, the sets $a(s_i)Y$ and $a(s_j)Y$ are either disjoint or identical.
	Indeed, suppose $x \in a(s_i)Y \cap a(s_j)Y$.
	Then there are sequences $(n_t)_{t \in \N}$ and $(m_t)_{t \in \N}$ in $\O_K$ such that
	\begin{equation*}
		a(s_i) \lim_{t \to \infty}{a(rn_t)x_0} = a(s_j) \lim_{t \to \infty}{a(rm_t)x_0} = x.
	\end{equation*}
	Let $(\gamma_t)_{t \in \N}$ and $(\delta_t)_{t \in \N}$ be sequences in $\Gamma$ and $g \in G$ with $g\Gamma = x$
	so that
	\begin{equation*}
		a(s_i) \lim_{t \to \infty}{a(rn_t) \gamma_t} = a(s_j) \lim_{t \to \infty}{a(rm_t) \delta_t} = g.
	\end{equation*}
	Then
	\begin{align*}
		a(s_j - s_i) \lim_{t \to \infty}{a \left( r (m_t - n_t) \right) x_0}
		 & = \lim_{t \to \infty}{\gamma_t \left( a(s_i) a(rn_t) \gamma_t \right)^{-1} a(s_j) a(rm_t) \delta_t \Gamma} \\
		 & = \lim_{t \to \infty}{\gamma_t g^{-1} g \Gamma} = x_0.
	\end{align*}
	It follows that $a(s_j-s_i)Y = Y$, so multiplying by $a(s_i)$, we get $a(s_i)Y = a(s_j)Y$.
	
	But then we have written $X$ as a finite disjoint union of closed sets.
	Since $X$ is connected, we must have $a(s_i)Y = X$ for every $i = 0, \dots, k-1$.
	In particular, $Y = X$, so $(T^{rn})_{n \in \O_K}$ is ergodic.
\end{proof}


\subsection{Weyl systems}

The results in this paper depend critically on understanding polynomial orbits in \emph{Weyl systems}.
Following \cite{bll-weyl}, we call a topological dynamical system $(X, T)$ a \emph{Weyl system} if
$X$ is a compact abelian Lie group and $T$ is a $\Z^d$-action by unipotent affine tranformations.
In proving our multiple recurrence results, we will focus our attention on \emph{connected Weyl systems},
that is Weyl systems where $X$ is connected (and hence a torus).

The main result on polynomial orbits is the following:

\begin{prop}[cf. \cite{bll-weyl}, Proposition 3.2] \label{prop: finite subtori}
	Let $(X, T)$ be a $\Z^d$-Weyl system and $p_1, \dots, p_m : \Z^d \to \Z^d$ polynomials.
	Then for every $x \in X$,
	$Y := \overline{\left\{ \left( T^{p_1(n)}x, \dots, T^{p_m(n)}x \right) : n \in \Z^d \right\}}$
	is a union of finitely many subtori $(Y_w)_{w \in W}$ of $X^m$.
	Moreover, there is a homomorphism $\omega : \Z^d \to W$ such that the sequence
	$\left( T^{p_1(n)}x, \dots, T^{p_m(n)}x \right)_{n \in \omega^{-1}(w)}$
	is well-distributed in $Y_w$ for each $w \in W$.
\end{prop}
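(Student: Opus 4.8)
The plan is to reduce the statement to a known equidistribution result for polynomial sequences in nilmanifolds (the abelian, unipotent-affine case of the Leibman/Host--Kra type equidistribution theorems, or equivalently Weyl's theorem for several-variable polynomials as cited in \cite{bll_weyl}), and then bootstrap from a single point to all of $X^m$ by exploiting translation-homogeneity. First, I would set up coordinates: writing the Weyl system as $T^n x = M(n) x + b(n)$ where $n \mapsto M(n)$ is a polynomial (in fact, since $T$ commutes, the affine maps $T^{e_j}$ pairwise commute and are unipotent, so $M(n)$ is a polynomial map into the unipotent automorphisms and $b(n)$ is a polynomial cocycle), one sees that for fixed $x$ each coordinate $n \mapsto T^{p_i(n)} x$ is a polynomial map $\Z^d \to X$. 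Thus the diagonal orbit map $P \colon \Z^d \to X^m$, $P(n) = (T^{p_1(n)}x, \dots, T^{p_m(n)}x)$, is itself a polynomial sequence on the torus $X^m$.

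The key input is then the structure theorem for closures of polynomial orbits on a compact abelian Lie group (a torus, after passing to the connected component): the closure $Y = \overline{P(\Z^d)}$ is a finite union of cosets of a closed connected subgroup $Y_0 \le X^m$, i.e.\ a finite union of subtori, and moreover on the sublattice (or finite-index subgroup) $\Lambda = \omega^{-1}(0)$ indexing those cosets the sequence $P(n)_{n \in \Lambda}$ is well-distributed in the corresponding subtorus $Y_0$. Concretely, one writes $P(n) = \sum_{|\alpha| \le D} c_\alpha \binom{n}{\alpha}$ with coefficients $c_\alpha \in X^m$ (binomial-coefficient basis so that the values stay on the group), lets $G$ be the closed subgroup generated by $\{c_\alpha : \alpha \ne 0\}$, and checks via the Weyl criterion that, for any character $\chi$ of $X^m$ that is trivial on $G$, the exponential sum $\frac{1}{|\Phi_N|}\sum_{n \in \Phi_N} \chi(P(n))$ tends to $\chi(c_0)$ along every F{\o}lner sequence, while for $\chi$ nontrivial on $G_0$ (the identity component) the sum tends to $0$ — this is exactly the several-variable Weyl equidistribution theorem. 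The finite group $W := G/G_0$ receives a homomorphism $\omega \colon \Z^d \to W$ recording which coset $P(n)$ lands in, and well-distribution in each coset $Y_w = c_0 + (\text{lift of }w) + G_0$ follows by the same Weyl-criterion computation restricted to $n \in \omega^{-1}(w)$ (which is a coset of the finite-index subgroup $\ker\omega$, along which F{\o}lner sequences still exist and the exponential-sum estimates persist).

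I expect the main obstacle to be the bookkeeping around two points: (i) verifying that the diagonal orbit map really is a genuine polynomial $\Z^d \to X^m$ with coefficients on the group — this requires using that all the $T^{e_j}$ are \emph{unipotent} affine maps that commute, so that $n \mapsto M(n)$ has polynomial dependence and products $T^{p_1(n)} \cdots$ can be expanded via the (finite) Taylor-type expansion, with no denominators escaping $X^m$ because the $p_i$ are $\Z$-valued on $\Z^d$; and (ii) making the "finite union of subtori indexed by a finite abelian group $W$ with a homomorphism $\omega$" precise, rather than merely "finite union of cosets" — the point is that $G/G_0$ is a finite abelian group because $G$ is a closed subgroup of a torus, and the coset assignment $n \mapsto P(n) + G_0 \in (G + c_0)/G_0$ is affine, hence (after subtracting the constant term, which one can arrange to absorb) a homomorphism onto $W$. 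Once these structural facts are in place, the well-distribution claim in each $Y_w$ is a direct application of the Weyl equidistribution criterion on the torus along F{\o}lner sequences in the finite-index subgroup $\omega^{-1}(0)$, and the statement for general $w$ follows by translating by the appropriate group element and noting that translation by an element of $X^m$ is a homeomorphism preserving Haar measure on each subtorus.
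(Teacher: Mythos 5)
Your reduction to a single polynomial sequence $P(n)=(T^{p_1(n)}x,\dots,T^{p_m(n)}x)$ with values in $X^m$, and the Weyl-criterion dichotomy (characters trivial on $G$ give constant sums, characters nontrivial on $G_0$ give sums tending to $0$ by the several-variable Weyl theorem), are sound and are in the spirit of the route the paper takes, which is simply to quote \cite[Proposition 3.2]{bll_weyl} or Leibman's Theorem~B* (Theorem~\ref{thm: nil orbits}). The genuine gap is in your item (ii): the coset-assignment map $n\mapsto P(n)-c_0+G_0\in G/G_0$ is a \emph{polynomial} map into a finite abelian group, and such maps are in general neither affine nor surjective, so you cannot take $W=G/G_0$ and $\omega$ to be this map, nor can the constant term be ``absorbed'' to make it a homomorphism. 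Concretely, for $P(n)=\binom{n}{2}\cdot\tfrac12$ in $\T$ one has $G/G_0\cong\Z/2\Z$, but the fiber $\{n: P(n)=0\}=\{n\equiv 0,1 \pmod 4\}$ is not a coset of any subgroup of $\Z$, so no homomorphism onto $G/G_0$ has the required level sets; and for $P(n)=(n/2,\,n^2/4)$ in $\T^2$ the orbit consists of two points while $c_0+G$ has four, so the orbit closure is a \emph{proper} subset of $c_0+G$ and not every coset $c_0+w+G_0$, $w\in G/G_0$, is attained. Since the ``moreover'' clause (the existence of the homomorphism $\omega$ and well-distribution on its fibers) is part of the statement, this is a real gap, not just loose bookkeeping.

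The repair stays within your framework. Let $\Lambda$ be the set of characters $\chi$ of $X^m$ with $\chi(c_\alpha)$ rational (i.e.\ of finite order) for all $\alpha\neq 0$; these are exactly the characters annihilating $G_0$, and for each of finitely many generators of $\Lambda$ the map $n\mapsto \chi(P(n))$ is a polynomial with rational nonconstant coefficients, hence periodic in $n$. Take $q$ a common period, set $W:=\Z^d/q\Z^d$ with $\omega$ the quotient homomorphism, and $Y_w:=P(n_w)+G_0$ for any representative $n_w$ of $w$ (the $Y_w$ may repeat, which the statement permits). On the fiber $n=n_w+qm$, every $\chi\in\Lambda$ is constant, while for $\chi\notin\Lambda$ the polynomial $m\mapsto \chi(P(n_w+qm))$ still has an irrational nonconstant coefficient (the substitution is an invertible $\Q$-affine change of variables), so Lemma~\ref{lem: Weyl} together with the character criterion of Lemma~\ref{lem: ud linear comb} gives well-distribution of the fiber in $Y_w$; the union of the $Y_w$ then yields the first assertion as well. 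With this replacement of your $W$ and $\omega$ the argument goes through; alternatively one can, as the paper does, deduce everything directly from Theorem~\ref{thm: nil orbits}, where the finite group and the homomorphism are supplied by the conclusion.
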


\noindent This can be seen via a multivariable version of Weyl's theorem on polynomial equidistribution in tori
(see the explanation of \cite[Proposition 3.2]{bll-weyl})
or as a special case of a more general result due to Leibman:

\begin{thm}[\cite{leib-poly_nil2}, Theorem B*] \label{thm: nil orbits}
	Let $X = G/\Gamma$ be a nilmanifold.
	Let $g : \Z^d \to G$ be a polynomial map, and let $x \in X$.
	There is a connected closed subgroup $H \subseteq G$, a homomorphism $\omega : \Z^d \to W$
	onto a finite group $W$, and a set $\{x_w : w \in W\} \subseteq X$ such that the sets
	$Y_w := Hx_w$, $w \in W$, are closed in $X$ and $\left( g(n)x \right)_{n \in \omega^{-1}(w)}$
	is well-distributed in $Y_w$ for every $w \in W$.
\end{thm}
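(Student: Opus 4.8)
The plan is to prove the structure theorem by induction on $\dim G$, reducing at each stage to the behaviour of the projection of the orbit onto the ``horizontal torus'' $X_{\mathrm{ab}} := G/(\overline{[G,G]}\,\Gamma)$, where the multivariable Weyl equidistribution theorem applies. First I would make the standard normalizations: replacing $G$ by its identity component (the finite component group gets folded into the eventual finite group $W$) and passing to the universal cover, we may assume $G$ is connected and simply connected, so that $\overline{[G,G]} = [G,G]$ is a closed, rational (with respect to $\Gamma$) subgroup and $X_{\mathrm{ab}} \cong \T^m$ is a torus. Absorbing the base point into the map (write $x = g_0\Gamma$ and replace $g$ by $n \mapsto g(n)g_0$, still polynomial), we may also assume $x = \Gamma$. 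Throughout I would lean on Leibman's structural facts about polynomial maps into nilpotent groups: they form a group under pointwise multiplication, are stable under precomposition with affine maps $\Z^{d'} \to \Z^d$, and project to polynomial maps in quotient groups. This is what licenses the reparametrizations and factorizations below.

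The base case is $G$ abelian, where $X = \T^m$ and $g(n)\Gamma = P(n) + \Z^m$ for a polynomial $P \colon \Z^d \to \R^m$. Here I would invoke the multivariable Weyl equidistribution theorem (the same input that underlies Proposition \ref{prop: finite subtori}): split $P$ into its ``irrational'' part and the part carrying rational Taylor coefficients. The rational part is periodic modulo $\Z^m$, which defines a homomorphism $\omega \colon \Z^d \to W$ onto a finite abelian group whose fibres are the relevant residue classes; restricting $P$ to such a fibre and applying Weyl, the orbit becomes well-distributed in a single coset of the rational subtorus $\T' \le \T^m$ spanned by the irrational frequencies, and $\T'$ does not depend on the fibre. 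Each such coset is automatically closed, so this settles the torus case. Note that Weyl's theorem gives well-distribution, not merely equidistribution along one sequence of boxes.

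The inductive step rests on a key lemma, which I expect to be the main obstacle: $(g(n)\Gamma)_{n \in \Z^d}$ is well-distributed in $G/\Gamma$ if and only if its projection to $X_{\mathrm{ab}}$ is well-distributed in $X_{\mathrm{ab}}$. One direction is trivial; the substance is the converse. I would prove it by a secondary induction on the nilpotency step, via van der Corput: if well-distribution fails upstairs, then decomposing $L^2(X)$ along the fibres of the torus bundle $X \to G/(G_s\Gamma)$ over the last nonzero term $G_s$ of the lower central series, one finds a \emph{nontrivial} vertical character $\xi$ whose isotypic averages fail to converge to $0$; the van der Corput inequality then replaces these by averages over the differenced sequence $n \mapsto g(n+h)g(n)^{-1}$, whose $G_s$-component, by centrality of $G_s$, is a polynomial of strictly lower degree. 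Iterating this descent pushes the obstruction down to a nontrivial character of $X_{\mathrm{ab}}$ that is constant along the orbit, contradicting well-distribution there. (One may equivalently route this through PET induction on the Weyl complexity of the polynomial sequence.)

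Granting the lemma, the induction closes as follows. Apply the torus base case to the projected orbit $\bar g(n)$ in $X_{\mathrm{ab}}$, obtaining a homomorphism $\omega_0 \colon \Z^d \to W_0$ onto a finite group such that on each fibre $\Lambda_w := \omega_0^{-1}(w)$ the projection is well-distributed in a coset $v_w + \T'$ of a rational subtorus. Reparametrize $g$ along $\ker \omega_0$. If $\T' = \T^m$, the lemma gives well-distribution of the original orbit in $X$ itself on that fibre. If $\T' \subsetneq \T^m$, then $\bar g$ takes values in the proper coset $v_w + \T'$, so $g$ takes values in a coset $a_w G''$ of the proper \emph{rational} subgroup $G'' := \pi^{-1}(\T') \supseteq [G,G]$; factoring $g(n) = a_w\, g''(n)\, \gamma_n$ with $g''$ a polynomial map into $G''$ (again from the polynomial-maps machinery, using rationality of $G''$) and applying the inductive hypothesis on the lower-dimensional nilmanifold $G''/(G'' \cap \Gamma)$, then pushing forward by $a_w$, yields the structure with $H = a_w H'' a_w^{-1}$ and an appropriate $x_w$; rationality of $H$ guarantees $Y_w = H x_w$ is closed. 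Finally, amalgamate $\omega_0$ with the finite-group homomorphisms produced on each fibre into a single homomorphism $\omega \colon \Z^d \to W$. The two places I expect genuine work are the vertical-character/van der Corput lemma above and the bookkeeping needed to keep every subgroup rational (hence every $Y_w$ closed) and every map polynomial through the reparametrizations.
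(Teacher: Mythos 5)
The paper does not prove this theorem; it is cited verbatim from Leibman \cite{leib-poly_nil2} as Theorem~B*, so there is no internal argument to compare against. Your proposal is a reconstruction of Leibman's result, and its overall shape --- reduce to connected, simply connected $G$; induct on $\dim G$; settle the abelian base case via multivariable Weyl; and in the inductive step either conclude well-distribution in $X$ via a horizontal-torus lemma or drop to a proper rational subgroup $G''$ and recurse --- does follow the right contours.

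The substantive gap is the ``key lemma,'' which you correctly identify as the crux. For connected $X$ it asserts that well-distribution in $X$ is equivalent to well-distribution in the horizontal torus $X_{\mathrm{ab}}$; but this is not an auxiliary lemma one proves on the side --- it is precisely Leibman's Theorem~C, which this paper cites separately as Theorem~\ref{thm: proj}. So your proposal is really to prove Theorems~B* and~C simultaneously by a van der Corput/PET descent, which is viable, but the sketch skips the central difficulty. After restricting to a nonzero vertical Fourier mode $f_\xi$ and applying van der Corput, the quantity produced is $f_\xi(g(n+h)\Gamma)\,\overline{f_\xi(g(n)\Gamma)}$, a function on $X\times X$ evaluated along the \emph{pair} sequence $n\mapsto(g(n+h),g(n))$, not a function on $X$ evaluated along the ratio $g(n+h)g(n)^{-1}$. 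To make a degree or step drop usable, one first exploits $\xi$-equivariance together with centrality of $G_s$ to descend this function to the quotient $(G\times G)/\{(z,z):z\in G_s\}$, and one must then carry a PET/Weyl complexity invariant through both the product construction and the subsequent reductions; the phrase ``the $G_s$-component is a polynomial of strictly lower degree, so iterate'' elides where the induction actually closes. The other two items you flagged --- the factorization $g(n)=a_w\,g''(n)\,\gamma_n$ with $g''$ a polynomial map into the rational subgroup $G''$ and $\gamma_n\in\Gamma$, and the amalgamation of the homomorphisms obtained on each fibre of $\omega_0$ into a single $\omega\colon\Z^d\to W$ after passing to a common finite-index kernel --- are genuine lemmas in Leibman's machinery rather than immediate, but they are routine once that machinery is in hand. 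With the van der Corput step carried out carefully, the plan is a faithful reconstruction of Leibman's proof.
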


As a consequence, we can deduce a simple criterion for checking
that a polynomial sequence is well-distributed in a torus.
First we need some notation.
For a sequence $u : \Z^d \to \T^m$ with polynomial coordinates $u(n) = \left( u_1(n), \dots, u_m(n) \right)$, we write
\begin{equation*}
	\spn(u) := \spn_{\R}{\left\{ \left( u_1(x), \dots, u_m(x) \right) : x \in \R^d \right\}}.
\end{equation*}

\begin{cor} \label{cor: span}
	Let $\alpha_1, \dots, \alpha_r$ be rationally independent irrational elements of $\T$.
	Let $u_1, \dots, u_r : \Z^l \to \Z^m$ be polynomials with zero constant term.
	Then the sequence
	\begin{equation*}
		\left( u_1(n)\alpha_1 + \cdots + u_r(n)\alpha_r \right)_{n \in \Z^l}
	\end{equation*}
	is well-distributed in the subtorus $\spn(u_1) + \cdots + \spn(u_r) \pmod{1}$ of $\T^m$.
\end{cor}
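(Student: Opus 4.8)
The plan is to derive this as a direct application of Proposition \ref{prop: finite subtori} (or equivalently Theorem \ref{thm: nil orbits} in the connected abelian case), combined with an elementary computation of the closure of the orbit. First I would set up the right Weyl system: let $X = \T^m$ and consider the $\Z^l$-action generated by the unipotent affine maps whose polynomial orbits realize the sequence $u_1(n)\alpha_1 + \cdots + u_r(n)\alpha_r$. Concretely, since each $u_i : \Z^l \to \Z^m$ is a polynomial with zero constant term, the map $n \mapsto u_i(n)\alpha_i$ is a polynomial sequence in $\T^m$ through the origin, and a finite sum of such is again a polynomial sequence in $\T^m$ through the origin; this can be encoded as $T^{q(n)}x_0$ for a polynomial $q : \Z^l \to \Z^d$ in a suitable connected Weyl system (a torus with a unipotent $\Z^d$-action), with $x_0$ the identity. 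Proposition \ref{prop: finite subtori} then tells us the orbit closure $Y$ is a finite union of subtori $Y_w$, indexed by a finite group $W$ via a homomorphism $\omega : \Z^l \to W$, with well-distribution in each piece.

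The key point is that here $W$ must be trivial, so that $Y = Y_0$ is a single subtorus and the full sequence is well-distributed in it. The reason is that the sequence passes through $0$ at $n = 0$, and $0$ lies in $Y_{\omega(0)} = Y_e$ where $e$ is the identity of $W$; more importantly, because all the $u_i$ have zero constant term, the sequence is ``scale-invariant'' enough that the orbit closure along any finite-index sublattice of $\Z^l$ agrees with the full orbit closure — indeed, replacing $n$ by $Nn$ for large $N$ still sweeps out the same polynomials up to reindexing, forcing $\omega$ to be trivial. So I would include a short lemma (or inline argument) that for polynomial sequences with zero constant term, the homomorphism $\omega$ in Proposition \ref{prop: finite subtori} can be taken trivial; this is standard and follows from the fact that $\omega$ factors through the ``top-degree behavior,'' which is unaffected by passing to sublattices. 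Once $W$ is trivial, well-distribution of $(u_1(n)\alpha_1 + \cdots + u_r(n)\alpha_r)_{n \in \Z^l}$ in a subtorus $Y$ of $\T^m$ is immediate.

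It then remains to identify $Y$ with the subtorus $\spn(u_1) + \cdots + \spn(u_r) \pmod 1$. Since $Y$ is a closed connected subgroup of $\T^m$ containing all the points $u_1(n)\alpha_1 + \cdots + u_r(n)\alpha_r$, and $\alpha_1, \dots, \alpha_r$ are rationally independent irrationals, a Weyl-type equidistribution argument (or a direct character computation) shows that $Y$ contains, for each fixed $x \in \Z^l$ and each real coefficient vector, all real multiples of $(u_i(x) \bmod 1)$: rational independence of the $\alpha_i$ lets us decouple the contributions of the distinct $\alpha_i$, and irrationality lets us fill out the real line in each coordinate direction $u_i(x)$. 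Taking the closed subgroup generated by all such directions over $x \in \Z^l$ (equivalently $x \in \R^l$, since integer points suffice to span the same real subspace of $\R^m$) yields exactly $\spn(u_1) + \cdots + \spn(u_r) \pmod 1$. Conversely this subtorus obviously contains the sequence, so the two agree.

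The main obstacle I anticipate is the bookkeeping in the reduction to a single Weyl system and the verification that $\omega$ is trivial — one has to be careful that the sum $\sum_i u_i(n)\alpha_i$ genuinely arises as a polynomial orbit $T^{q(n)} x_0$ in a connected unipotent system, which requires choosing the group and the generators of the action correctly (the $\alpha_i$ become part of the translation data). The equidistribution/character computation identifying $Y$ is routine once rational independence of the $\alpha_i$ is invoked, so I expect that part to be quick.
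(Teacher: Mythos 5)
The crux of your plan --- the lemma asserting that for polynomial sequences with zero constant term the finite group $W$ (equivalently the homomorphism $\omega$) in Proposition \ref{prop: finite subtori} can be taken trivial --- is false as stated, and the justification you offer (``top-degree behavior is unaffected by passing to sublattices, so replacing $n$ by $Nn$ sweeps out the same orbit closure'') is incorrect. Take $X=\T$, $T$ the rotation by $1/2$, $p(n)=n$, $x=0$: the polynomial sequence $n \mapsto n/2 \bmod 1$ has zero constant term, yet its orbit closure is the two-point set $\{0,1/2\}$, the closure along the sublattice $2\Z$ is $\{0\}$, and $\omega:\Z\to\Z/2\Z$ is nontrivial. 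Triviality of $W$ is an arithmetic statement about the coefficients, not a formal consequence of the vanishing constant term: in the setting of the corollary it holds precisely because every nonconstant coefficient of $c\cdot\bigl(u_1(n)\alpha_1+\cdots+u_r(n)\alpha_r\bigr)$ lies in $\Q\alpha_1+\cdots+\Q\alpha_r$ and is therefore irrational whenever it is nonzero --- i.e., it requires exactly the rational-independence hypothesis that your outline only invokes later, when identifying the limit subtorus. As written, the argument has a genuine gap at this step, and the gap propagates: without trivial $W$ you do not know $Y$ is a single connected subgroup, which your step identifying $Y$ with $\spn(u_1)+\cdots+\spn(u_r) \pmod 1$ presupposes.

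Note also that once you supply the missing arithmetic input, the detour through Proposition \ref{prop: finite subtori} becomes unnecessary: for $c\in\Z^m$ annihilating $\spn(u_1)+\cdots+\spn(u_r)$ one has $c\cdot u_i(n)=0$ for all $i,n$, so the character is constantly $1$ along the sequence, while for $c$ not annihilating this subspace some $c\cdot u_i$ is a nonzero polynomial with zero constant term, so $\sum_i (c\cdot u_i(n))\alpha_i$ has an irrational nonconstant coefficient (by rational independence of $1,\alpha_1,\dots,\alpha_r$) and is well-distributed mod $1$ by Lemma \ref{lem: Weyl}; the Weyl criterion (Lemma \ref{lem: ud linear comb}, applied to the subtorus) then yields the corollary directly. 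This character computation is, in substance, the proof the paper relies on: it cites \cite[Corollary 3.3]{bll_weyl} for the one-variable case and observes that the same argument works in several variables. The same computation is also the clean replacement for your vague ``decoupling'' step: a character $c$ taking integer values on all orbit points forces $c\cdot u_i(n)=0$ for all $i,n$ by integrality of $c\cdot u_i(n)$ and rational independence, hence $c$ annihilates $\spn(u_1)+\cdots+\spn(u_r)$.
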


\begin{proof}
	The case $d = 1$ is handled by \cite[Corollary 3.3]{bll-weyl}.
	The same proof works for general $d \in \N$.
\end{proof}


\subsection{Properties of polynomials}

\begin{defn}
	The polynomials $p_1, \dots, p_m \in \Q[x_1, \dots, x_d]$ are \emph{algebraically independent} (over $\Q$)
	if, for every nonzero $f \in \Q[x_1, \dots, x_m]$, the polynomial $f(p_1, \dots, p_m)$ is nonzero.
\end{defn}

\begin{prop} \label{prop: alg ind coord}
	Let $K$ be a number field with ring of integers $\O_K$.
	Let $p(x) \in K[x]$ be a nonconstant $\O_K$-valued polynomial.
	Fix an integral basis $\{b_1, \dots, b_d\} \subseteq \O_K$,
	and let $p_1, \dots, p_d \in \Q[x_1, \dots, x_d]$ be $\Z$-valued polynomials so that
	\begin{equation*}
		p \left( \sum_{i=1}^d{n_ib_i} \right) = \sum_{i=1}^d{p_i(n_1, \dots, n_d)b_i}.
	\end{equation*}
	Then the polynomials $p_1, \dots, p_d$ are algebraically independent (over $\Q$).
\end{prop}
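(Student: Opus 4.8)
The plan is to identify the map $\mathbf{n}\mapsto\big(p_1(\mathbf{n}),\dots,p_d(\mathbf{n})\big)$ with the expression in coordinates of the polynomial self-map $p\colon K\to K$, and then to apply the Jacobian criterion for algebraic independence. Concretely, let $\iota\colon\Q^d\to K$ be the $\Q$-linear isomorphism $\iota(\mathbf{n})=\sum_{i=1}^d n_ib_i$ (here we use that $\{b_1,\dots,b_d\}$ is a $\Q$-basis of $K$, not merely a $\Z$-basis of $\O_K$), so that the map $P:=(p_1,\dots,p_d)\colon\Q^d\to\Q^d$ is exactly $\iota^{-1}\circ p\circ\iota$. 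Since $d$ polynomials in $d$ variables over a field of characteristic zero are algebraically independent if and only if their Jacobian determinant $\det\!\big(\partial p_i/\partial x_j\big)$ is not the zero polynomial, it suffices to compute this Jacobian and verify that it does not vanish identically.

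The first step is to compute the linear part of $p$. Writing $p(x)=\sum_k c_kx^k$ with $c_k\in K$ and expanding $p(\xi+tb_j)\in K[t]$ by the binomial theorem (valid because $K$ is commutative), the coefficient of $t$ is $\sum_k kc_k\xi^{k-1}b_j=p'(\xi)b_j$, where $p'$ is the formal derivative. Since $\iota$ carries the standard basis of $\Q^d$ to $\{b_1,\dots,b_d\}$, it follows that $(\partial p_i/\partial x_j)(\mathbf{n})$ is the $i$-th coordinate of $p'(\iota(\mathbf{n}))b_j$ with respect to $\{b_1,\dots,b_d\}$; that is, the Jacobian matrix of $P$ at $\mathbf{n}$ is precisely the matrix, in the basis $\{b_i\}$, of the operator of multiplication by $p'(\iota(\mathbf{n}))$ on the $\Q$-vector space $K$. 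Hence
\begin{align*}
	\det\!\big(\partial p_i/\partial x_j\big)(\mathbf{n}) = N_{K/\Q}\big(p'(\iota(\mathbf{n}))\big),
\end{align*}
since the field norm $N_{K/\Q}(\alpha)$ is by definition the determinant of multiplication by $\alpha$.

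It then remains to check that $\mathbf{n}\mapsto N_{K/\Q}\big(p'(\iota(\mathbf{n}))\big)$ is a nonzero element of $\Q[x_1,\dots,x_d]$. It is a polynomial because the coordinates of $p'(\iota(\mathbf{n}))\in\O_K$ with respect to $\{b_i\}$ are polynomials in $\mathbf{n}$ (by the same expansion as for $p$, using the structure constants $b_ib_j=\sum_k c_{i,j,k}b_k$), and the norm is a polynomial function of those coordinates. It is not identically zero because $p$ is nonconstant, so $p'\neq 0$ has only finitely many roots in the field $K$; choosing any $\xi_0\in\O_K$ with $p'(\xi_0)\neq 0$ and recalling that $N_{K/\Q}(\alpha)=0$ forces $\alpha=0$ (multiplication by a nonzero element of a field is invertible), we see the polynomial does not vanish at $\iota^{-1}(\xi_0)$. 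The Jacobian criterion then gives algebraic independence of $p_1,\dots,p_d$ over $\Q$.

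There is no serious obstacle here; the one input that deserves care is the direction of the Jacobian criterion we invoke — that a nonvanishing Jacobian implies algebraic independence — which is where characteristic zero enters, and I would include its short standard proof (take a nonzero $g$ of minimal degree with $g(p_1,\dots,p_d)\equiv 0$, differentiate via the chain rule, and observe that the resulting identities exhibit a nonzero vector in the left kernel of the Jacobian matrix, forcing $\det\!\big(\partial p_i/\partial x_j\big)\equiv 0$, a contradiction). Everything else — that the $p_i$ really are the coordinate functions of $p$, and the polynomiality bookkeeping through the structure constants — is routine.
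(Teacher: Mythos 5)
Your proof is correct and follows essentially the same route as the paper's: both apply the Jacobian criterion for algebraic independence in characteristic zero, both compute the partial derivatives by directional differentiation in the integral basis to get $p'(\iota(\mathbf{n}))b_j$, and both conclude nonvanishing of the Jacobian from $p' \ne 0$. The one cosmetic difference is that you package the determinant as the field norm $N_{K/\Q}(p'(\iota(\mathbf{n})))$, whereas the paper argues directly that the rows $p'(x)b_1,\dots,p'(x)b_d$ are $\Q$-linearly independent whenever $p'(x)\ne 0$; these are the same observation.
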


\begin{proof}
	By \cite[Chapter I, 11.4]{lefschetz}, it suffices to check that the Jacobian matrix
	\begin{equation*}
		J := \left( \begin{array}{ccc}
			\frac{\partial p_1}{\partial x_1} & \cdots & \frac{\partial p_d}{\partial x_1} \\
			\vdots & \ddots & \vdots \\
			\frac{\partial p_1}{\partial x_d} & \cdots & \frac{\partial p_d}{\partial x_d}
		\end{array} \right)
	\end{equation*}
	has full rank.
	
	But the $i$th row of the Jacobian matrix is given by
	\begin{equation*}
		\frac{\partial p}{\partial x_i}(x) = \lim_{h \to 0}{\frac{p(x + hb_i) - p(x)}{h}}
		 = \left( \lim_{h \to 0}{\frac{p(x + hb_i) - p(x)}{hb_i}} \right) b_i = p'(x) b_i.
	\end{equation*}
	Since $p$ is nonconstant, $p'(x) \not\equiv 0$.
	Moreover, $\{b_1, \dots, b_d\}$ is linearly independent over $\Q$, so the rows of $J$ are linearly independent.
	Therefore, $J$ has full rank, so $p_1, \dots p_d$ are algebraically independent.
\end{proof}

\begin{prop} \label{prop: ind coord}
	Let $K$ be a number field with ring of integers $\O_K$, and let $\{b_1, \dots, b_d\}$ be an integral basis in $\O_K$.
	Let $\{p_1, \dots, p_k\} \subseteq K[x]$ be an independent family of polynomials (over $K$).
	For each $i = 1, \dots, k$, let $p_{i,1}, \dots, p_{i,d} \in \Q[x_1, \dots, x_d]$ be the coordinate polynomials so that
	\begin{equation*}
		p_i \left( \sum_{j=1}^d{x_jb_j} \right) = \sum_{j=1}^d{p_{i,j}(x_1, \dots, x_d)b_j}.
	\end{equation*}
	Then the family $\{p_{i,j} : 1 \le i \le k, 1 \le j \le d\} \subseteq \Q[x_1, \dots, x_d]$
	is independent (over $\Q$).
	That is, for any $(c_{i,j})_{1 \le i \le k, 1 \le j \le d} \in \Q^{kd} \setminus \{0\}$, the polynomial
	$\sum_{i=1}^k{\sum_{j=1}^d{c_{i,j}p_{i,j}}}$ is nonconstant.
\end{prop}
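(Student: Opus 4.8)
The plan is to reduce the claimed $\Q$-linear independence of the double-indexed family $\{p_{i,j}\}$ to the given $K$-linear independence of $\{1, p_1, \dots, p_k\}$, by exploiting the bijective correspondence between $\Q$-linear combinations of the coordinate polynomials and $K$-coefficient ``combinations'' of the original polynomials. Concretely, suppose we have scalars $(c_{i,j}) \in \Q^{kd}$, not all zero, with $\sum_{i,j} c_{i,j} p_{i,j}$ constant; I want to produce from this a nontrivial constancy relation among $1, p_1, \dots, p_k$ over $K$, contradicting independence.

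The key algebraic observation is the following. Fix $j$ and consider the $\Q$-linear functional on $K$ that reads off the $b_j$-coordinate of an element written in the integral basis $\{b_1, \dots, b_d\}$; call it $\pi_j : K \to \Q$. Then by definition $p_{i,j}(x_1, \dots, x_d) = \pi_j\bigl(p_i(\sum_j x_j b_j)\bigr)$. First I would introduce the ``trace-dual'' or simply a dual basis: since the trace form $(a,b) \mapsto \mathrm{Tr}_{K/\Q}(ab)$ is nondegenerate, there is a dual basis $\{b_1^*, \dots, b_d^*\}$ with $\mathrm{Tr}_{K/\Q}(b_i b_j^*) = \delta_{ij}$, so that $\pi_j(a) = \mathrm{Tr}_{K/\Q}(a b_j^*)$ for all $a \in K$. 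Now group the given scalars: for each $i$, set $\gamma_i := \sum_{j=1}^d c_{i,j} b_j^* \in K$. Using $\Q$-bilinearity of the trace and evaluating the big polynomial at a generic point $x = \sum_j x_j b_j$, one computes
\begin{align*}
	\sum_{i=1}^k \sum_{j=1}^d c_{i,j} p_{i,j}(x_1, \dots, x_d)
	 = \sum_{i=1}^k \sum_{j=1}^d c_{i,j}\,\mathrm{Tr}_{K/\Q}\!\bigl(p_i(x) b_j^*\bigr)
	 = \mathrm{Tr}_{K/\Q}\!\left( \sum_{i=1}^k \gamma_i\, p_i(x) \right).
\end{align*}
Since this holds identically in $x_1, \dots, x_d$ (hence, since the $b_j$ form a $\Q$-basis, identically in $x \in K$, or equivalently as an identity of polynomials after a $\Q$-linear change of variables), and since not all $c_{i,j}$ vanish, not all $\gamma_i$ vanish (the map $(c_{i,j})_j \mapsto \gamma_i$ is a $\Q$-linear isomorphism $\Q^d \to K$ for each fixed $i$). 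So we have a polynomial $q(x) := \sum_i \gamma_i p_i(x) \in K[x]$ with $\mathrm{Tr}_{K/\Q}\bigl(q(x)\bigr)$ constant as a function of $x$.

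The remaining point is to promote ``$\mathrm{Tr}_{K/\Q} \circ q$ is constant'' to ``$q$ is constant.'' Write $q(x) = \sum_m a_m x^m$ with $a_m \in K$. Then $\mathrm{Tr}_{K/\Q}(q(x)) = \sum_m \mathrm{Tr}_{K/\Q}(a_m) x^m$ as a polynomial in $x$ over $\Q$; this being constant forces $\mathrm{Tr}_{K/\Q}(a_m) = 0$ for every $m \ge 1$. That alone does not force $a_m = 0$. To fix this, instead of the trace of $q$ itself I would apply the argument to the polynomials $\beta q(x)$ for a full $\Q$-basis $\{\beta_1, \dots, \beta_d\}$ of $K$ — equivalently, observe that the hypothesis that the \emph{whole} coordinate family $\{p_{i,j}\}$ satisfies no relation is what we are proving, and a single relation $(c_{i,j})$ already gives, via the above, that $\pi_j(q(x))$ is constant for \emph{that particular} $j$-pattern; but a cleaner route is: the coordinate polynomials of $q$ in the basis $\{b_j\}$ are $\Q$-linear combinations (namely $\sum_i c_{i,j'} p_{i,j}$-type expressions — one must track indices) of the $p_{i,j}$, and at least one coordinate polynomial of $q$ is nonconstant unless $q$ is constant. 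So the honest reduction is: a nontrivial $\Q$-relation among $\{p_{i,j}\}$ yields (after regrouping the $c$'s by the \emph{second} index into elements $\gamma_i \in K$ exactly as above, with $b_j^*$ replaced by $b_j$ in the right spot) a polynomial $q = \sum_i \gamma_i p_i$ with $q$ constant, not merely $\mathrm{Tr}(q)$ constant — the point being that ``$\sum_{i,j} c_{i,j} p_{i,j}$ constant'' unpacks directly as ``$\sum_i \gamma_i p_i$ has all its $b_j$-coordinates constant,'' i.e. $q$ is constant. Then since $\{1, p_1, \dots, p_k\}$ is $K$-linearly independent and $q - (\text{its constant value}) = \sum_i \gamma_i p_i - c\cdot 1 = 0$ with some $\gamma_i \ne 0$, we reach the desired contradiction.

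I expect the main obstacle to be purely bookkeeping: keeping the two index ranges ($j$ from the target coordinates versus the index appearing inside $p_i(\sum x_j b_j)$) straight, and verifying that ``$\sum_{i,j} c_{i,j} p_{i,j}$ is constant'' translates cleanly into ``$q := \sum_i \gamma_i p_i \in K[x]$ is constant'' rather than only a trace being constant — so the clean formulation is to avoid the trace form entirely and instead argue coordinate-by-coordinate, using that the $K$-valued polynomial $q$ is constant if and only if all of its $\Q$-valued coordinate polynomials are constant, which is immediate from $\Q$-linear independence of $\{b_1, \dots, b_d\}$. Once that translation is set up, the conclusion is immediate from the hypothesis that $\{p_1, \dots, p_k\}$ is independent over $K$ (equivalently $\{1, p_1, \dots, p_k\}$ is $K$-linearly independent).
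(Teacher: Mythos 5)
Your reduction breaks down at its crucial step. The final claim --- that ``$\sum_{i,j}c_{i,j}p_{i,j}$ is constant'' unpacks directly as ``$q:=\sum_i\gamma_i p_i$ is constant'' --- is false. The hypothesis is a \emph{single} $\Q$-valued identity (one fixed $\Q$-linear functional applied to $q(x)$), whereas ``$q$ is constant'' amounts to $d$ identities (every $b_j$-coordinate of $q$ is constant); these are not interchangeable. Moreover, if you set $\gamma_i=\sum_j c_{i,j}b_j$, the coordinate polynomials of $q=\sum_i\gamma_ip_i$ are \emph{not} the sums $\sum_i c_{i,j}p_{i,j}$: multiplication by $b_j$ mixes coordinates through the structure constants $b_jb_l=\sum_m a_{j,l,m}b_m$, so no regrouping of indices makes the hypothesis say that all coordinates of $q$ are constant. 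In the dual-basis formulation you correctly get only that $\mathrm{Tr}_{K/\Q}(q(x))$ is constant, and promoting this to ``$q$ is constant'' is exactly the nontrivial content of the proposition --- it cannot be obtained by bookkeeping. (Your intermediate computation is also wrong: since the trace is only $\Q$-linear, $\mathrm{Tr}_{K/\Q}(q(x))$ is $\sum_m\mathrm{Tr}_{K/\Q}(a_mx^m)$, a sum of genuine degree-$m$ forms in $(x_1,\dots,x_d)$, not $\sum_m\mathrm{Tr}_{K/\Q}(a_m)x^m$; so the obstruction is not ``trace-zero coefficients'' but the step you then tried to bypass.)

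What is missing is a lemma of the form: if $q\in K[x]$ and $\mathrm{Tr}_{K/\Q}(q(x))$ (equivalently, some fixed nonzero $\Q$-linear functional applied to $q(x)$) is constant as a polynomial in the coordinates $x_1,\dots,x_d$, then $q$ is constant. This is true, but requires an argument: comparing homogeneous components gives $\mathrm{Tr}_{K/\Q}(a_mx^m)\equiv 0$ for each $m\ge 1$, and then polarization (characteristic zero) together with nondegeneracy of the trace form forces $a_m=0$. With that lemma in hand, your scheme (pass to $\gamma_i=\sum_jc_{i,j}b_j^*$, then invoke $K$-linear independence of $\{1,p_1,\dots,p_k\}$) does close, and would give a somewhat different proof from the paper's. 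The paper supplies the analogous input differently: it first proves that the coordinate polynomials of a single nonconstant $p\in K[x]$ are algebraically independent over $\Q$ (Proposition \ref{prop: alg ind coord}, via a Jacobian computation), and then bootstraps to the full family by looking at the $b_1$-coordinates of the auxiliary polynomials $b_jp_i$. Some such nontrivial input is unavoidable; as written, your proof asserts the conclusion of this step rather than proving it.
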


\begin{proof}
	First, since $\{b_1, \dots, b_d\}$ is linearly independent over $\Q$,
	the family $\{b_jp_i : 1 \le i \le k, 1 \le j \le d\} \subseteq K[x]$ is independent over $\Q$.
	
	Now let $q_{i,j} \in \Q[x_1, \dots, x_d]$ be the $b_1$-coordinate of $b_jp_i$.
	That is,
	\begin{equation*}
		b_jp_i \left( \sum_{l=1}^d{x_lb_l} \right) = q_{i,j}(x_1, \dots, x_d)b_1 + r_{i,j}(x_1, \dots, x_d),
	\end{equation*}
	where $r_{i,j}(x_1, \dots, x_d) \in \spn_{\Q}\{b_2, \dots, b_d\}$.
	We claim that $\{q_{i,j} : 1 \le i \le k, 1 \le j \le d\}$ is independent over $\Q$.
	Suppose not.
	Then for some $(c_{i,j})_{1 \le i \le k, 1 \le j \le d} \in \Q^{kd} \setminus \{0\}$ and some $c \in \Q$, we have
	\begin{equation*}
		\sum_{i=1}^k{\sum_{j=1}^d{c_{i,j}q_{i,j}(x_1, \dots, x_d)}} = c.
	\end{equation*}
	Then
	\begin{equation*}
		Q \left( \sum_{l=1}^d{x_lb_l} \right)
		 := \sum_{i=1}^k{\sum_{j=1}^d{c_{i,j}b_jp_i \left( \sum_{l=1}^d{x_lb_l} \right)}}
		 = cb_1 + \sum_{i=1}^k{\sum_{j=1}^d{c_{i,j}r_{i,j}(x_1, \dots, x_d)}}.
	\end{equation*}
	Hence, for the polynomial function $f \left( \sum_{l=1}^d{x_lb_l} \right) := x_1 - c$, we have $f(Q) = 0$.
	By Proposition \ref{prop: alg ind coord}, it follows that $Q$ is constant.
	But $\{b_jp_i : 1 \le i \le k, 1 \le j \le d\}$ is independent over $\Q$, so this is a contradiction.
	
	For $1 \le j, l, m \le d$, let $a_{j,l,m} \in \Z$ so that $b_jb_l = \sum_{m=1}^d{a_{j,l,m}b_m}$.
	By direct computation, we have
	\begin{equation*}
		b_jp_i \left( \sum_{l=1}^d{x_lb_l} \right)
		 = \sum_{l=1}^d{p_{i,l}(x_1, \dots, x_d) \sum_{m=1}^d{a_{j,l,m}b_m}}
		 = \sum_{m=1}^d{ \left( \sum_{l=1}^d{a_{j,l,m}p_{i,l}(x_1, \dots, x_d)} \right) b_m}.
	\end{equation*}
	Thus,
	\begin{equation*}
		q_{i,j}(x_1, \dots, x_d) = \sum_{l=1}^d{a_{j,l,1}p_{i.l}(x_1, \dots, x_d)}
		 \in \spn_{\Q}{\{p_{i,1}, \dots, p_{i,d}\}}.
	\end{equation*}
	Therefore, $\spn{\left( \{1\} \cup \{q_{i,j} : 1 \le i \le k, 1 \le j \le d\} \right)}
	 \subseteq \spn{\left( \{1\} \cup \{p_{i,j} : 1 \le i \le k, 1 \le j \le d\} \right)}$.
	It follows that $\{p_{i,j} : 1 \le i \le k, 1 \le j \le d\}$ is independent over $\Q$.
\end{proof}

\begin{lem} \label{lem: intersective}
	Let $K$ be a number field with ring of integers $\O_K$.
	Let $\{p_1, \dots, p_k\} \subseteq K[x]$ be jointly intersective $\O_K$-valued polynomials.
	Let $r \in \O_K \setminus \{0\}$.
	Then there exists $\xi \in \O_K$ and $D \in \O_K \setminus \{0\}$ such that
	\begin{equation*}
		p_i \left( \xi + D\O_K \right) \subseteq r\O_K
	\end{equation*}
	for $i = 1, \dots, k$.
\end{lem}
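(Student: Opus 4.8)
The plan is to first use joint intersectivity to pin down a single point $\xi$ at which all the $p_i$ are divisible by $r$, and then to ``thicken'' $\xi$ to an entire coset $\xi + D\O_K$ by choosing $D$ to be a sufficiently divisible multiple of $r$, so that passing from $\xi$ to $\xi + Dm$ perturbs each value $p_i(\xi)$ only within $r\O_K$.

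For the first step, note that $r\O_K$ has finite index in $(\O_K,+)$, since $\O_K/r\O_K$ is finite (its order is $|N_{K/\Q}(r)| < \infty$ because $r \neq 0$). Applying the definition of joint intersectivity to the finite index subgroup $\Lambda = r\O_K$ yields $\xi \in \O_K$ with $p_i(\xi) \in r\O_K$ for all $i = 1, \dots, k$. For the second step, we clear denominators: since each $p_i \in K[x]$ has only finitely many coefficients, all lying in $K = \mathrm{Frac}(\O_K)$, there is a positive integer $c$ with $c\,p_i \in \O_K[x]$ for every $i$. Because $\xi \in \O_K$, expanding $(\xi + y)^j$ by the binomial theorem shows that $c\,p_i(\xi + y)$ again lies in $\O_K[y]$ as a polynomial in the new variable $y$. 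We then set $D := cr \in \O_K \setminus \{0\}$.

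To conclude, fix $i$ and $m \in \O_K$. Writing $Q_i(y) := p_i(\xi + y)$, we have $c\,Q_i \in \O_K[y]$ and $Q_i(0) = p_i(\xi) \in r\O_K$; substituting $y = Dm$, every monomial of positive degree contributes a term divisible by $D = cr$, so $c\,Q_i(Dm) - c\,Q_i(0) \in cr\,\O_K$. Combined with $c\,Q_i(0) = c\,p_i(\xi) \in cr\,\O_K$, this gives $c\,Q_i(Dm) \in cr\,\O_K$. Since $Q_i(Dm) = p_i(\xi + Dm) \in \O_K$ (as $p_i$ is $\O_K$-valued) and $c$ is a nonzero rational integer, we may divide by $c$ to obtain $p_i(\xi + Dm) \in r\O_K$. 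As $i$ and $m$ were arbitrary, $p_i(\xi + D\O_K) \subseteq r\O_K$ for all $i$.

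There is no real obstacle here; the only points needing a little care are that $c$ can be chosen uniformly in $i$ (handled by taking a common denominator) and that the property ``$c\,p_i(\cdot) \in \O_K[\cdot]$'' survives the shift by $\xi \in \O_K$, as well as the final cancellation of $c$. An essentially equivalent alternative would replace the denominator-clearing by the Newton forward-difference expansion $Q_i(y) = \sum_j (\Delta^j Q_i)(0)\binom{y}{j}$, whose coefficients $(\Delta^j Q_i)(0)$ are $\Z$-linear combinations of values of the $\O_K$-valued polynomial $Q_i$ and hence lie in $\O_K$, and then take $D = n!\cdot r$ with $n = \max_i \deg p_i$, using $j \mid n!$ for $1 \le j \le n$ to see each $\binom{Dm}{j} \in r\O_K$.
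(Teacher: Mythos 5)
Your proof is correct and follows essentially the same route as the paper: use joint intersectivity on the finite-index subgroup $r\O_K$ to find $\xi$, then clear denominators and take $D$ to be a suitable multiple of $r$ so that the binomial expansion shows the whole coset $\xi + D\O_K$ lands in $r\O_K$. The only cosmetic difference is that you use a single rational-integer denominator $c$ and cancel it at the end, whereas the paper clears denominators with elements $D_i \in \O_K$ polynomial by polynomial and takes $D = r\cdot\mathrm{lcm}(D_1,\dots,D_k)$.
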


\begin{proof}
	The subgroup $r\O_K$ has finite index in $\O_K$,
	so there exists $\xi \in \O_K$ such that $p_i(\xi) \in r\O_K$ for $i = 1, \dots, k$.
	
	Fix $1 \le i \le k$.
	Now write $p_i(x) = a_m x^m + \cdots + a_1 x + a_0$ with $a_0, a_1, \dots, a_m \in K$.
	Since $p_i$ is $\O_K$-valued, we have $a_0 = p_i(0) \in \O_K$.
	Let $D_i \in \O_K$ so that $D_ia_j \in \O_K$ for all $j = 1, \dots, m$.
	We claim $p_i(\xi + D_ir\O_K) \subseteq r\O_K$.
	Indeed, for $n \in \O_K$, we have
	\begin{align*}
		p \left( \xi + D_irn \right) & = p_i(\xi) + \sum_{j=1}^m{\sum_{l=1}^j{a_j\binom{j}{l}(D_irn)^l\xi^{j-l}}} \\
		 & = p_i(\xi) + r \cdot \sum_{j=1}^m{\left( D_ia_j \sum_{l=1}^j{\binom{j}{l}D^{l-1}k^{l-1}n^l \xi^{l-j}} \right)}
		 \in r \O_K.
	\end{align*}
	
	Taking $D = r \cdot \text{lcm}(D_1, \dots, D_k)$ completes the proof.
\end{proof}


\subsection{Eligible collections}

Theorem \ref{thm: rational Kronecker} and Theorem \ref{thm: nilfactor} each establish characteristic factors
for certain polynomial configurations in ergodic systems.
In both cases, it is significantly easier to deal with totally ergodic systems.
The notion of \emph{eligible} collections, introduced by Frantzikinakis in \cite{fra} for $\Z$-valued polynomials,
can be utilized to reduce the ergodic case to the simpler case in which the system is totally ergodic.

\begin{defn} \label{defn: eligible}
	Let $\P$ be a collection of families of $k$ $\O_K$-valued polynomials.
	We say that $\P$ is \emph{eligible} if for any $\{p_1, \dots, p_k\} \in \P$, we have
	\begin{enumerate}[(i)]
		\item	$\{p_1(n) - p_1(0), \dots, p_k(n) - p_k(0)\} \in \P$;
		\item	$\{p_1(rn+s), \dots, p_k(rn+s)\} \in \P$ for any $r, s \in \O_K$ with $r \ne 0$;
		\item	$\{cp_1(n), \dots, cp_k(n)\} \in \P$ for any $c \in K \setminus \{0\}$
			such that $cp_i$ is $\O_K$-valued for $i = 1, \dots, k$.
	\end{enumerate}
\end{defn}

\begin{prop}[cf. \cite{fra}, Proposition 4.1] \label{prop: eligible}
	Let $\P$ be eligible.
	Suppose that for some $m \in \N$, the nilfactor $\calZ_m$ is characteristic
	for every $P \in \P$ in totally ergodic systems.
	Then $\calZ_m$ is characteristic for every $P \in \P$ in ergodic systems.
\end{prop}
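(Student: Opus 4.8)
The plan is to reduce the ergodic case to the totally ergodic case by passing to an extension where the action has been restricted to a finite-index subgroup, exploiting the flexibility built into the definition of an eligible collection. First, fix an ergodic $\O_K$-system $\X = (X, \B, \mu, T)$ and a family $\{p_1, \dots, p_k\} \in \P$. If $\X$ is not totally ergodic, Proposition \ref{prop: tot erg} tells us that $\calK_{rat}(\X)$ is nontrivial and, more usefully, that there is a nonzero $r \in \O_K$ (a common period for finitely many rational characters) such that the restricted action $(T^{rn})_{n \in \O_K}$ need not be ergodic either — but the ergodic decomposition of this restricted action is governed by finitely many rational characters. The first step is to make this precise: choose $r$ so that $(T^{rn+s})_{n \in \O_K}$ is ergodic on each ergodic component, and so that the ergodic components are permuted by the finite translation action, exactly as in the proof of Proposition \ref{prop: tot erg conn}. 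On each such component the restricted action should be totally ergodic (or can be arranged to be after a further finite refinement).

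Next, I would perform the standard substitution $n \mapsto rn + s$ inside the average. Writing $\Phi_N' = r^{-1}(\Phi_N - s) \cap \O_K$ (or a comparable F\o lner sequence), a change of variables shows that
\begin{align*}
	\UClim_{n \in \O_K}{\prod_{i=1}^k{T^{p_i(n)}f_i}}
\end{align*}
can be recovered from uniform Ces\`aro averages over the family $\{p_1(rn+s), \dots, p_k(rn+s)\}$ with respect to the action of $T$ restricted to $r\O_K + s$-cosets — here property (ii) of eligibility guarantees this new family is again in $\P$. After this substitution, each fiber of the ergodic decomposition of the restricted action is totally ergodic, so the hypothesis applies on each fiber: $\calZ_m$ (computed for the restricted action, which agrees with the global $\calZ_m$ up to the rational Kronecker part) is characteristic there. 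One then integrates over the ergodic decomposition. The point is that conditional expectation onto $\calZ_m$ commutes appropriately with this decomposition because $\calZ_m \supseteq \calK_{rat}$, and the rational Kronecker factor is precisely where the obstruction to total ergodicity lives; modifying the $f_i$ by projecting onto $\calZ_m$ changes nothing that the restricted-action analysis does not already see.

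The main obstacle, and the place requiring genuine care, is bookkeeping the interaction between the polynomial substitution and the ergodic decomposition: after replacing $n$ by $rn + s$, the polynomials $p_i(rn+s)$ have nonzero constant terms $p_i(s)$, which shift the base point of the orbit, and one must invoke property (i) of eligibility to subtract these constants — but subtracting $p_i(s)$ corresponds to pre-composing $f_i$ with $T^{-p_i(s)}$, which may not preserve $\calZ_m$-measurability fiberwise unless one is careful about which component one is on. The clean way around this is to note that $T^{-p_i(s)}$ maps $\calZ_m$ to $\calZ_m$ (it is $T$-equivariant) and permutes ergodic components of the restricted action, so the estimate on one component transfers to its translate. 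A second, more routine, subtlety is verifying that the nilfactor $\calZ_m$ of the global system, when restricted to $r\O_K$, contains the nilfactor of the restricted system of step $m$ — this is where one uses that passing to a finite-index subgroup only enlarges (the rational part of) the Host--Kra factors, never shrinks the characteristic-factor hypothesis. Once these two points are handled, the rest is the substitution-and-integrate argument, and the conclusion that $\calZ_m$ is characteristic for $P$ in $\X$ follows.
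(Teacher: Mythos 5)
Your overall strategy matches the paper's: pass to a finite-index restriction $T^r$ so that the ergodic components become totally ergodic, use eligibility to transform the polynomial family, and sum over components and cosets. But there is a genuine gap at the core of the reduction: you never invoke property (iii) of eligibility (scaling by $c \in K$), and consequently never actually pass the polynomials to the restricted action.

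Concretely, after your substitution $n \mapsto rn + s$ and the shift by $p_i(s)$ via property (i), you are left with the family $\{p_i(rn+s) - p_i(s)\}$ applied through the \emph{full} action $T$, not the restricted action $T^r$. The hypothesis you need to apply is about totally ergodic systems, and the only systems that are totally ergodic in your setup are the ergodic components of $(T^{rn})_{n \in \O_K}$, not of $T$ itself. To get something to which the hypothesis applies, one must write $T^{p_i(Dn+s)-p_i(s)} = (T^r)^{q_i^{(s)}(n)}$ with $q_i^{(s)}(n) := r^{-1}\bigl(p_i(Dn+s)-p_i(s)\bigr)$ an $\O_K$-valued polynomial, and check $\{q_1^{(s)},\dots,q_k^{(s)}\} \in \P$ using property (iii). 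This requires the divisibility $p_i(Dn+s) - p_i(s) \in r\O_K$ for all $n$, which is \emph{not} automatic from the substitution $n \mapsto rn+s$ when the $p_i$ are merely $\O_K$-valued (their coefficients can lie in $K \setminus \O_K$, e.g.\ binomial-type polynomials). The paper obtains it by first reducing to $p_i(0)=0$ via property (i) and then invoking Lemma~\ref{lem: intersective} to produce an auxiliary $D$ (a multiple of $r$ times a denominator-clearing factor), and then substitutes $n \mapsto Dn+s$, not $n \mapsto rn+s$. Without this $D$, your change of variables does not produce a family in $\P$ that can be fed to the totally-ergodic hypothesis, and the proof does not close.

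The remaining parts of your sketch — handling the constant shifts $p_i(s)$ via $T$-invariance of $\calZ_m$, and the containment $\calZ_m(T^{rn}) \subseteq \calZ_m(T^n)$ — are correct in substance (though your sentence about the Host--Kra factors being ``enlarged'' reads backwards: the nilfactor of the restricted action is contained in that of the full action, which is exactly why $\E{f_1}{\calZ_m} = 0$ passes to the components). Once you insert Lemma~\ref{lem: intersective}, use property (iii) for the division by $r$, and replace $r$ by the correct step size $D$, the argument becomes the paper's.
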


\begin{proof}
	Let $(X, \B, \mu, T)$ be an ergodic $\O_K$-system.
	Let $f_1, \dots, f_k \in L^{\infty}(\mu)$, and suppose $\E{f_i}{\calZ_m} = 0$ for some $i = 1, \dots, k$.
	Without loss of generality, $i = 1$.
	We want to show
	\begin{equation*}
		\UClim_{n \in \O_K}{ \prod_{i=1}^k{T^{p_i(n)}f_i} } = 0
	\end{equation*}
	in $L^2(\mu)$.
	Shifting by the constant terms and using property (i), we may assume $p_i(0) = 0$.
	
	By Theorem \ref{thm: polynomial nil}, we may assume that $X = G/\Gamma$ is a nilmanifold
	and $T$ acts by niltranslations $T^nx = a(n)x$ with $a(n) \in G$.
	We claim that there exists $r \in \O_K$ such that the (finitely many) ergodic components
	of the action $\left( T^{rn} \right)_{n \in \O_K}$ are totally ergodic.
	Let $\calZ$ be the Kronecker factor of $(X,T)$.
	This is an action by rotations on an abelian Lie group of the form
	$\Z_{a_1} \times \cdots \times \Z_{a_d} \times \T^{c}$ with $a_1, \dots, a_d \in \N, c \in \N \cup \{0\}$.
	Set $a := \prod_{j=1}^d{a_j} \in \N$.
	Letting $r = a(b_1 + \cdots + b_d)$, where $\{b_1, \dots, b_d\}$ is an integral basis for $\O_K$,
	we then have that the ergodic components of $\left( T^{rn} \right)_{n \in \O_K}$ are totally ergodic
	(we have trivialized the rational component of the Kronecker factor).
	
	Since $p_i(0) = 0$ for each $i = 1, \dots, k$, there exist $D \in \O_K \setminus \{0\}$
	so that the polynomials $q_i(n) := r^{-1}p_i(Dn)$ are $\O_K$-valued by Lemma \ref{lem: intersective}.
	By properties (ii) and (iii), $\{q_1, \dots, q_k\} \in \P$.
	
	Now, $\left( T^{rn} \right)_{n \in \O_K}$ has finitely many ergodic components
	and $\calZ_m(T^{rn}) \subseteq \calZ_m(T^n)$, so
	\begin{equation*}
		\E{f_1}{\calZ^{(j)}_m} = 0,
	\end{equation*}
	where $\calZ^{(j)}_m$ is the nilfactor for the $j$th ergodic component of $T^{rn}$.
	Summing over the finitely many ergodic components of $T^{rn}$, we thus have
	\begin{equation*}
		\UClim_{n \in \O_K}{ \prod_{i=1}^k{T^{p_i(Dn)}f_i} }
		 = \UClim_{n \in \O_K}{ \prod_{i=1}^k{T^{r \cdot q_i(n)}f_i} } = 0.
	\end{equation*}
	
	Note that by the proof of Lemma \ref{lem: intersective}, $p_i(Dn + s) \equiv p_i(s) \pmod{r\O_K}$.
	Hence, $q^{(s)}_i(n) := r^{-1}(p_i(Dn+s) - p_i(s))$ is $\O_K$-valued.
	Moreover, since $\P$ is eligible, we have $\left\{ q^{(s)}_1, \dots, q^{(s)}_k \right\} \in \P$.
	By assumption, $\E{f_i}{\calZ_m} = 0$ for some $i = 1, \dots, k$.
	It follows that $\E{T^{p_i(s)}f_i}{\calZ_m} = 0$, since $\calZ_m$ is $T$-invariant.
	Thus, by the argument in the previous paragraph, we have
	\begin{align*}
		\UClim_{n \in \O_K}{ \prod_{i=1}^k{T^{p_i(Dn+s)}f_i} }
		 & = \UClim_{n \in \O_K}{ \prod_{i=1}^k{T^{p_i(Dn+s)-p_i(s)}(T^{p_i(s)}f_i)} } \\
		 & = \UClim_{n \in \O_K}{ \prod_{i=1}^k{T^{r \cdot q^{(s)}_i(n)}(T^{p_i(s)}f_i)} } = 0
	\end{align*}
	for $s \in \O_K/r\O_K$.
	This completes the proof.
\end{proof}


\section{Characteristic factors} \label{sec: factors}


\subsection{Proof of Theorem \ref{thm: rational Kronecker}}

We want to prove that the rational Kronecker factor, $\calK_{rat}$, is characteristic for the average
\begin{equation*}
	\UClim_{n \in \O_K}{\prod_{i=1}^k{T^{p_i(n)}f_i}}
\end{equation*}
when $p_1, \dots, p_k \in K[x]$ are independent $\O_K$-valued polynomials.

We will first prove a special case:

\begin{thm} \label{thm: tot erg}
	Let $K$ be a number field with ring of integers $\O_K$.
	Suppose $p_1, \dots, p_k \in K[x]$ are independent $\O_K$-valued polynomials.
	If $\left( X, \B, \mu, T \right)$ is a totally ergodic $\O_K$-system
	and $f_1, \dots, f_k \in L^{\infty}(\mu)$, then
	\begin{equation*}
		\UClim_{n \in \O_K}{\prod_{i=1}^k{T^{p_i(n)}f_i}}
		 = \prod_{i=1}^k{\int_X{f_i~d\mu}}.
	\end{equation*}
\end{thm}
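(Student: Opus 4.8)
The plan is to deduce Theorem~\ref{thm: tot erg} from the corresponding topological equidistribution statement and then pass back to $L^2$ by a routine approximation. First, since $\{1,p_1,\dots,p_k\}$ is linearly independent over $K$, each $p_i$ and each difference $p_i-p_j$ ($i\neq j$) is non-constant, so $p_1,\dots,p_k$ are non-constant and essentially distinct; hence Theorem~\ref{thm: polynomial nil} applies and lets us assume $X=G/\Gamma$ is a nilsystem with $T^nx=a(n)x$ for a homomorphism $a\colon\O_K\to G$ (and $G$ connected). By Proposition~\ref{prop: tot erg conn}, total ergodicity of $T$ is equivalent to $X$ being connected --- this is the only place the total ergodicity hypothesis is used.

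Next, I would reduce the whole statement to showing that for every $x\in X$ the sequence $\big(T^{p_1(n)}x,\dots,T^{p_k(n)}x\big)_{n\in\O_K}$ is well-distributed in $(X^k,\mu^{\otimes k})$. Granting this: for continuous $f_1,\dots,f_k$ one gets, along every F{\o}lner sequence $(\Phi_N)$, pointwise convergence of $\tfrac1{|\Phi_N|}\sum_{n\in\Phi_N}\prod_i f_i(T^{p_i(n)}x)$ to $\int_{X^k}\bigotimes_i f_i\,d\mu^{\otimes k}=\prod_i\int_X f_i\,d\mu$, and uniform boundedness of the integrands upgrades this to $L^2(\mu)$; the elementary bound $\big\|\tfrac1{|\Phi_N|}\sum_{n\in\Phi_N}\prod_i T^{p_i(n)}f_i\big\|_2\le\|f_k\|_2\prod_{i<k}\|f_i\|_\infty$ then extends it to all $f_i\in L^\infty(\mu)$ by a $3\eps$-argument, which is exactly the asserted uniform Ces\`aro limit.

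For the equidistribution statement, the orbit in $X^k=G^k/\Gamma^k$ is the polynomial orbit $g(n)\cdot(x,\dots,x)$, where $g(n)=\big(a(p_1(n)),\dots,a(p_k(n))\big)$ is a polynomial map $\Z^d\cong\O_K\to G^k$ (a homomorphism out of an abelian group is polynomial, and polynomial maps into nilpotent groups are stable under substituting polynomials). Theorem~\ref{thm: nil orbits} decomposes the orbit closure into finitely many closed sub-nilmanifolds $Y_w=Hx_w$ on which the orbit is well-distributed along the fibers of a homomorphism $\omega\colon\Z^d\to W$. Since $X^k$ is connected, it suffices to show that the projection of the orbit to the maximal torus factor $(\T^m)^k$ of $X^k$ (where $\T^m=G/[G,G]\Gamma$ is the horizontal torus of $X$) is well-distributed: each $\pi(Y_w)$ is then a closed coset of a fixed connected subgroup of $(\T^m)^k$, and these finitely many cosets cover $(\T^m)^k$, forcing $\pi(Y_w)=(\T^m)^k$; but a closed sub-nilmanifold of a connected nilmanifold that surjects onto its horizontal torus is the whole nilmanifold, so $Y_w=X^k$ for every $w$ and the full orbit is well-distributed. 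The projected sequence is $n\mapsto\big(\bar a(p_1(n)),\dots,\bar a(p_k(n))\big)$, whose well-distribution in $(\T^m)^k$ does not depend on any base point, so it can be checked on characters (this is where Corollary~\ref{cor: span} could alternatively be invoked).

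Finally, fix $(v_1,\dots,v_k)\in(\Z^m)^k\setminus\{0\}$ defining a nontrivial character of $(\T^m)^k$; applied to the projected sequence it yields the $\T$-valued polynomial $Q(n)=\sum_i\langle v_i,\bar a(p_i(n))\rangle$. Fixing an integral basis $\{b_1,\dots,b_d\}$ and writing $p_i=\sum_j p_{i,j}b_j$ with integer-valued $p_{i,j}\in\Q[x_1,\dots,x_d]$, one gets $Q(n)=\sum_{i,j}\theta_{i,j}p_{i,j}(n)$ with $\theta_{i,j}=\langle v_i,\bar a(b_j)\rangle\in\T$. By Weyl's criterion, $Q$ can fail to be equidistributed only if every non-constant coefficient of a real lift of $Q$ is rational; since $\{1\}\cup\{p_{i,j}\}$ is linearly independent over $\Q$ by Proposition~\ref{prop: ind coord}, hence over $\R$ (linear independence of finitely many vectors being witnessed by a nonzero rational minor), this would force every $\theta_{i,j}$ to be rational, i.e.\ each character $\chi_i:=\langle v_i,\bar a(\cdot)\rangle\colon\O_K\to\T$ to be rational. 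But $\bar a(\O_K)$ is dense in $\T^m$ (the horizontal torus is an ergodic group rotation), so whenever $v_i\neq0$ the image $\chi_i(\O_K)$ is dense, hence infinite, in $\T$, contradicting that a rational character is periodic (Lemma~\ref{lem: rational periodic}) and therefore has finite image. Thus $Q$ is equidistributed for every nontrivial character, so the projected sequence is well-distributed in $(\T^m)^k$, completing the proof. The main obstacle here is the structural reduction in the third paragraph --- extracting from Theorem~\ref{thm: nil orbits} that the orbit in $X^k$ is governed entirely by its horizontal-torus projection, with no residual lower-step obstruction --- while the last paragraph is essentially bookkeeping built on Proposition~\ref{prop: ind coord} and Lemma~\ref{lem: rational periodic}.
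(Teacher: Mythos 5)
Your overall skeleton (reduction to nilsystems via Theorem \ref{thm: polynomial nil}, connectedness via Proposition \ref{prop: tot erg conn}, the final approximation back to $L^2(\mu)$, and the use of Proposition \ref{prop: ind coord}, Lemma \ref{lem: Weyl} and Lemma \ref{lem: rational periodic} in the character computation) matches the paper, but the central structural step in your third paragraph is wrong, and with it the statement you set out to prove. The claim that a closed sub-nilmanifold of a connected nilmanifold which surjects onto the maximal torus factor $G/[G,G]\Gamma$ must be the whole nilmanifold is false: the correct statement of this type is Theorem \ref{thm: proj}, which concerns the factor $Z=X/[G_0,G_0]$ (in general much larger than the maximal torus) and applies to orbit closures of polynomial maps, not to arbitrary sub-nilmanifolds. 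Concretely, take $K=\Q$, $X=\T^2$, $T(x_1,x_2)=(x_1+\alpha,x_2+x_1)$ with $\alpha$ irrational (a totally ergodic $2$-step nilsystem whose maximal torus factor is the single circle $x_1\mapsto x_1+\alpha$), and $p_1(n)=n$, $p_2(n)=n^2$. For $x=(0,x_2)$ one has $T^nx=\bigl(n\alpha,\,x_2+\binom{n}{2}\alpha\bigr)$ and $T^{n^2}x=\bigl(n^2\alpha,\,x_2+\binom{n^2}{2}\alpha\bigr)$, so the nontrivial character $(y,y')\mapsto e\bigl(y_1+2y_2-y_1'\bigr)$ of $X^2$ equals the constant $e(2x_2)$ along the sequence $\bigl(T^nx,T^{n^2}x\bigr)$: the orbit closure is a proper subnilmanifold of $X^2$ even though its projection to the maximal torus factor $\T\times\T$ is all of it (since $(n\alpha,n^2\alpha)$ equidistributes there). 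Hence both your conclusion ``$Y_w=X^k$ for every $w$'' and the ``for every $x$'' well-distribution you reduce to in your second paragraph are false; the correct equidistribution statements (Theorems \ref{thm: top tot erg}, \ref{thm: top tot erg Z^l}, \ref{thm: Z^l equidistribution}) hold only off a null, meager exceptional set, which is still enough for the $L^2$ conclusion.

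Because your horizontal-torus projection is a rotation orbit and hence base-point independent, your argument never engages the skew coordinates, which is exactly where the content of the theorem lies. The paper instead applies Theorem \ref{thm: proj} with $Z=X/[G_0,G_0]$ to reduce to connected Weyl systems (Lemma \ref{lem: affine}) and then analyzes commuting unipotent affine maps on $\T^m$ directly: after simultaneous triangularization it shows that for every $x$ whose coordinates are suitably independent over $\Q\bigl(\{\alpha^{(j)}_s\}\bigr)$ the polynomial $c\cdot u_x(n)$ has an irrational non-constant coefficient (using the independence of the coordinate polynomials from Proposition \ref{prop: ind coord} together with Lemma \ref{lem: Weyl}), and it shows the exceptional points lie in countably many proper subtori. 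Your final paragraph's computation (rationality of all $\theta_{i,j}$ forcing a rational character, excluded by Lemma \ref{lem: rational periodic} and ergodicity of the connected Kronecker rotation) is correct as far as it goes, but it only controls the maximal torus factor and cannot by itself yield well-distribution in $X^k$; to repair the proof you would need to replace the horizontal-torus shortcut by an analysis of the full Weyl-system coordinates and accept an almost-every-$x$ conclusion.
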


\begin{rem}
	After a previous version of this paper appeared on \emph{arXiv},
	Best and Ferr\'{e} Moragues reproved Theorem \ref{thm: tot erg}
	using a different method (see \cite[Thoerem 1.6]{bf}).
\end{rem}


\subsubsection{Reduction to Weyl systems}

Recall that a sequence $(x_n)_{n \in \O_K}$ in a compact topological space $X$ is \emph{well-distributed}
with respect to a probability measure $\mu$ on $X$ if $\UClim_{n \in \O_K}{\delta_{x_n}} = \mu$ in the weak-* topology.
That is, for any continuous function $f : X \to \C$ and any F{\o}lner sequence $(\Phi_N)_{N \in \N}$ in $(\O_K, +)$,
one has
\begin{equation*}
	\frac{1}{|\Phi_N|} \sum_{n \in \Phi_N}{f(x_n)} \to \int_X{f~d\mu}.
\end{equation*}
By Theorem \ref{thm: polynomial nil}, Theorem \ref{thm: tot erg}
is equivalent to the following equidistribution result:

\begin{thm} \label{thm: top tot erg}
	Let $K$ be a number field with ring of integers $\O_K$.
	Let $(X, \B, \mu, T)$ be a totally ergodic $\O_K$-nilsystem.
	Let $\{p_1, \dots, p_k\} \subseteq K[x]$ be independent $\O_K$-valued polynomials.
	Then for almost every $x \in X$, the sequence $\left( T^{p_1(n)}x, \dots, T^{p_k(n)}x \right)_{n \in \O_K}$
	is well-distributed in $X^k$.
\end{thm}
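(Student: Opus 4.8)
The plan is to lift the problem to the universal nilpotent group and reduce to the torus case via the Weyl-system machinery already assembled in the excerpt. First I would pass to an inverse limit: since the nilfactor $\calZ_r$ is an inverse limit of $r$-step nilsystems, and well-distribution in $X^k$ is equivalent to well-distribution of $\int f\,d(\text{point masses})$ against a generating family of continuous functions, it suffices to prove the statement when $X = G/\Gamma$ is an honest connected $r$-step nilmanifold with $T^n x = a(n)x$ for a homomorphism $a\colon\O_K\to G$. Next I would choose an integral basis $\{b_1,\dots,b_d\}\subseteq\O_K$ and write each polynomial in coordinates: $p_i(\sum_j n_j b_j) = \sum_j p_{i,j}(n)b_j$ with $p_{i,j}\in\Q[x_1,\dots,x_d]$ integer-valued. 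By Proposition \ref{prop: ind coord}, the family $\{p_{i,j}\colon 1\le i\le k,\,1\le j\le d\}$ together with $1$ is linearly independent over $\Q$; equivalently, under the identification $\O_K\cong\Z^d$ the map $n\mapsto(p_1(n),\dots,p_k(n))$ has linearly independent (over $\Q$, modulo constants) coordinate polynomials $\Z^d\to\Z^{kd}$. So the problem becomes: a $\Z^d$-polynomial sequence on the nilmanifold $X^k$ whose coordinate polynomials are linearly independent is well-distributed for a.e.\ starting point.

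The main work is the equidistribution step on $X^k$. Here I would invoke Leibman's theorem (Theorem \ref{thm: nil orbits}, or its torus specialization Proposition \ref{prop: finite subtori}): the orbit closure $Y=\overline{\{(T^{p_1(n)}x,\dots,T^{p_k(n)}x):n\in\Z^d\}}$ is a finite union of sub-nilmanifolds $Y_w = Hx_w$ indexed by a finite quotient group $W$, with the sequence restricted to $\omega^{-1}(w)$ well-distributed in $Y_w$. To conclude $Y = X^k$ I would argue in two stages. First, total ergodicity of $T$ forces the relevant finite quotient $W$ to be trivial: if $W$ were nontrivial, then $\omega\colon\O_K\to W$ would exhibit a rational character on $\O_K$ controlling the orbit, contradicting triviality of $\calK_{rat}$ (Proposition \ref{prop: tot erg}) — this is exactly the mechanism used in the proof of Proposition \ref{prop: tot erg conn}. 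Second, with $W$ trivial the orbit closure is a single connected sub-nilmanifold $Hx$, and I must show $H$ is all of $G^k$. For this I pass to the maximal torus (Kronecker) factor of $X^k$, which is $Z^k$ for $Z$ the Kronecker factor of $X$ (a torus, by Proposition \ref{prop: tot erg conn} and connectedness); projecting the orbit down, I reduce — via Corollary \ref{cor: span} — to checking that the $\R$-span of $\{(p_{i,j}(x))_{i,j}: x\in\R^d\}$ fills the relevant subtorus, which is precisely the linear-independence statement from Proposition \ref{prop: ind coord}. A standard fact (well-distribution in a nilmanifold is detected on its maximal torus factor, since nilmanifolds have no ``extra'' invariant structure beyond what the abelianization sees, when the orbit is already known to be uniformly distributed on a subgroup orbit) then upgrades full equidistribution on the torus factor to $H = G^k$, so $Y = X^k$.

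The last ingredient is the ``almost every $x$'' clause. Leibman's theorem gives, for the single point $x$, a decomposition with an $x$-dependent subgroup $H=H(x)$; but the group $H$ and the quotient $W$ depend only on the polynomial data and the connected component structure, not on $x$, for $x$ ranging over a dense $G_\delta$ of full measure — indeed, for $x$ in the (full-measure, comeager) set of points whose $T$-orbit is dense in $X$, one gets $H(x)=G^k$ uniformly. I would make this precise by noting that the set of ``bad'' $x$ (where the orbit closure is a proper subnilmanifold) is contained in a countable union of proper closed subsets of positive codimension, hence is meager and null.

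The step I expect to be the genuine obstacle is the passage from equidistribution on the maximal torus factor of $X^k$ to well-distribution on all of $X^k$: one must rule out the orbit closure being a proper sub-nilmanifold that projects \emph{onto} the torus factor. This is handled by the general principle (due to Leibman, Parry, Lesigne for connected nilmanifolds) that a connected sub-nilmanifold surjecting onto the abelianization must be the whole nilmanifold — but verifying that the hypotheses of that principle apply in the $\O_K\cong\Z^d$ setting, and correctly tracking how the linear-independence of the coordinate polynomials propagates up the lower central series, is where the care is needed. Everything else (the inverse-limit reduction, the coordinate bookkeeping, the meager/null argument) is routine given the preliminaries already established.
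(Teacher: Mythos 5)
Your outline follows the paper's skeleton up to a point (coordinates via Proposition \ref{prop: ind coord}, Leibman's orbit-closure theorems, a countable union of proper closed subsets as the exceptional set), but the crucial upgrade step rests on a principle that is false in the generality you need. You propose to verify equidistribution of the projected orbit on the \emph{Kronecker} factor $Z^k$ of $X^k$ (a rotation on a torus, handled by Corollary \ref{cor: span}) and then invoke ``well-distribution in a nilmanifold is detected on its maximal torus factor.'' The correct detection factor for \emph{polynomial} orbits is Leibman's $Z = X/[G_0,G_0]$ (Theorem \ref{thm: proj}), and this is in general strictly larger than the Kronecker factor: when the translating elements do not lie in $G_0$, the induced action on $X/[G_0,G_0]$ is by unipotent \emph{affine} maps (a Weyl system), not by rotations. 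Already for the totally ergodic skew product $T(x,y) = (x+\alpha,\, y+x)$ on $\T^2$ the Kronecker factor is the first circle, while $X/[G_0,G_0]$ is all of $\T^2$; a horizontal circle is a closed connected sub-nilmanifold that surjects onto the Kronecker factor without being everything, so ``$H$ projects onto the Kronecker factor $\Rightarrow H = G^k$'' fails. (The Lie-algebra argument you have in mind, $\mathfrak h + [\mathfrak g,\mathfrak g] = \mathfrak g \Rightarrow \mathfrak h = \mathfrak g$, is valid only for the abelianization of the \emph{connected} structure group, which is exactly Leibman's $Z$, not the Kronecker factor of the system.) Consequently, the real content of the theorem — proving equidistribution of the polynomial orbit in the Weyl system $Z^k$, which is what the paper does in Theorem \ref{thm: Z^l equidistribution} via triangularization of the commuting unipotent affine maps and a careful analysis of which coefficients of $c\cdot u_x(n)$ can be rational — is missing from your plan; it cannot be replaced by the rotation computation on the Kronecker factor.

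A second, related gap is the ``almost every $x$'' clause. You assert that every point with dense $T$-orbit is good, so that $H(x)=G^k$ uniformly on a full-measure comeager set. This is not true: the exceptional set is genuinely dictated by rational relations between the coordinates of $x$ and the defining data of the affine maps, not by orbit density. For example, with commuting maps $T_1(x,y)=(x+\alpha,\,y+x)$, $T_2(x,y)=(x,\,y+\gamma)$ and $\gamma=-\alpha/2$, the orbit $\left(T_1^{n_1}T_2^{n_1^2}(x_1,x_2)\right)_{n\in\Z^2}$ fails to equidistribute in $\T^2$ whenever $x_1 \in \Q + \Q\alpha + \tfrac{\alpha}{2}$, even though every point of this system has a dense orbit. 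This is precisely why the paper's proof chooses $x$ with $\{1,x_1,\dots,x_m\}$ rationally independent over the field generated by the $\alpha^{(j)}_s$ and only then shows the bad set lies in a countable union of proper subtori. So the exceptional-set argument must be run inside the Weyl-system analysis, not deduced from minimality or orbit density.
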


Having reduced to an equidistribution result on nilmanifolds, we can now make several more reductions.
First, by Proposition \ref{prop: tot erg conn}, the nilmanifold in Theorem \ref{thm: top tot erg} is necessarily connected,
since it admits a totally ergodic action by niltranslations.
Next, by Proposition \ref{prop: ind coord}, we may expand the polynomials $p_1, \dots, p_k$
in coordinates with respect to an integral basis in order to obtain an independent family of $\Z$-valued polynomials.
Hence, Theorem \ref{thm: top tot erg} follows from:

\begin{thm} \label{thm: top tot erg Z^l}
	Let $d, k, l \in \N$.
	Let $\left( X, \B, \mu, T \right)$ be an ergodic, connected $\Z^l$-nilsystem.
	Let $\{p_{i,j} : 1 \le i \le k, 1 \le j \le l\} \subseteq \Q[x_1, \dots, x_d]$ be a family of independent
	$\Z$-valued polynomials.
	Then the sequence
	\begin{equation*}
		\left( \prod_{j=1}^l{T_j^{p_{1,j}(n)}}x, \dots, \prod_{j=1}^l{T_j^{p_{k,j}(n)}}x \right)_{n \in \Z^d}
	\end{equation*}
	is well-distributed in $X^k$ for every $x$ in a co-meager set of full measure.
\end{thm}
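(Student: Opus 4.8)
The plan is to reduce Theorem \ref{thm: top tot erg Z^l} to the equidistribution machinery already assembled in the paper, in particular to Leibman's theorem on polynomial orbits in nilmanifolds (Theorem \ref{thm: nil orbits}) together with the structure theory of nilsystems. First I would set up the polynomial map: write $X = G/\Gamma$ with $T_j x = a_j x$ for commuting $a_j \in G$, and form the product nilmanifold $X^k = G^k / \Gamma^k$. The sequence in question is $g(n) \cdot (x, \dots, x)$ where $g : \Z^d \to G^k$ is the polynomial map $g(n) = \left( \prod_j a_j^{p_{1,j}(n)}, \dots, \prod_j a_j^{p_{k,j}(n)} \right)$ (this is a polynomial map into the nilpotent group $G^k$ in the sense of Leibman, since each coordinate is a product of one-parameter-type terms with $\Z$-valued polynomial exponents). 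The diagonal $\Delta = \{(x,\dots,x)\}$ is a closed submanifold isomorphic to $X$, and Leibman's theorem applied to the orbit of a generic point of $\Delta$ tells us the closure is a union $\bigcup_{w \in W} H x_w$ of translates of a connected closed subgroup $H$, with well-distribution on each piece along the corresponding congruence classes. The goal is to show $H = G^k$ (modulo the relevant lattice), i.e. the orbit closure is all of $X^k$, and that $W$ is trivial so that we get genuine well-distribution (not merely along a finite-index subgroup).

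The key step is therefore a \emph{non-degeneracy} argument: I would show that the projection of the orbit closure to any "Kronecker" quotient is everything. By the standard reduction for equidistribution in nilmanifolds (passing to the maximal torus factor, i.e. the horizontal torus $G/([G,G]\Gamma)$ of each factor, and more generally using that a sub-nilmanifold of $X^k$ projecting onto $X^k$ under the abelianization must be all of $X^k$ when combined with connectedness), it suffices to check the case where $X$ is itself a torus $\T^m$ — that is, a connected Weyl system. Here is where the hypotheses bite. On a connected Weyl system, the orbit $\left(\prod_j T_j^{p_{i,j}(n)} x\right)_n$ in the $i$-th coordinate, expanded in an eigenbasis, becomes a vector of polynomial expressions $\sum_j p_{i,j}(n) \beta_j$ in the frequencies; and by total ergodicity (Proposition \ref{prop: tot erg}, \ref{prop: tot erg conn}) the relevant frequencies $\beta_j$ are such that the action has no rational eigenvalues, so after collecting rationally-independent irrational combinations $\alpha_1, \dots, \alpha_r$ we are exactly in the situation of Corollary \ref{cor: span}: the sequence $\left( u_1(n)\alpha_1 + \cdots + u_r(n)\alpha_r \right)_n$ on $\T^{mk}$ is well-distributed in $\spn(u_1) + \cdots + \spn(u_r) \pmod 1$. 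The independence hypothesis on $\{p_{i,j}\}$ over $\Q$, together with the fact that $\{1\}$ is adjoined (so no nonzero rational-coefficient combination of the $p_{i,j}$ is constant), is precisely what forces the span $\spn(u_1)+\cdots+\spn(u_r)$ to be the \emph{full} torus $\T^{mk}$ — otherwise one would obtain a nontrivial linear relation among the $p_{i,j}$ and the constant polynomial $1$, contradicting independence. (The appearance of $1$ is what rules out the "diagonal-type" degeneracies that would occur if, say, all $p_{i,j}$ were equal.)

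Having handled the torus case, I would lift back up the tower: a connected nilsystem is an inverse limit / tower of compact abelian group extensions, and one argues by induction on the step, at each stage using that the orbit closure, known to surject onto the lower-step factor, must — being a connected subgroup orbit that is invariant under the relevant structure and projects onto everything below — be the whole space, by an argument of the Host–Kra / Leibman type (e.g. Leibman's "orbit closure is a sub-nilmanifold and surjectivity of abelianization implies everything" lemma, or the classical fact that for connected $X$, well-distribution lifts through abelian extensions once one verifies the relevant cocycle has no nontrivial invariant part, which again reduces to the torus computation above). Finally, triviality of the finite group $W$ in Leibman's conclusion follows because the ambient group is connected and we have just shown the orbit of a generic point is dense; and the "co-meager set of full measure" clause comes from the standard fact (already implicit in how Theorem \ref{thm: nil orbits} is used) that for nilsystems the set of points with equidistributed orbit is residual and conull — concretely, the bad set is contained in a countable union of proper sub-nilmanifolds, each of which is closed, nowhere dense, and of measure zero.

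The main obstacle I expect is the torus-case non-degeneracy computation: carefully translating the $\Q$-independence of the family $\{1\} \cup \{p_{i,j}\}$ into the statement that $\spn(u_1) + \cdots + \spn(u_r) = \T^{mk}$, keeping track of how the multiple eigenfrequencies $\beta_j$ of a connected totally ergodic Weyl system interact across the $k$ coordinates. In particular one must be careful that the \emph{same} frequencies $\beta_j$ appear in every coordinate (the system is fixed, only the polynomial exponents vary), so the relevant obstruction is a relation $\sum_{i,j} c_{i,j} p_{i,j} \equiv \text{const}$, and it is exactly the inclusion of $1$ in the independence hypothesis that kills the constant. The lifting through the nilpotent tower is technically the longest part but is essentially an application of known results (Leibman, Host–Kra) rather than something requiring a genuinely new idea; the reduction to connected Weyl systems and the role of the adjoined constant polynomial are the conceptual crux.
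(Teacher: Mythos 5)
Your overall architecture — reduce to the Weyl factor of the connected nilsystem via Leibman's Theorem C, then do a concrete computation on a torus using Corollary~\ref{cor: span} and Weyl's theorem, with the independence of $\{1\}\cup\{p_{i,j}\}$ supplying the non-degeneracy — matches the paper's strategy. But there is a substantive gap in the torus computation, and it sits exactly where the difficulty of the theorem lives.

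You describe the Weyl-factor computation as if the system were a pure rotation: ``expanded in an eigenbasis, becomes a vector of polynomial expressions $\sum_j p_{i,j}(n)\beta_j$ in the frequencies.'' A connected Weyl system is a torus with a $\Z^l$-action by commuting \emph{unipotent affine} transformations $T_j x = A_j x + \alpha_j$, and the matrices $A_j$ are not the identity in general; in fact they cannot be diagonalized (a unipotent matrix with an eigenbasis is the identity). When some $A_j\neq I$, the orbit $\bigl(\prod_j T_j^{p_{i,j}(n)}\bigr)x$ is a polynomial in $n$ whose coefficients are themselves nontrivial products of coordinates of $x$, the affine parts $\alpha^{(j)}_s$, and the off-diagonal entries $a^{(j)}_{s,r}$ of the $A_j$. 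The span $\spn(u_1)+\cdots+\spn(u_r)$ then depends on the base point $x$, which is the entire reason the conclusion holds only for a co-meager, full-measure set of $x$ and not for every $x$. Your write-up never engages with this: the argument you give would work verbatim for all $x$ (as it does in the pure-rotation sub-case, where the orbit is a translate of a fixed sequence), and you would have no mechanism producing the exceptional set. The paper's proof of Theorem~\ref{thm: Z^l equidistribution} handles this by simultaneously triangularizing the $A_j$, splitting into the rotation case and the genuinely skew case, and in the latter case choosing $x$ with coordinates linearly independent over $\Q(\{\alpha^{(j)}_s\})$, tracking the leading ``skew'' level $r_0$, and showing that the independence of $\{p_{i,j}\}$ forces the coefficient polynomial $P_{r_0}(n)$ to be nonconstant so that Lemma~\ref{lem: Weyl} applies. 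That is the core of the argument, and it is absent from your proposal.

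A smaller point: after invoking Leibman's Theorem C (Theorem~\ref{thm: proj}), the ``lift back up the tower by induction on the step'' passage is redundant, and as stated it is also too weak — surjectivity of the orbit closure onto all lower-step factors does not by itself imply density in $X^k$; that implication is exactly what Theorem~\ref{thm: proj} buys you (for polynomial sequences on connected nilmanifolds), and it is cleaner to use it once and for all, as the paper does, rather than sketch an independent inductive argument. Likewise, the claim that the bad set is a countable union of proper sub-nilmanifolds needs to come out of the computation: in the paper it does, because the exceptional $x$ are shown to satisfy an explicit linear relation $c_0+\sum_r c_r x_r=0$ with coefficients in the countable field $\Q(\{\alpha^{(j)}_s\})$, hence lie in countably many $(m-1)$-dimensional subtori. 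Without the Weyl-case analysis you cannot produce this description of the exceptional set.
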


Now we will reduce from a connected nilystem to the case that $(X,T)$ is a Weyl system,
i.e. $X$ is a finite-dimensional torus and $T$ acts by unipotent affine transformations.
Let $G_0$ be the connected component of the identity in $G$, $Z = X/[G_0, G_0]$,
and $\pi : G \to Z$ the projection map.
The following result of Leibman shows that we can reduce to studying orbits in $Z$:

\begin{thm}[\cite{leib-poly_nil2}, Theorem C] \label{thm: proj}
	Let $X = G/\Gamma$ be a connected nilmanifold, $x \in X$, and $g : \Z^d \to G$ a polynomial map.
	The following are equivalent:
	\begin{enumerate}[(i)]
		\item	the orbit $\left\{ g(n)x : n \in \Z^d \right\}$ is dense in $X$;
		\item	$\left\{ g(n)\pi(x) : n \in \Z^d \right\}$ is dense in $Z$;
		\item	$\left( g(n)x \right)_{n \in \Z^d}$ is well-distributed in $X$;
		\item	$\left( g(n)\pi(x) \right)_{n \in \Z^d}$ is well-distributed in $Z$.
	\end{enumerate}
\end{thm}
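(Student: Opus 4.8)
The plan is to prove the cyclic chain $(iii)\Rightarrow(i)\Rightarrow(ii)\Rightarrow(iii)$ together with $(iii)\Rightarrow(iv)\Rightarrow(ii)$, which makes all four conditions equivalent. The implications $(iii)\Rightarrow(i)$ and $(iv)\Rightarrow(ii)$ are immediate, since a well-distributed sequence has dense orbit. For $(i)\Rightarrow(ii)$, I would use that $X$ is compact and $\pi$ is continuous, so that $\pi\bigl(\overline{\{g(n)x\}}\bigr)=\overline{\{g(n)\pi(x)\}}$ and density in $X$ descends to density in $Z$. For $(iii)\Rightarrow(iv)$, I would use that $\pi$ intertwines the translations and pushes the Haar measure $\mu$ of $X$ forward to the Haar measure of $Z$: for continuous $f$ on $Z$ and any F{\o}lner sequence $(\Phi_N)$, $\frac{1}{|\Phi_N|}\sum_{n\in\Phi_N}f(g(n)\pi(x))=\frac{1}{|\Phi_N|}\sum_{n\in\Phi_N}(f\circ\pi)(g(n)x)\to\int_X(f\circ\pi)\,d\mu=\int_Z f\,d(\pi_*\mu)$. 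Thus everything reduces to the implication $(ii)\Rightarrow(iii)$.

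For $(ii)\Rightarrow(iii)$, I would first reduce to the case that $G$ is connected (standard for connected nilmanifolds, since $X$ is then homogeneous under the identity component $G_0$ and $[G,G]=[G_0,G_0]$, so $Z$ is the maximal torus factor). Then I would apply Theorem \ref{thm: nil orbits}: there is a connected closed subgroup $H\le G$, a surjection $\omega:\Z^d\to W$ onto a finite group, and points $x_w\in X$ for $w\in W$, such that each $Y_w:=Hx_w$ is closed, $\overline{\{g(n)x:n\in\Z^d\}}=\bigcup_{w\in W}Y_w$, and $(g(n)x)_{n\in\omega^{-1}(w)}$ is well-distributed in $Y_w$. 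Since $\Gamma$ is discrete, each $Y_w$ has dimension exactly $\dim H$; hence the orbit closure $\bigcup_w Y_w$ equals $X$ precisely when $\dim H=\dim X$ (finitely many submanifolds of dimension $<\dim X$ are Haar-null), and in that case each $Y_w$, being open and closed in the connected space $X$, equals $X$, so the full sequence $(g(n)x)_{n\in\Z^d}$ — a finite union over the fibers of $\omega$, on each of which it equidistributes to $\mu$ — is well-distributed in $X$. It therefore remains to deduce $\dim H=\dim X$, equivalently $H=G$, from hypothesis $(ii)$.

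Projecting $\overline{\{g(n)x\}}=\bigcup_w Y_w$ through $\pi$ and using compactness gives $\bigcup_w\pi(Y_w)=\overline{\{g(n)\pi(x)\}}=Z$ by $(ii)$. Each $\pi(Y_w)$ is compact and equals a coset of the connected subgroup $\pi(H)\le Z$ (recall $Z$ is a torus), so $\pi(H)$ is a subtorus whose cosets cover $Z$ in finitely many pieces; then $Z/\pi(H)$ is a torus covered by finitely many points, hence trivial, so $\pi(H)=Z$. Unwinding, $\pi(H)=Z$ means $H\,[G,G]\,\Gamma=G$; since $\Gamma$ is discrete, a dimension count gives $\dim\bigl(H\,[G,G]\bigr)=\dim G$, and as $[G,G]\trianglelefteq G$ the subgroup $H\,[G,G]$ is then open, hence all of the connected group $G$. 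Finally, the purely group-theoretic fact that $H\,[G,G]=G$ forces $H=G$ when $G$ is nilpotent — proved by induction on the nilpotency class, since modulo the central last term $G_s$ of the lower central series one obtains $HG_s=G$, whence $[G,G]=[HG_s,HG_s]=[H,H]\le H$ and therefore $G=H\,[G,G]=H$ — gives $H=G$. Then $Y_w=Gx_w=X$ for all $w$, and the previous paragraph yields $(iii)$.

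Granting Theorem \ref{thm: nil orbits}, I expect the main obstacle to be the content of the third paragraph: passing from ``the orbit fills the maximal torus factor $Z$'' to ``$H=G$,'' which rests on the torus-covering observation, the nilpotent group lemma $H[G,G]=G\Rightarrow H=G$, and the (standard but not entirely cost-free) reduction to connected $G$. If instead one wished to prove Theorem \ref{thm: nil orbits} rather than cite it, the essential difficulty would move to the inductive van der Corput argument lifting equidistribution from the $(s-1)$-step factor $X/G_s$ across the toral extension $X\to X/G_s$ — the technical core of Leibman's theory of polynomial orbits on nilmanifolds.
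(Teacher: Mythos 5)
The paper does not prove this theorem; it is cited verbatim from Leibman \cite{leib-poly_nil2}, so there is no in-paper argument to compare against. Your blind proof is correct in outline and is, in essence, the standard (Leibman's) derivation: the easy directions, plus the reduction of $(ii)\Rightarrow(iii)$ to Theorem~\ref{thm: nil orbits} through the decomposition $\overline{\{g(n)x\}}=\bigcup_w Y_w$, the torus-covering observation forcing $\pi(H)=Z$, and the Frattini-type fact that $H[G,G]=G$ implies $H=G$ in a nilpotent group.

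One caveat concerns the opening reduction. You write that one may ``reduce to the case that $G$ is connected (standard for connected nilmanifolds, since $X$ is then homogeneous under the identity component $G_0$ and $[G,G]=[G_0,G_0]$).'' Two objections: the identity $[G,G]=[G_0,G_0]$ is false in general for disconnected $G$ (which is exactly why $Z$ is defined via $[G_0,G_0]$ in the first place), and, more to the point, one cannot simply pass to $G_0$, because the polynomial $g$ may take values outside $G_0$ even when $X=G/\Gamma$ is connected. The genuine reduction --- replacing $(G,\Gamma,g)$ by a connected, simply connected model without changing $X$, $Z$, or the orbit $\{g(n)x\}$ --- is carried out in Leibman's preliminary sections and is not a one-line observation. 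You also want \emph{simple} connectedness, not merely connectedness, at the later point where you conclude that $H[G,G]$ is a closed subgroup so that the Baire-category dimension count applies: in a connected simply connected nilpotent Lie group every connected Lie subgroup is closed, but in general a product of a closed subgroup with a closed normal subgroup need not be closed. None of this changes the structure of your argument --- it just means the parenthetical justification for the reduction is incorrect and that step deserves to be cited rather than waved off.
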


\begin{lem}[cf. \cite{frakra1}, Proposition 2.1]
	Without loss of generality, $G_0$ is abelian.
\end{lem}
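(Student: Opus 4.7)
The plan is to quotient by $H := [G_0, G_0]$ and to transfer well-distribution back to $X^k$ via Theorem \ref{thm: proj}. Granting (for now) that $H\Gamma$ is closed in $G$, set $Z := X/H = G/(H\Gamma)$ and let $\pi : X \to Z$ denote the factor map. Then $Z$ is a connected nilmanifold whose structure group $G/H$ has connected component $G_0/H$, which is abelian by construction. The $\Z^l$-action on $X$ by niltranslations descends to an action on $Z$, ergodicity passes to factors, connectedness is preserved by continuous surjection, and the polynomial data $\{p_{i,j}\}$ is unchanged. So $(Z,T)$ satisfies the hypotheses of Theorem \ref{thm: top tot erg Z^l}, now with the additional feature that the identity component of its structure group is abelian.

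Granting Theorem \ref{thm: top tot erg Z^l} in the abelian case, the sequence
\begin{align*}
	\left( \prod_{j=1}^l{T_j^{p_{1,j}(n)}}z, \ldots, \prod_{j=1}^l{T_j^{p_{k,j}(n)}}z \right)_{n \in \Z^d}
\end{align*}
is well-distributed in $Z^k$ for every $z$ in a co-meager subset $Z_0 \subseteq Z$ of full measure. Since $\pi$ is a continuous open surjection (being the quotient by the action of the closed subgroup $H$), its preimage $\pi^{-1}(Z_0) \subseteq X$ is again co-meager and of full measure. For any $x \in \pi^{-1}(Z_0)$, I would apply Theorem \ref{thm: proj} to the product nilmanifold $X^k = G^k/\Gamma^k$: since $(G^k)_0 = G_0^k$ and $[G_0^k, G_0^k] = H^k$, the quotient ``$Z$'' attached to $X^k$ in Theorem \ref{thm: proj} is exactly $Z^k$, and the equivalence of its conditions (iii) and (iv) lifts well-distribution of the orbit from $Z^k$ to $X^k$.

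The main technical point is the closedness of $H\Gamma$ in $G$, without which $Z$ need not be a nilmanifold. This is the standard fact from Mal'cev theory that if $H$ is a connected closed normal subgroup of a nilpotent Lie group $G$ and $\Gamma < G$ is a cocompact discrete subgroup, then $H\Gamma$ is closed. Once this is in hand, the remainder of the argument is bookkeeping about factor maps of nilmanifolds and the elementary identifications $[G_0, G_0]^k = [G_0^k, G_0^k]$ and $(X/H)^k = X^k/H^k$.
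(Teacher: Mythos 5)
Your proposal is correct and follows essentially the same route as the paper, whose two-line proof likewise invokes Theorem \ref{thm: proj} to pass to the projection onto $Z = X/[G_0,G_0]$, observes that $(G/[G_0,G_0])_0$ is abelian, and uses that total ergodicity (equivalently, ergodicity together with connectedness) passes to factors. The additional details you supply — applying Theorem \ref{thm: proj} to $X^k = G^k/\Gamma^k$ via $[(G_0)^k,(G_0)^k] = [G_0,G_0]^k$, the closedness of $[G_0,G_0]\Gamma$, and the pullback of the co-meager full-measure set — are exactly what the paper leaves implicit.
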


\begin{proof}
	Use Theorem \ref{thm: proj} to reduce to the projection onto $Z$.
	Now, the group $(G/[G_0,G_0])_0$ is abelian, and a factor of a totally ergodic system is totally ergodic.
\end{proof}

\begin{lem}[cf. \cite{frakra1}, Propositions 3.1 and 3.2] \label{lem: affine}
	Without loss of generality, $(X,T)$ is a connected Weyl system.
\end{lem}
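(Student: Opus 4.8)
The plan is to use the structure theory of connected nilmanifolds together with Leibman's projection theorem (Theorem \ref{thm: proj}) to realize the given nilsystem concretely as an affine system on a torus, and then to check that the well-distribution statement to be proved is insensitive to this replacement. This is essentially a bookkeeping reduction following \cite{frakra1}, Propositions 3.1 and 3.2.

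First I would invoke the previous lemma to assume that $G_0$, the identity component of $G$, is abelian. Since $X = G/\Gamma$ is connected, $G = G_0\Gamma$, so $G_0$ acts transitively on $X$; setting $\Lambda := G_0 \cap \Gamma$, the orbit map of the base point identifies $X$ with $G_0/\Lambda$ as a topological $G_0$-space, and the niltranslations $T_j$ transfer along this identification. Now $G_0$ is a connected abelian Lie group and $\Lambda \subseteq \Gamma$ is discrete with $G_0/\Lambda \cong X$ compact, so $X$ is a compact connected abelian Lie group, hence a finite-dimensional torus $\T^m$.

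Next I would exhibit each $T_j$ as a unipotent affine transformation of $\T^m$. Write the translating element as $a_j = c_j\gamma_j$ with $c_j \in G_0$, $\gamma_j \in \Gamma$ (possible since $G = G_0\Gamma$). For $x \in G_0$ one computes $a_j x\Gamma = c_j\bigl(\gamma_j x \gamma_j^{-1}\bigr)\Gamma$; since $G_0$ is normal in $G$, conjugation by $\gamma_j$ is an automorphism of $G_0$, and since $\gamma_j$ normalizes $\Gamma$ it preserves $\Lambda$, so it descends to an automorphism $A_j$ of $\T^m$. Because $G$ is nilpotent, $\mathrm{Ad}(\gamma_j)$ is unipotent, so $A_j$ is a unipotent automorphism; thus $T_j$ is the affine map $x \mapsto A_j x + b_j$, where $b_j$ is the image of $c_j$, and the $T_j$ commute. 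Hence $(X, T_1, \dots, T_l)$ is a connected Weyl system.

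Finally, I would observe that the conclusion of Theorem \ref{thm: top tot erg Z^l} — well-distribution of $\left( \prod_{j=1}^l T_j^{p_{i,j}(n)}x, \dots \right)_{n \in \Z^d}$ for $x$ in a co-meager set of full measure — is an invariant of the topological measure-isomorphism just built, and that denseness of polynomial orbits is controlled by passage to the torus quotient via Theorem \ref{thm: proj}; so it suffices to prove Theorem \ref{thm: top tot erg Z^l} for connected Weyl systems. There is no genuinely hard step here: the main points requiring care are the verification that the induced torus automorphisms are unipotent and that the exceptional set in the conclusion is unaffected by the identifications, both of which are routine.
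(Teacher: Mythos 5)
Your argument is correct and is essentially the one the paper invokes by citing \cite[Proposition 3.1]{frakra1}: the identification $X \cong G_0/(G_0 \cap \Gamma)$ with a torus and the conversion of each niltranslation $x \mapsto a_j x$ into a unipotent affine map are independent of the translating element, which is precisely the paper's remark that the reduction applies simultaneously to the $l$ commuting transformations. (Your appeal to Theorem \ref{thm: proj} in the last paragraph is unnecessary, since the identification you build is an isomorphism rather than a proper factor map, but this is harmless.)
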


\begin{proof}
	To reduce from a connected nilsystem such that $G_0$ is abelian to a connected Weyl system,
	see \cite[Proposition 3.1]{frakra1}.
	The isomorphism between a niltranslation and a unipotent affine transformation
	does not depend on the element of $G$ defining the niltranslation,
	so the result still holds for $d$ commuting niltranslations.
\end{proof}

We have therefore reduced Theorem \ref{thm: tot erg} to the following result about
well-distribution of polynomial orbits for unipotent affine actions on tori:

\begin{thm} \label{thm: Z^l equidistribution}
	Let $d, l, k, m \in \N$.
	Let $\{p_{i,j} : 1 \le i \le k, 1 \le j \le l\} \subseteq \Q[x_1, \dots, x_d]$ be $\Z$-valued and independent over $\Q$.
	Let $T_1, \dots, T_l : \T^m \to \T^m$ be commuting unipotent affine transformations
	generating an ergodic $\Z^l$-action.
	Then the polynomial sequence
	\begin{equation*}
		\left( \prod_{j=1}^l{T_j^{p_{1,j}(n)}}x, \dots, \prod_{j=1}^l{T_j^{p_{k,j}(n)}}x \right)_{n \in \Z^d}
	\end{equation*}
	is well-distributed in $\T^{mk}$ for all $x$ in a co-meager set of full measure.
\end{thm}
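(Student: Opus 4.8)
The plan is to verify the Weyl equidistribution criterion for the $\T^{mk}$-valued polynomial sequence $n \mapsto \left(\prod_{j=1}^l T_j^{p_{1,j}(n)}x, \dots, \prod_{j=1}^l T_j^{p_{k,j}(n)}x\right)$ directly, carefully tracking which Taylor coefficients in $n$ depend on the starting point $x$ and which do not. Write each $T_j$ as $T_j x = A_j x + b_j$ with $A_j \in \mathrm{GL}_m(\Z)$ unipotent and pairwise commuting and $b_j \in \T^m$. Since $A_j^s = \sum_{u \ge 0}\binom{s}{u}Y_j^u$ where $Y_j := A_j - I$ is nilpotent, one gets $\prod_{j} T_j^{p_{i,j}(n)}x = L_i(n)x + c_i(n)$, where $L_i(n) := \prod_j A_j^{p_{i,j}(n)}$ is a matrix-valued polynomial in $n$ with $L_i(n) - I$ nilpotent-valued and $c_i(n) \in \T^m$ is a $\T^m$-valued polynomial in $n$. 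Consequently, for each $\xi = (\xi_1, \dots, \xi_k) \in (\Z^m)^k$ the sequence $P_{x,\xi}(n) := \sum_{i=1}^k \xi_i \cdot \left(\prod_j T_j^{p_{i,j}(n)}x\right) = \left(\sum_i L_i(n)^T \xi_i\right)\cdot x + \sum_i \xi_i \cdot c_i(n)$ is a $\T$-valued polynomial in $n \in \Z^d$. By Weyl's equidistribution theorem for polynomial sequences (in the well-distributed form; cf.\ Corollary \ref{cor: span}), the full sequence is well-distributed in $\T^{mk}$ for a given $x$ if and only if, for every $\xi \ne 0$, the polynomial $P_{x,\xi}$ has an irrational non-constant coefficient. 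So it suffices to produce a co-meager set of full measure of $x$ for which this holds simultaneously for all $\xi \in (\Z^m)^k \setminus \{0\}$.

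For $\xi \ne 0$, write the coefficient of a monomial $n^\alpha$ ($\alpha \ne 0$) in $P_{x,\xi}$ as $v_{\alpha,\xi}\cdot x + \gamma_{\alpha,\xi}$, where $v_{\alpha,\xi} \in \Z^m$ is the coefficient of $n^\alpha$ in $\sum_i (L_i(n) - I)^T \xi_i$ and $\gamma_{\alpha,\xi} \in \T$ collects the corresponding contribution of the $c_i$. If $v_{\alpha,\xi} \ne 0$ for some $\alpha \ne 0$, then $\{x \in \T^m : v_{\alpha,\xi}\cdot x + \gamma_{\alpha,\xi} \in \Q\}$ is a countable union of proper affine subtori of $\T^m$, hence a closed, nowhere dense, null set. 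Removing, over all such $\xi$, this countable union of meager null sets leaves a co-meager, full-measure set of $x$ for which $P_{x,\xi}$ is well-distributed whenever $\xi$ falls into this case.

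It remains to treat $\xi \ne 0$ for which $v_{\alpha,\xi} = 0$ for all $\alpha \ne 0$, i.e.\ $\sum_i (L_i(n) - I)^T \xi_i \equiv 0$ as a polynomial in $n$. In this case $P_{x,\xi}$ is independent of $x$ modulo its (irrelevant) constant term, and one must show that its non-constant part $Q_\xi$ is well-distributed in $\T$; this is the technical heart of the argument. Expanding $L_i$ and $c_i$ through the commuting nilpotents $Y_j$, the non-constant coefficients of $Q_\xi$ are $\Z$-linear combinations of the coordinates of $b_1, \dots, b_l$ whose combining coefficients are assembled from $\xi$, the entries of the matrices $Y_j^u$, and the (rational) coefficients of the $p_{i,j}$. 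Assume for contradiction that all of them are rational. Because $\{1\} \cup \{p_{i,j}\}$ is linearly independent over $\Q$, the matrix formed by the non-constant coefficients of the $p_{i,j}$ has full rank and can be inverted over $\Q$; this lets one strip off the polynomial data and conclude that certain explicit nonzero integer vectors $\eta$ built from $\xi$ and the $Y_j$ satisfy $A_j^T \eta = \eta$ and $\eta \cdot b_j \in \Q$ for all $j = 1, \dots, l$ (clearing denominators, one may take $\eta \cdot b_j \in \Z$). But ergodicity of the $\Z^l$-action $(T_1, \dots, T_l)$ on $\T^m$ is precisely the statement that no such nonzero $\eta$ exists; propagating this through the nilpotency filtration of the $Y_j$ (equivalently, by induction on $m$, with the case $A_1 = \dots = A_l = I$ of pure rotations as the base case and already containing the essential mechanism) forces $\xi = 0$, a contradiction. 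Hence $Q_\xi$ is well-distributed for every $\xi \ne 0$ in this case as well, and the proof is complete. I expect this last step — correctly identifying, after the expansion through the commuting unipotents, the ergodicity obstruction and running the induction — to be the main obstacle; the bookkeeping in the first two paragraphs is routine.
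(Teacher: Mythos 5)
Your proposal follows the same overall strategy as the paper's proof: use the Weyl criterion (via characters of $\T^{mk}$) to reduce to checking that certain $\T$-valued polynomial sequences $P_{x,\xi}$ have an irrational non-constant coefficient, dispose of most $\xi$ by choosing $x$ off a countable union of proper affine subtori, and then handle the residual $\xi$ (for which $P_{x,\xi}$ is $x$-independent) separately. Where you genuinely diverge is in the bookkeeping and in how the residual case is settled. The paper first passes to a simultaneously triangularized model (and has to work a bit to do so — the conjugating matrix $P$ need not be unimodular, so one passes to a finite cover and must re-establish ergodicity of that cover), then runs a case analysis on the triangular coordinates: a pure-rotation base case and, in the presence of nontrivial off-diagonal entries, an explicit argument around the coordinate $r_0$ together with a dimension reduction that is, in effect, an induction on $m$. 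You instead keep the matrices $A_j = I + Y_j$ in their given form and invoke ergodicity of the $\Z^l$-action directly to rule out the residual $\xi$. This is a more conceptual route, and in fact makes visible something the paper leaves implicit: even in the pure-rotation base case, the reason the coefficients cannot all be rational is exactly ergodicity (no nonzero $\eta\in\Z^m$ fixed by all $A_j^T$ with $\eta\cdot b_j\in\Z$).

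One caution about the step you rightly flag as the main obstacle. After expanding through the commuting nilpotents, the polynomials that appear multiplying the $Y$-words are not the $p_{i,j}$ alone but the products $\prod_j\binom{p_{i,j}(n)}{u_j}$, and for $|\mathbf{u}|\ge 2$ these need not be linearly independent of each other or of the $p_{i,j}$ (e.g.\ with $p_{1,1}(n)=n$, $p_{2,1}(n)=n^2$ one has $\binom{p_{1,1}}{2}=\tfrac12(p_{2,1}-p_{1,1})$). So ``inverting the coefficient matrix of the $p_{i,j}$'' does not by itself strip off the higher $Y$-degree terms; one really does need to climb the nilpotency filtration (apply $Y_{j}^T$ to the identity $\sum_i(L_i(n)-I)^T\xi_i\equiv 0$ to kill the top layer first, then work downward) to first deduce $Y_j^T\xi_i=0$ for all $i,j$, after which $c_i(n)$ collapses to $\sum_j p_{i,j}(n)b_j$ and the linear-independence/ergodicity argument applies cleanly. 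You identified the right mechanism (induction through the filtration with pure rotations as the base case), so this is a matter of filling in the argument rather than a wrong idea — but it is where the real work lies, and the paper's more pedestrian coordinate-by-coordinate treatment is arguably doing the same filtration climb in disguise via the $r_0$, $S$, $v_s$ machinery.

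Two small points. First, ergodicity of the $\Z^l$-action is equivalent to: the only $\eta\in\Z^m$ with $A_j^T\eta=\eta$ and $\eta\cdot b_j\equiv 0\pmod 1$ (not merely $\in\Q$) for all $j$ is $\eta=0$; your parenthetical about clearing denominators by rescaling $\eta$ handles this, but it should be stated as a step, since $A_j^T(N\eta)=N\eta$ must also be re-checked (trivially true). Second, the reduction to one-dimensional well-distribution should cite the multivariable analogue of the Weyl criterion (the paper's Lemma on linear combinations of coordinates) rather than Corollary on $\spn(u)$, which addresses a different point.
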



\subsubsection{Equidistribution of $\Z^l$-polynomial sequences}

In order to prove Theorem \ref{thm: Z^l equidistribution}, we will use two classic results in equidistribution.
The first is a multivariable version of Weyl's polynomial equidistribution theorem.

\begin{lem}[cf. \cite{weyl}, Satz 20] \label{lem: Weyl}
	Fix $d \in \N$.
	Let $p \in \R[x_1, \dots, x_d]$.
	If at least one coefficient of $p$ other than the constant term is irrational,
	then the $\Z^d$-sequence $\left( p(n_1, \dots, n_d) \right)_{n \in \Z^d}$ is well-distributed mod 1.
\end{lem}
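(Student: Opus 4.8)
Here is how I would prove Lemma~\ref{lem: Weyl}. The plan is to deduce it from the one-variable toolkit (vanishing of nontrivial characters, periodicity of rational-coefficient polynomials mod $1$) by the van der Corput differencing trick, organized as a single induction on the total degree $k = \deg p$.

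\textbf{Reduction.} By Weyl's equidistribution criterion in its uniform-Ces\`aro form (which follows from the usual one by approximating continuous functions on $\T$ by trigonometric polynomials), the sequence $\left( p(n) \right)_{n \in \Z^d}$ is well-distributed mod $1$ if and only if $\UClim_{n \in \Z^d}{e(hp(n))} = 0$ for every nonzero $h \in \Z$. For $h \neq 0$ the polynomial $hp$ again has an irrational coefficient on some non-constant monomial, so it suffices to prove: \emph{for every $d \ge 1$ and every $p \in \R[x_1, \dots, x_d]$ having an irrational coefficient on at least one non-constant monomial, $\UClim_{n \in \Z^d}{e(p(n))} = 0$}; I would prove this by induction on $k = \deg p$.

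\textbf{The ``purely linear'' case (base case, including $k = 1$).} Suppose every monomial of $p$ of degree $\ge 2$ has rational coefficient, and write $p(n) = \alpha \cdot n + \beta + R(n)$, where $\alpha \in \R^d$ has an irrational coordinate and $R$ collects the degree-$\ge 2$ terms (hence has rational coefficients). Clearing a common denominator $M$ of the coefficients of $R$, a short binomial computation shows $R(n + Me_i) \equiv R(n) \pmod 1$ for each $i$, so $e(R(n))$ depends only on $n \bmod M$ and expands into finitely many characters of $(\Z/M\Z)^d$. Thus $e(p(n))$ is a finite $\mathbb{C}$-linear combination of characters of $\Z^d$ of the form $n \mapsto e\big((\alpha + q) \cdot n\big)$ with $q \in \Q^d$; each such character is nontrivial (its frequency still has an irrational coordinate), and a nontrivial character $\chi$ of $\Z^d$ has vanishing uniform Ces\`aro average: multiplying $\sum_{n \in \Phi_N}\chi(n)$ by $\chi(g) - 1$ for a suitable $g$ bounds it by $\left| (\Phi_N + g) \triangle \Phi_N \right| = o(|\Phi_N|)$. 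Hence $\UClim_n{e(p(n))} = 0$.

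\textbf{Inductive step.} Suppose $k \ge 2$ and some monomial of $p$ of degree $\ell \ge 2$ has an irrational coefficient; fix such a monomial $x^{\beta_0}$ of \emph{maximal} degree $\ell$ (so all coefficients of $p$ on monomials of degree $> \ell$ are rational), write $c_{\beta_0} \notin \Q$ for its coefficient, and fix a coordinate $i_0$ with $(\beta_0)_{i_0} \ge 1$. By the van der Corput trick (the variant in \cite[Lemma A6]{bm}), it suffices to show $\UClim_{h \in \Z^d}{\left| \UClim_{n \in \Z^d}{e(q_h(n))} \right|} = 0$, where $q_h(n) := p(n+h) - p(n)$, a polynomial in $n$ of degree $\le k-1$. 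The key point is that the homogeneous degree-$(\ell-1)$ part of $q_h$ equals $\sum_{i=1}^d{h_i\,\partial_i p_\ell} + (\text{a polynomial in }n\text{ with rational coefficients})$, where $p_\ell$ is the degree-$\ell$ homogeneous part of $p$ (the degree-$\ell$ parts cancel in the difference, the rational contribution comes from the degree-$>\ell$ parts of $p$, and parts of degree $<\ell$ contribute nothing in degree $\ell-1$). Consequently the coefficient of $q_h$ on the \emph{non-constant} monomial $x^{\beta_0 - e_{i_0}}$ equals $\lambda(h) + (\text{a rational number})$, where $\lambda$ is a linear form in $h$ whose $h_{i_0}$-coefficient is $(\beta_0)_{i_0}c_{\beta_0}$, an irrational (hence nonzero) number. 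For fixed $(h_i)_{i \ne i_0}$, $\lambda(h)$ is rational for at most one value of $h_{i_0}$; hence the exceptional set $S := \{h : q_h$ has no irrational coefficient on a non-constant monomial$\}$ meets every line parallel to the $i_0$-axis in at most one point, which forces $\UClim_h{\ind_S(h)} = 0$ (the translates $S, S+e_{i_0}, \dots, S+(M-1)e_{i_0}$ are pairwise disjoint, and a F{\o}lner estimate gives $\limsup_N |\Phi_N \cap S|/|\Phi_N| \le 1/M$ for all $M$). For $h \notin S$, the inductive hypothesis applies to $q_h$ and gives $\UClim_n{e(q_h(n))} = 0$; since the inner expression is always bounded by $1$ and vanishes off $S$, the outer uniform Ces\`aro limit is $0$, and the van der Corput trick yields $\UClim_n{e(p(n))} = 0$, completing the induction.

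\textbf{Main obstacle.} Every ingredient here — Weyl's criterion, vanishing of nontrivial characters, periodicity of rational-coefficient polynomials mod $1$, and the van der Corput trick — is standard machinery; the only step requiring genuine care is the bookkeeping in the inductive step, namely checking that the differenced polynomial $q_h$ retains an irrational coefficient on a \emph{non-constant} monomial for all $h$ outside a set of uniform density zero. Isolating the ``purely linear'' case (where this descent would otherwise land on the constant monomial, giving no information) is precisely what makes the bookkeeping go through.
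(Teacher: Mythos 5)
Your proof is correct, and it follows exactly the route the paper indicates: the paper gives no proof of Lemma \ref{lem: Weyl} beyond citing Weyl and remarking in a footnote that the general case is an exercise using a variant of the van der Corput trick (as in \cite[Lemma A6]{bm}), which is precisely the van der Corput induction on degree that you carry out. The bookkeeping in your inductive step (isolating the maximal-degree irrational monomial, the linear form $\lambda(h)$ with irrational $h_{i_0}$-coefficient, and the Banach-density-zero exceptional set) and the periodicity/character argument in the purely linear case are both sound.
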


\begin{rem}
	Weyl proved Lemma \ref{lem: Weyl} in the case $d=2$, with indications of how to prove the case of general $d$,
	for F{\o}lner sequences that are increasing dilations of a fixed set.
	Lemma \ref{lem: Weyl} in its full generality is today an easy exercise
	with the help of an appropriate variant of the van der Corput trick (see, e.g. \cite[Lemma A6]{bm}).
\end{rem}

The next lemma allows one to reduce equidistribution in a multidimensional torus to equidistribution in the circle.

\begin{lem} \label{lem: ud linear comb}
	Fix $d, m \in \N$.
	A $\Z^d$-sequence $u : \Z^d \to \T^m$ is well-distributed in $\T^m$ if and only if
	for every $c \in \Z^m \setminus \{0\}$, the sequence $c \cdot u(n) = c_1u_1(n) + \cdots + c_mu_m(n)$
	is well-distributed in $\T$.
\end{lem}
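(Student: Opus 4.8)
The plan is to deduce both implications from the Weyl equidistribution criterion, in the form appropriate for well-distribution along F{\o}lner sequences in $\Z^d$. Recall the criterion: a bounded sequence $(x_n)_{n \in \Z^d}$ in $\T^m$ is well-distributed with respect to Haar measure if and only if, for every nontrivial character $\chi$ of $\T^m$ and every F{\o}lner sequence $(\Phi_N)_{N \in \N}$ in $\Z^d$, one has $\frac{1}{|\Phi_N|} \sum_{n \in \Phi_N} \chi(x_n) \to 0$. I would include a one-line justification: the characters of $\T^m$ span a dense subalgebra of $C(\T^m)$ by Stone--Weierstrass, and the averaging functionals $f \mapsto \frac{1}{|\Phi_N|} \sum_{n \in \Phi_N} f(x_n)$ are uniformly bounded, so convergence of these functionals to $\int f \, d\mu$ on each character (trivially for the trivial one, by hypothesis for the others) forces convergence on all of $C(\T^m)$, and for any fixed F{\o}lner sequence this is exactly well-distribution.

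Granting this, the forward implication is immediate. Suppose $u$ is well-distributed in $\T^m$. Fix $c \in \Z^m \setminus \{0\}$. For each nonzero $k \in \Z$ the map $x \mapsto e(kc \cdot x)$ is a nontrivial character of $\T^m$, so $\frac{1}{|\Phi_N|} \sum_{n \in \Phi_N} e(k \, c \cdot u(n)) \to \int_{\T^m} e(kc \cdot x) \, dx = 0$ for every F{\o}lner sequence. Since the nontrivial characters of $\T$ are exactly $t \mapsto e(kt)$ with $k \in \Z \setminus \{0\}$, this says the $\T$-valued sequence $(c \cdot u(n))_{n \in \Z^d}$ passes the Weyl criterion, hence is well-distributed in $\T$.

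For the reverse implication I would argue symmetrically. Assume $(c \cdot u(n))_{n \in \Z^d}$ is well-distributed in $\T$ for every $c \in \Z^m \setminus \{0\}$. Every nontrivial character of $\T^m$ has the form $x \mapsto e(c \cdot x)$ for some $c \in \Z^m \setminus \{0\}$, and for such $c$, well-distribution of $c \cdot u$ gives $\frac{1}{|\Phi_N|} \sum_{n \in \Phi_N} e(c \cdot u(n)) \to \int_{\T} e(t) \, dt = 0$ for every F{\o}lner sequence $(\Phi_N)_{N \in \N}$. Thus $u$ satisfies the Weyl criterion in $\T^m$ and is well-distributed.

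There is essentially no obstacle here; this is a routine reduction. The only point warranting a moment's care is that well-distribution is quantified over \emph{all} F{\o}lner sequences simultaneously, but since the Weyl criterion is verified one F{\o}lner sequence at a time, the equivalence transfers without difficulty.
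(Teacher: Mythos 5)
Your argument is correct and is essentially the same as the paper's: the paper simply cites the classical result of Kuipers--Niederreiter (Theorem 6.3) for $d=1$ and notes the argument extends to general $d$, and that argument is exactly the Weyl character criterion you spell out, with characters of $\T^m$ being the maps $x \mapsto e(c \cdot x)$, $c \in \Z^m$. Writing out the Stone--Weierstrass justification and the F{\o}lner-sequence bookkeeping is a fine (slightly more self-contained) presentation of the same proof.
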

\begin{proof}
	See \cite[Theorem 6.3]{kn} for the case $d = 1$.
	The same argument works for general $d \in \N$.
\end{proof}

With these two lemmas at hand, we are now ready to prove Theorem \ref{thm: Z^l equidistribution}.

\begin{proof}[Proof of Theorem \ref{thm: Z^l equidistribution}]
	For each $j = 1, \dots, l$, we can write $T_jx = A_jx + \alpha_j$ for some unipotent $(m \times m)$-matrix $A_j$ with integer entries
	and a vector $\alpha_j \in \T^m$.
	Since the matrices $A_1, \dots, A_l$ commute, they are simultaneously triangularizable.
	That is, there is a matrix $P$ with rational entries and lower-triangular matrices $B_j$ such that $A_jP = PB_j$.
	Multiplying $P$ by a common denominator of its entries, we may assume that $P$ has integer entries.
	Then $P$ is well-defined as a surjective endomorphism of $\T^m$.
	(One can show that in general, $P$ cannot be assumed to be an automorphism.)
	Let $\beta_j \in \T^m$ such that $P\beta_j = \alpha_j$, and set $S_jx := B_jx + \beta_j$.
	Then we have $T_jPx = A_jPx + \alpha_j = PB_jx + P\beta_j = PS_jx$.
	That is, $T$ is a factor of $S$ with factor map $P : \T^m \to \T^m$.
	
	Now we check that $S$ is an ergodic $\Z^l$-action on $\T^m$.
	For $n = (n_1, \dots, n_l) \in \Z^l$, let $S^n := \prod_{j=1}^l{S_j^{n_j}}$ and $T^n := \prod_{j=1}^l{T_j^{n_j}}$.
	Suppose $A \subseteq \T^m$ is $S$-invariant.
	That is, $S^nA = A$ for every $n \in \Z^l$.
	Applying the factor map $P : \T^m \to \T^m$, we have $PA = PS^nA = T^nPA$, so $PA$ is a $T$-invariant set.
	But $T$ is ergodic by assumption, so $\mu(PA) \in \{0,1\}$.
	If $\mu(PA) = 0$, then $\mu(A) \le \mu(P^{-1}PA) = \mu(A) = 0$.
	On the other hand, if $\mu(PA) = 1$, then $\mu(A) \ge \frac{1}{|\det(P)|}$.
	
	Assume $A$ is an $S$-invariant set of minimal positive measure so that $\left. S \right|_A : A \to A$ is ergodic.
	Let $x \in A$ be a generic point for $\left. S \right|_A$.
	Then by Proposition \ref{prop: finite subtori}, $A$ differs from the set
	$Y = \overline{\left\{S^nx : n \in \Z^l\right\}}$ by a null-set, and $Y$ is a subtorus of $\T^m$.
	But $\mu(Y) = \mu(A) > 0$, so $Y = \T^m$.
	Thus, for any $S$-invariant set $A$ of positive measure, we have $\mu(A) = 1$.
	Therefore, $S$ is ergodic.
	
	The above argument shows that, without loss of generality, we may assume
	that the transformations $T_j$ are of the form
	\begin{equation*}
		T_jx = \left( x_1 + \alpha^{(j)}_1, x_2 + a^{(j)}_{2,1}x_1 + \alpha^{(j)}_2, \dots,
		 x_m + \sum_{r=1}^{m-1}{a^{(j)}_{m,r}x_r} + \alpha^{(j)}_m \right)
	\end{equation*}
	for $x = (x_1, \dots, x_m) \in \T^m$.
	
	Let $u_x(n) := \left( T^{p_1(n)}x, \dots, T^{p_k(n)}x \right)$
	for $x \in \T^m$ and $n \in \Z^d$, where $T^{p_i(n)}$ denotes the transformation $\prod_{j=1}^l{T_j^{p_{i,j}(n)}}$.
	We now break the proof into two cases, depending on the coefficients $a^{(j)}_{s,r}$. \\
	
	First consider the case $a^{(j)}_{s,r} = 0$ for all $1 \le j \le l$, $2 \le s \le m$, and $1 \le r \le s-1$.
	That is,
	\begin{equation*}
		T_jx = \left( x_1 + \alpha^{(j)}_1, x_2 + \alpha^{(j)}_2, \dots, x_m + \alpha^{(j)}_m \right)
		 = x + \alpha^{(j)}
	\end{equation*}
	is a toral rotation for $j = 1, \dots, l$.
	Let $\alpha : \Z^l \to \T^m$ be the homomorphism $\alpha(n_1, \dots, n_l) = \sum_{j=1}^l{n_j\alpha^{(j)}}$.
	Then for any $x \in \T^m$, we have
	\begin{equation*}
		u_x(n) = x + \left( \alpha(p_1(n)), \dots, \alpha(p_k(n)) \right) = x + u_0(n).
	\end{equation*}
	
	Now let $(c_{i,s})_{1 \le i \le k, 1 \le s \le m} \in \Z^{mk} \setminus \{0\}$.
	By Lemma \ref{lem: ud linear comb}, it suffices to show
	\begin{equation*}
		c \cdot u_0(n) = \sum_{i=1}^k{c_i \cdot \alpha(p_i(n))}
	\end{equation*}
	is well-distributed in $\T$.
	For each $i = 1, \dots, k$ and $n \in \Z^l$, we have
	\begin{equation*}
		c_i \cdot \alpha(n) = \sum_{s=1}^m{c_{i,s} \sum_{j=1}^l{n_j\alpha^{(j)}_s}}
		 = \sum_{j=1}^l{n_j \sum_{s=1}^m{c_{i,s}\alpha^{(j)}_s}}.
	\end{equation*}
	Letting
	$\beta_i = \left( \sum_{s=1}^m{c_{i,s}\alpha^{(1)}_s}, \dots, \sum_{s=1}^m{c_{i,s}\alpha^{(l)}_s} \right) \in \T^l$,
	we therefore have
	\begin{equation*}
		c_i \cdot \alpha(n) = \beta_i \cdot n.
	\end{equation*}
	Thus,
	\begin{equation*}
		c \cdot u_0(n) = \sum_{i=1}^k{\beta_i \cdot p_i(n)}
		 = \sum_{i=1}^k{\sum_{j=1}^l{\beta_{i,j}p_{i,j}(n)}},
	\end{equation*}
	which is well-distributed by Lemma \ref{lem: Weyl}. \\
	
	Now suppose $a^{(j)}_{s,r} \ne 0$ for some $1 \le j \le l, 2 \le s \le m$, and $1 \le r \le s-1$.
	Let $x = (x_1, \dots, x_m)$ so that $\{1, x_1, \dots, x_m\}$ is linearly independent over
	$\Q \left( \left\{ \alpha^{(j)}_s : 1 \le j \le l, 1 \le s \le m \right\} \right)$.
	
	Put
	$r_0 := \max \left\{ 1 \le r \le m-1 : a^{(j)}_{s,r} \ne 0~\text{for some}~1 \le j \le l~\text{and}~r+1 \le s \le m \right\}$,
	and let $S := \left\{ r_0 + 1 \le s \le m : a^{(j)}_{s,r_0} \ne 0~\text{for some}~1 \le j \le l \right\}$.
	For $s \in S$, let
	\begin{equation*}
		v_s := \left( a^{(1)}_{s,r_0}, \dots, a^{(l)}_{s,r_0} \right) \in \Z^l \setminus \{0\}.
	\end{equation*}
	
	We claim that without loss of generality, $\{v_s : s \in S\}$ is linearly independent over $\Q$.
	Indeed, suppose $\sum_{s \in S}{c_sv_s} = 0$ for some $(c_s)_{s \in S} \in \Z^S \setminus \{0\}$.
	Taking $s_0 := \max\{ s \in S : c_s \ne 0 \}$, we can perform a change of variables
	\begin{equation*}
		\tilde{x}_{s_0} := \sum_{s \in S}{c_sx_s}.
	\end{equation*}
	In the new coordinates, this gives $\tilde{v}_{s_0} = 0$, so $\tilde{S} = S \setminus \{s_0\}$
	and linear dependence is removed.
	Moreover, this change of variables is $|c_{s_0}|$-to-one, so well-distribution in the new coordinates
	implies well-distribution in the original system,
	since orbit closures of polynomial sequences must be finite unions of subtori
	(see Proposition \ref{prop: finite subtori}).
	
	Assume now that $\{v_s : s \in S\}$ is linearly independent over $\Q$.
	Let $(c_{i,s})_{1 \le i \le k, 1 \le s \le m} \in \Z^{mk} \setminus \{0\}$.
	If $c_{i,s} = 0$ for $1 \le i \le k$ and $s \in S$, then we can reduce to the lower-dimensional torus
	consisting of those coordinates not in the set $S$.
	Thus, we may assume $c_{i,s} \ne 0$ for some $1 \le i \le k$ and $s \in S$.
	Now expand
	\begin{equation*}
		c \cdot u_x(n) = \sum_{s=1}^m{P_s(n)x_s} + R(n),
	\end{equation*}
	where $P_s$ is $\Z$-valued for each $s = 1, \dots, m$ and $R(n)$
	is linearly independent from $\{x_1, \dots, x_s\}$ over $\Q$ for every $n \in \Z^d$.
	By the restriction on the coordinates of $x$, we can compute
	\begin{align*}
		P_{r_0}(n) & = \sum_{i=1}^k{c_{i,r_0}} + \sum_{i=1}^k{\sum_{s \in S}{c_{i,s} \left( v_s \cdot p_i(n) \right)}} \\
		 & = \sum_{i=1}^k{c_{i,r_0}} + \sum_{i=1}^k{\sum_{j=1}^l{\sum_{s \in S}{c_{i,s}a^{(j)}_{s,r_0}p_{i,j}(n)}}} \\
		 & = \text{const.} + \sum_{i=1}^k{\sum_{j=1}^l{d_{i,j}p_{i,j}(n)}},
	\end{align*}
	where
	\begin{equation*}
		\left( d_{i,1}, \dots, d_{i,l} \right) = \sum_{s \in S}{c_{i,s}v_s}
	\end{equation*}
	for $1 \le i \le k$.
	By assumption, $(c_{i,s})_{1 \le i \le k, s \in S} \ne 0$.
	Since $\{v_s : s \in S\}$ is linearly independent over $\Q$, this implies that $d_{i,j} \ne 0$
	for some $1 \le i \le k$ and $1 \le j \le l$.
	Therefore, $P_{r_0}(n)$ is nonconstant, since $\{p_{i,j} : 1 \le i \le k, 1 \le j \le l\}$ is an independent family.
	It follows that the polynomial $c \cdot u_x(n)$ has at least one irrational coefficient other than the constant term,
	so $c \cdot u_x(n)$ is well-distributed in $\T$ by Lemma \ref{lem: Weyl}.
	
	We have shown that $c \cdot u_x(n)$ is well-distributed in $\T$ for every $c \in \Z^{mk} \setminus \{0\}$.
	By Lemma \ref{lem: ud linear comb}, it follows that $u_x(n)$ is well-distributed in $\T^{mk}$ as desired. \\
	
	Let $E$ be the set of exceptional points $x \in \T^m$ such that
	$\left( u_x(n) \right)_{n \in \Z^d}$ is not well-distributed in $\T^{mk}$.
	The above argument show that if $x = (x_1, \dots, x_m) \in E$, then
	\begin{equation} \label{eq: linear relation}
		c_0 + \sum_{r=1}^m{c_r x_r} = 0
	\end{equation}
	for some coefficients
	\begin{equation*}
		(c_r)_{r=0}^m \in \Q \left( \left\{ \alpha^{(j)}_s : 1 \le j \le l, 1 \le s \le m \right\} \right)^{m+1} \setminus \{0\}.
	\end{equation*}
	For each such choice of coefficients $(c_r)_{r=0}^m$, the equation \eqref{eq: linear relation}
	defines a subtorus of dimension $m-1$.
	Hence, $E$ is contained in a countable union of $(m-1)$-dimensional subtori.
	In particular, $E$ is both a set of measure zero and meager in $\T^m$.
\end{proof}


\subsubsection{The general case}

Theorem \ref{thm: tot erg} says the for totally ergodic systems, the trivial factor is characteristic
for independent polynomials $\{p_1, \dots, p_k\}$.
In particular, the Kronecker factor $\calZ = \calZ_1$ is characteristic.
Now, the collection of all families of independent $\O_K$-valued polynomials is clearly eligible,
so by Proposition \ref{prop: eligible}, the Kronecker factor is characteristic
for independent polynomials in any ergodic system.

In order to prove Theorem \ref{thm: rational Kronecker},
it remains only to reduce from the Kronecker factor to the rational Kronecker factor.
We want to prove: if $\E{f_i}{\calK_{rat}} = 0$ for some $i = 1, \dots, k$,
then $\UClim_{n \in \O_K}{\prod_{i=1}^k{T^{p_i(n)}f_i}} = 0$.
Since the Kronecker factor is spanned by eigenfunctions, we may assume that $f_i$ is an eigenfunction
with eigenvalue $\alpha_i \in \T^d$ for $i = 1, \dots, k$.
That is, $T^nf_i = e(n \cdot \alpha_i)f_i$.
The condition that $\E{f_i}{\calK_{rat}} = 0$ means that $\alpha_i \notin \Q^d$ for some $i = 1, \dots, k$.
Expanding the multiple ergodic average, we have
\begin{equation} \label{eq: eigenfunction avg}
	\UClim_{n \in \O_K}{\prod_{i=1}^k{T^{p_i(n)}f_i}}
	 = \UClim_{n \in \O_K}{e \left( \sum_{i=1}^k{\sum_{j=1}^d{p_{i,j}(n) \alpha_{i,j}}} \right) \prod_{i=1}^k{f_i}}.
\end{equation}
Since $\alpha_{i,j} \notin \Q$ for some $1 \le i \le k$ and $1 \le j \le d$, the polynomial
$\sum_{i=1}^k{\sum_{j=1}^d{p_{i,j}(n)\alpha_{i,j}}}$ has an irrational coefficient other than the constant term.
Thus, by Lemma \ref{lem: Weyl}, the average \eqref{eq: eigenfunction avg} is equal to 0 as desired.


\subsection{Proof of Theorem \ref{thm: nilfactor}}

We follow the approach of Frantzikinakis (see \cite[Theorem A]{fra}),
modifying as necessary to upgrade to our multidimensional setting.

The polynomials $l_1p(n), \dots, l_kp(n)$ are essentially distinct, so the characteristic factor for the averages
\begin{equation} \label{eq: poly mult}
	\UClim_{n \in \O_K}{ \prod_{i=1}^k{T^{l_ip(n)}f_i} }
\end{equation}
is a nilfactor, $\calZ_r$, for some $r \in \Z$, by Theorem \ref{thm: polynomial nil}.
The content of Theorem \ref{thm: nilfactor} is thus to show that $r = k-1$.

We do this in several steps.
First, we will show that for totally ergodic systems, the limit \eqref{eq: poly mult} does not depend on $p$.
As a consequence, $\calZ_{k-1}$ is characteristic for totally ergodic systems,
since it is characteristic when $p(n) = n$.
We then apply Proposition \ref{prop: eligible} to conclude that $\calZ_{k-1}$ is characteristic in any ergodic system.


\subsubsection{Totally ergodic systems}

For this section, we will assume that $\left( X, \B, \mu, T \right)$ is a totally ergodic $\O_K$-system,
and we set out to prove that the limit \eqref{eq: poly mult} is independent of the choice of polynomial $p$.

By Theorem \ref{thm: polynomial nil} and a standard approximation argument, we may further assume
that $X = G/\Gamma$ is a nilmanifold and $T$ is an action by niltranslations $T^nx = a(n)x$ with $a(n) \in G$.
It therefore suffices to show that the orbits
\begin{equation*}
	\left\{\left( T^{l_1p(n)}x, \dots, T^{l_kp(n)}x \right) : n \in \O_K\right\} \qquad \text{and} \qquad
	 \left\{\left( T^{l_1n}x, \dots, T^{l_kn}x \right) : n \in \O_K\right\}
\end{equation*}
are equidistributed for almost every $x \in X$.
Equivalently, letting $g(n) := \left( a(l_1n), \dots, a(l_kn) \right) \in G^k$ and $\tilde{x} = (x, \dots, x) \in X^k$,
we want to show that $\left\{ g(p(n))\tilde{x} : n \in \O_K \right\}$ and $\left\{ g(n)\tilde{x} : n \in \O_K \right\}$
are equidistributed for almost every $x \in X$.
Now, by Theorem \ref{thm: proj}, it is enough to show that these sequences have the same closure in $X^k$.

By Proposition \ref{prop: alg ind coord}, any nonconstant polynomial $p \in K[x]$
has algebraically independent coordinates, so we will prove a related result about $\Z^d$-valued polynomials
with algebraically independent coordinates:

\begin{prop}[cf. \cite{fra}, Proposition 2.7] \label{prop: polynomial orbit}
	Let $X = G/\Gamma$ be a nilmanifold, $g : \Z^l \to G$ a polynomial sequence, and $x \in X$.
	Suppose $p : \Z^d \to \Z^l$ is a polynomial with algebraically independent coordinates.
	If $Y := \overline{\left\{ g(n)x : n \in \Z^l \right\}}$ is connected,
	then $\overline{\left\{ g(p(n))x : n \in \Z^d \right\}} = Y$.
\end{prop}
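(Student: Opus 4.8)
The plan is to reduce to Leibman's structure theorem (Theorem \ref{thm: nil orbits}) for the polynomial sequence $n \mapsto g(p(n))$ and then argue that connectedness of $Y$ forces the orbit closure of $g \circ p$ to be all of $Y$. First I would apply Theorem \ref{thm: nil orbits} twice: once to the $\Z^l$-polynomial map $g$ (with base point $x$) to obtain a connected closed subgroup $H \subseteq G$, a finite group $W$, a homomorphism $\omega : \Z^l \to W$, and points $x_w$ such that $g(n)x$ is well-distributed in $Hx_w$ on $\omega^{-1}(w)$; since $Y = \overline{\{g(n)x : n \in \Z^l\}}$ is assumed connected and equals $\bigcup_w Hx_w$, a finite union of cosets of the connected group $H$, connectedness forces $W$ to act trivially on these cosets, so in fact $Hx_w = Y$ for every $w$ in the image of $\omega$, and $Y = H x$ is a single (connected) sub-nilmanifold. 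So without loss of generality $g(n)x$ is already well-distributed in $Y = Hx$, and we may replace $G$ by $H$ and $X$ by $Y$, i.e.\ assume $\{g(n)x : n \in \Z^l\}$ is dense in $X$.

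Next I would apply Theorem \ref{thm: nil orbits} to the $\Z^d$-polynomial map $n \mapsto g(p(n))$ with base point $x$: this produces a connected closed subgroup $H' \subseteq G$, a finite group $W'$, a homomorphism $\omega' : \Z^d \to W'$, and points $x'_{w'}$ with $g(p(n))x$ well-distributed in $H'x'_{w'}$ on $\omega'^{-1}(w')$, and $\overline{\{g(p(n))x : n \in \Z^d\}} = \bigcup_{w'} H'x'_{w'}$. The goal is to show this union equals $X$. By Theorem \ref{thm: proj} (applied to the connected nilmanifold $X$), density of $\{g(n)x\}$ in $X$ is equivalent to density of its projection to $Z = X/[G_0,G_0]$, and likewise for $g \circ p$; since $Z$ is a connected abelian nilmanifold, i.e.\ a torus, we are reduced to the following purely toral statement: if a polynomial $\Z^l$-sequence $\bar g(n)\bar x$ is equidistributed (well-distributed) in a torus $Z$, and $p : \Z^d \to \Z^l$ has algebraically independent coordinates, then $\bar g(p(n))\bar x$ is also equidistributed in $Z$.

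This toral statement is where the real content lies, and I expect it to be the main obstacle. On $Z$ the sequence $\bar g(n)\bar x$ has the form of a $\Z^l$-polynomial $q(n)$ with values in the torus (coordinates $q_1, \dots, q_m$), and by Lemma \ref{lem: ud linear comb} well-distribution is equivalent to: for every $c \in \Z^m \setminus \{0\}$, the real polynomial $c \cdot q(n)$ has an irrational non-constant coefficient (by Lemma \ref{lem: Weyl}). I must show that $c \cdot q(p(n))$ then also has an irrational non-constant coefficient. Write $c \cdot q(n) = \sum_\alpha \theta_\alpha n^\alpha$ with not all $\theta_\alpha$ ($\alpha \neq 0$) rational; substituting $n \mapsto p(n)$ gives $c \cdot q(p(n)) = \sum_\alpha \theta_\alpha p(n)^\alpha$, and I need that the span (over $\Q$) of the non-constant monomials appearing, weighted by the $\theta_\alpha$, still contains an irrational element. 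The key algebraic input is precisely algebraic independence of $p_1, \dots, p_l$ over $\Q$: it guarantees that the monomials $p(n)^\alpha$ in $n$ are ``$\Q$-linearly spread out'' enough that no irrational combination of the $\theta_\alpha$ can collapse to a rational-coefficient polynomial — more carefully, one splits $\R$ as a $\Q$-vector space, writes $\theta_\alpha = \sum_\beta \lambda_{\alpha,\beta} e_\beta$ in a $\Q$-basis containing $1$, picks $e_{\beta_0} \neq 1$ with some $\lambda_{\alpha,\beta_0} \neq 0$ for a non-constant $\alpha$, and observes that the $e_{\beta_0}$-component of $c\cdot q(p(n))$ is $\sum_\alpha \lambda_{\alpha,\beta_0} p(n)^\alpha$, a nonzero polynomial in $n$ by algebraic independence of the $p_i$ (since $\sum_\alpha \lambda_{\alpha,\beta_0} X^\alpha$ is a nonzero polynomial in $l$ variables). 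Hence $c \cdot q(p(n))$ has an irrational coefficient on a non-constant monomial, Lemma \ref{lem: Weyl} applies, and we conclude via Lemma \ref{lem: ud linear comb} and Theorem \ref{thm: proj} that $\overline{\{g(p(n))x : n \in \Z^d\}} = X = Y$.
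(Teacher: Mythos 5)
Your proposal is correct and follows essentially the same route as the paper's proof: both use Leibman's theorems (Theorems \ref{thm: nil orbits} and \ref{thm: proj}) together with connectedness of $Y$ to reduce to an equidistribution statement for torus-valued polynomial sequences, and then exploit algebraic independence of the coordinates of $p$ in the same way (a nonzero polynomial relation composed with $p$ would force the relation itself to vanish). The only difference is the toral criterion at the end: the paper invokes Corollary \ref{cor: span} and verifies $\spn(u_i \circ p) = \spn(u_i)$, while you re-derive the needed fact directly from Weyl's criterion (Lemmas \ref{lem: Weyl} and \ref{lem: ud linear comb}) via a $\Q$-basis decomposition of the coefficients --- an equivalent formulation of the same argument.
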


\begin{proof}
	By Theorem \ref{thm: nil orbits}, $Y$ is a subnilmanifold $H/\Delta$.
	Now by Theorem \ref{thm: proj}, we may replace $H$ by $H/[H_0,H_0]$
	and assume that $H_0$ is abelian.
	As in Lemma \ref{lem: affine}, we may further reduce to the case that $Y = \T^m$
	and $g(n)x = T_1^{p_1(n)} \cdots T_k^{p_k(n)}x$ with $T_i$ unipotent affine actions.
	The coordinates of $\left\{ g(n)x : n \in \Z^l \right\}$ are real polynomials in $n \in \Z^l$,
	so it remains to show:
	if $u : \Z^l \to \T^m$ is a sequence with polynomial coordinates
	and $\overline{\left\{ u(n) : n \in \Z^l \right\}} = \T^m$,
	then $\overline{\left\{ u(p(n)) : n \in \Z^d \right\}} = \T^m$
	for every polynomial $p : \Z^d \to \Z^l$ with algebraically independent coordinates.
	
	The polynomial $u(n)$ can be decomposed as
	$u(n) = u(0) + u_0(n)q + u_1(n)\alpha_1 + \cdots + u_r(n)\alpha_r$, where $q \in \Q$,
	$\alpha_1, \dots, \alpha_r \in \R$ are linearly independent irrational numbers and $u_i : \Z^l \to \Z^m$
	are polynomials with $u_i(0) = 0$.
	By Corollary \ref{cor: span}, the orbit $\left\{ u(n) : n \in \Z^l \right\}$ is dense in $\T^m$ if and only if
	\begin{equation*}
		\spn{(u_1)} + \cdots + \spn{(u_r)} = \R^m.
	\end{equation*}
	Thus, it suffices to prove $\spn(u_i \circ p) = \spn(u_i)$ for each $i = 1, \dots, r$.
	
	Fix $1 \le i \le r$.
	Suppose the coordinates of $u_i \circ p$ satisfy a linear relation $\sum_{j=1}^m{c_ju_{i,j}(p(n))} = 0$
	for some $c_1, \dots, c_m \in \Z$, where $u_i = (u_{i,1}, \dots, u_{i,m})$ with $u_{i,j} : \Z^l \to \Z$.
	Let $v : \Z^l \to \Z$ be the polynomial $v(n) := \sum_{j=1}^m{c_ju_{i,j}(n)}$.
	Then $v \circ p = 0$.
	But the coordinates of $p$ are algebraically independent, so we must have $v = 0$.
	That is, the coordinates of $u_i$ satisfies the the same linear relation.
	Therefore, $\spn(u_i \circ p) = \spn(u_i)$ as desired.
\end{proof}

It remains only to show that $Y := \overline{\left\{ g(n) \tilde{x} : n \in \O_K \right\}}$ is connected.
This is where we use that the system is totally ergodic.
Let $Y_w = Hx_w$, $w \in W$, as in Theorem \ref{thm: nil orbits}.
Since $W$ is a finite group, $\omega^{-1}(0) \subseteq \Z^d$ has finite index in $\Z^d$.
Because $T$ is totally ergodic, we therefore have $Y_0 = Y$, so $Y$ is indeed connected.

In summary, we have shown the following:

\begin{thm} \label{thm: poly linear tot erg}
	Let $K$ be a number field with ring of integers $\O_K$.
	Let $\left( X, \B, \mu, T \right)$ be a totally ergodic $\O_K$-system.
	Let $p(x) \in K[x]$ be a non-constant $\O_K$-valued polynomial.
	Let $l_1, \dots, l_k \in \O_K$ be distinct and nonzero.
	Then
	\begin{equation*}
		\UClim_{n \in \O_K}{ \prod_{i=1}^k{T^{l_ip(n)}f_i} }
		 = \UClim_{n \in \O_K}{ \prod_{i=1}^k{T^{l_in}f_i} }.
	\end{equation*}
\end{thm}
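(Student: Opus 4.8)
The plan is to reduce the asserted identity of multiple ergodic averages to a purely topological statement about coincidence of orbit closures in a nilsystem, and then to invoke Proposition \ref{prop: polynomial orbit}. First I would apply Theorem \ref{thm: polynomial nil}: the family $\{l_1 p(n), \dots, l_k p(n)\}$ is essentially distinct, so some nilfactor $\calZ_r$ is characteristic both for $\UClim_{n \in \O_K}{\prod_{i=1}^{k} T^{l_i p(n)} f_i}$ and (taking $p(n) = n$) for $\UClim_{n \in \O_K}{\prod_{i=1}^{k} T^{l_i n} f_i}$; since $\calZ_r$ is an inverse limit of nilsystems, a standard approximation reduces us to the case where $X = G/\Gamma$ is a nilmanifold and $T^n x = a(n) x$ for a homomorphism $a : \O_K \to G$. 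Writing $g(n) := \left( a(l_1 n), \dots, a(l_k n) \right) \in G^k$, a polynomial (in fact homomorphic) map on $\O_K \cong \Z^d$, and $\tilde{x} := (x, \dots, x) \in X^k$, the identity follows as soon as we know that, for almost every $x$, the sequences $\left( g(p(n)) \tilde{x} \right)_{n \in \O_K}$ and $\left( g(n) \tilde{x} \right)_{n \in \O_K}$ are equidistributed with respect to the same measure on $X^k$; by Theorem \ref{thm: proj} this in turn reduces to showing that they have the same orbit closure, i.e. $\overline{\{ g(p(n)) \tilde{x} : n \in \O_K \}} = \overline{\{ g(n) \tilde{x} : n \in \O_K \}} =: Y$.

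The second step is to feed this into Proposition \ref{prop: polynomial orbit}. By Proposition \ref{prop: alg ind coord}, the coordinate polynomials of the non-constant $\O_K$-valued polynomial $p$ with respect to an integral basis are algebraically independent over $\Q$, so $p$ is a polynomial map $\Z^d \to \Z^d$ with algebraically independent coordinates, and Proposition \ref{prop: polynomial orbit} yields $\overline{\{ g(p(n)) \tilde{x} \}} = Y$ --- provided $Y$ is connected. Establishing the connectedness of $Y$ is the only place total ergodicity enters, and it is the step I expect to be the main obstacle. Applying Leibman's structure theorem (Theorem \ref{thm: nil orbits}) to $g$ and $\tilde{x}$ produces a connected closed subgroup $H \le G^k$, a finite group $W$, a homomorphism $\omega : \Z^d \to W$, and points $\{ \tilde{x}_w : w \in W \}$ with $Y = \bigcup_{w \in W} H \tilde{x}_w$ and $\left( g(n) \tilde{x} \right)_{n \in \omega^{-1}(w)}$ well-distributed in $H \tilde{x}_w$. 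Since $W$ is finite, $\omega^{-1}(0)$ has finite index in $\Z^d \cong \O_K$, so by Proposition \ref{prop: tot erg}(iii) the action $(T^n)_{n \in \omega^{-1}(0)}$ is still ergodic; the task is then to deduce that the orbit of $\tilde{x}$ under $\left( g(n) \right)_{n \in \omega^{-1}(0)}$ is already dense in all of $Y$ rather than in the single piece $H \tilde{x}_0$, forcing $Y = H \tilde{x}_0$ to be connected. A convenient route is to pass, as in Lemma \ref{lem: affine} and Proposition \ref{prop: finite subtori}, to the connected Weyl quotient $X = \T^m$, where $Y$ is a finite union of cosets of a subtorus $V$; there the coefficients of the polynomials describing $\left( g(n) \tilde{x} \right)$ span the same $\Q$-subspace when $n$ is restricted to the finite-index sublattice $\omega^{-1}(0)$, which pins down $Y$ in terms of that sublattice and yields connectedness.

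Finally I would translate the coincidence of orbit closures back into the stated $L^2$ identity: on the nilsystem both $\UClim_{n \in \O_K}{\prod_{i=1}^{k} T^{l_i p(n)} f_i}$ and $\UClim_{n \in \O_K}{\prod_{i=1}^{k} T^{l_i n} f_i}$ are obtained by integrating the fixed function $(y_1, \dots, y_k) \mapsto \prod_{i=1}^{k} f_i(y_i)$ against the common equidistribution measure on $Y$ (after approximating $\calZ_r$-measurable functions by continuous functions on the nilmanifold), so the two averages agree. The reductions to nilsystems and to Weyl form, and the bookkeeping with uniform Ces\`{a}ro limits, are routine given Theorems \ref{thm: polynomial nil}, \ref{thm: proj}, \ref{thm: nil orbits} and Proposition \ref{prop: polynomial orbit}; the delicate point is the connectedness argument, where one must ensure that total ergodicity of the single $\O_K$-action really does prevent $Y$ from disconnecting under restriction of the parameter to a finite-index subgroup.
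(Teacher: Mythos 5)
Your overall route is the same as the paper's: reduce via Theorem \ref{thm: polynomial nil} and Theorem \ref{thm: proj} to showing that, for a.e.\ $x$, the sequences $\left( g(p(n))\tilde{x} \right)_{n}$ and $\left( g(n)\tilde{x} \right)_{n}$ have the same closure $Y$ in $X^k$; handle the reparametrization by $p$ through Proposition \ref{prop: alg ind coord} combined with Proposition \ref{prop: polynomial orbit}; and reduce everything to the connectedness of $Y$, which is exactly where the paper also brings in total ergodicity (its own treatment of that step is the one-line assertion that $Y_0 = Y$). Up to that point your argument matches the paper and is fine, and your instinct that connectedness is the delicate point is correct.

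The gap is precisely in your proposed justification of connectedness, and it is genuine. Preservation of the $\Q$-span of the coefficient polynomials under restriction of $n$ to the finite-index sublattice $\omega^{-1}(0)$ only controls the connected part of the orbit closure (the subtorus $\spn(u_1) + \cdots + \spn(u_r)$ of Corollary \ref{cor: span}); the components $Y_w$ come from the rational part $u_0(n)q$, which the span argument does not see and which can genuinely shrink when $n$ is restricted to a sublattice. Worse, total ergodicity of the single $\O_K$-action does not by itself force $Y$ to be connected when the $l_i$ have irrational ratios: already for Kronecker systems the characters separating the components of $Y$ have the form $n \mapsto \prod_i \lambda_i(l_i n)$ with $\lambda_i$ eigenvalues of $T$, and since the eigenvalue group is merely a subgroup of $\hat{\O_K}$ (not an $\O_K$-module), such a product can be a nontrivial rational character even though each $\lambda_i$ is irrational. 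Concretely, let $K = \Q(i)$, $X = \T^2$, and for $n = n_1 + n_2 i$ set $T^n(z_1,z_2) = \left( z_1 + n_1\beta + n_2\gamma,\ z_2 + n_1\gamma + n_2(\tfrac{1}{2} - \beta) \right)$ with $1, \beta, \gamma$ rationally independent; this $\Z[i]$-action is totally ergodic, yet for $l_1 = 1$, $l_2 = i$ the continuous function $(u,v) \mapsto e(u_1 + v_2)$ equals $e(x_1 + x_2 + \tfrac{n_1}{2})$ along the orbit of any diagonal point, so it takes exactly two values there and $Y$ is disconnected for every $x$ --- so Proposition \ref{prop: polynomial orbit} is not applicable. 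In fact the asserted identity itself fails here for $p(n) = n^3$: taking $f_1(z) = e(z_1)$, $f_2(z) = e(z_2)$, one has $\left( T^{p(n)}f_1 \cdot T^{ip(n)}f_2 \right)(z) = e(z_1 + z_2)\, e\left( (n_1^3 - 3n_1n_2^2)/2 \right)$, whose uniform Ces\`{a}ro limit is $\tfrac{1}{2} e(z_1 + z_2)$ (the exponent is $\equiv \tfrac{1}{2} n_1(1+n_2) \pmod{1}$, vanishing with density $3/4$), whereas for $p(n) = n$ the product is $e(z_1+z_2)(-1)^{n_1}$, whose uniform Ces\`{a}ro limit is $0$. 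So no bookkeeping with spans can close this step in the stated generality: connectedness of the diagonal orbit closure needs more than total ergodicity of $T$, e.g.\ rational commensurability of the $l_i$ (the regime $\frac{s}{r} \in \Q$ that the paper imposes elsewhere), and the same caveat applies to the paper's own terse claim $Y_0 = Y$ at this point.
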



\subsubsection{General case}

Now we prove Theorem \ref{thm: nilfactor}.
Letting $l := \gcd(l_1, \dots, l_k)$ and replacing $l_1, \dots, l_k$ by $l'_i := \frac{l_i}{l}$ and $p$ by $lp$,
we may assume without loss of generality that $l = 1$.
By \cite[Theorem 4.1.2]{griesmer}, the characteristic factor for $\{l_1n, \dots, l_kn\}$ is $\calZ_{k-1}$.
Thus, by Theorem \ref{thm: poly linear tot erg}, $\calZ_{k-1}$ is characteristic for $\{l_1p(n), \dots, l_kp(n)\}$
in the case of totally ergodic systems.
It is easily checked that the collection
\begin{equation*}
	\P := \left\{ \{l_1p(n), \dots, l_kp(n)\} : p(x) \in K[x]~\text{is noncontant and}~\O_K\text{-valued} \right\}
\end{equation*}
is eligible (see Definition \ref{defn: eligible}) under the assumption that $l = \gcd(l_1, \dots, l_k) = 1$.
Hence, Theorem \ref{thm: nilfactor} follows by Proposition \ref{prop: eligible}.


\section{Large intersections} \label{sec: Khintchine}

Having established characteristic factors for the polynomial multiple ergodic averages of interest,
we now move to deducing the related Khintchine-type theorems.


\subsection{Proof of Theorem \ref{thm: IndLargeInt}} \label{sec: IndLargeInt}

We want to prove Theorem \ref{thm: IndLargeInt}, restated here for the convenience of the reader:

\IndLargeInt*

\noindent We will prove the following stronger statement:

\begin{thm} \label{thm: large limit}
	Let $K$ be a number field with ring of integers $\O_K$.
	Suppose $\{p_1, \dots, p_k\} \subseteq K[x]$ is a jointly intersective family
	of linearly independent $\O_K$-valued polynomials.
	Then for any measure-preserving $\O_K$-system $\left( X, \B, \mu, T \right)$, $A \in \B$, and $\eps > 0$,
	there exist $\xi \in \O_K$ and $D \in \O_K \setminus \{0\}$ such that
	\begin{equation*}
		\UClim_{n \in \O_K}{\mu \left( A \cap T^{-p_1(\xi + Dn)}A \cap \cdots \cap T^{-p_k(\xi + Dn)}A \right)}
		 > \mu(A)^{k+1} - \eps.
	\end{equation*}
\end{thm}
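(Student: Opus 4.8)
The idea is to average $n$ along an affine substitution $n\mapsto\xi+Dn$ chosen so that the reparametrized polynomials $q_i(n):=p_i(\xi+Dn)$ all take values in a finite-index subgroup $\Lambda\subseteq(\O_K,+)$ on which a sufficiently accurate finite-rotational approximant of $\E{\ind_A}{\calK_{rat}}$ is $T$-invariant; once that is arranged, the multiple average essentially collapses to $\int(\text{approximant})^{k+1}\,d\mu$, which is $\ge\mu(A)^{k+1}$ by Jensen's inequality. Since the family $\{p_1,\dots,p_k\}$ is jointly intersective and linearly independent, it is \emph{independent}, and so is every affine reparametrization $\{q_1,\dots,q_k\}$, so Theorem~\ref{thm: rational Kronecker} applies to $\{q_i\}$.

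First assume $T$ is ergodic. Fix a decreasing cofinal sequence $\Lambda_1\supseteq\Lambda_2\supseteq\cdots$ of finite-index subgroups of $(\O_K,+)$ (there are only countably many). For each $m$ put $g_m:=\E{\ind_A}{\mathcal{I}_{\Lambda_m}}$, where $\mathcal{I}_{\Lambda_m}$ denotes the $\sigma$-algebra of $(T^n)_{n\in\Lambda_m}$-invariant sets; then $0\le g_m\le 1$, $\int g_m\,d\mu=\mu(A)$, and $T^ng_m=g_m$ for $n\in\Lambda_m$. By the argument of Proposition~\ref{prop: tot erg} together with Lemma~\ref{lem: rational periodic}, $\bigvee_m\mathcal{I}_{\Lambda_m}=\calK_{rat}$, so the martingale convergence theorem gives $g_m\to g_\infty:=\E{\ind_A}{\calK_{rat}}$ in $L^2(\mu)$; fix $m$ with $\|g_\infty-g_m\|_{L^2(\mu)}<\eps/(2k)$. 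Applying Lemma~\ref{lem: intersective} to the jointly intersective family $\{p_1,\dots,p_k\}$ and some $r\in\O_K\setminus\{0\}$ with $r\O_K\subseteq\Lambda_m$ produces $\xi\in\O_K$ and $D\in\O_K\setminus\{0\}$ with $p_i(\xi+D\O_K)\subseteq\Lambda_m$ for all $i$, whence $q_i(\O_K)\subseteq\Lambda_m$ and therefore $T^{q_i(n)}g_m=g_m$ for all $n\in\O_K$ and all $i$. Now Theorem~\ref{thm: rational Kronecker} gives $\UClim_n\prod_{i=1}^kT^{q_i(n)}\ind_A=\UClim_n\prod_{i=1}^kT^{q_i(n)}g_\infty$ in $L^2(\mu)$; since $\mu(A\cap T^{-q_1(n)}A\cap\cdots\cap T^{-q_k(n)}A)=\langle\ind_A,\prod_iT^{q_i(n)}\ind_A\rangle$ and the $\UClim$ is an $L^2$-limit, the $\UClim$ of the left-hand side equals $\UClim_n\langle\ind_A,\prod_iT^{q_i(n)}g_\infty\rangle$. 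Replacing $g_\infty$ by $g_m$ changes this by at most $k\|g_\infty-g_m\|_{L^2(\mu)}$ (telescoping; all factors have sup-norm $\le 1$), and $\prod_iT^{q_i(n)}g_m=g_m^k$ with $\langle\ind_A,g_m^k\rangle=\int g_m^{k+1}\,d\mu\ge\mu(A)^{k+1}$ (the first equality because $g_m^k$ is $\mathcal{I}_{\Lambda_m}$-measurable, the inequality by Jensen applied to $t\mapsto t^{k+1}$). Assembling these estimates yields $\UClim_n\mu\big(A\cap T^{-q_1(n)}A\cap\cdots\cap T^{-q_k(n)}A\big)>\mu(A)^{k+1}-\eps$.

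For a general measure-preserving $\O_K$-system I would run this argument inside the ergodic decomposition $\mu=\int_\Omega\mu_\omega\,d\mathbb{P}(\omega)$, forming the analogues $g_m^\omega$ and $g_\infty^\omega$ of $g_m$ and $g_\infty$ relative to each ergodic component. The one genuinely new point is that $\xi$ and $D$ must be chosen uniformly in $\omega$; since the cofinal family $(\Lambda_m)$ does not depend on $\omega$, one first selects $m$ so large that $\int_\Omega\|g_\infty^\omega-g_m^\omega\|_{L^2(\mu_\omega)}^2\,d\mathbb{P}(\omega)<(\eps/2k)^2$ (possible by dominated convergence, the integrand tending to $0$ pointwise in $\omega$ and bounded by $4$), and only then extracts $\xi,D$ from Lemma~\ref{lem: intersective} with $\Lambda_m$. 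Applying the ergodic-case estimate in each component and integrating over $\Omega$ — using dominated convergence to interchange $\UClim$ with $\int_\Omega$, Cauchy--Schwarz to bound the error term by $k\big(\int_\Omega\|g_\infty^\omega-g_m^\omega\|_{L^2(\mu_\omega)}^2\,d\mathbb{P}\big)^{1/2}$, and $\int_\Omega\mu_\omega(A)^{k+1}\,d\mathbb{P}\ge\big(\int_\Omega\mu_\omega(A)\,d\mathbb{P}\big)^{k+1}=\mu(A)^{k+1}$ (Jensen) — delivers the theorem.

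I expect the main obstacle to be the non-ergodic bookkeeping rather than any fundamentally new idea: ensuring that $\xi,D$ are uniform across ergodic components (handled by fixing $\Lambda_m$, and hence $\xi$ and $D$, before decomposing) and justifying the interchange of $\UClim$ with the integral over $\Omega$. The substantive inputs — that joint intersectivity lets one force the polynomial values into any prescribed finite-index subgroup (Lemma~\ref{lem: intersective}), while independence keeps $\calK_{rat}$ characteristic (Theorem~\ref{thm: rational Kronecker}) — are already available.
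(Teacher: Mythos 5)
Your proposal is correct and takes essentially the same approach as the paper: use joint intersectivity via Lemma \ref{lem: intersective} to push the reparametrized polynomials $p_i(\xi+Dn)$ into a prescribed finite-index subgroup, apply Theorem \ref{thm: rational Kronecker} to replace $\ind_A$ by $\E{\ind_A}{\calK_{rat}}$, approximate that projection by a projection invariant under the subaction of the subgroup so the product collapses and Jensen gives $\mu(A)^{k+1}$, and handle the non-ergodic case by fixing $\xi$ and $D$ (which depend only on the subgroup, not on the measure) before passing to the ergodic decomposition. The only differences are bookkeeping-level: you apply Theorem \ref{thm: rational Kronecker} directly to $T$ with the polynomials $p_i(\xi+Dn)$ instead of to the ergodic components of $(T^{rn})_{n\in\O_K}$ as in the paper, and you use an $L^2$ martingale approximation with an integrated error over the ergodic decomposition where the paper uses an $L^1$ approximation by the periodic factor $\calK_r$ and discards a small exceptional set $\Omega\setminus\Omega_r$.
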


\noindent Assuming Theorem \ref{thm: large limit}, the set
\begin{equation*}
	\left\{ n \in \O_K : \mu \left( A \cap T^{-p_1(\xi + Dn)}A \cap \cdots \cap T^{-p_k(\xi + Dn)}A \right)
	 > \mu(A)^{k+1} - \eps \right\}
\end{equation*}
is syndetic by Proposition \ref{prop: Folner syndetic}.
Since $\xi + D\O_K$ is syndetic in $\O_K$, Theorem \ref{thm: IndLargeInt} follows immediately.

\begin{proof}[Proof of Theorem \ref{thm: large limit}]
	First assume that $T$ is ergodic.
	The rational Kronecker factor $\calK_{rat}$ is the inverse limit of the periodic factors
	$\calK_r := \left\{ f \in L^2(\mu) : T^{rn}f = f~\text{for all}~n \in \O_K \right\}$, $r \in \O_K$.
	Note that $\calK_r \subseteq \calK_s$ if $r \mid s$ in $\O_K$.
	Thus, we may approximate $\calK_{rat}$ by $\calK_r$ for some $r \in \O_K$.
	To be precise, there exists $r \in \O_K$ such that
	\begin{equation*}
		\norm{1}{\E{\ind_A}{\calK_{rat}} - \E{\ind_A}{\calK_r}} < \frac{\eps}{k+1}.
	\end{equation*}
	
	Now, the system $\left( X, \B, \mu, (T^{rn})_{n \in \O_K} \right)$ has finitely many ergodic components.
	In fact, for some $m \le [\O_K : r\O_K]$, $X$ can be partitioned into $m$ disjoint sets
	$X_1, \dots, X_m \in \B$ with $\mu(X_j) = \frac{1}{m}$
	such that $\mu \left( X_j \triangle T^{-rn}X_j \right) = 0$ and $\left( X, \B, \mu_j, (T^{rn})_{n \in \O_K} \right)$
	is ergodic, where $\mu_j(B) = m \cdot \mu(B \cap X_j)$.
	
	By Lemma \ref{lem: intersective}, let $\xi \in \O_K$ and $D \in \O_K \setminus \{0\}$
	such that $p_i(\xi + D\O_K) \subseteq r\O_K$ for $i = 1, \dots, k$.
	For each $i = 1, \dots, k$, let $q_i(x) \in K[x]$ be the $\O_K$-valued polynomial $q_i(x) := r^{-1} p_i(\xi + Dx)$.
	Then by Theorem \ref{thm: rational Kronecker},	
	\begin{align*}
		\UClim_{n \in \O_K}&{\mu \left( A \cap T^{-p_1(\xi + Dn)}A \cap \cdots \cap T^{-p_k(\xi + Dn)}A \right)} \\
		 & = \UClim_{n \in \O_K}{\frac{1}{m} \sum_{j=1}^m{
		 \mu_j \left( A \cap (T^r)^{-q_1(n)}A \cap \cdots \cap (T^r)^{-q_k(n)}A \right)}} \\
		 & = \UClim_{n \in \O_K}{\frac{1}{m} \sum_{j=1}^m{
		 \int_X{\E{\ind_A}{\calK_{rat}} \prod_{i=1}^k{(T^r)^{q_i(n)}\E{\ind_A}{\calK_{rat}}}~d\mu_j}}} \\
		 & = \UClim_{n \in \O_K}{
		 \int_X{\E{\ind_A}{\calK_{rat}} \prod_{i=1}^k{T^{rq_i(n)}\E{\ind_A}{\calK_{rat}}}~d\mu}} \\
		 & > \UClim_{n \in \O_K}{
		 \int_X{\E{\ind_A}{\calK_r} \prod_{i=1}^k{T^{rq_i(n)}\E{\ind_A}{\calK_r}}~d\mu}} - \eps \\
		 & = \int_X{ \left( \E{\ind_A}{\calK_r} \right)^{k+1}~d\mu} - \eps \\
		 & \ge \left( \int_X{\E{\ind_A}{\calK_r}~d\mu} \right)^{k+1} - \eps \\
		 & = \mu(A)^{k+1} - \eps.
	\end{align*}
	
	\bigskip
	
	Now suppose $T$ is not ergodic.
	Let $\mu = \int_{\Omega}{\mu_{\omega}~d\rho(\omega)}$ be the ergodic decomposition.
	For each $\omega \in \Omega$, let $r_{\omega} \in \O_K$ be minimal (with respect to divisibility) so that
	\begin{equation*}
		\norm{L^1(\mu_{\omega})}{\E{\ind_A}{\calK_{rat}(\mu_{\omega})}
		 - \E{\ind_A}{\calK_{r_\omega}}(\mu_{\omega})} < \frac{\eps}{2(k+1)}.
	\end{equation*}
	The function $\omega \mapsto r_{\omega}$ is measurable,
	so we may define $\Omega_r := \{\omega \in \Omega : r_{\omega} \mid r\}$
	and let $\mu_r := \int_{\Omega_r}{\mu_{\omega}~d\rho(\omega)}$.
	Then let $r \in \O_K$ so that $\rho(\Omega \setminus \Omega_r) < \frac{\eps}{2}$.
	
	Note that in the proof of the ergodic case, the numbers $\xi$ and $D$ depend only on $r$ and not on $\mu$.
	Thus, for every $\omega \in \Omega_r$, we have
	\begin{equation*}
		\UClim_{n \in \O_K}{\mu_{\omega} \left( A \cap T^{-p_1(\xi + Dn)}A \cap \cdots \cap T^{-p_k(\xi + Dn)}A \right)}
		 > \mu_{\omega}(A)^{k+1} - \frac{\eps}{2}.
	\end{equation*}
	Now we integrate over $\Omega$:
	\begin{align*}
		\UClim_{n \in \O_K}&{\mu \left( A \cap T^{-p_1(\xi + Dn)}A \cap \cdots \cap T^{-p_k(\xi + Dn)}A \right)} \\
		 & \ge \UClim_{n \in \O_K}{\int_{\Omega_r}{
		 \mu_{\omega} \left( A \cap T^{-p_1(\xi + Dn)}A \cap \cdots \cap T^{-p_k(\xi + Dn)}A \right)~d\rho(\omega)}} \\
		 & > \int_{\Omega_r}{\left( \mu_{\omega}(A)^{k+1} - \frac{\eps}{2} \right)~d\rho(\omega)} \\
		 & \ge \int_{\Omega_r}{\mu_{\omega}(A)^{k+1}~d\rho(\omega)} - \frac{\eps}{2} \\
		 & > \int_{\Omega}{\mu_{\omega}(A)^{k+1}~d\rho(\omega)} - \eps \\
		 & \ge \left( \int_{\Omega}{\mu_{\omega}(A)~d\rho(\omega)} \right)^{k+1} - \eps \\
		 & = \mu(A)^{k+1} - \eps.
	\end{align*}
\end{proof}


\subsection{Proof of Theorem \ref{thm: MultLargeInt}} \label{sec: MultLargeInt}

Now we turn to proving Theorem \ref{thm: MultLargeInt}, restated below:

\MultLargeInt*

First we will prove the special case when $T$ is totally ergodic.
In this case, by applying Theorem \ref{thm: nilfactor}, we can compute limits explicitly:

\begin{thm} \label{thm: limit formulae}
	Let $K$ be a number field with ring of integers $\O_K$.
	Let $\X = (X, \B, \mu, T)$ be a totally ergodic $\O_K$-system.
	Let $Z$ be a compact abelian group and $\alpha : (\O_K, +) \to Z$ a homomorphism
	such that the Kronecker factor of $\X$ is isomorphic to the system $\mathbf{Z} = (Z, \B_Z, \mu_Z, S)$,
	where $\B_Z$ is the Borel $\sigma$-algebra, $\mu_Z$ is the Haar probability measure,
	and $S$ acts by rotations $S^nz = z + \alpha_n$ for $n \in \O_K$.
	\begin{enumerate}[1.]
		\item	Let $r, s \in \O_K$ distinct and nonzero,
			$p(x) \in K[x]$ an $\O_K$-valued polynomial, and $f_1, f_2 \in L^{\infty}(\mu)$.
			Then
			\begin{equation} \label{eq: limit formula 2}
				\UClim_{n \in \O_K}{T^{rp(n)}f_1(x) \cdot T^{sp(n)}f_2(x)}
				 = \int_{Z^2}{\tilde{f_1}(z+u) \tilde{f_2}(z+v)~d\nu(u,v)}
			\end{equation}
			in $L^2(\mu)$, where $x \mapsto z$ is the factor map, $\tilde{f} = \E{f}{Z}$,
			and $\nu$ is the Haar measure on the subgroup
			$\overline{\left\{ \left( \alpha_{rn}, \alpha_{sn} \right) : n \in \O_K \right\}} \subseteq Z^2$.
		\item	Let $a_1, a_2 \in \Z \setminus \{0\}$ be coprime, and put $a_3 = a_1 + a_2$.
			There is a compact abelian group $H$ such that the nilfactor $(X, \calZ_2, \mu, T)$
			is isomorphic to a skew-product system $\mathbf{Z} \times_{\sigma} H$,
			and there exists a function $\psi : Z^2 \to H$ such that $\psi(0,\cdot) = 0$ and
			$t \mapsto \psi(t, \cdot)$ is continuous as a function from $Z$ to the space $\mathcal{M}(Z,H)$
			of measurable functions $Z \to H$ in the topology of convergence in measure,
			and integers $b_1, b_2, b_3 \in \Z$ such that:
			for any $\O_K$-valued polynomial $p(x) \in K[x]$ and any $f_1, f_2, f_3 \in L^{\infty}(\mu)$, we have
			\begin{equation} \label{eq: limit formula 3}
				\UClim_{n \in \O_K}{\prod_{i=1}^3{T^{a_ip(n)}f_i(x)}}
				 = \int_{Z \times H^2}{
				 \prod_{i=1}^3{\tilde{f}_i(z + a_it, h + a_iu + a_i^2v + b_i \psi(t,z))}~dt~du~dv}
			\end{equation}
			in $L^2(\mu)$, where $\tilde{f} = \E{f}{\calZ_2}$.
	\end{enumerate}
\end{thm}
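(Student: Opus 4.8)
The plan is to derive both formulas by the same two reductions followed by an explicit computation on the relevant Host--Kra factor. Since $T$ is totally ergodic, Theorem \ref{thm: poly linear tot erg} lets us replace $p(n)$ by $n$ throughout, so it suffices to compute the linear averages $\UClim_{n \in \O_K}{\prod_i T^{l_i n}f_i}$ with $l_i \in \{r,s\}$ for item 1 and $l_i \in \{a_1,a_2,a_3\}$ for item 2. By Theorem \ref{thm: nilfactor} these averages are controlled by $\calZ_1$ (the Kronecker factor) in item 1 and by $\calZ_2$ in item 2, so we may replace each $f_i$ by $\E{f_i}{\calZ_1}$, resp.\ $\E{f_i}{\calZ_2}$, and then work with a concrete algebraic model of that factor. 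For item 2 we moreover use that $a_1, a_2 \in \Z$ are coprime; the degenerate cases $a_1 = a_2$ or $a_1 + a_2 = 0$ collapse to shorter patterns covered by item 1.

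For item 1, model the Kronecker factor as a rotation $(T^n F)(z) = F(z + \alpha_n)$ on the compact abelian group $Z$, where $n \mapsto \alpha_n$ is a homomorphism $\O_K \to Z$ with dense image. Then $(T^{rn}\tilde f_1)(z)\,(T^{sn}\tilde f_2)(z) = \tilde f_1(z + \alpha_{rn})\,\tilde f_2(z + \alpha_{sn})$, and $n \mapsto (\alpha_{rn},\alpha_{sn})$ is a homomorphism $\O_K \to Z^2$. For any character $\chi$ of $Z^2$, the map $n \mapsto \chi(\alpha_{rn},\alpha_{sn})$ is a character of $\O_K$, whose uniform Ces\`{a}ro average equals $1$ if $\chi$ annihilates the closed subgroup $N := \overline{\{(\alpha_{rn},\alpha_{sn}) : n \in \O_K\}}$ and $0$ otherwise; hence $\UClim_{n \in \O_K}{\delta_{(\alpha_{rn},\alpha_{sn})}} = \nu$, the Haar measure on $N$, in the weak-$*$ topology. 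Expanding $\tilde f_1$ and $\tilde f_2$ into characters of $Z$ (and approximating in $L^2$) and applying this well-distribution statement gives \eqref{eq: limit formula 2}.

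For item 2, invoke the structure of the second Host--Kra factor (\cite{hk}; \cite{griesmer} in the $\O_K$-setting): $(X,\calZ_2,\mu,T)$ is an abelian group extension $\mathbf{Z} \times_\sigma H$ of its Kronecker factor, with $H$ a compact abelian group and $\sigma : \O_K \times Z \to H$ a cocycle. Total ergodicity makes the underlying nilsystem connected (Proposition \ref{prop: tot erg conn}), which (after a standard approximation reducing to a single $2$-step nilsystem) lets us take $H$ to be a finite-dimensional torus and, crucially, forces the orbit closures below to be connected. Because each $a_i \in \Z$, the cocycle identity $\sigma_{m+m'}(z) = \sigma_m(z) + \sigma_{m'}(z + \alpha_m)$ iterates to $\sigma_{a_i n}(z) = \sum_{j=0}^{a_i-1}\sigma_n(z + j\alpha_n)$ (with the obvious modification when $a_i < 0$). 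Now use the Conze--Lesigne equation for $\sigma$ forced by $\calZ_2$ being a Host--Kra factor: for each $t \in Z$ there are a homomorphism $c_t : \O_K \to H$ and a measurable transfer function $F_t : Z \to H$, depending on $t$ measurably and --- after using total ergodicity to fix representatives --- continuously in the topology of convergence in measure, with $\sigma_n(z+t) - \sigma_n(z) = c_t(n) + F_t(z + \alpha_n) - F_t(z)$ and $t \mapsto c_t$ additive. Substituting $t = j\alpha_n$ and summing over $j$, the $F$-terms telescope, the $\sigma_n(z)$ term contributes $a_i \sigma_n(z)$, and the bi-additive term $c_t(n)$ contributes $\binom{a_i}{2}\,c_{\alpha_n}(n)$; writing $\binom{a_i}{2} = \tfrac{a_i^2 - a_i}{2}$ and pushing the failure of divisibility by $2$ in $H$ into an integer $b_i$ yields the fiber displacement $h + a_i u + a_i^2 v + b_i\psi(t,z)$, with $\psi$ assembled from $F_t$ and the coboundary and normalized so that $\psi(0,\cdot)=0$ and $t \mapsto \psi(t,\cdot)$ is continuous. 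Finally, one identifies the closed subgroup of $Z \times H^2$ in which the resulting polynomial-type sequence $n \mapsto (\alpha_n, \sigma_n(\cdot), \dots)$ equidistributes; connectedness and total ergodicity, with coprimality of $a_1,a_2$ governing the interaction of the three fibers, show this subgroup is all of $Z \times H^2$ with Haar measure $dt\,du\,dv$, which is \eqref{eq: limit formula 3}.

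The two reductions and the harmonic-analysis bookkeeping of item 1 are routine. The substantive work --- and the main obstacle --- is item 2: producing the skew-product model $\mathbf{Z} \times_\sigma H$ with a cocycle concrete enough to compute with, carrying out the Conze--Lesigne manipulation so that the integers $b_i$ and the function $\psi$ emerge in exactly the stated form with the asserted continuity in $t$, and proving the non-degeneracy of the limiting distribution on $Z \times H^2$ (this last step is where the connectedness coming from total ergodicity is essential). This follows the scheme of Frantzikinakis \cite{fra}; the new point is to run it over $\O_K$ rather than $\Z$, which means replacing one-dimensional Ces\`{a}ro averages by uniform Ces\`{a}ro (F{\o}lner) averages throughout and re-checking the attendant equidistribution statements (which ultimately rest on the multivariable Weyl theorem, Lemma \ref{lem: Weyl}) in that generality.
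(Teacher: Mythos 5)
Your opening reduction---by the ``Moreover'' clause of Theorem \ref{thm: nilfactor} (equivalently Theorem \ref{thm: poly linear tot erg}), total ergodicity makes the average independent of the polynomial, so one may take $p(n)=n$---is exactly the paper's proof. After that single step the paper invokes no further machinery: the linear identities \eqref{eq: limit formula 2} and \eqref{eq: limit formula 3} with $p(n)=n$ are cited as special cases of Theorems 3.1 and 7.1 of \cite{abb}, the companion paper which already treats linear multiple ergodic averages over arbitrary countable abelian groups (hence over $\O_K$). You instead set out to reprove the linear case rather than cite it.

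For item 1 your character/Fourier argument on the Kronecker rotation is correct, and is essentially the content of \cite[Theorem~3.1]{abb}. For item 2 the sketch does not close. The Conze--Lesigne route you outline is the right scheme, but the two places you pass over are precisely the substance: (a) producing the specific fiber displacement $h + a_i u + a_i^2 v + b_i\psi(t,z)$ with $\psi$ normalized so that $\psi(0,\cdot)=0$ and $t\mapsto\psi(t,\cdot)$ continuous in measure requires a genuine normalization of the Conze--Lesigne data over the whole $Z$-parameter, not merely ``assembling'' $\psi$ from $F_t$ and a coboundary; and (b) your conclusion that the limiting distribution on $Z \times H^2$ is Haar is asserted in one sentence as a consequence of connectedness, total ergodicity, and coprimality of $a_1,a_2$, but this equidistribution is the main theorem being proved (it is \cite[Theorem~7.1]{abb}, following \cite{bhk} and \cite{fra}), not an afterthought, and nothing in your sketch establishes it. After your correct reduction, citing \cite[Theorems 3.1 and 7.1]{abb} for the linear case, as the paper does, completes the argument.
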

\begin{proof}
	Since the system $\X$ is totally ergodic, the limits
	\begin{equation*}
		\UClim_{n \in \O_K}{T^{rp(n)}f_1 \cdot T^{sp(n)}f_2}
		\quad \text{and} \quad
		\UClim_{n \in \O_K}{\prod_{i=1}^3{T^{a_ip(n)}f_i}}
	\end{equation*}
	are independent of the choice of the polynomial $p$ by Theorem \ref{thm: nilfactor}.
	Thus, we may assume without loss of generality that $p(n) = n$.
	The identity \eqref{eq: limit formula 2} is then a special case of \cite[Theorem 3.1]{abb},
	and \eqref{eq: limit formula 3} is a special case of \cite[Theorem 7.1]{abb}.
\end{proof}

\begin{cor} \label{cor: twisted limit formulae}
	Let $K$ be a number field with ring of integers $\O_K$.
	Let $\X = (X, \B, \mu, T)$ be a totally ergodic $\O_K$-system with Kronecker factor $(\mathbf{Z}, \alpha)$.
	
	\begin{enumerate}[1.]
		\item	Let $r, s \in \O_K$ distinct and nonzero,
			$p(x) \in K[x]$ an $\O_K$-valued polynomial, and $f_0, f_1, f_2 \in L^{\infty}(\mu)$.
			Then for any continuous function $\eta : Z^2 \to \C$, we have
			\begin{equation} \label{eq: twisted limit formula 2} \begin{split}
				\UClim_{n \in \O_K}&{\eta \left( \alpha_{rp(n)}, \alpha_{sp(n)} \right)
				 \int_X{f_0 \cdot T^{rp(n)}f_1 \cdot T^{sp(n)}f_2~d\mu}} \\
				 & = \int_{Z^3}{\eta(u,v) \tilde{f_0}(z) \tilde{f_1}(z+u) \tilde{f_2}(z+v)~dz~d\nu(u,v)}
			\end{split} \end{equation}
			in $L^2(\mu)$, where $x \mapsto z$ is the factor map, $\tilde{f} = \E{f}{Z}$,
			and $\nu$ is the Haar measure on the subgroup
			$\overline{\left\{ \left( \alpha_{rn}, \alpha_{sn} \right) : n \in \O_K \right\}} \subseteq Z^2$.
		\item	Let $a_1, a_2 \in \Z \setminus \{0\}$ be coprime, and put $a_0 = 0, a_3 = a_1 + a_2$.
			Let $H$, $\psi$, and $b_i$ be as in Theorem \ref{thm: limit formulae}(2).
			Let $p(x) \in K[x]$ be an $\O_K$-valued polynomial, and let $f_0, f_1, f_2, f_3 \in L^{\infty}(\mu)$.
			Then for any continuous function $\eta : Z \to \C$,
			\begin{equation} \label{eq: twisted limit formula 3} \begin{split}
				\UClim_{n \in \O_K}&{\eta \left( \alpha_{p(n)} \right)
				 \int_X{\prod_{i=0}^3{T^{a_ip(n)}f_i}~d\mu}} \\
				 & = \int_{Z^2 \times H^3}{\eta(t)
				 \prod_{i=0}^3{\tilde{f}_i(z + a_it, h + a_iu + a_i^2v + b_i \psi(t,z))}~dz~dt~dh~du~dv}
			\end{split} \end{equation}
			in $L^2(\mu)$, where $\tilde{f} = \E{f}{\calZ_2}$.
	\end{enumerate}
\end{cor}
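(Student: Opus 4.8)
The plan is to deduce both identities from the untwisted limit formulae of Theorem~\ref{thm: limit formulae} by first reducing to the case where $\eta$ is a character and then absorbing the twisting factor into the functions $f_i$. \emph{Reduction to characters.} Both sides of \eqref{eq: twisted limit formula 2} and of \eqref{eq: twisted limit formula 3} are scalars depending linearly on $\eta$, and, since each $f_i \in L^\infty(\mu)$, the quantity under the UC-limit is bounded by $C\,\norm{\infty}{\eta}$ with $C$ depending only on the $L^\infty$-norms of the $f_i$. By Stone--Weierstrass, finite linear combinations of characters are uniformly dense in $C(Z^2)$, resp.\ $C(Z)$, and every character of $Z^2$ factors as $\eta(u,v) = \chi_1(u)\chi_2(v)$ with $\chi_1,\chi_2 \in \hat{Z}$. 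Hence it suffices to prove \eqref{eq: twisted limit formula 2} for $\eta = \chi_1 \otimes \chi_2$ and \eqref{eq: twisted limit formula 3} for $\eta = \chi \in \hat{Z}$; the general case then follows by linearity and uniform approximation.

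\emph{The three-term formula.} For $\eta = \chi_1 \otimes \chi_2$, let $\phi_i$ denote the function $x \mapsto \chi_i(z)$ on $X$, which lies in $L^\infty(\mu)$ and is an eigenfunction of the Kronecker factor, $T^m\phi_i = \chi_i(\alpha_m)\phi_i$. Since $\chi_1(\alpha_{rp(n)})\chi_2(\alpha_{sp(n)})$ is a constant (independent of $x$), it equals $\overline{\phi_1\phi_2}\cdot T^{rp(n)}\phi_1 \cdot T^{sp(n)}\phi_2$ $\mu$-a.e., and therefore
\begin{align*}
 &\eta\!\left( \alpha_{rp(n)}, \alpha_{sp(n)} \right) \int_X f_0 \cdot T^{rp(n)}f_1 \cdot T^{sp(n)}f_2 \, d\mu \\
 &\qquad = \int_X \left( \overline{\phi_1\phi_2}\, f_0 \right) \cdot T^{rp(n)}(\phi_1 f_1) \cdot T^{sp(n)}(\phi_2 f_2) \, d\mu .
\end{align*}
I would then apply Theorem~\ref{thm: limit formulae}(1) to $\phi_1 f_1$ and $\phi_2 f_2$ (multiplying the asserted $L^2$-convergence by $\overline{\phi_1\phi_2}\, f_0$ and integrating over $X$, so the $i=0$ function contributes $\tilde f_0$ against Haar measure on $Z$); the same subgroup and measure $\nu$ occur since they depend only on $r,s$ and $\alpha$. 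Using that conditional expectation onto $Z$ pulls out $Z$-measurable factors, so $\E{\phi_i f_i}{Z} = \chi_i\tilde f_i$, the right-hand side becomes $\int_{Z^3} \overline{\chi_1(z)\chi_2(z)}\,\tilde f_0(z)\,\chi_1(z+u)\tilde f_1(z+u)\,\chi_2(z+v)\tilde f_2(z+v)\,dz\,d\nu(u,v)$; collapsing $\overline{\chi_1(z)}\chi_1(z+u) = \chi_1(u)$ and $\overline{\chi_2(z)}\chi_2(z+v) = \chi_2(v)$ yields exactly the right-hand side of \eqref{eq: twisted limit formula 2}.

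\emph{The four-term formula.} The argument is the same, with one extra step: here $\eta = \chi$ is a function of the single variable $\alpha_{p(n)}$, while the product only sees $T^{a_ip(n)}$. Since $a_1, a_2$ are coprime, fix $j_1,j_2 \in \Z$ with $j_1a_1 + j_2a_2 = 1$, so that $\chi(\alpha_{p(n)}) = \chi(\alpha_{a_1p(n)})^{j_1}\chi(\alpha_{a_2p(n)})^{j_2}$. Setting $\phi_i$ to be the function $x \mapsto \chi^{j_i}(z)$ on $X$, one checks that $\chi(\alpha_{p(n)}) = \overline{\chi^{j_1+j_2}(z)}\cdot T^{a_1p(n)}\phi_1 \cdot T^{a_2p(n)}\phi_2$ $\mu$-a.e., so that the left-hand side of \eqref{eq: twisted limit formula 3} for $\eta = \chi$ equals $\int_X \big(\overline{\chi^{j_1+j_2}(z)}\, f_0\big)\cdot T^{a_1p(n)}(\phi_1 f_1)\cdot T^{a_2p(n)}(\phi_2 f_2)\cdot T^{a_3p(n)}f_3\,d\mu$. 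Applying Theorem~\ref{thm: limit formulae}(2) to $\phi_1 f_1, \phi_2 f_2, f_3$ and integrating against $\overline{\chi^{j_1+j_2}(z)}\, f_0$ as before (using $\E{(\,x\mapsto\chi^j(z)\,)f}{\calZ_2} = \chi^j(z)\tilde f$, where $z$ is the $Z$-coordinate of the point of $\calZ_2 \cong Z \times_\sigma H$, and that $a_0 = b_0 = 0$), the character prefactors on the right-hand side combine into $\overline{\chi^{j_1+j_2}(z)}\,\chi^{j_1}(z+a_1t)\,\chi^{j_2}(z+a_2t) = \chi(t)^{\,j_1a_1 + j_2a_2} = \chi(t)$, which is precisely the factor $\eta(t)$ in \eqref{eq: twisted limit formula 3}.

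\emph{Main obstacle.} There is no genuine obstacle: once Theorem~\ref{thm: limit formulae} is in hand, both parts are bookkeeping. The one point requiring care is the coprimality manoeuvre in the four-term case together with the verification that, after passing through the limit formula, the introduced eigenfunction corrections recombine to exactly $\chi(t)$ and leave no residual dependence on $z$, $h$, $u$, or $v$; the remaining ingredients are Stone--Weierstrass and the pull-out property of conditional expectation.
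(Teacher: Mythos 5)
Your proposal is correct and follows essentially the same route as the paper: reduce to characters via Stone--Weierstrass, then absorb the twisting factor $\eta(\alpha_{\cdot p(n)})$ into the functions by multiplying each $f_i$ by a suitable power of the character (the paper writes $g_i(x) = \lambda(c_iz)f_i(x)$ with $c_1a_1 + c_2a_2 = 1$, $c_3 = 0$, $c_0 = -(c_1+c_2)$, which is exactly your $\phi_i$ with $c_i = j_i$), and apply Theorem~\ref{thm: limit formulae}. Your version is a bit more explicit about why the twist disappears (the eigenfunction relation $T^m\phi_i = \chi_i(\alpha_m)\phi_i$ and the final collapse of character prefactors), but this is the same argument.
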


\begin{proof}
	(1) Since $Z^2$ is a compact abelian group, we may assume by the Stone--Weierstrass theorem
	that $\eta(u,v) = \lambda_1(u)\lambda_2(v)$ for $u, v \in Z$, where $\lambda_1, \lambda_2 \in \hat{Z}$.
	Defining
	\begin{align*}
		g_0(x) & = \overline{\lambda_1(z)} \overline{\lambda_2(z)} f_0(x) \\
		\intertext{and}
		g_i(x) & = \lambda_i(z) f_i(x)
	\end{align*}
	for $i = 1, 2$, the formula \eqref{eq: twisted limit formula 2} then follows
	by applying \eqref{eq: limit formula 2} to the functions $g_1, g_2$ and integrating against $g_0$. \\
	
	(2) Again, without loss of generality, we may assume $\eta = \lambda \in \hat{Z}$.
	Since $\gcd(a_1, a_2) = 1$, there are integers $c_1, c_2 \in \Z$ so that $c_1a_1 + c_2a_2 = 1$.
	Let $c_3 = 0$ and $c_0 = -(c_1+c_2)$ so that
	\begin{equation*}
		\sum_{i=0}^3{c_i} = 0 \quad \text{and} \quad
		\sum_{i=0}^3{c_ia_i} = 1.
	\end{equation*}
	Then define $g_i(x) := \lambda(c_iz) f_i(x)$ for $i = 0, 1, 2, 3$.
	Applying the formula \eqref{eq: limit formula 3} for the functions $g_1, g_2, g_3$ and integrating against $g_0$
	produces the desired formula \eqref{eq: twisted limit formula 3}.
\end{proof}

\begin{prop} \label{prop: MultLargeInt tot erg}
	Let $K$ be a number field with ring of integers $\O_K$.
	Let $(X, \B, \mu, T)$ be a totally ergodic $\O_K$-system, $r, s \in \O_K$ distinct and nonzero,
	and $p(x) \in K[x]$ an $\O_K$-valued polynomial.
	Then for any $A \in \B$ with $\mu(A) > 0$ and any $\eps > 0$, the set
	\begin{equation*}
		\left\{ n \in \O_K :
		 \mu \left( A \cap T^{-rp(n)}A \cap T^{-sp(n)}A \right) > \mu(A)^3 - \eps \right\}
	\end{equation*}
	is syndetic.
	
	Moreover, if $\frac{s}{r} \in \Q$, then
	\begin{equation*}
		\left\{ n \in \O_K :
		 \mu \left( A \cap T^{-rp(n)}A \cap T^{-sp(n)}A \cap T^{-(r+s)p(n)}A \right) > \mu(A)^4 - \eps \right\}
	\end{equation*}
	is syndetic.
\end{prop}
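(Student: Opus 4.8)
The plan is to deduce both assertions from the twisted limit formulas in Corollary~\ref{cor: twisted limit formulae} by a uniform two‑step template. Write $\delta := \mu(A)$ and let $(Z,\alpha)$ be the Kronecker factor of the system, so $\alpha \colon \O_K \to Z$ is a homomorphism with $\alpha_0 = 0$. For the configuration $\{c_1(n),\dots,c_j(n)\}$ at hand ($j \in \{3,4\}$) put $F(n) := \mu\bigl( A \cap \bigcap_i T^{-c_i(n)}A \bigr)$. \emph{Step 1} is to produce, for each $\eps > 0$, a continuous non‑negative function $\eta$ on an appropriate compact abelian group together with an ``address map'' $\theta$ --- with $\theta(n) = (\alpha_{rp(n)},\alpha_{sp(n)})$ in the first case and $\theta(n) = \alpha_{q(n)}$ in the second, for a suitable polynomial $q$ --- such that
\[
	\UClim_{n \in \O_K} \eta(\theta(n))\, F(n) \;>\; \bigl( \delta^j - \tfrac{\eps}{2} \bigr) \UClim_{n \in \O_K} \eta(\theta(n)) \;>\; 0 .
\]
\emph{Step 2} then finishes: if $\{ n : F(n) > \delta^j - \eps \}$ were not syndetic, then by Proposition~\ref{prop: Folner syndetic} there would be a F{\o}lner sequence $(\Phi_N)$ in $\O_K$ with $F(n) \le \delta^j - \eps$ for all $n \in \bigcup_N \Phi_N$; averaging $\eta(\theta(n)) F(n)$ over $\Phi_N$ and letting $N \to \infty$ (the uniform Ces\`{a}ro limits of Step 1 exist along every F{\o}lner sequence) would give $(\delta^j - \tfrac{\eps}{2}) \UClim_n \eta(\theta(n)) \le (\delta^j - \eps) \UClim_n \eta(\theta(n))$, contradicting Step 1. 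So it remains to carry out Step 1.

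\emph{The triple} ($j = 3$). Take $\theta(n) = (\alpha_{rp(n)}, \alpha_{sp(n)}) \in Z^2$, put $\tilde f := \E{\ind_A}{Z}$ (a $[0,1]$‑valued function with $\int_Z \tilde f\, dz = \delta$), and let $\nu$ be the Haar measure on $\Lambda := \overline{\{ (\alpha_{rn}, \alpha_{sn}) : n \in \O_K \}} \subseteq Z^2$. The map $(u,v) \mapsto \int_Z \tilde f(z) \tilde f(z+u) \tilde f(z+v)\, dz$ is continuous on $Z^2$ (continuity of translation in $L^3(Z)$ and H\"older), and at $(0,0)$ it equals $\int_Z \tilde f^3\, dz \ge \bigl( \int_Z \tilde f\, dz \bigr)^3 = \delta^3$ by Jensen. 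So there is a continuous $\eta \colon Z^2 \to [0,1]$ with $\eta(0,0) > 0$ supported where this triple correlation exceeds $\delta^3 - \tfrac{\eps}{2}$; since $(0,0) \in \Lambda$ and $\nu$ has full support, $\int_{Z^2} \eta\, d\nu > 0$. Applying Corollary~\ref{cor: twisted limit formulae}(1) with $f_0 = f_1 = f_2 = \ind_A$, and separately with $f_0 = f_1 = f_2 = 1$, yields
\begin{align*}
	\UClim_n \eta(\alpha_{rp(n)}, \alpha_{sp(n)})\, F(n)
	 &= \int_{Z^3} \eta(u,v)\, \tilde f(z)\, \tilde f(z+u)\, \tilde f(z+v)\, dz\, d\nu(u,v) \\
	 &\ge \bigl( \delta^3 - \tfrac{\eps}{2} \bigr) \int_{Z^2} \eta\, d\nu
\end{align*}
and $\UClim_n \eta(\alpha_{rp(n)}, \alpha_{sp(n)}) = \int_{Z^2} \eta\, d\nu$, which is Step 1 with $j = 3$.

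\emph{The quadruple} ($j = 4$). We may assume $r + s \ne 0$ (otherwise the claim follows from the triple case, as $\delta^3 \ge \delta^4$). Write $\tfrac{s}{r} = \tfrac{a_2}{a_1}$ in lowest terms with $a_1 \ge 1$, so $a_1, a_2 \in \Z \setminus \{0\}$ are coprime, $a_3 := a_1 + a_2 \ne 0$, and $a_1 \ne a_2$ (else $r = s$). From $a_1 s = a_2 r$ and $a_1 \O_K + a_2 \O_K = \O_K$ we get $a_1 \mid r$ in $\O_K$, so $q(x) := (r/a_1) p(x)$ is a non‑constant $\O_K$‑valued polynomial with $a_1 q(n) = r p(n)$, $a_2 q(n) = s p(n)$, $a_3 q(n) = (r+s) p(n)$, whence $F(n) = \mu\bigl( A \cap \bigcap_{i=1}^3 T^{-a_i q(n)}A \bigr)$; take $\theta(n) = \alpha_{q(n)} \in Z$. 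The key point is that the right‑hand side of \eqref{eq: twisted limit formula 3} does not involve the polynomial appearing there, so, by Corollary~\ref{cor: twisted limit formulae}(2) with $f_0 = \cdots = f_3 = \ind_A$ applied first to $q$ and then to the identity polynomial, for every continuous $\eta \colon Z \to \C$ we have
\[
	\UClim_n \eta(\alpha_{q(n)})\, F(n) = \UClim_n \eta(\alpha_n)\, \mu\Bigl( A \cap \bigcap_{i=1}^3 T^{-a_i n}A \Bigr)
\]
and likewise $\UClim_n \eta(\alpha_{q(n)}) = \UClim_n \eta(\alpha_n)$. Thus Step 1 for $\{ a_1 q(n), a_2 q(n), a_3 q(n) \}$ reduces to Step 1 for the \emph{linear} configuration $\{ a_1 n, a_2 n, a_3 n \}$, and the latter is exactly what is established while proving Theorem~\ref{thm: ABB}(2) for $G = \O_K$ (the finite‑index conditions on $a_1\O_K$, $a_2\O_K$, $a_3\O_K$, $(a_2 - a_1)\O_K$ hold since $a_1, a_2, a_3, a_2 - a_1$ are all nonzero): for a suitable continuous $\eta \ge 0$ on $Z$ with $\int_Z \eta\, dz > 0$, the corresponding twisted limit exceeds $(\delta^4 - \tfrac{\eps}{2}) \int_Z \eta\, dz$. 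Unwinding \eqref{eq: limit formula 3}, this rests on evaluating the relevant $\calZ_2$‑integral at the identity of $Z$, where, since $\psi(0,\cdot) = 0$ and $a_0 = 0$, it collapses to $\int_Z \int_{H^3} \tilde f(z,h) \prod_{i=1}^3 \tilde f(z, h + a_i u + a_i^2 v)\, dh\, du\, dv\, dz$ with $\tilde f = \E{\ind_A}{\calZ_2}$; a fiberwise Parseval identity on $H$, valid because $\gcd(a_1,a_2) = 1$, bounds the inner integral below by $\bigl( \int_H \tilde f(z,\cdot)\, dh \bigr)^4$, and one more application of Jensen gives $\delta^4$.

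The main obstacle is this last positivity: the fiberwise Parseval computation requires $\gcd(a_1,a_2) = 1$ --- equivalently $\tfrac{s}{r} \in \Q$, which is why the hypothesis is imposed for length four --- and it genuinely fails, as the example of \cite{MIT} shows, when two conjugates of $\tfrac{s}{r}$ are negatives of one another. Carrying it out, together with the continuity needed to localize the weight $\eta$ near the identity of $Z$, is precisely the work done in \cite{abb} for linear configurations, which the twisted limit formulas of Corollary~\ref{cor: twisted limit formulae} let us import unchanged. With Step 1 established, Step 2 completes the proof.
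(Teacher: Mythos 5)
Your proposal is correct and follows essentially the same strategy as the paper: reduce to coprime integers for the quadruple, invoke the twisted limit formulas of Corollary~\ref{cor: twisted limit formulae}, localize the weight $\eta$ near the identity (using continuity of $t \mapsto \psi(t,\cdot)$ in measure), and establish the required integral lower bound via Jensen, then extract syndeticity via Proposition~\ref{prop: Folner syndetic}.

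A few comments on the (minor) places where you diverge from the paper's write-up. (1) You correctly dispose of the degenerate case $r+s=0$ by falling back to the triple result; the paper does not mention this case, and without it the change of variables $h=a_3x$ (or the analogous Fourier constraint) degenerates, so flagging it is a real if small improvement. (2) For the quadruple integral inequality you invoke a ``fiberwise Parseval'' bound, which is the Fourier-dual phrasing of what the paper does by an explicit chain of measure-preserving changes of variables followed by Cauchy--Schwarz; they are the same computation. Note, however, that coprimality of $a_1,a_2$ is not actually what makes that positivity step go through --- the change-of-variables argument only needs $a_1,a_2,a_3$ nonzero (and $H$ connected so that multiplication by a nonzero integer is a measure-preserving surjection). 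Coprimality is used earlier: it is a hypothesis of the limit formula~\eqref{eq: limit formula 3} inherited from \cite[Theorem 7.1]{abb}, it is used to build the coefficients $c_i$ in the proof of Corollary~\ref{cor: twisted limit formulae}(2), and it is what guarantees $q$ is $\O_K$-valued. So attributing the positivity to $\gcd(a_1,a_2)=1$ is slightly misleading, even though you arrange for coprimality to hold anyway. (3) Your derivation that $a_1 \mid r$ in $\O_K$ (and hence that $q=(r/a_1)p$ is $\O_K$-valued) is a clean alternative to the paper's observation that $a_1q$ and $a_2q$ being $\O_K$-valued plus $c_1a_1+c_2a_2=1$ forces $q$ to be $\O_K$-valued; both are fine. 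Everything else --- the explicit contradiction argument in Step~2 via a F{\o}lner sequence avoiding the set, the continuity needed to localize $\eta$, and the final pair of Jensen applications --- matches the paper.
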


\begin{rem}
	We do not assume that the polynomial $p$ is intersective in Proposition \ref{prop: MultLargeInt tot erg}.
	This is because, in the totally ergodic setting, there are no ``local obstructions'' that need to be avoided.
	In order to extend to the ergodic setting, however, we will have to restrict to intersective polynomials.
\end{rem}

\begin{proof}[Proof of Proposition \ref{prop: MultLargeInt tot erg}]
	We adapt the method from \cite{fra}. \\
	
	First we prove the double recurrence result.
	Using the formula \eqref{eq: twisted limit formula 2} with $f_i = \ind_A$
	and choosing $\eta$ supported on a small neighborhood of 0, it suffices to show
	\begin{equation*}
		\int_Z{(\E{\ind_A}{\calZ})^3~dz} \ge \mu(A)^3.
	\end{equation*}
	But this follows immediately from Jensen's inequality, so
	\begin{equation*}
		\left\{ n \in \O_K :
		 \mu \left( A \cap T^{-rp(n)}A \cap T^{-sp(n)}A \right) > \mu(A)^3 - \eps \right\}
	\end{equation*}
	is syndetic. \\
	
	Now we move to triple recurrence.
	Since $\frac{s}{r} \in \Q$, we can write $r = a_1k$ and $s = a_2k$ for some coprime $a_1, a_2 \in \Z$
	and some $k \in K$.
	Let $q(n) = kp(n)$.
	Note that $a_1q(n) = rp(n)$ and $a_2q(n) = sp(n)$ are $\O_K$-valued.
	Therefore, $q$ is itself $\O_K$-valued, since $\gcd(a_1, a_2) = 1$.
	Hence, without loss of generality, we will assume that $r$ and $s$ are coprime integers.
	
	Now put $a_0 = 0$, $a_1 = r$, $a_2 = s$, and $a_3 = r + s$.
	Applying formula \eqref{eq: twisted limit formula 3} with $f_i = \ind_A$
	and choosing the function $\eta$ to be supported on a small neighborhood of 0,
	we want to show
	\begin{equation} \label{eq: triple rec ineq}
		\int_{Z \times H^3}{\prod_{i=0}^3{\E{\ind_A}{\calZ_2}(z, h + a_iu + a_i^2v)}~dh~du~dv~dz} \ge \mu(A)^4.
	\end{equation}
	
	Fix $z \in Z$, and let $F_z : H \to [0,1]$ be the function $F_z(x) = \E{\ind_A}{\calZ_2}(z,x)$.
	Now we perform several changes of variables.
	First, take $h = a_3x$:
	\begin{align*}
		& \int_{H^3}{\prod_{i=0}^3{F_z(h + a_iu + a_i^2v)}~dh~du~dv} \\
		 & = \int_{H^3}{F_z(a_3x) F_z\left( a_3\left( x + u + a_3v \right) \right)
		 F_z(a_3x + a_1u + a_1^2v) F_z(a_3x + a_2u + a_2^2v)~du~dx~dv} \\
	\intertext{Next, $x + u + a_3v = y$:}
		 & = \int_{H^3}{F_z(a_3x) F_z(a_3y) F_z(a_2x + a_1y - a_1a_2v) F_z(a_1x + a_2y - a_1a_2v)~dv~dx~dy} \\
	\intertext{Now, $a_1(x+y) - a_1a_2v = w$:}
		 & = \int_{H^3}{F_z(a_3x) F_z(a_3y) F_z((a_2-a_1)x + w) F_z((a_2-a_1)y + w)~dx~dy~dw} \\
		 & = \int_H{\left( \int_H{F_z(a_3x) F_z \left( (a_2 - a_1)x + w \right)~dx}\right)^2~dw} \\
	\intertext{Apply Jensen's inequality:}
		 & \ge \left( \int_{H^2}{F_z(a_3x) F_z \left( (a_2 - a_1)x + w \right)~dw~dx} \right)^2 \\
	\intertext{Finally, let $w + (a_2-a_1)x = u$ and $a_3x = t$:}
		 & = \left( \int_H{F_z(t)~dt} \right)^2 \left( \int_H{F_z(u)~du} \right)^2 \\
		 & = \left( \int_H{F_z~dm_H} \right)^4.
	\end{align*}
	
	Thus, applying Jensen's inequality one more time, we have
	\begin{align*}
		\int_{Z \times H^3}&{\prod_{i=0}^3{\E{\ind_A}{\calZ_2}(z, h + a_iu + a_i^2v)}~dh~du~dv~dz} \\
		 &= \int_Z{\left( \int_{H^3}{\prod_{i=0}^3{F_z(h + a_iu + a_i^2v)}~dh~du~dv} \right)~dz} \\
		 & \ge \int_Z{ \left( \int_H{F_z~dm_H} \right)^4~dz} \\
		 & \ge \left( \int_Z{\int_H{F_z~dm_H}~dz} \right)^4 \\
		 & = \mu(A)^4.
	\end{align*}
	
	That is, the inequality \eqref{eq: triple rec ineq} holds, so the set
	\begin{equation*}
		\left\{ n \in \O_K :
		 \mu \left( A \cap T^{-rp(n)}A \cap T^{-sp(n)}A \cap T^{-(r+s)p(n)}A \right) > \mu(A)^4 - \eps \right\}
	\end{equation*}
	is syndetic.
\end{proof}

We have proved Theorem \ref{thm: MultLargeInt} in the case when $T$ is totally ergodic.
We will now extend this to the general case that $T$ is simply ergodic.
Theorem \ref{thm: nilfactor} still applies, so by a standard approximation argument,
we may assume without loss of generality that $T$ acts by niltranslations.
The Kronecker factor is then a group of the form $\Z_{a_1} \times \cdots \times \Z_{a_d} \times \T^{c}$.
As in the proof of Proposition \ref{prop: eligible}, we can therefore find $k \in \O_K$
such that the Kronecker factor of $\left( T^{kn} \right)_{n \in \O_K}$ is connected,
and hence each of the finitely many ergodic components of $\left( T^{kn} \right)_{n \in \O_K}$ is totally ergodic
by Proposition \ref{prop: tot erg conn}.
Let $X_1, \dots, X_m$ the atoms of the $\left( T^{kn} \right)_{n \in \O_K}$-invariant $\sigma$-algebra,
and let $\mu_j(B) := m \cdot \mu(B \cap X_j)$ so that $\mu$
has ergodic decomposition $\mu = \frac{1}{m} \sum_{j=1}^m{\mu_j}$ for the action $(T^{kn})_{n \in \O_K}$.

By Lemma \ref{lem: intersective}, let $\xi \in \O_K$ and $D \in \O_K \setminus \{0\}$
so that $p(\xi + D\O_K) \subseteq k\O_K$.
Let $q(x) \in K[x]$ be the $\O_K$-valued polynomial $q(n) = k^{-1}p(\xi + Dn)$ for every $n \in \O_K$.
Following the argument in the proof of Proposition \ref{prop: MultLargeInt tot erg},
we can choose a continuous function $\eta$ concentrated on a sufficiently small neighborhood of $0$ in $Z^2$
with $\int_{Z^2}{\eta~d\nu} = 1$ so that
\begin{equation*}
	\UClim_{n \in \O_K}{\eta(\alpha_{rq(n)}, \alpha_{sq(n)}) \mu_j \left( A \cap T^{-krq(n)}A \cap T^{-ksq(n)}A \right)}
	 \ge \mu_j(A)^3
\end{equation*}
for $j = 1, \dots, m$.
Summing over $j = 1, \dots, m$ and applying Jensen's inequality, we get
\begin{equation*}
	\UClim_{n \in \O_K}{\eta(\alpha_{rq(n)}, \alpha_{sq(n)}) \mu \left( A \cap T^{-krq(n)}A \cap T^{-ksq(n)}A \right)}
	 \ge \mu(A)^3
\end{equation*}
from which it follows that
\begin{equation*}
	\left\{ n \in \O_K : \mu \left( A \cap T^{-krq(n)}A \cap T^{-ksq(n)}A \right) > \mu(A)^3 - \eps \right\}
\end{equation*}
is syndetic in $\O_K$.

A similar argument with the ergodic decomposition can be used to show that, if $\frac{s}{r} \in \Q$, then
\begin{equation*}
	\left\{ n \in \O_K :
	 \mu \left( A \cap T^{-krq(n)}A \cap T^{-ksq(n)}A \cap T^{-k(r+s)q(n)}A \right) > \mu(A)^4 - \eps \right\}
\end{equation*}
is also syndetic.

Thus, the sets
\begin{equation*}
	\left\{ n \in \O_K :
	 \mu \left( A \cap T^{-rp(n)}A \cap T^{-sp(n)}A \right) > \mu(A)^3 - \eps \right\}
\end{equation*}
and (if $\frac{s}{r} \in \Q$)
\begin{equation*}
	\left\{ n \in \O_K :
	 \mu \left( A \cap T^{-rp(n)}A \cap T^{-sp(n)}A \cap T^{-(r+s)p(n)}A \right) > \mu(A)^4 - \eps \right\}
\end{equation*}
are relatively syndetic in $\xi + D\O_K$.
But $\xi + D\O_K$ is syndetic in $\O_K$, so we are done.


\section{Refinements} \label{sec: refinements}


\subsection{Polynomial IP sets}

Recall that a set $E \subseteq \O_K$ is $\text{IP}^*$ if it intersects every finite sum set
\begin{equation*}
	FS \left( (x_n)_{n \in \N} \right) := \left\{ \sum_{n \in F}{x_n} : F \subseteq \N~\text{is finite and nonempty} \right\},
\end{equation*}
where $(x_n)_{n \in \N}$ is a sequence of distinct elements of $\O_K$.
Similarly, we say $E$ is \emph{$\text{IP}_r^*$} if it intersects every finite sum set of the form
\begin{equation*}
	FS(x_1, \dots, x_r) := \left\{ \sum_{k=1}^s{x_{n_k}} : 1 \le s \le r, n_1 < n_2 < \dots < n_s \right\},
\end{equation*}
where $x_1, \dots, x_r \in \O_K$ are distinct and nonzero.
Finally, $E$ is called an \emph{$\text{IP}_0^*$ set} if $E$ is $\text{IP}_r^*$ for some $r \in \N$.
Clearly, every $\text{IP}_0^*$ set is also $\text{IP}^*$, but the converse is not true.

Now we will define polynomial generalizations of IP and $\text{IP}_0$ sets.
For a set $S$, let $\calF(S)$ denote the semigroup of finite subsets of $S$ with the union operation.

\begin{defn}
	Let $(H,+)$ be an abelian group, and let $\varphi : \calF(S) \to H$.
	\begin{enumerate}[1.]
		\item	We say that $\varphi$ is \emph{linear} if
			$\varphi(\alpha \cup \beta) = \varphi(\alpha) + \varphi(\beta)$ whenever $\alpha \cap \beta = \es$.
		\item	For $\beta \in \calF(S)$, the \emph{$\beta$-derivate} of $\varphi$
			is the function $D_{\beta}\varphi : \calF(S \setminus \beta) \to H$ given by
			$D_{\beta}\varphi(\alpha) = \varphi(\alpha \cup \beta) - \varphi(\alpha)$.
		\item	We say $\varphi$ is a \emph{polynomial of degree $\le d$} if
			for any disjoint sets $\beta_0, \dots, \beta_d \in \calF(S)$, one has
			$D_{\beta_0} D_{\beta_1} \cdots D_{\beta_d}\varphi = 0$.
	\end{enumerate}
\end{defn}

Note that an IP set has the form $\{\varphi(\alpha) : \alpha \in \calF(\N), \alpha \ne \es\}$
for a linear mapping $\varphi : \calF(S) \to \O_K$ with $\varphi(\es) = 0$.
For a polynomial mapping $\varphi : \calF(S) \to \O_K$, we call the corresponding set
$\{\varphi(\alpha) : \alpha \in \calF(\N), \alpha \ne \es\}$ a \emph{VIP set}.
Similarly, if $\varphi : \calF(\{1, \dots, r\} \to \O_K$ is a polynomial mapping of degree $\le d$ with $\varphi(\es) = 0$,
we say that $\{\varphi(\alpha) : \alpha \in \calF(\{1, \dots, r\}), \alpha \ne \es\}$ is \emph{$\text{VIP}_{d,r}$}.
A set $E \subseteq \O_K$ is \emph{$\text{VIP}^*$} if it intersects every VIP set,
and $E$ is \emph{$\text{VIP}_{d,r}^*$} if it intersects every $\text{VIP}_{d,r}$ set.
Finally, $E$ is \emph{$\text{VIP}_0^*$} if for any $d \in \N$, $E$ is $\text{VIP}_{d,r}^*$ for some $r \in \N$.

As we will see below, $\text{VIP}_0^*$ is an appropriate notion of largeness for nilsequences.
However, for a multi-correlation sequence, which differs from a nilsequence by a nullsequence
(see Theorem \ref{thm: nilsequence decomp} below), we need the slightly weaker notion of $\text{AVIP}_0^*$.
A set $E$ is \emph{almost-$\text{VIP}_0^*$}, or \emph{$\text{AVIP}_0^*$} for short, if there is a $\text{VIP}_0^*$ set $A$
such that $d^*(A \setminus E) = 0$.

For any notion of largeness discussed so far, we use the added decoration of $+$ in the subscript to indicate a shift.
In particular, $\text{(A)VIP}_{0,+}^*$ means a shift of an $\text{(A)VIP}_0^*$ set.


\subsection{Recurrence in nilmanifolds}

\begin{thm}[\cite{bl-IP}, Theorem 0.6] \label{thm: VIP returns}
	Let $(X, T)$ be a $\Z^d$-nilsystem.
	Then, for any $x_0 \in X$ and any neighborhood $U$ of $x_0$, the set
	\begin{equation*}
		R_U(x_0) := \left\{ n \in \Z^d : T^nx_0 \in U \right\}
	\end{equation*}
	is a $\text{VIP}_0^*$ set.
\end{thm}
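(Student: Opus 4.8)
The plan is to prove the statement by induction on the nilpotency class $s$ of the group $G$ defining the nilsystem $X = G/\Gamma$, following the polynomial-IP machinery of Bergelson and Leibman. Unwinding the definitions, we must produce, for each $d \in \N$, a bound $r = r(d, X, T, x_0, U)$ such that $R_U(x_0)$ meets every $\text{VIP}_{d,r}$ set; such a set is the nonzero part of the image of a polynomial mapping $\varphi \colon \calF(\{1, \dots, r\}) \to \Z^d$ of degree $\le d$ with $\varphi(\es) = 0$. After replacing $X$ by the orbit closure $\overline{\{T^n x_0 : n \in \Z^d\}}$, which by Theorem \ref{thm: nil orbits} is a finite union of sub-nilmanifolds permuted by the action, and after the standard reductions (passing to a finite-index subgroup of $\Z^d$ to make the action minimal, which is harmless for $\text{VIP}_0^*$-type statements in view of a preliminary polynomial-Hales--Jewett argument modulo that subgroup, and lifting to a cover), we may assume $G$ is connected and simply connected, so that each term of the lower central series is simply connected and the fibers arising below are tori.

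For the base case $s = 1$, the space $X$ is a torus and $T^n x = x + \rho(n)$ for a homomorphism $\rho \colon \Z^d \to X$. Given a polynomial mapping $\varphi \colon \calF(\{1, \dots, r\}) \to \Z^d$ of degree $\le d$ with $\varphi(\es) = 0$, the composite $\psi := \rho \circ \varphi$ is a polynomial mapping of degree $\le d$ into the compact abelian group $X$ with $\psi(\es) = 0$. One then invokes the fact --- an instance of the polynomial Hales--Jewett theorem for set-polynomials, combined with continuity of $\rho$ --- that for every neighborhood $V$ of $0$ in $X$ and every degree $d$ there exists $r = r(d, X, V)$ such that any such $\psi$ on $\calF(\{1, \dots, r\})$ takes some value $\psi(\alpha) \in V$ with $\alpha \ne \es$. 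Applying this with $V = U - x_0$ gives $\varphi(\alpha) \in R_U(x_0)$, so $R_U(x_0)$ is $\text{VIP}_{d,r}^*$.

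For the inductive step, suppose the result holds for all $\Z^d$-nilsystems of step $< s$, and let $G$ have step $s$. Let $G_{(s)}$ be the last nontrivial term of the lower central series; it is central, and $\bar X := G/(G_{(s)} \Gamma)$ is a step-$(s-1)$ nilsystem with factor map $\pi \colon X \to \bar X$ whose fibers are cosets of the torus $K := G_{(s)}/(G_{(s)} \cap \Gamma)$. Shrinking $U$ if necessary, membership $T^n x_0 \in U$ splits into a base condition $\bar T^n \pi(x_0) \in \bar U$ together with a condition forcing a certain $\Z^d$-polynomial expression with values in $K$ to lie near $0$. By the inductive hypothesis, the base return set $\bar R$ is $\text{VIP}_0^*$. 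The heart of the matter is then to combine this with the $\text{VIP}_{d,r}$ mapping $\varphi$: using the IP-ring (Hindman / Milliken--Taylor-type) formalism for polynomial set-maps, one passes to a sub-$\text{VIP}_{d', r'}$ structure inside $\varphi^{-1}(\bar R)$ on which the fiber part, expressed through $\varphi$, becomes a polynomial mapping of some larger degree $d''$ into $K$ vanishing at $\es$; applying the base case in $K$ then produces a common $\alpha \ne \es$ with $\varphi(\alpha) \in R_U(x_0)$. A compactness/Ramsey argument tracking how $r'$ and $d''$ depend on the earlier data extracts a single finite $r$ valid for all such $\varphi$, yielding $\text{VIP}_{d'', r}^*$ and hence $\text{VIP}_0^*$.

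The genuinely delicate part is the combinatorial core of the inductive step: showing that the preimage under a $\text{VIP}_{d,r}$ mapping of the $\text{VIP}_0^*$ base return set still contains a full polynomial-IP structure on which the fiber cocycle reduces to a manageable $K$-valued polynomial. This is precisely where one needs the polynomial extension of the Hales--Jewett theorem for set-polynomials together with careful bookkeeping of degrees under the van der Corput / PET differencing steps; each differencing raises the degree, which is exactly the reason the conclusion must be stated as $\text{VIP}_0^*$ rather than $\text{VIP}_{d,r}^*$ for fixed $d$. Converting the ``for every IP-ring'' version of the argument into a uniform finite bound on $r$ via compactness is the final technical hurdle.
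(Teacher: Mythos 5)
First, a point of comparison: this theorem is not proved in the paper at all --- it is imported verbatim from Bergelson and Leibman \cite{bl-IP} (Theorem 0.6 there) and used as a black box in Section \ref{sec: refinements}, so there is no internal argument to measure your proposal against; the only question is whether your sketch constitutes a proof, and it does not. What you have written is a plan whose hardest steps are described rather than carried out. The inductive step is the entire content of the theorem: you assert that, after composing with a $\text{VIP}_{d,r}$ mapping $\varphi$ and passing to a polynomial-IP substructure inside $\varphi^{-1}(\bar{R})$, the fiber coordinate becomes a $K$-valued polynomial set-mapping vanishing at $\es$, and that the parameters can be made uniform by ``a compactness/Ramsey argument''; but no argument is given for either claim, and you yourself flag them as ``the genuinely delicate part'' and ``the final technical hurdle.'' In particular, the fiber coordinate of $T^n x_0$ along the extension is only a generalized-polynomial expression in $n$ once one quotients by $\Gamma$, and turning its composition with a set-polynomial into an honest polynomial mapping of bounded degree on a controlled substructure, with $r$ bounded in terms of $d$ and the data alone (uniformly over all $\varphi$), is exactly where the set-polynomial Hales--Jewett machinery has to do real work. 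Your base case also leans on an unproved (though true) statement: that a polynomial set-mapping $\psi$ of degree $\le d$ into a compact abelian group with $\psi(\es)=0$ must take a value in a prescribed neighborhood of $0$ at some nonempty $\alpha$, with $r$ depending only on $d$ and the neighborhood; unlike the linear case this is not a pigeonhole argument but is itself a PHJ-strength result that needs a precise citation or proof.

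Two smaller points. The reduction ``pass to a finite-index subgroup to make the action minimal, which is harmless for $\text{VIP}_0^*$-type statements'' is not free: a $\text{VIP}_{d,r}$ set need not meet a given finite-index subgroup, so one must first locate a substructure on which $\varphi$ takes values in the relevant coset, and this again calls on the very polynomial Hales--Jewett input you defer. And the final bookkeeping is phrased misleadingly: to get $\text{VIP}_0^*$ you must, for each fixed $d$, exhibit $r(d)$ such that $R_U(x_0)$ meets every $\text{VIP}_{d,r(d)}$ set; the auxiliary degrees $d''$ produced by differencing concern intermediate fiber mappings and should not appear in the conclusion (although $\text{VIP}_{d'',r}^*$ with $d'' \ge d$ does formally imply $\text{VIP}_{d,r}^*$). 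In the context of this paper the correct move is the one the authors make: cite \cite{bl-IP} for the result rather than reprove it.
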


\begin{cor} \label{cor: VIP returns}
	Let $\varphi : \Z^d \to \R$ be a nilsequence.
	For any $c < \sup{\varphi}$, the set
	\begin{equation*}
		R := \left\{ n \in \Z^d : \varphi(n) > c \right\}
	\end{equation*}
	is $\text{VIP}_{0,+}^*$.
\end{cor}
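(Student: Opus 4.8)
The plan is to obtain the corollary as a short consequence of Theorem \ref{thm: VIP returns}, using two elementary closure properties: any set containing a $\text{VIP}_0^*$ set is itself $\text{VIP}_0^*$ (immediate from the definition, since intersecting every $\text{VIP}_{d,r}$ set is a monotone property of a set), and hence any set containing a shift of a $\text{VIP}_0^*$ set is $\text{VIP}_{0,+}^*$.

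First I would handle a \emph{basic} nilsequence, i.e.\ $\varphi(n) = F(T^n x_0)$ for a $\Z^d$-nilsystem $(X, T)$, a continuous function $F : X \to \R$, and a point $x_0 \in X$. Since $c < \sup \varphi$, there exists $n_0 \in \Z^d$ with $F(T^{n_0} x_0) > c$. Put $y := T^{n_0} x_0$; by continuity of $F$ there is an open neighborhood $U$ of $y$ with $F(z) > c$ for every $z \in U$. Theorem \ref{thm: VIP returns} then gives that $R_U(y) = \{ n \in \Z^d : T^n y \in U \}$ is $\text{VIP}_0^*$. Because $T^n y = T^{n + n_0} x_0$, membership $n \in R_U(y)$ forces $\varphi(n + n_0) = F(T^{n+n_0} x_0) > c$, so $R_U(y) \subseteq R - n_0$, where $R - n_0 = \{ n : n + n_0 \in R \}$. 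Thus $R - n_0$ contains a $\text{VIP}_0^*$ set, hence is $\text{VIP}_0^*$, and therefore $R = (R - n_0) + n_0$ is $\text{VIP}_{0,+}^*$.

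For a general nilsequence $\varphi$, realized as a uniform limit of basic nilsequences $\varphi_k$, I would set $\delta := \sup \varphi - c > 0$, choose $k$ with $\sup_n |\varphi(n) - \varphi_k(n)| < \delta/3$, and let $c' := c + \delta/3$. Then $\sup \varphi_k \ge \sup \varphi - \delta/3 > c'$, so by the basic case $\{ n : \varphi_k(n) > c' \}$ is $\text{VIP}_{0,+}^*$, and this set is contained in $\{ n : \varphi(n) > c' - \delta/3 \} = R$. Since any set containing a $\text{VIP}_{0,+}^*$ set is again $\text{VIP}_{0,+}^*$ (translate so that it contains a $\text{VIP}_0^*$ set and apply the monotonicity remark), $R$ is $\text{VIP}_{0,+}^*$.

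The whole argument is routine once Theorem \ref{thm: VIP returns} is available; the only mildly delicate point is the passage from general to basic nilsequences, where one must use the strictness of the inequality $c < \sup \varphi$ to absorb the uniform approximation error. If instead one takes ``nilsequence'' to mean a sequence arising from a single nilsystem (or from a component of an inverse limit of nilsystems, as for the Host--Kra factors $\calZ_r$), the approximation step can be skipped entirely, and in any case no input beyond Theorem \ref{thm: VIP returns} is required.
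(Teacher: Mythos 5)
Your proof is correct and takes essentially the same route as the paper: approximate $\varphi$ uniformly by a basic nilsequence and apply Theorem \ref{thm: VIP returns} to a suitably chosen base point and neighborhood, using the strict inequality $c < \sup\varphi$ to absorb the approximation error. The only minor difference is that the paper passes to a \emph{minimal} nilsystem and uses density of the orbit to find $m$ with $T^m x_0 \in U$, whereas you instead pick $n_0$ directly from the definition of the supremum and take a neighborhood of $T^{n_0}x_0$; this sidesteps the minimality reduction but is otherwise the same argument.
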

\begin{proof}
	Let $\eps = \sup{\varphi} - c > 0$.
	Then let $(X, T)$ be a minimal nilsystem, $x_0 \in X$, and $F \in C(X)$ such that
	$\sup_{n \in \Z^d}{\left| \varphi(n) - F(T^nx_0) \right|} < \frac{\eps}{2}$.
	Note that $\sup{F} > \sup{\varphi} - \frac{\eps}{2}$.
	
	Let $U := \left\{ x \in X : F(x) > \sup{\varphi} - \frac{\eps}{2} \right\}$.
	Then $U$ is a nonempty open set.
	Since $(X, T)$ is minimal, we have $T^mx_0 \in U$ for some $m \in \Z^d$.
	By Theorem \ref{thm: VIP returns},
	\begin{equation*}
		S := \left\{ n \in \Z^d : T^n(T^mx_0) \in U \right\}
	\end{equation*}
	is $\text{VIP}_0^*$.
	
	Suppose $n \in S$.
	Then
	\begin{equation*}
		\varphi(n + m) > F(T^{n+m}x_0) - \frac{\eps}{2} > \sup{\varphi} - \eps = c
	\end{equation*}
	Therefore, $R \supseteq S + m$ is $\text{VIP}_{0,+}^*$.
\end{proof}


\subsection{Nilsequence-nulsequence decomposition}

Let $r \in \N$.
A \emph{basic $r$-step nilsequence} is a function $\varphi(n) = F(T^nx_0)$,
where $(X, \B, \mu, T)$ is an $r$-step nilsystem, $F : X \to \C$ is a continuous function, and $x_0 \in X$.
An \emph{$r$-step nilsequence} is a uniform limit of basic $r$-step nilsequences.
Knowing that a nilfactor is characteristic for polynomial multiple ergodic averages
gives a decomposition of the corresponding multi-correlation sequences.
Recall that a function $\psi : \O_K \to \C$ is a \emph{nullsequence} if $\UClim_{n \in \O_K}{|\psi(n)|^2} = 0$.

\begin{thm} \label{thm: nilsequence decomp}
	Let $K$ be a number field with ring of integers $\O_K$.
	Let $p_1, \dots, p_k \in K[x]$ be non-constant, essentially distinct, $\O_K$-valued polynomials.
	Then for any ergodic measure-preserving $\O_K$-system $\left( X, \B, \mu, T \right)$
	and any $f_0, f_1, \dots, f_k \in L^{\infty}(\mu)$, there is a decomposition
	\begin{equation*}
		a(n) := \int_X{f_0 \cdot T^{p_1(n)}f_1 \cdot{}\dots{}\cdot T^{p_k(n)}f_k~d\mu} = \varphi(n) + \psi(n),
	\end{equation*}
	where $\varphi$ is a nilsequence and $\psi$ is a nullsequence.
\end{thm}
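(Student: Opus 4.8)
The plan is to reduce the multi-correlation sequence $a(n)$ to the smallest nilfactor that is characteristic for the configuration, approximate that nilfactor by a concrete nilsystem, and then identify the resulting sequence as a basic nilsequence; the error incurred in the reduction is precisely the nullsequence $\psi$.

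First I would apply Theorem \ref{thm: polynomial nil}: since $p_1,\dots,p_k$ are non-constant and essentially distinct, there is $r\in\N$ for which $\calZ_r$ is characteristic for $\{p_1(n),\dots,p_k(n)\}$, and the PET/van der Corput argument underlying that theorem in fact yields the stronger estimate
\[
\UClim_{n\in\O_K}{\norm{2}{\prod_{i=1}^k T^{p_i(n)}f_i-\prod_{i=1}^k T^{p_i(n)}\E{f_i}{\calZ_r}}^2}=0 .
\]
Set $b(n):=\int_X \E{f_0}{\calZ_r}\prod_{i=1}^k T^{p_i(n)}\E{f_i}{\calZ_r}\,d\mu$. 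Since $\calZ_r$ is $T$-invariant, $\prod_{i=1}^k T^{p_i(n)}\E{f_i}{\calZ_r}$ is $\calZ_r$-measurable, so $b(n)=\int_X f_0\prod_{i=1}^k T^{p_i(n)}\E{f_i}{\calZ_r}\,d\mu$, and by Cauchy--Schwarz $|a(n)-b(n)|\le \norm{\infty}{f_0}\cdot\norm{2}{\prod_{i} T^{p_i(n)}f_i-\prod_{i} T^{p_i(n)}\E{f_i}{\calZ_r}}$. Hence $\psi(n):=a(n)-b(n)$ is a nullsequence, and it remains to prove that $b$ is a nilsequence.

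Next I would pass from $\calZ_r$ to a single nilsystem. By \cite[Theorem 4.1.2]{griesmer}, $\calZ_r$ is an inverse limit of $r$-step $\O_K$-nilsystems $(X^{(m)},T)$. Given $\eps>0$ and $M:=\max_{0\le i\le k}\norm{\infty}{f_i}$, I would choose $m$ large and, approximating in $L^2$, pick continuous functions $G_0,\dots,G_k\in C(X^{(m)})$ with $\norm{\infty}{G_i}\le M$ and $\norm{2}{\E{f_i}{\calZ_r}-G_i}<\eps$ (regarding each $G_i$ as a function on $X$ via the factor map). A telescoping estimate, using that $T$ preserves the $L^2$-norm, then gives $|b(n)-b_m(n)|\le C_{k,M}\,\eps$ uniformly in $n$, where $b_m(n):=\int_{X^{(m)}}G_0\prod_{i=1}^k T^{p_i(n)}G_i\,d\mu_m$. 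Thus $b$ is a uniform limit of the $b_m$, and since a uniform limit of nilsequences is again a nilsequence, it suffices to show that each $b_m$ is a basic nilsequence of step bounded in terms of $r$ and $k$.

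The heart of the argument, and the step I expect to be the main obstacle, is this single-nilsystem case. Writing $X^{(m)}=G_m/\Gamma_m$ with $T^nx=a(n)x$ for a homomorphism $a:\O_K\to G_m$ and identifying $\O_K\cong\Z^d$, the maps $n\mapsto a(p_i(n))$ are polynomial maps into $G_m$. Setting $\hat G:=G_m^{k+1}$, $\hat\Gamma:=\Gamma_m^{k+1}$, $\hat X:=\hat G/\hat\Gamma$, $\hat F:=G_0\otimes\cdots\otimes G_k\in C(\hat X)$, letting $\mu_\Delta$ be the Haar measure on the diagonal sub-nilmanifold $\Delta X=\{(x,\dots,x)\}$, and $g(n):=(e,a(p_1(n)),\dots,a(p_k(n)))\in\hat G$, one has $b_m(n)=\int_{\hat X}\hat F\,d\big(g(n)_*\mu_\Delta\big)$. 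The difficulty is that the naive map $g\mapsto\int_{\hat X}\hat F\,d(g_*\mu_\Delta)$ does not descend to a continuous function on a nilmanifold, so one must genuinely invoke the theory of polynomial orbits on nilmanifolds — Theorem \ref{thm: nil orbits} and Leibman's analysis in \cite{leib-poly_nil2}, along the lines of the study of characteristic factors by Bergelson, Host, and Kra in \cite{bhk}, now for commuting transformations and polynomial iterates — to see that $n\mapsto g(n)_*\mu_\Delta$ moves $\Delta X$ through a finite family of sub-nilmanifolds polynomially in $n$ and that the corresponding integrals assemble into a basic nilsequence. Granting this, $\varphi:=b$ is a nilsequence, $\psi=a-b$ is a nullsequence, and the decomposition follows.
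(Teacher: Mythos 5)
Your reduction to a single nilsystem with continuous functions — passing to the characteristic nilfactor via Theorem \ref{thm: polynomial nil}, the Cauchy--Schwarz estimate showing $\psi = a - b$ is a nullsequence, approximating the inverse limit by a concrete $r$-step nilsystem, and the $L^2$-to-uniform telescoping for $b_m \to b$ — is correct and matches the first two steps of the paper's argument exactly. Each of those pieces is fine as written.

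The gap is in the step you yourself flag as the heart of the argument and then dispatch with ``granting this.'' You propose to deduce that $b_m$ is a basic nilsequence from Theorem \ref{thm: nil orbits} (Leibman's \cite[Theorem B*]{leib-poly_nil2}) together with the heuristic that $g(n)_* \mu_\Delta$ ``moves $\Delta X$ through a finite family of sub-nilmanifolds polynomially in $n$.'' Theorem \ref{thm: nil orbits} describes the equidistribution of a \emph{single} polynomial orbit $(g(n)\tilde{x})_n$; but $b_m(n)$ is an \emph{integral over} $x$, and knowing the orbit closure of each diagonal point does not by itself produce the data of a nilsequence: a nilmanifold $\tilde X$, a polynomial sequence $\tilde g$, a base point $\tilde x_0$, and a continuous $\tilde F \in C(\tilde X)$ with $b_m(n) = \tilde F(\tilde g(n)\tilde x_0)$ up to a nullsequence. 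Producing that data — and controlling the measure-zero set of points whose diagonal orbit closures are smaller, which is the source of the nullsequence error — is precisely the content of \cite[Theorem 1.3]{leib-nil} (``Nilsequences, null-sequences, and multiple correlation sequences''), and that is the citation the paper actually uses here. You have cited the wrong Leibman paper: \cite{leib-poly_nil2} gives equidistribution of polynomial orbits, not the nilsequence structure of multi-correlation sequences. Replacing your push-forward sketch with an appeal to \cite[Theorem 1.3]{leib-nil} closes the gap and recovers the paper's three-line proof.
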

\begin{proof}
	First, by \cite[Theorem 5.2]{br}, there exists $r \in \N$ such that
	\begin{equation*}
		a(n)
		 - \int_X{\E{f_0}{\calZ_r} \cdot T^{p_1(n)}\E{f_1}{\calZ_r} \cdot{}\dots{}\cdot T^{p_k(n)}\E{f_k}{\calZ_r}~d\mu}
	\end{equation*}
	is a nullsequence, so we may assume that $(X, \B, \mu, T)$ is a nilsystem.
	
	Next, up to a uniform approximation in $n$, we may assume that $f_0, f_1, \dots, f_k$ are continuous functions.
	Then by \cite[Theorem 1.3]{leib-nil}, $a(n)$ is the sum of a (basic) nilsequence and a nullsequence.
	Taking a uniform limit gives the desired decomposition.
\end{proof}

\begin{prop} \label{prop: syndetic implies AVIP}
	Let $K$ be a number field with ring of integers $\O_K$.
	Suppose $\varphi : \O_K \to \C$ is a nilsequence, $\psi : \O_K \to \C$ is a nullsequence,
	and $a(n) = \varphi(n) + \psi(n)$.
	Suppose that for some $c > 0$, the set
	\begin{equation*}
		R(c) := \left\{ n \in \O_K : a(n) > c \right\}
	\end{equation*}
	is syndetic.
	Then $R(c')$ is $\text{AVIP}_{0,+}^*$ for every $c' < c$.
\end{prop}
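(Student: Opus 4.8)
The plan is to compare $R(c')$ with a sublevel set of the nilsequence $\varphi$ alone, which is $\text{VIP}_{0,+}^*$ by Corollary \ref{cor: VIP returns}, and to absorb the discrepancy between $a$ and $\varphi$ into a set of upper Banach density zero coming from the nullsequence $\psi$. First I would reduce to the real-valued case: the real part of a nilsequence is again a nilsequence and the real part of a nullsequence is again a nullsequence, so we may assume $\varphi$, $\psi$, and $a$ are real-valued. Identifying $\O_K$ with $\Z^d$, we may also regard $\varphi$ as a $\Z^d$-nilsequence, so that Corollary \ref{cor: VIP returns} applies (one should check that $\text{VIP}_0^*$, and hence $\text{AVIP}_{0,+}^*$, is the same notion on $\O_K$ and on $\Z^d$ under this identification, which is immediate from the definitions).

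Next I would show $\sup_n \varphi(n) \ge c$. Suppose instead $s_0 := \sup_n \varphi(n) < c$. Then every $n \in R(c)$ satisfies $\psi(n) = a(n) - \varphi(n) > c - s_0 > 0$, so $R(c) \subseteq \{n : |\psi(n)| > c - s_0\}$. Since $\psi$ is a nullsequence, $\UClim_{n \in \O_K}{|\psi(n)|^2} = 0$, and Markov's inequality applied along an arbitrary F{\o}lner sequence gives $d^*(\{n : |\psi(n)| > c - s_0\}) = 0$. But a syndetic set has positive lower (hence upper) Banach density, so the syndetic set $R(c)$ cannot be contained in a set of density zero, a contradiction. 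Hence $\sup_n \varphi(n) \ge c > c'$.

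Now fix $c''$ with $c' < c'' < c$; then $c'' < \sup_n \varphi(n)$ as well, so Corollary \ref{cor: VIP returns} (applied to $\varphi$ with threshold $c''$) shows that $R'' := \{n \in \O_K : \varphi(n) > c''\}$ is $\text{VIP}_{0,+}^*$, say $R'' = A + m$ with $A$ a $\text{VIP}_0^*$ set and $m \in \O_K$. Let $B := \{n \in \O_K : |\psi(n)| \ge c'' - c'\}$; as above, $d^*(B) = 0$ by Markov's inequality since $c'' - c' > 0$ and $\psi$ is a nullsequence. For $n \in R'' \setminus B$ we have $a(n) = \varphi(n) + \psi(n) > c'' - (c'' - c') = c'$, i.e. $n \in R(c')$. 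Therefore $R'' \setminus R(c') \subseteq B$, and by translation-invariance of upper Banach density, $A \setminus (R(c') - m) \subseteq B - m$ has $d^* = 0$. By definition this says $R(c') - m$ is $\text{AVIP}_0^*$, hence $R(c')$ is $\text{AVIP}_{0,+}^*$, as claimed.

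The argument is short, and the only points requiring any care are routine: the Markov-type estimate that $d^*(\{|\psi| \ge \delta\}) = 0$ for each $\delta > 0$ (which uses that the uniform Ces\`aro average of $|\psi|^2$ vanishes along \emph{every} F{\o}lner sequence), and the fact that a syndetic set has positive Banach density and therefore cannot be engulfed by a density-zero set. Neither of these is a genuine obstacle; the substantive input, namely that $\text{VIP}_{0,+}^*$ is the right notion of largeness for super-level sets of nilsequences, has already been isolated in Corollary \ref{cor: VIP returns}.
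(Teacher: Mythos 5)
Your proof is correct and follows essentially the same route as the paper: invoke Corollary~\ref{cor: VIP returns} on a super-level set of $\varphi$ at a threshold strictly between $c'$ and $c$ (the paper fixes $\tfrac{c+c'}{2}$, you allow an arbitrary $c''$), and absorb the discrepancy from the nullsequence $\psi$ into a density-zero exceptional set via the Markov-type estimate. One small remark: the paper asserts that $R(c)\setminus E$ is ``still syndetic,'' which is not true in general (removing a density-zero set from a syndetic set can destroy syndeticity); only nonemptiness is actually used there, and your direct argument that $\sup\varphi\ge c$ -- from the fact that a syndetic set has positive Banach density and so cannot be contained in $\{|\psi|>c-\sup\varphi\}$ -- cleanly sidesteps this imprecision.
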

\begin{proof}
	Let $c' < c$.
	Then the set
	\begin{equation*}
		E := \left\{ n \in \O_K : |\psi(n)| \ge \frac{c - c'}{2} \right\}
	\end{equation*}
	has upper Banach density $d^*(E) = 0$.
	Therefore, $R(c) \setminus E$ is still syndetic; in particular, it is nonempty.
	But for $n \in R(c) \setminus E$, we have $\varphi(n) > c - \frac{c-c'}{2} = \frac{c+c'}{2}$.
	So, by Corollary \ref{cor: VIP returns},
	\begin{equation*}
		S := \left\{ n \in \O_K : \varphi(n) > \frac{c+c'}{2} \right\}
	\end{equation*}
	is $\text{VIP}_{0,+}^*$.
	Finally, since $\frac{c+c'}{2} - \frac{c-c'}{2} = c'$,
	we have $R(c') \supseteq S \setminus E$, so $R(c')$ is $\text{AVIP}_{0,+}^*$.
\end{proof}

By Theorem \ref{thm: nilsequence decomp}, Proposition \ref{prop: syndetic implies AVIP}
applies to polynomial multi-correlation sequences in ergodic systems.
We can therefore strengthen the conclusions of Theorems \ref{thm: IndLargeInt} and \ref{thm: MultLargeInt},
respectively, under the assumption of ergodicity:

\begin{thm} \label{thm: IndLargeInt AVIP}
	Let $K$ be a number field with ring of integers $\O_K$.
	Suppose $\{p_1, \dots, p_k\} \subseteq K[x]$ is a jointly intersective family
	of linearly independent $\O_K$-valued polynomials.
	Then for any ergodic measure-preserving $\O_K$-system $\left( X, \B, \mu, T \right)$,
	$A \in \B$, and $\eps > 0$, the set
	\begin{equation*}
		\left\{ n \in \O_K :
		 \mu \left( A \cap T^{-p_1(n)}A \cap \cdots \cap T^{-p_k(n)}A \right) > \mu(A)^{k+1} - \eps \right\}
	\end{equation*}
	is $\text{AVIP}_{0,+}^*$.
\end{thm}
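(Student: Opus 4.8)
The plan is to recognize the quantity in question as a polynomial multi-correlation sequence and to combine the syndeticity already furnished by Theorem~\ref{thm: IndLargeInt} with the nilsequence--nullsequence decomposition of Theorem~\ref{thm: nilsequence decomp} and the abstract transfer principle of Proposition~\ref{prop: syndetic implies AVIP}. Write
\begin{align*}
	a(n) := \mu\left( A \cap T^{-p_1(n)}A \cap \cdots \cap T^{-p_k(n)}A \right)
	 = \int_X{ \ind_A \cdot T^{p_1(n)}\ind_A \cdot{}\dots{}\cdot T^{p_k(n)}\ind_A ~d\mu },
\end{align*}
so that the set whose largeness we must control is precisely $R(c') := \left\{ n \in \O_K : a(n) > c' \right\}$ with $c' = \mu(A)^{k+1} - \eps$; it therefore suffices to put $a(n)$ into the form required by Proposition~\ref{prop: syndetic implies AVIP} and to check that a slightly larger superlevel set is syndetic.

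First I would dispose of the trivial case $\mu(A) = 0$, in which $R(c')$ is all of $\O_K$ and hence $\text{AVIP}_{0,+}^*$; and since membership in the class of $\text{AVIP}_{0,+}^*$ sets is preserved under passing to supersets, I may assume $\eps < \mu(A)^{k+1}$, so that $c := \mu(A)^{k+1} - \tfrac{\eps}{2} > 0$. Next I would verify that Theorem~\ref{thm: nilsequence decomp} is applicable: a jointly intersective linearly independent family $\{p_1,\dots,p_k\}$ is independent, i.e.\ $\{1, p_1, \dots, p_k\}$ is linearly independent over $K$ (see the discussion following the definition of an independent family), so in particular every $p_i$ is non-constant and every difference $p_i - p_j$ with $i \ne j$ is non-constant; thus $p_1, \dots, p_k$ are non-constant and essentially distinct. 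Applying Theorem~\ref{thm: nilsequence decomp} with $f_0 = f_1 = \dots = f_k = \ind_A$ yields $a(n) = \varphi(n) + \psi(n)$ with $\varphi$ a nilsequence and $\psi$ a nullsequence. By Theorem~\ref{thm: IndLargeInt}, applied with $\tfrac{\eps}{2}$ in place of $\eps$, the set $R(c)$ is syndetic. Proposition~\ref{prop: syndetic implies AVIP} then shows that $R(c')$ is $\text{AVIP}_{0,+}^*$ for every $c' < c$; taking $c' = \mu(A)^{k+1} - \eps$ gives the theorem.

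I do not anticipate any real obstacle: the theorem is essentially a repackaging of results already established. The substantive inputs --- the identification of the rational Kronecker factor as characteristic (Theorem~\ref{thm: rational Kronecker}), which underlies the syndeticity in Theorem~\ref{thm: IndLargeInt}, and the nilsequence structure of polynomial multi-correlation sequences (Theorem~\ref{thm: nilsequence decomp}) --- have already been proved. The only points demanding a little care are the harmless $\eps$-bookkeeping in passing from the syndeticity threshold $c$ to the strictly smaller $\text{AVIP}_{0,+}^*$ threshold $c'$, and the observation that joint intersectivity together with linear independence forbids constant polynomials and constant differences, which is exactly what makes the hypotheses of Theorem~\ref{thm: nilsequence decomp} hold.
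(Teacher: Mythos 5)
Your proposal is correct and follows the paper's own route exactly: decompose the multi-correlation sequence via Theorem~\ref{thm: nilsequence decomp}, use the syndeticity from Theorem~\ref{thm: IndLargeInt} at a slightly higher threshold, and transfer to $\text{AVIP}_{0,+}^*$ via Proposition~\ref{prop: syndetic implies AVIP}. The paper leaves the $\eps$-bookkeeping, the trivial case $\mu(A)=0$, and the verification that joint intersectivity plus linear independence gives non-constant and essentially distinct polynomials implicit; your write-up makes them explicit, but there is no difference in substance.
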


\begin{thm} \label{thm: MultLargeInt AVIP}
	Let $K$ be a number field with ring of integers $\O_K$.
	Let $p(x) \in K[x]$ be an $\O_K$-valued intersective polynomial.
	Let $r,s \in \O_K$ be distinct and nonzero.
	Then for any ergodic measure-preserving $\O_K$-system $\left( X, \B, \mu, T \right)$,
	$A \in \B$, and $\eps > 0$, the set
	\begin{equation*}
		\left\{ n \in \O_K :
		 \mu \left( A \cap T^{-rp(n)}A \cap T^{-sp(n)}A \right) > \mu(A)^3 - \eps \right\}
	\end{equation*}
	is $\text{AVIP}_{0,+}^*$.
	
	Moreover, if $\frac{s}{r} \in \Q$, then
	\begin{equation*}
		\left\{ n \in \O_K :
		 \mu \left( A \cap T^{-rp(n)}A \cap T^{-sp(n)}A \cap T^{-(r+s)p(n)}A \right) > \mu(A)^4 - \eps \right\}
	\end{equation*}
	is $\text{AVIP}_{0,+}^*$.
\end{thm}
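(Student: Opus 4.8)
The plan is to deduce Theorem~\ref{thm: MultLargeInt AVIP} from the syndeticity already obtained in Theorem~\ref{thm: MultLargeInt}, by combining it with a structural decomposition of the relevant multi-correlation sequences and feeding the outcome into Proposition~\ref{prop: syndetic implies AVIP}; this is the same scheme that upgrades Theorem~\ref{thm: IndLargeInt} to Theorem~\ref{thm: IndLargeInt AVIP}. First one disposes of degenerate cases. If $p$ is constant, then, being intersective, $p \equiv 0$, and both sets in the statement equal $\O_K$, which is $\text{VIP}_0^*$. Otherwise one may assume $\mu(A) > 0$ and $\eps < \mu(A)^3$ (resp.\ $\eps < \mu(A)^4$ in the quadruple case): if $\eps$ exceeds the displayed power of $\mu(A)$ the target set is all of $\O_K$ (the correlation is $\ge 0 > \mu(A)^3 - \eps$), and if $\eps$ equals it the target set is $\{n \in \O_K : a(n) > 0\}$, which is syndetic by Theorem~\ref{thm: br} (the families $\{rp, sp\}$ and $\{rp, sp, (r+s)p\}$ are jointly intersective because $p$ is intersective) and hence $\text{AVIP}_{0,+}^*$ by the argument below with threshold $0$.

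For the triple, set $a(n) := \mu \left( A \cap T^{-rp(n)}A \cap T^{-sp(n)}A \right) = \int_X \ind_A \cdot T^{rp(n)}\ind_A \cdot T^{sp(n)}\ind_A \, d\mu$. Since $r$ and $s$ are nonzero and $p$ is non-constant, the polynomials $rp$ and $sp$ are non-constant, and since $r \ne s$ their difference $(r - s)p$ is non-constant, so $rp$ and $sp$ are essentially distinct. Theorem~\ref{thm: nilsequence decomp} then yields a decomposition $a(n) = \varphi(n) + \psi(n)$ with $\varphi$ a nilsequence and $\psi$ a nullsequence. By the first half of Theorem~\ref{thm: MultLargeInt}, the set $\left\{ n \in \O_K : a(n) > \mu(A)^3 - \frac{\eps}{2} \right\}$ is syndetic, so Proposition~\ref{prop: syndetic implies AVIP}, applied with $c = \mu(A)^3 - \frac{\eps}{2} > 0$ and $c' = \mu(A)^3 - \eps$, shows that $\left\{ n \in \O_K : a(n) > \mu(A)^3 - \eps \right\}$ is $\text{AVIP}_{0,+}^*$.

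The quadruple case (under the hypothesis $\frac{s}{r} \in \Q$) is structurally identical: one takes $a(n) := \mu \left( A \cap T^{-rp(n)}A \cap T^{-sp(n)}A \cap T^{-(r+s)p(n)}A \right)$, invokes the second half of Theorem~\ref{thm: MultLargeInt} for the syndeticity of $\left\{ n \in \O_K : a(n) > \mu(A)^4 - \frac{\eps}{2} \right\}$, and applies Proposition~\ref{prop: syndetic implies AVIP} with $c' = \mu(A)^4 - \eps$. The main point requiring care is verifying the hypotheses of Theorem~\ref{thm: nilsequence decomp}, particularly essential distinctness: if $r + s \ne 0$, then $rp$, $sp$, $(r + s)p$ are non-constant and pairwise essentially distinct and the argument runs verbatim; if $r + s = 0$, the factor $T^{-(r+s)p(n)}$ is the identity and, using $\ind_A^2 = \ind_A$, one rewrites $a(n) = \int_X \ind_A \cdot T^{rp(n)}\ind_A \cdot T^{-rp(n)}\ind_A \, d\mu$, a two-term multi-correlation sequence for the non-constant, essentially distinct polynomials $rp$ and $-rp$, to which Theorem~\ref{thm: nilsequence decomp} applies directly. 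I do not anticipate a genuine obstacle: the substantive content resides entirely in Theorem~\ref{thm: MultLargeInt}, in the nilsequence--nullsequence decomposition of Theorem~\ref{thm: nilsequence decomp}, and in the $\text{VIP}_0^*$ recurrence estimate (Theorem~\ref{thm: VIP returns}) underlying Proposition~\ref{prop: syndetic implies AVIP}; the remaining work is only to check that the hypotheses of these tools hold and to handle the degenerate configurations.
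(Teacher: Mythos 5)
Your proposal is correct and follows exactly the approach the paper intends: the paper does not spell out a proof of Theorem~\ref{thm: MultLargeInt AVIP} beyond the one-sentence remark that Theorem~\ref{thm: nilsequence decomp} plus Proposition~\ref{prop: syndetic implies AVIP} upgrade the syndeticity of Theorem~\ref{thm: MultLargeInt} to $\text{AVIP}_{0,+}^*$, and you fill in precisely those details (verifying essential distinctness of $\{rp,sp\}$ and $\{rp,sp,(r+s)p\}$, feeding the $\eps/2$-syndeticity into the proposition, and handling the degenerate case $r+s=0$ by absorbing the trivial factor via $\ind_A^2=\ind_A$). The only quibble is cosmetic: the boundary case $\eps = \mu(A)^3$ does not really need a separate appeal to Theorem~\ref{thm: br} (which is stated for $p_i \in \O_K[x]$ rather than $\O_K$-valued $p_i \in K[x]$); one can uniformly apply Theorem~\ref{thm: MultLargeInt} at level $\eps' < \min(\eps,\mu(A)^3)$ and then invoke Proposition~\ref{prop: syndetic implies AVIP} with $c = \mu(A)^3 - \eps' > 0$, $c' = \mu(A)^3 - \eps < c$, which covers all $\eps > 0$ at once.
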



\section*{Acknowledgements}

The authors thank Jonathan Lubin for providing a key idea in the proof of Proposition \ref{prop: ind coord}.



\begin{thebibliography}{99}

\bibitem{a}
E. Ackelsberg.
Khintchine-type double recurrence in abelian groups.
arXiv:2307.04698, 27 pp.

\bibitem{abb}
E. Ackelsberg, V. Bergelson, and A. Best.
Multiple recurrence and large intersections for abelian group actions.
\emph{Discrete Anal.} (2021) Paper No. 18, 91 pp.

\bibitem{abs}
E. Ackelsberg, V. Bergelson, and O. Shalom.
Khintchine-type recurrence for 3-point configurations.
\emph{Forum Math. Sigma}, \textbf{10} (2022) Paper No. e107, 57 pp.

\bibitem{et/dp}
V. Bergelson.
Ergodic theory and {D}iophantine problems.
In {\em Topics in Symbolic Dynamics and Applications ({T}emuco, 1997)},
London Math. Soc. Lecture Note Ser. vol 279
(Cambridge University Press, 2000) pp. 167--205.

\bibitem{d*}
V. Bergelson and A. Ferr\'{e} Moragues.
An ergodic correspondence principle, invariant means and applications.
\emph{Israel J. Math.}, \textbf{245} (2021) 921--962.

\bibitem{bhk}
V. Bergelson, B. Host, and B. Kra.
Multiple recurrence and nilsequences.
\emph{Invent. Math.}, \textbf{160} (2005) 261--303.
With an appendix by Imre Ruzsa.

\bibitem{bl-poly}
V. Bergelson and A. Leibman.
Polynomial extensions of van der Waerden's and Szemer\'{e}di's theorems.
\emph{J. Amer. Math. Soc.}, \textbf{9} (1996) 725--753.

\bibitem{bl-IP}
V. Bergelson and A. Leibman.
${\rm IP}_r^*$-recurrence and nilsystems.
\emph{Adv. Math.}, \textbf{339} (2018) 642--656.

\bibitem{bl-cubic}
V. Bergelson and A. Leibman.
Sets of large values of correlation functions for polynomial cubic configurations.
\emph{Ergodic Theory Dynam. Systems}, \textbf{38} (2018) 499--522.

\bibitem{bll-weyl}
V. Bergelson, A. Leibman, and E. Lesigne.
Complexities of finite families of polynomials, Weyl systems, and constructions in combinatorial number theory.
\emph{J. Anal. Math.}, \textbf{103} (2007) 47–92.

\bibitem{bll-poly}
V. Bergelson, A. Leibman, and E. Lesigne.
Intersective polynomials and the polynomial Szemer\'{e}di theorem.
\emph{Adv. Math.}, \textbf{219} (2008) 369--388.

\bibitem{bm96}
V. Bergelson and R. McCutcheon.
Uniformity in the polynomial Szemer\'{e}di theorem.
In \emph{Ergodic Theory of $\Z^d$-actions (Warwick, 1993--1994)},
London Math. Soc. Lecture Note Ser. vol 228
(Cambridge University Press, 1996) pp. 273--296.

\bibitem{bm}
V. Bergelson and R. McCutcheon.
An ergodic IP polynomial Szemer\'{e}di theorem.
\emph{Mem. Amer. Math. Soc.}, \textbf{146} (2000) viii---106.

\bibitem{br}
V. Bergelson and D. Robertson.
Polynomial multiple recurrence over rings of integers.
\emph{Ergodic Theory Dynam. Systems}, \textbf{36} (2016) 1354–1378.

\bibitem{btz2}
V. Bergelson, T. Tao, and T. Ziegler.
Multiple recurrence and convergence results associated to $\mathbb{F}_p^{\omega}$-actions.
\emph{J. Anal. Math.}, \textbf{127} (2015) 329–378.

\bibitem{MIT}
A. Berger, A. Sah, M. Sawhney, and J. Tidor.
Popular differences for matrix patterns.
\emph{Trans. Amer. Math. Soc.}, \textbf{375} (2022) 2677--2704.

\bibitem{bf}
A. Best and A. Ferr\'{e} Moragues.
Polynomial ergodic averages for certain countable ring actions.
\emph{Discrete Contin. Dyn. Syst.}, \textbf{42} (2022) 3379--3413.

\bibitem{conrad}
K. Conrad.
Trace and norm.
Online notes. \\ Available at: https://kconrad.math.uconn.edu/blurbs/galoistheory/tracenorm.pdf

\bibitem{fra}
N. Frantzikinakis.
Multiple ergodic averages for three polynomials and applications.
\emph{Trans. Amer. Math. Soc.}, \textbf{360} (2008) 5435–5475.

\bibitem{frakra1}
N. Frantzikinakis and B. Kra.
Polynomial averages converge to the product of integrals.
\emph{Israel J. Math.}, \textbf{148} (2005) 267–276.

\bibitem{frakra2}
N. Frantzikinakis and B. Kra.
Ergodic averages for independent polynomials and applications.
\emph{J. London Math. Soc. (2)}, \textbf{74} (2006) 131–142.

\bibitem{diag}
H. Furstenberg.
Ergodic behavior of diagonal measures and a theorem of {S}zemer\'{e}di on arithmetic progressions.
\emph{J. Analyse Math.}, \textbf{31} (1977) 204--256.

\bibitem{FK}
H. Furstenberg and Y. Katznelson.
An ergodic {S}zemer\'{e}di theorem for {IP}-systems and combinatorial theory.
\emph{J. Analyse Math.}, \textbf{45} (1985) 117--168.

\bibitem{fw}
H. Furstenberg and B. Weiss.
A mean ergodic theorem for $\frac{1}{N} \sum_{n=1}^N{f(T^nx)g(T^{2n}x)}$.
In \emph{Convergence in Ergodic Theory and Probability (Columbus, OH, 1993)},
Ohio State Univ. Math. Res. Inst. Publ. vol 5
(de Gruyter, 1996) pp. 193--227.

\bibitem{green}
B. Green.
A {S}zemer\'{e}di-type regularity lemma in abelian groups, with applications.
\emph{Geom. Funct. Anal.}, \textbf{15} (2005) 340--376.

\bibitem{gt}
B. Green and T. Tao.
An arithmetic regularity lemma, an associated counting lemma, and applications.
In \emph{An irregular mind},
Bolyai Soc. Math. Stud. vol 21
(J\'{a}nos Bolyai Mathematical Society, 2010) pp. 261--334.

\bibitem{griesmer}
J. T. Griesmer.
\emph{Ergodic averages, correlation sequences, and sumsets}.
PhD. thesis. The Ohio State University (2009).

\bibitem{hindman}
N. Hindman.
Finite sums of sequences within cells of a partition of $\N$.
\emph{J. Combinatorial Theory Ser. A}, \textbf{17} (1974) 1--11.

\bibitem{hindman-covering}
N. Hindman.
On density, translates, and pairwise sums of integers.
\emph{J. Combinatorial Theory Ser. A}, \textbf{33} (1982) 147--157.

\bibitem{hk}
B. Host and B. Kra.
Nonconventional ergodic averages and nilmanifolds.
\emph{Ann. of Math. (2)}, \textbf{161} (2005) 397--488.

\bibitem{kh}
A. Khintchine.
Eine {V}ersch\"{a}rfung des {P}oincar\'{e}schen	``{W}iederkehrsatzes''.
\emph{Compositio Math.}, \textbf{1} (1935) 177--179.

\bibitem{kovac}
V. Kova\v{c}.
Popular difference for right isosceles triangles.
\emph{Electron. J. Combin.}, \textbf{28} (2021) Paper No. 4.27, 10 pp.

\bibitem{kn}
L. Kuipers and H. Niederreiter.
\emph{Uniform Distribution of Sequences}.
(Wiley-Interscience [John Wiley \& Sons], 1974).

\bibitem{lefschetz}
S. Lefschetz.
\emph{Algebraic Geometry}.
(Princeton University Press, 1953).

\bibitem{leib-poly_nil2}
A. Leibman.
Pointwise convergence of ergodic averages for polynomial actions of $\mathbb{Z}^d$
by translations on a nilmanifold.
\emph{Ergodic Theory Dynam. Systems}, \textbf{25} (2005) 215–225.

\bibitem{leib-nil}
A. Leibman.
Nilsequences, null-sequences, and multiple correlation sequences.
\emph{Ergodic Theory Dynam. Systems}, \textbf{35} (2015) 176--191.

\bibitem{shalom}
O. Shalom.
Multiple ergodic averages in abelian groups and Khintchine type recurrence.
\emph{Trans. Amer. Math. Soc.}, \textbf{375} (2022) 2729--2761.

\bibitem{weyl}
H. Weyl.
\"{U}ber die Gleichverteilung von Zahlen mod Eins.
\emph{Math. Ann.}, \textbf{77} (1916) 313--352.

\bibitem{ziegler}
T. Ziegler.
Universal characteristic factors and Furstenberg averages.
\emph{J. Amer. Math. Soc.}, \textbf{20} (2007) 53--97.

\end{thebibliography}
\end{document}